\newcommand{\doublewidetilde}[1]{{%
  \mathpalette\double@widetilde{#1}%
}}
\newcommand{\double@widetilde}[2]{%
  \sbox\z@{$\m@th#1\widetilde{#2}$}%
  \ht\z@=.9\ht\z@
  \widetilde{\box\z@}%
}
\DeclareMathOperator{\PGL}{PGL}
\DeclareMathOperator{\upO}{O}
\newcommand{\CC}{\mathbb{C}}
\newcommand{\EE}{\mathbb{E}}
\newcommand{\FF}{\mathbb{F}}
\newcommand{\RR}{\mathbb{R}}
\newcommand{\ZZ}{\mathbb{Z}}
\newcommand{\calD}{\mathcal{D}}
\newcommand{\calF}{\mathcal{F}}
\newcommand{\calO}{\mathcal{O}}
\newcommand{\calV}{\mathcal{V}}
\newcommand{\calW}{\mathcal{W}}
\newcommand{\0}{\textbf{0}}
\renewcommand{\1}{{\rm\bf 1}}
\DeclareMathOperator{\Ind}{Ind}
\DeclareMathOperator{\Hom}{Hom}
\DeclareMathOperator{\const}{const}
\DeclareMathOperator{\diag}{diag}
\DeclareMathOperator{\supp}{supp}
\DeclareMathOperator{\ev}{ev}
\renewcommand\Re{\operatorname{Re}}
\renewcommand\Im{\operatorname{Im}}
\DeclareMathOperator{\Fsymp}{\calF_{\textup{symp}}}
\newcommand{\SplusT}{\;\backslash\kern-0.8em{\backslash} \;}
\newcommand{\SminusT}{\; /\kern-0.8em{/} \;}
\renewcommand{\setminus}{-}
\theoremstyle{plain}
\newtheorem{theorem}{Theorem}[section]
\newtheorem*{theorem*}{Theorem}
\newtheorem{proposition}[theorem]{Proposition}
\newtheorem{lemma}[theorem]{Lemma}
\newtheorem{corollary}[theorem]{Corollary}
\newtheorem*{fact*}{Fact}
\newtheorem{thmalph}{Theorem}
\theoremstyle{definition}
\newtheorem{definition}[theorem]{Definition}
\newtheorem{remark}[theorem]{Remark}
\numberwithin{equation}{section}
\definecolor{back}{RGB}{255,255,255}
\definecolor{fore}{RGB}{0,0,0}
\definecolor{title}{RGB}{255,0,90}
\definecolor{green}{rgb}{0.0, 0.5, 0.0}
\definecolor{purple}{rgb}{0.5, 0.0, 0.5}
\definecolor{bluegreen}{rgb}{0.0,0.5, 0.5}
\definecolor{orange}{rgb}{1,0.5, 0.1}
\definecolor{redgreen}{rgb}{0.5, 0.5, 0.0}
\def\green{\color{green}}
\def\green{\color{green}}
\def\g2{{\green 2}}
\title{Symmetry breaking for $\PGL(2)$ over non-archimedean local fields}
\author{Corina Ciobotaru}
\address{Aarhus Institute of Advanced Studies and Department of Mathematics, Aarhus University, Ny Munkegade 118, 8000 Aarhus C, Denmark}
\email{cociobotaru@aias.au.dk}
\author{Jan Frahm}
\address{Department of Mathematics, Aarhus University, Ny Munkegade 118, 8000 Aarhus C, Denmark}
\email{frahm@math.au.dk}
\begin{document}

\subjclass[2010]{Primary 22E35; Secondary 22E50.}

\keywords{Symmetry breaking operators, intertwining operators, branching problem, principal series representations, unitary representations.}

\begin{abstract}
For a quadratic extension $\EE/\FF$ of non-archimedean local fields we construct explicit holomorphic families of intertwining operators between principal series representations of $\PGL(2,\EE)$ and $\PGL(2,\FF)$, also referred to as \emph{symmetry breaking operators}. These families are given in terms of their distribution kernels which can be viewed as distributions on $\EE$ depending holomorphically on the principal series parameters. For all such parameters we determine the support of these distributions, and we study their mapping properties. This leads to a classification of all intertwining operators between principal series representations, not necessarily irreducible. As an application, we show that every Steinberg representation of $\PGL(2,\EE)$ contains a Steinberg representation of $\PGL(2,\FF)$ as a direct summand of Hilbert spaces.
\end{abstract}

\maketitle


\section*{Introduction}

For a quadratic extension $\EE/\FF$ of local fields, the pair $(G,G')=(\PGL(2,\EE),\PGL(2,\FF))$ of reductive groups has the strong Gelfand property, i.e.
$$ \dim\Hom_{G'}(\pi|_{G'},\tau) \leq 1 $$
for all irreducible smooth admissible representations $\pi$ of $G$ and $\tau$ of $G'$, additionally assumed to be of moderate growth in the archimedean case. For non-archimedean fields, this result is due to D.~Prasad~\cite{Pra92} where he also determines for which representations the multiplicities are equal to $1$ in terms of epsilon factors.

More recently, for the archimedean case, T.~Kobayashi~\cite{Kob15} advocated to not only study the abstract multiplicities $\dim\Hom_{G'}(\pi|_{G'},\tau)$, but to explicitly construct and classify intertwining operators in $\Hom_{G'}(\pi|_{G'},\tau)$, which he also refers to as \emph{symmetry breaking operators}. Together with B.~Speh he studied the pair of real reductive groups $(G,G')=(\upO(1,n+1),\upO(1,n))$ in great detail, see \cite{KS15,KS18} and also \cite{Cle16,Cle17,FW20} for similar results for other pairs of groups. The key players in their construction are meromorphic families of symmetry breaking operators between principal series representations. They construct and study these families in terms of their distribution kernels which they identify with distributions on $\RR^n$, determine the optimal holomorphic renormalization of these families, their supports and the mapping properties of the corresponding symmetry breaking operators using mostly techniques from analysis. Such results have applications to, for instance, branching laws for unitary representations~\cite{Wei21}, conformal geometry~\cite{JO20}, partial differential equations~\cite{FOZ20,MOZ16} and automorphic forms~\cite{BR10,FS18}.

The aim of this paper is to carry out a similar analysis for the pair of reductive groups $(G,G')=(\PGL(2,\EE),\PGL(2,\FF))$ where $\EE/\FF$ is a quadratic extension of non-archimedean local fields. For this, we first establish a similar correspondence between intertwining operators between principal series of $G$ and $G'$ and distributions/generalized functions on the field $\EE$ which have certain invariance properties. Next, we construct explicit holomorphic families of such invariant distributions for all principal series representations of $G$ and $G'$. We further determine for all principal series parameters the support of these distributions and also study mapping properties of the corresponding symmetry breaking operators. This leads to a full classification of symmetry breaking operators between all principal series of $G$ and $G'$, not necessarily irreducible. Generically, the space of symmetry breaking operators is one-dimensional, as predicted by the results of D.~Prasad~\cite{Pra92}. However, for some discrete set of parameters where at least one of the principal series representations is reducible, the dimension is two, and we find an explicit basis in these cases.

Let us describe our results in more detail.

\subsection*{Symmetry breaking operators between principal series}

Let $\EE$ be a non-archimedean local field, $\calO_\EE$ its ring of integers and $\calO_\EE^\times$ the group of units. Write $q_\EE$ for the cardinality of the residue field of $\EE$, $\nu_\EE:\EE\to\ZZ\cup\{\infty\}$ for the valuation of $\EE$, and choose a uniformizer $\varpi_\EE$ for $\calO_\EE$. For a multiplicative character $\chi\in\widehat{\calO_\EE^\times}$ of $\calO_\EE^\times$ and $s\in\CC/\frac{2\pi i}{\ln q_\EE}\ZZ$ we denote by $\chi_s$ the multiplicative character of $\EE^\times$ given by
$$ \chi_s(x) = \chi(\varpi_\EE^{-\nu_\EE(x)}x)|x|_\EE^s \qquad (x\in\EE^\times). $$

The character $\chi_s$ gives rise to a character of the Borel subgroup $B$ of $G=\PGL(2,\EE)$ consisting of upper triangular matrices by
$$ \chi_s\begin{pmatrix}a&b\\0&1\end{pmatrix} = \chi_s(a) \qquad (a\in\EE^\times,b\in\EE). $$
Denote by $\pi_{\chi,s}$ the corresponding principal series representations induced from $\chi_s$ (smooth normalized parabolic induction):
$$ \pi_{\chi,s} = \Ind_B^G(\chi_s). $$

For a quadratic extension $\EE/\FF$ of non-archimedean local fields we consider the pair of reductive groups $(G,G')=(\PGL(2,\EE),\PGL(2,\FF))$. We write $\tau_{\eta,t}$ for the principal series representation of $G'$ with parameters $\eta\in\widehat{\calO_\FF^\times}$ and $t\in\CC/\frac{2\pi i}{\ln q_\FF}\ZZ$. The main object of study in this paper is the space
$$ \Hom_{G'}(\pi_{\chi,s}|_{G'},\tau_{\eta,t}) $$
of intertwining operators between principal series of $G$ and $G'$, also referred to as \emph{symmetry breaking operators} in the archimedean setting.

To construct and classify such operators explicitly, we give an equivalent description in terms of their distribution kernels (see Propositions~\ref{prop:SchwartzKernel} and \ref{prop:GeneralCharSBOKernels}). More precisely, a symmetry breaking operator $A\in\Hom_{G'}(\pi_{\chi,s}|_{G'},\tau_{\eta,t})$ corresponds to a distribution $K\in\calD'(\EE)$ on $\EE$ such that (in the distribution sense)
$$ Af(\overline{n}_y) = \int_\EE K(x)f(\overline{n}_{x+y})\,dx \qquad (f\in\Ind_B^G(\chi_s),y\in\FF), $$
where $\EE\to\overline{N},\,x\mapsto\overline{n}_x$ parameterizes the unipotent radical $\overline{N}$ of a Borel subgroup of $G$ opposite to $B$. This establishes a bijection
$$ \Hom_{G'}(\pi_{\chi,s}|_{G'},\tau_{\eta,t}) \to \calD'(\EE)_{s,t}^{\chi,\eta}\subseteq\calD'(\EE), \quad A\mapsto K $$
onto a subspace $\calD'(\EE)_{s,t}^{\chi,\eta}$ of $\calD'(\EE)$ consisting of distributions satisfying certain equivariance properties (see Proposition~\ref{prop:CharSBOKernels}). We call these distributions \emph{symmetry breaking kernels}.

\subsection*{Construction and classification of symmetry breaking kernels}

The main ingredient in the description of $\calD'(\EE)_{s,t}^{\chi,\eta}$ is a family of distributions $K_{s,t}^{\chi,\eta}$ that depends meromorphically on $(s,t)\in\CC$ and is contained in $\calD'(\EE)_{s,t}^{\chi,\eta}$ whenever it is defined. More precisely, we put
\begin{equation*}
	K_{s,t}^{\chi,\eta}(x) :=  \chi_{s-\frac{1}{2}}(x^2)\cdot(\chi_{s-\frac{1}{2}}\eta_{t+\frac{1}{2}})\left(\frac{x_2}{x_1^2-x_2^2\alpha^2}\right), \qquad \mbox{where } x=x_1+x_2\alpha \in \EE\setminus\FF.
\end{equation*}
This function is locally integrable on $\EE$ whenever $\Re(2s\pm t+\frac{1}{2})>0$ (see Lemma~\ref{lem:KernelL1loc}) and therefore defines a family of distributions on $\EE$. The main result of Section~\ref{sec::hol_sym_fam} is its meromorphic extension to all $(s,t)\in\CC^2$. To state this result, let
\begin{align*}
	\SplusT &= \{(\chi,s,\eta,t)\in\widehat{\calO_\EE^\times}\times\CC\times\widehat{\calO_\FF^\times}\times\CC:\chi|_{\calO_\FF^\times}\cdot\eta=1,2s+t+\tfrac{1}{2}\in\tfrac{2\pi i}{\ln q_\FF}\ZZ\},\\
	\SminusT &= \{(\chi,s,\eta,t)\in\widehat{\calO_\EE^\times}\times\CC\times\widehat{\calO_\FF^\times}\times\CC:\chi|_{\calO_\FF^\times}\cdot\eta^{-1}=1,2s-t+\tfrac{1}{2}\in\tfrac{2\pi i}{\ln q_\FF}\ZZ\}.
\end{align*}
We call a family of distributions meromorphic/holomorphic if its pairing with every test function is meromorphic/holomorphic.

\begin{thmalph}[see Theorem \ref{thm:HolomorphicRenormalization}]\label{thm:IntroA}
	The family of distributions $K_{s,t}^{\chi,\eta}\in\calD'(\EE)$ has a meromorphic extension to $(s,t)\in\CC^2$ for all $\chi$ and $\eta$ with simple poles at $(\chi,s,\eta,t)\in\SplusT\cup\SminusT$. For $(\chi,s,\eta,t)\not\in\SplusT\cup\SminusT$ we have $K_{s,t}^{\chi,\eta}\in\calD'(\EE)_{s,t}^{\chi,\eta}$.
\end{thmalph}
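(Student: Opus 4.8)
The plan is to prove meromorphic continuation by an explicit reduction to one-variable integrals over the local field $\FF$, for which the standard Tate-type analysis applies. First I would use the decomposition $x = x_1 + x_2\alpha \in \EE\setminus\FF$ with $x_1,x_2\in\FF$ to rewrite the pairing $\langle K_{s,t}^{\chi,\eta},\varphi\rangle$ for a test function $\varphi\in\calD(\EE)$ as a double integral over $(x_1,x_2)\in\FF\times\FF^\times$; the defining formula shows that as a function of $x_2$ (with $x_1$ fixed) the kernel is, up to the smooth factor coming from $\varphi$, a product of characters of the form $(\chi_{s-\frac12}\eta_{t+\frac12})(x_2)$ times $\chi_{s-\frac12}((x_1^2-x_2^2\alpha^2)^{-1})$ and $\chi_{s-\frac12}(x^2)$. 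The point is that near the "diagonal" locus $x_2 = 0$ (where $x$ becomes an element of $\FF$ and the formula degenerates) the kernel behaves like a homogeneous distribution $|x_2|_\FF^{2s+t-\frac12}$ twisted by a character, whose meromorphic continuation in the exponent has exactly the poles claimed. Away from $x_2=0$ the integrand is locally a smooth compactly supported function times a locally integrable function, so there is no contribution to the poles there, except possibly from $x_2\to\infty$, which does not occur for $\varphi$ compactly supported.

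The key steps, in order: (1) Fix a test function $\varphi$ supported in a small neighborhood and split the domain into the region $|x_2|_\FF \geq c$ and the region $|x_2|_\FF < c$ for a suitable constant; on the first region the integral is entire in $(s,t)$ by dominated convergence since everything is bounded. (2) On the second region, change variables and Taylor-expand $\varphi$ and the smooth factors in $x_2$ around $x_2=0$; the leading term produces an integral of the shape $\int_{|x_2|_\FF<c} (\chi_{s-\frac12}\eta_{t+\frac12})(x_2)\,|x_2|_\FF^{\text{(stuff)}}\,d^\times x_2$ times a factor depending only on $x_1$, i.e. a local zeta integral whose meromorphic continuation is classical and whose poles occur precisely when the relevant character equals the trivial character, which translates into the conditions $2s+t+\frac12\in\frac{2\pi i}{\ln q_\FF}\ZZ$ (giving $\SplusT$) or $2s-t+\frac12\in\frac{2\pi i}{\ln q_\FF}\ZZ$ (giving $\SminusT$), together with the matching of $\chi|_{\calO_\FF^\times}$ with $\eta^{\pm1}$. (3) Show the higher-order Taylor terms give integrals that are holomorphic in a strictly larger half-plane, and iterate/induct to cover all of $\CC^2$, so that only the leading term contributes poles, and these are simple. (4) Once the meromorphic family is constructed, verify that for $(\chi,s,\eta,t)\notin\SplusT\cup\SminusT$ the resulting distribution still lies in $\calD'(\EE)_{s,t}^{\chi,\eta}$: the equivariance identities defining this space hold for the locally integrable kernel on the open set $\Re(2s\pm t+\frac12)>0$, hence as an identity of meromorphic families of distributions, hence at every point where the family is holomorphic.

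The main obstacle I expect is step (2)–(3): carefully isolating the singular part of the kernel as $x_2\to0$ and checking that, after subtracting finitely many Taylor terms, the remainder is $L^1_{\mathrm{loc}}$ with parameter-dependence holomorphic on an advancing half-plane — this is where one must be careful about the interaction between the two characters $\chi_{s-\frac12}(x^2)$ and $(\chi_{s-\frac12}\eta_{t+\frac12})(x_2)$ and the factor $(x_1^2-x_2^2\alpha^2)^{-1}$, whose own zero set (when $x_1^2 = x_2^2\alpha^2$, impossible for $x\in\EE\setminus\FF$ unless $\alpha^2\in\FF^{\times2}$, which it is not since $\EE/\FF$ is the quadratic extension defined by $\alpha$) must be ruled out. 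Once it is clear that the only singular locus is $x_2 = 0$, the rest is bookkeeping with local zeta functions. I would also need to treat the two archimedean-style "branches" $x_2\to0^+$ and the behavior of $\chi_{s-\frac12}\eta_{t+\frac12}$ versus $\chi_{s-\frac12}\eta_{t+\frac12}^{-1}$, which is what produces the two distinct pole sets $\SplusT$ and $\SminusT$ rather than a single one.
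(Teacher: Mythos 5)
Your step (1) and your step (4) are fine and match the paper's strategy (holomorphy where the integrand is bounded away from the singular locus; equivariance first on the open region of local integrability $\Re(2s\pm t+\frac{1}{2})>0$ and then by analytic continuation, since the conditions of Proposition~\ref{prop:CharSBOKernels} depend holomorphically on $(s,t)$). But steps (2)--(3) contain a genuine gap. Your reduction on the region $|x_2|_\FF<c$ to a one-variable zeta integral in $x_2$ ``times a factor depending only on $x_1$'' is only valid where $x_1$ stays away from $0$: there one has $|x|_\EE=|x_1^2-x_2^2\alpha^2|_\FF=|x_1|_\FF^2$ for $|x_2|$ small, the kernel separates, and the resulting geometric series produces exactly the $\SplusT$ poles $2s+t+\frac{1}{2}\in\frac{2\pi i}{\ln q_\FF}\ZZ$ with the character condition $\chi|_{\calO_\FF^\times}\cdot\eta=1$. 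It does \emph{not} produce the $\SminusT$ family. Those poles come from the joint behavior of $(x_1,x_2)$ at the origin: once $|x_1|_\FF\lesssim|x_2|_\FF$ one has $|x|_\EE=|x_2^2\alpha^2|_\FF$, the kernel's modulus is $|x_2|_\FF^{2s-t-\frac{3}{2}}$ on a region of $x_1$-measure $\sim|x_2|_\FF$, and the relevant exponent is $2s-t+\frac{1}{2}$ --- a condition on the \emph{diagonal} $\FF^\times$-homogeneity $K(ax)=\chi_{s-\frac12}(a)\eta_{t+\frac12}(a)^{-1}K(x)$ that cannot be seen from an $x_2$-integral with $x_1$ as a spectator, nor from a subsequent independent $x_1$-integral (the latter would give poles in $t$ alone, not in $2s-t$). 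This is why the paper splits the small-$x_2$ region further into $(\FF\setminus V)\times V$ and $V\times V$ and treats the neighborhood of $0$ by a genuinely two-variable computation (a double geometric series, Lemmas~\ref{lem:ZeroIntegralx1x2} and \ref{lem:AnotherIntegral}, and the gamma-factor integral of Appendix~\ref{app:Integral}), which also yields the character condition $\chi|_{\calO_\FF^\times}\cdot\eta^{-1}=1$ and the residue formula. Without that separate analysis near $x=0$ your argument establishes meromorphy with poles only along $\SplusT$, which is false.

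A smaller point: the ``Taylor expansion and induction over advancing half-planes'' is an archimedean device that has no content here. Over a non-archimedean field your test function $\varphi$ is locally constant, so on small cosets it \emph{is} its leading term and there are no higher-order terms to absorb; the paper exploits exactly this by choosing $V=\varpi_\FF^n\calO_\FF$ with $\varphi$ constant on cosets of $V\times V$. The real work that your outline defers to this step is the two-variable analysis at the origin described above, not the control of a Taylor remainder.
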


Since $\SplusT$ resp. $\SminusT$ is precisely the zero set of the local non-archimedean $L$-factor $L(\frac{1}{2},\chi_s|_{\FF^\times}\cdot\eta_t)$ resp. $L(\frac{1}{2},\chi_s|_{\FF^\times}\cdot\eta_t^{-1})$ (see \eqref{eq:DefLocalLFactor} for the definition), the renormalized family
$$ \widetilde{K}_{s,t}^{\chi,\eta} := L(\tfrac{1}{2},\chi_s|_{\FF^\times}\cdot\eta_t)^{-1}L(\tfrac{1}{2},\chi_s|_{\FF^\times}\cdot\eta_t^{-1})^{-1}\cdot K_{s,t}^{\chi,\eta} $$
is holomorphic in $(s,t)\in\CC^2$ and satisfies $\widetilde{K}_{s,t}^{\chi,\eta}\in\calD'(\EE)_{s,t}^{\chi,\eta}$ for all parameters $(\chi,s,\eta,t)$. Its values at $(\chi,s,\eta,t)\in\SplusT\cup\SminusT$ are obtained explicitly (see the residue identities in Theorem \ref{thm:ResidueIdentities}). This allows us to determine the set of parameters $(\chi,s,\eta,t)$ for which $\widetilde{K}_{s,t}^{\chi,\eta}=0$ and also the support of the distribution $\widetilde{K}_{s,t}^{\chi,\eta}\in\calD'(\EE)$ for all parameters.

\begin{thmalph}[see Theorem \ref{thm:ZerosAndSupportOfKernel}]\label{thm:IntroB}
$\widetilde{K}_{s,t}^{\chi,\eta}=0$ if and only if $(\chi,s,\eta,t)\in L$ with the discrete set $L\subseteq\SminusT$ given by \eqref{equ::L}. Moreover, we have
$$ \supp\widetilde{K}_{s,t}^{\chi,\eta} = \begin{cases}\emptyset&\mbox{for $(\chi,s,\eta,t)\in L$,}\\\{0\}&\mbox{for $(\chi,s,\eta,t)\in\SminusT\setminus L$,}\\\FF&\mbox{for $(\chi,s,\eta,t)\in\SplusT\setminus\SminusT$,}\\\EE&\mbox{else.}\end{cases} $$
\end{thmalph}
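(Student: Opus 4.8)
The plan is to deduce Theorem~\ref{thm:IntroB} from the explicit residue identities of Theorem~\ref{thm:ResidueIdentities} together with the $L^1_{\mathrm{loc}}$-description of $K_{s,t}^{\chi,\eta}$ outside the critical lines. First I would settle the generic case: whenever $(\chi,s,\eta,t)\notin\SplusT\cup\SminusT$, the distribution $\widetilde{K}_{s,t}^{\chi,\eta}$ equals (a nonzero multiple of) the locally integrable function $K_{s,t}^{\chi,\eta}$ from Lemma~\ref{lem:KernelL1loc}, which by its defining formula is nonzero almost everywhere on $\EE\setminus\FF$; hence $\widetilde K\ne 0$ and $\supp\widetilde K=\EE$ (the complement of the null set $\FF$ has closure $\EE$). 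This handles the ``else'' branch and shows in particular that the vanishing locus $L$ is contained in $\SplusT\cup\SminusT$.

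Next I would pin down $L$ and the behaviour on the critical lines by reading off the residue identities. On $\SplusT\setminus\SminusT$ and on $\SminusT\setminus\SplusT$, Theorem~\ref{thm:ResidueIdentities} expresses $\widetilde{K}_{s,t}^{\chi,\eta}$ (equivalently, the residue of $K_{s,t}^{\chi,\eta}$ along the relevant line, up to the nonvanishing factor coming from the \emph{other} $L$-factor) as an explicit invariant distribution supported on $\FF$, respectively on $\{0\}$: on $\SplusT$ one gets a distribution obtained by integrating along $\FF\subseteq\EE$ against a character of $\FF^\times$ — visibly supported exactly on $\FF$ and nonzero — while on $\SminusT$ one gets a derivative-of-delta / Dirac-type distribution at the origin coming from the pole of $\chi_{s-1/2}(x^2)$, supported on $\{0\}$. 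The only subtlety is to check these are genuinely nonzero: for the $\SplusT$-case the integral-over-$\FF$ distribution is nonzero because the relevant local integral / $L$-factor on $\FF^\times$ does not vanish there (that is exactly why $\SplusT\setminus\SminusT$ is excluded from $L$), so $\supp=\FF$; for the $\SminusT\setminus\SplusT$-case one checks when the explicit $\delta$-type distribution degenerates — this is precisely the definition of $L$ in \eqref{equ::L}, so outside $L$ one has $\supp=\{0\}$ and inside $L$ the residue is zero, giving $\widetilde K=0$ and $\supp=\emptyset$.

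The remaining point is the overlap $\SplusT\cap\SminusT$, which lies inside the ``else'' set $\EE$ in the statement. Here both $L$-factors have a pole, so $\widetilde K_{s,t}^{\chi,\eta}$ is the (double) residue / second-order term of $K_{s,t}^{\chi,\eta}$, and I would again quote the corresponding residue identity: it produces a distribution which is \emph{not} supported on $\FF$ (it contains a component transverse to $\FF$, essentially because the $\SplusT$-residue and the $\SminusT$-residue interact), hence $\supp=\EE$; one must also confirm $L\cap(\SplusT\cap\SminusT)=\emptyset$, i.e. $L\subseteq\SminusT\setminus\SplusT$, which follows from the explicit description of $L$. Assembling the four cases then gives the claimed support formula, and the characterization $\widetilde K=0\iff (\chi,s,\eta,t)\in L$ follows since in every case except $L$ we have exhibited a nonzero value.

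I expect the main obstacle to be the \emph{nonvanishing} verifications rather than the support computations: the supports are essentially forced by which $L$-factor contributes a pole, but one must carefully track the explicit constants and character factors in the residue identities of Theorem~\ref{thm:ResidueIdentities} to be sure that the residual distributions do not accidentally vanish — in particular, isolating the precise degeneration on $\SminusT$ that defines $L$, and checking that on $\SplusT\cap\SminusT$ the two residual contributions do not cancel. A secondary technical point is handling the coordinate description $x=x_1+x_2\alpha$ uniformly in residue characteristic $2$, where $\EE/\FF$ may be wildly ramified and the formula for $K_{s,t}^{\chi,\eta}$ must be interpreted with care.
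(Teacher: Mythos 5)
Your generic case and your treatment of the two individual lines $\SplusT\setminus\SminusT$ and $\SminusT\setminus\SplusT$ follow the paper's route (restriction to $\EE\setminus\FF$ is a nowhere vanishing locally constant function unless a normalizing factor vanishes; on the critical lines read off Theorem~\ref{thm:ResidueIdentities} and check nonvanishing of the constants, the gamma-factor analysis of Lemma~\ref{lem:PropertiesGammaFactors} being exactly what produces the list \eqref{equ::L}). But your handling of the overlap $\SplusT\cap\SminusT$ is wrong, and in a way that contradicts the statement you are proving. The ``else'' branch of the case distinction is read after the first three branches, so it only covers parameters outside $\SplusT\cup\SminusT$; points of $\SplusT\cap\SminusT$ lie in $\SminusT\setminus L$ (since $L\cap\SplusT=\emptyset$) and the theorem asserts support $\{0\}$ there, not $\EE$. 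There is also no ``double residue'' to take: $\widetilde{K}_{s,t}^{\chi,\eta}$ is a single holomorphic family, and its value on the overlap is computed by either residue identity of Theorem~\ref{thm:ResidueIdentities}, which agree. Concretely, on $\SplusT\cap\SminusT$ one has $\eta^2=1$ and $2t\in\frac{2\pi i}{\ln q_\FF}\ZZ$, so $L_t^\eta$ degenerates to $(1-q_\FF^{-1})\delta(x_1)\delta(x_2)$ and $\widetilde{K}_{s,t}^{\chi,\eta}$ is a nonzero multiple of the Dirac distribution at $0\in\EE$; there is no ``component transverse to $\FF$'' and the support is $\{0\}$. So this part of your argument must be replaced by the observation that the two residue formulas coincide there and give a nonzero multiple of $\delta$.

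Two smaller inaccuracies, not fatal but worth fixing: on $\SminusT$ the residue is a plain multiple of $\delta$ (no derivatives of $\delta$ occur), its possible vanishing being governed entirely by the gamma factors $\Gamma((\chi^2|_{\calO_\FF^\times})_{2t})\Gamma((\chi^{-2})_{-t+\frac12})$ (and the surviving factor $1-q_\FF^{-(2s+t+\frac12)}$ when $\chi|_{\calO_\FF^\times}\cdot\eta=1$), and it is precisely this case analysis (via Lemma~\ref{lem:PropertiesGammaFactors}, distinguishing $\chi^2|_{\calO_\FF^\times}\neq1$, $\chi^2|_{\calO_\FF^\times}=1\neq\chi^2$, $\chi^2=1$, and ramified versus unramified) that constitutes the identification of $L$, so ``one checks when it degenerates'' is where the real work of the theorem sits. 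Also, for generic parameters $\widetilde{K}_{s,t}^{\chi,\eta}$ is not globally the locally integrable function of Lemma~\ref{lem:KernelL1loc} (that requires $\Re(2s\pm t+\frac12)>0$); what is true, and what the support argument needs, is only that its restriction to the open set $\EE\setminus\FF$ is given by the explicit nowhere vanishing function times the normalizing factors, since pairings with test functions supported away from $\FF$ converge for all $(s,t)$.
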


Theorems~\ref{thm:IntroA} and \ref{thm:IntroB} are proven by explicitly evaluating the distribution $K_{s,t}^{\chi,\eta}$ on a test function using elementary computations of $p$-adic integrals (see in particular Section~\ref{sec::mero_family}).

Using the family $\widetilde{K}_{s,t}^{\chi,\eta}$, we determine in Section \ref{sec::class_sym} the space $\calD'(\EE)_{s,t}^{\chi,\eta}$ of symmetry breaking kernels, and hence the space $\Hom_{G'}(\pi_{\chi,s}|_{G'},\tau_{\eta,t})$ of symmetry breaking operators, for all parameters. For $(\chi,s,\eta,t)\not\in L$ the kernel $K_{s,t}^{\chi,\eta}$ is a non-zero element of $\calD'(\EE)_{s,t}^{\chi,\eta}$. For $(\chi,s,\eta,t)\in L$ we can regularize the family $(s',t')\mapsto\widetilde{K}_{s',t'}^{\chi,\eta}$ along two different hyperplanes in $\CC^2$ to obtain two linearly independent kernels. Regularizing along $\SminusT$, i.e. along $\{(s',t'):2s'-t'+\frac{1}{2}\in\frac{2\pi i}{\ln q_\FF}\ZZ\}\subseteq\CC^2$, yields the Dirac delta distribution $\delta\in\calD'(\EE)$ at $0$. Regularizing along the hyperplane $\CC\times\{t\}\subseteq\CC^2$ yields a symmetry breaking kernel
$$ \doublewidetilde{K}_{s,t}^{\chi,\eta} := \left.(1-q_\FF^{-2z})^{-1}\widetilde{K}_{s+z,t}^{\chi,\eta}\right|_{z=0}. $$
whose support equals $\EE$. In particular, $\delta$ and $\doublewidetilde{K}_{s,t}^{\chi,\eta}$ are linearly independent.

\begin{thmalph}[see Corollary \ref{cor::dim_of D_E_chi_eta}]\label{thm:IntroC}
$$ \Hom_{G'}(\pi_{\chi,s}|_{G'},\tau_{\eta,t}) \simeq\calD'(\EE)_{s,t}^{\chi,\eta} = \begin{cases}\CC\widetilde{K}_{s,t}^{\chi,\eta}&\mbox{for $(\chi,s,\eta,t)\notin L$,}\\\CC\delta\oplus\CC\doublewidetilde{K}_{s,t}^{\chi,\eta}&\mbox{for $(\chi,s,\eta,t)\in L$.}\end{cases} $$
\end{thmalph}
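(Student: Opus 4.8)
The plan is to combine Theorems~\ref{thm:IntroA} and \ref{thm:IntroB} with the correspondence $\Hom_{G'}(\pi_{\chi,s}|_{G'},\tau_{\eta,t})\simeq\calD'(\EE)_{s,t}^{\chi,\eta}$ (Proposition~\ref{prop:CharSBOKernels}), and to produce enough linearly independent elements of $\calD'(\EE)_{s,t}^{\chi,\eta}$ to match the upper bound coming from Prasad's multiplicity result. Concretely, I would argue as follows.

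First, the easy inclusions. By Theorem~\ref{thm:IntroA}, $\widetilde{K}_{s,t}^{\chi,\eta}\in\calD'(\EE)_{s,t}^{\chi,\eta}$ for \emph{all} parameters, and by Theorem~\ref{thm:IntroB} it is nonzero exactly when $(\chi,s,\eta,t)\notin L$; so $\CC\widetilde{K}_{s,t}^{\chi,\eta}\subseteq\calD'(\EE)_{s,t}^{\chi,\eta}$ is one-dimensional off $L$. For $(\chi,s,\eta,t)\in L\subseteq\SminusT$, I must exhibit two linearly independent kernels. One is the Dirac delta $\delta$ at $0$: it lies in $\calD'(\EE)_{s,t}^{\chi,\eta}$ precisely because $(\chi,s,\eta,t)\in\SminusT$ (the equivariance condition defining $\calD'(\EE)_{s,t}^{\chi,\eta}$ reduces, on distributions supported at $0$, to a condition on $(s,t)$ which is exactly the $\SminusT$-condition — this is where I would invoke the explicit description of $\calD'(\EE)_{s,t}^{\chi,\eta}$ from Section~\ref{sec::class_sym} together with the regularization statement that the $\SminusT$-residue of $\widetilde K$ along $L$ is a multiple of $\delta$, as announced after Theorem~\ref{thm:IntroB}). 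The second is $\doublewidetilde{K}_{s,t}^{\chi,\eta}$, obtained by regularizing $z\mapsto\widetilde{K}_{s+z,t}^{\chi,\eta}$ along $\CC\times\{t\}$ as in the excerpt; it lies in $\calD'(\EE)_{s,t}^{\chi,\eta}$ because the equivariance conditions are closed and the renormalizing factor $(1-q_\FF^{-2z})^{-1}$ only kills the pole, and its support equals $\EE$ by the residue computation, so it is not proportional to $\delta$. Hence $\CC\delta\oplus\CC\doublewidetilde{K}_{s,t}^{\chi,\eta}\subseteq\calD'(\EE)_{s,t}^{\chi,\eta}$ is two-dimensional on $L$.

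Second, the matching upper bound. For irreducible $\pi_{\chi,s}$ and $\tau_{\eta,t}$, Prasad's strong Gelfand property gives $\dim\Hom_{G'}\le1$, which already closes the argument on the open locus where both representations are irreducible. The subtlety is that $L$ and the boundary cases involve \emph{reducible} principal series, where multiplicity one may fail and a direct dimension bound is needed. Here I would bound $\dim\calD'(\EE)_{s,t}^{\chi,\eta}$ intrinsically from the structure established in Section~\ref{sec::class_sym}: any $K\in\calD'(\EE)_{s,t}^{\chi,\eta}$ restricted to $\EE\setminus\FF$ must be a $G'$-equivariant distribution on the open orbit, hence (by a one-dimensionality statement for distributions on the open orbit, the local analogue of the ``generic multiplicity one'' input) a scalar multiple of $\widetilde{K}_{s,t}^{\chi,\eta}$ there; subtracting off that multiple reduces to classifying $K$ supported on $\FF$, and then on $\FF\setminus\{0\}$ and $\{0\}$ successively, each step contributing at most one dimension and only for $(\chi,s,\eta,t)$ in the respective singular set ($\SplusT\setminus\SminusT$ for the $\FF\setminus\{0\}$ piece, $\SminusT$ for the $\{0\}$ piece). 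A bookkeeping of which strata can simultaneously contribute — governed exactly by the combinatorics of $\SplusT$, $\SminusT$ and $L$ — yields $\dim\calD'(\EE)_{s,t}^{\chi,\eta}\le2$, with equality only on $L$.

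**The main obstacle** is the last step: controlling the ``jumps'' where distributions supported on proper subvarieties ($\FF$, then $\{0\}$) enter $\calD'(\EE)_{s,t}^{\chi,\eta}$, and showing these never stack up to more than two independent solutions. This requires a careful analysis of the equivariance equations restricted to each $G'$-orbit stratum of $\EE$ (open orbit $\EE\setminus\FF$, the one-dimensional orbit $\FF\setminus\{0\}$, the point $\{0\}$), computing for each stratum exactly the $(s,t)$-locus on which an equivariant distribution exists — and this is precisely where $\SplusT$, $\SminusT$, and the more delicate set $L\subseteq\SminusT$ (where the open-orbit solution $\widetilde K$ degenerates to zero, freeing up an extra dimension) come from. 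Matching these strata-by-strata contributions against the residue computations of Theorems~\ref{thm:IntroA}--\ref{thm:IntroB} to confirm that $\widetilde{K}$, $\delta$, and $\doublewidetilde{K}$ indeed span the whole space is the technical heart of Section~\ref{sec::class_sym}.
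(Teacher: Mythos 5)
Your lower bounds are fine and essentially the paper's: $\widetilde{K}_{s,t}^{\chi,\eta}\in\calD'(\EE)_{s,t}^{\chi,\eta}$ with the vanishing locus $L$ (Theorems~\ref{thm:HolomorphicRenormalization} and \ref{thm:ZerosAndSupportOfKernel}), $\delta$ on $\SminusT$ (checked directly against Proposition~\ref{prop:CharSBOKernels} in Lemma~\ref{lem:DimensionEstimates}), and $\doublewidetilde{K}_{s,t}^{\chi,\eta}$ on $L$ with support $\EE$ (Proposition~\ref{prop::kernel_tilde_tilde}). The genuine gap is the upper bound. Stratumwise bookkeeping over the strata $\EE\setminus\FF$, $\FF\setminus\{0\}$, $\{0\}$ cannot by itself give ``$\dim\le2$ with equality only on $L$'': the open stratum carries a nonzero equivariant solution for \emph{every} parameter (the restriction of $K_{s,t}^{\chi,\eta}$), the middle stratum for every parameter in $\SplusT$, and the origin for every parameter in $\SminusT$, so the count alone yields $2$ on all of $\SminusT$ and on $\SplusT\setminus\SminusT$ (and even $3$ on $\SplusT\cap\SminusT$), while the theorem asserts $1$ there away from $L$. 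What is missing is the extension obstruction across the smaller strata, which is where the work of the paper lies: (i) on $\SplusT$ the open-orbit solution does not extend equivariantly across $\FF^\times$ --- in the paper this is built into Proposition~\ref{prop:CharacterizationSBOKernelsPGL2}, whose proof uses the classification of homogeneous distributions on all of $\FF$ (Theorem~\ref{thm:HomogeneousDistributions}), not merely on $\FF^\times$, and gives $\dim\calD'(\EE^\times)_{s,t}^{\chi,\eta}=1$ for every parameter; and (ii), the crux, on $\SminusT\setminus L$ no element of $\calD'(\EE)_{s,t}^{\chi,\eta}$ has nonzero restriction to $\EE^\times$ (Lemma~\ref{lem::restriction_map_zero}). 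The paper proves (ii) by a deformation argument: expand the partially renormalized family in a Laurent series along a line through the parameter point, note that the residue term is a nonzero multiple of $\delta$ precisely because the point is not in $L$, and compare Laurent coefficients in the scaling equivariance (in the paper's notation, $D_0^au_{-1}=0$ and $D_0^au_0+D_1^au_{-1}=0$) to force the open-orbit component of any putative kernel to vanish. Nothing in your outline produces this step; your ``bookkeeping \dots yields $\dim\le2$, with equality only on $L$'' asserts the conclusion rather than proving it.

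Two further problems with the route as written. The ``subtract off a multiple of $\widetilde{K}$ on the open orbit'' reduction is unavailable exactly where it matters: on $\SplusT$ and on $L$ the restriction of $\widetilde{K}_{s,t}^{\chi,\eta}$ to $\EE\setminus\FF$ is zero, so the open-orbit generator is not a multiple of it there; and on $\SminusT\setminus L$ there is no element of $\calD'(\EE)_{s,t}^{\chi,\eta}$ whose open-orbit restriction is the generator, so the subtraction cannot be performed --- its impossibility is the content of Lemma~\ref{lem::restriction_map_zero}. Finally, Prasad's multiplicity one only bounds the $\Hom$-space when both principal series are irreducible; it gives nothing at the reducible parameters (which include all of $L$ and further points of $\SplusT\cup\SminusT$), and the paper does not invoke it: the upper bound comes solely from the exact sequence $0\to\calD_{\{0\}}'(\EE)_{s,t}^{\chi,\eta}\to\calD'(\EE)_{s,t}^{\chi,\eta}\to\calD'(\EE^\times)_{s,t}^{\chi,\eta}$ combined with Proposition~\ref{prop:CharacterizationSBOKernelsPGL2}, Lemma~\ref{lem:DimensionEstimates} and Lemma~\ref{lem::restriction_map_zero}.
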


Note that this is not a contradiction to the multiplicity-one property of the pair $(G,G')$ since for all $(\chi,s,\eta,t)\in L$ at least one of the representations $\pi_{\chi,s}$ and $\tau_{\eta,t}$ is reducible.


\subsection*{Mapping properties of symmetry breaking operators}

For $\chi^2=1$, the representation $\pi_{\chi,s}$ contains a unique one-dimensional subspace invariant under the maximal compact subgroup $K=\PGL(2,\calO_\EE)$. Let $\varphi_{\chi,s}$ be the unique element of this subspace with $\varphi_{\chi,s}(1)=1$. 

If additionally $s\in-\frac{1}{2}+\frac{\pi i}{\ln q_\EE}\ZZ$, then the subspace spanned by $\varphi_{\chi,s}$ is also $G$-invariant. The quotient of $\pi_{\chi,s}$ by this one-dimensional subrepresentation is irreducible and isomorphic to the unique irreducible subrepresentation $\pi_{\chi,-s}^0$ of $\pi_{\chi,-s}$. It is also referred to as \emph{Steinberg representation}. All other representations $\pi_{\chi,s}$ are irreducible (see Theorem~\ref{thm:CompositionSeries} for details).

We use the notation $\psi_{\eta,t}$ for the corresponding vector in $\tau_{\eta,t}$ if $\eta^2=1$ and $\tau_{\eta,t}^0$ for the Steinberg representation which occurs as subrepresentation of $\tau_{\eta,t}$ for $\eta^2=1$ and $t\in\frac{1}{2}+\frac{\pi i}{\ln q_\FF}\ZZ$.

In Section \ref{sec::mapp_prop} we compute the action of the symmetry breaking operator $\widetilde{A}_{s,t}^{\chi,\eta}$ with kernel $\widetilde{K}_{s,t}^{\chi,\eta}$ on the vectors $\varphi_{\chi,s}$ and also find the image of this operator in some cases.

\begin{thmalph}[see Corollary \ref{cor::sph_vectors} and Theorem \ref{thm::long_image}]\label{thm:IntroD}
\begin{enumerate}
	\item Assume $\chi^2=1$, then $\widetilde{A}_{s,t}^{\chi,\eta}\varphi_{\chi,s}=0$ unless $\eta=\chi|_{\calO_\FF^\times}$ in which case we have
	$$ \widetilde{A}_{s,t}^{\chi,\eta}\varphi_{\chi,s} = (1-q_\FF^{-1})\psi_{\eta,t}\times\begin{cases}(1-q_\EE^{-(2s+1)})&\mbox{for $\EE/\FF$ unramified,}\\(1-q_\EE^{-(2s+1)})(1+q_\FF^{t-\frac{1}{2}})&\mbox{for $\EE/\FF$ ramified.}\end{cases} $$
	\item For all parameters $(\chi,s,\eta,t)$ with $\chi|_{\calO_\FF^\times}\cdot\eta=\chi|_{\calO_\FF^\times}\cdot\eta^{-1}=1$, the image of the operator $\widetilde{A}_{s,t}^{\chi,\eta}:\pi_{\chi,s}|_{G'}\to\tau_{\eta,t}$ is given in Theorem \ref{thm::long_image}.
\end{enumerate}
\end{thmalph}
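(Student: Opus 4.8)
The plan is to prove (1) by reducing to a scalar with a $K'$-type argument and then evaluating that scalar as a $p$-adic integral, and to prove (2) by analysing the image as a $G'$-submodule of $\tau_{\eta,t}$. For (1), the line $\CC\varphi_{\chi,s}\subseteq\pi_{\chi,s}$ is stable under $K=\PGL(2,\calO_\EE)$, which acts on it by a character $\epsilon_\chi$ of $K$; since $\varphi_{\chi,s}$ lies in $\Ind_B^G(\chi_s)$, this character necessarily restricts to $\chi$ on $B\cap K$. As $\widetilde{A}_{s,t}^{\chi,\eta}$ is $G'$-equivariant and $K':=K\cap G'=\PGL(2,\calO_\FF)$, the vector $\widetilde{A}_{s,t}^{\chi,\eta}\varphi_{\chi,s}\in\tau_{\eta,t}$ transforms under $K'$ by $\epsilon_\chi|_{K'}$, whose restriction to $B'\cap K'$ is $\chi|_{\calO_\FF^\times}$. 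By Frobenius reciprocity for $\tau_{\eta,t}|_{K'}=\Ind_{B'\cap K'}^{K'}(\eta)$, the $\epsilon_\chi|_{K'}$-isotypic subspace of $\tau_{\eta,t}$ is nonzero only if $\chi|_{\calO_\FF^\times}=\eta$, and is then one-dimensional and spanned by $\psi_{\eta,t}$. This yields the dichotomy: $\widetilde{A}_{s,t}^{\chi,\eta}\varphi_{\chi,s}=0$ unless $\eta=\chi|_{\calO_\FF^\times}$, in which case $\widetilde{A}_{s,t}^{\chi,\eta}\varphi_{\chi,s}=c(s,t)\,\psi_{\eta,t}$.

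To compute $c(s,t)$, evaluate at $1$: using $\psi_{\eta,t}(1)=1$ and the integral description of $\widetilde{A}_{s,t}^{\chi,\eta}$,
$$ c(s,t)=\bigl(\widetilde{A}_{s,t}^{\chi,\eta}\varphi_{\chi,s}\bigr)(1)=\int_\EE\widetilde{K}_{s,t}^{\chi,\eta}(x)\,\varphi_{\chi,s}(\overline{n}_x)\,dx, $$
an absolutely convergent integral once $\Re s$ and $\Re(2s\pm t+\frac{1}{2})$ are large enough, by Lemma~\ref{lem:KernelL1loc} and the fact that $x\mapsto\varphi_{\chi,s}(\overline{n}_x)$ then decays. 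The Iwasawa decomposition of $\overline{n}_x$ gives $\varphi_{\chi,s}(\overline{n}_x)=1$ for $x\in\calO_\EE$ and $\varphi_{\chi,s}(\overline{n}_x)=q_\EE^{(2s+1)\nu_\EE(x)}$ for $x\notin\calO_\EE$ (the character $\chi$ drops out because $\chi^2=1$). I would then substitute the explicit formula for $K_{s,t}^{\chi,\eta}$ and its normalizing $L$-factors, split $\EE$ into the shells $\{\nu_\EE(x)=n\}$, express each shell integral in the coordinates $x=x_1+x_2\alpha$, and sum the resulting geometric series in $q_\EE^{-s}$ and $q_\FF^{-t}$, carrying out the unramified and ramified extensions $\EE/\FF$ separately (they differ in the relation between $\nu_\EE$, $q_\EE$ and $\nu_\FF$, $q_\FF$, and in the parity of $\nu_\EE$ on $\EE\setminus\FF$). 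This gives the two displayed closed forms; since $\widetilde{A}_{s,t}^{\chi,\eta}$ is holomorphic in $(s,t)$ by Theorem~\ref{thm:IntroA}, the identity extends from the region of convergence to all $(s,t)$. As a consistency check, the factor $1-q_\EE^{-(2s+1)}$ vanishes precisely for $s\in-\frac{1}{2}+\frac{\pi i}{\ln q_\EE}\ZZ$, i.e. precisely when $\CC\varphi_{\chi,s}$ is a $G$-subrepresentation, so $\widetilde{A}_{s,t}^{\chi,\eta}$ kills that finite-dimensional subrepresentation, as it must.

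For (2), the hypothesis $\chi|_{\calO_\FF^\times}\cdot\eta=\chi|_{\calO_\FF^\times}\cdot\eta^{-1}=1$ is equivalent to $\eta=\chi|_{\calO_\FF^\times}$ with $\eta^2=1$, which is exactly the stratum where both $L(\frac{1}{2},\chi_s|_{\FF^\times}\cdot\eta_t^{\pm1})$ are of unramified type and $\widetilde{A}_{s,t}^{\chi,\eta}$ can be degenerate — the same parameters where $\Hom_{G'}$ can be two-dimensional (Theorem~\ref{thm:IntroC}). The image of $\widetilde{A}_{s,t}^{\chi,\eta}\colon\pi_{\chi,s}|_{G'}\to\tau_{\eta,t}$ is a $G'$-submodule of $\tau_{\eta,t}$, so I would pin it down from: (i) whether $\widetilde{A}_{s,t}^{\chi,\eta}=0$, which by Theorem~\ref{thm:IntroB} happens exactly for $(\chi,s,\eta,t)\in L$; (ii) the composition series of $\tau_{\eta,t}$ and of $\pi_{\chi,s}$ from Theorem~\ref{thm:CompositionSeries}; and (iii) the position of $\psi_{\eta,t}$, which carries a $K'$-character absent from the Steinberg constituent $\tau_{\eta,t}^0$ and hence maps isomorphically onto the one-dimensional constituent. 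If $\tau_{\eta,t}$ is irreducible the image is $0$ (for $(\chi,s,\eta,t)\in L$) or all of $\tau_{\eta,t}$. If $\tau_{\eta,t}$ is reducible, part (1) determines whether $\widetilde{A}_{s,t}^{\chi,\eta}\varphi_{\chi,s}\ne0$, hence whether the image surjects onto the one-dimensional constituent or is contained in $\tau_{\eta,t}^0$; and restricting $\widetilde{A}_{s,t}^{\chi,\eta}$ to the subrepresentation $\pi_{\chi,s}^0$ when $\pi_{\chi,s}$ is reducible resolves the remaining cases. Assembling the finitely many sub-strata produces the list of Theorem~\ref{thm::long_image}.

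The main obstacle is the explicit $p$-adic computation in (1) — in particular the ramified case, where the odd-valuation shells of $\EE\setminus\FF$ together with the quadratic character $\chi|_{\calO_\FF^\times}$ produce the extra factor $1+q_\FF^{t-\frac{1}{2}}$ — and keeping the $L$-factor normalization compatible with Theorems~\ref{thm:IntroA}–\ref{thm:IntroC}; and, in (2), the sheer number of degeneracy strata (reducibility of $\pi_{\chi,s}$, reducibility of $\tau_{\eta,t}$, membership in $L$, poles and zeros of the $L$-factors), each of which requires its own identification of the image.
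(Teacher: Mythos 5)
Your part (1) is essentially the paper's own argument: the uniqueness of the one-dimensional $K'$-type in $\tau_{\eta,t}|_{K'}$ reduces everything to the scalar $\widetilde{A}_{s,t}^{\chi,\eta}\varphi_{\chi,s}(1)$, the formula $\varphi_{\chi,s}(\overline{n}_x)=\max(1,|x|_\EE)^{-(2s+1)}$ is exactly the paper's lemma, and the shell-by-shell evaluation of $\int_\EE|x|_\EE^{-t-\frac{1}{2}}|x_2|_\FF^{2s+t-\frac{1}{2}}\max(1,|x|_\EE)^{-(2s+1)}\,dx$ (the paper sums over $\varpi_\FF^m\calO_\FF^\times\times\varpi_\FF^n\calO_\FF^\times$ rather than over $\nu_\EE$-shells, an immaterial difference) plus holomorphic continuation is what the paper does in Corollary \ref{cor::sph_vectors}.

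Part (2), however, has a genuine gap. First, the hypothesis $\chi|_{\calO_\FF^\times}\cdot\eta=\chi|_{\calO_\FF^\times}\cdot\eta^{-1}=1$ only forces $\chi^2|_{\calO_\FF^\times}=1$, not $\chi^2=1$; in the strata $\chi^2\neq1$ (cases (1)--(2) of Theorem \ref{thm::long_image}) the vector $\varphi_{\chi,s}$ does not exist, so your mechanism ``part (1) decides whether the image surjects onto the one-dimensional constituent'' is unavailable there. Second, even when $\chi^2=1$ the inference ``$\widetilde{A}_{s,t}^{\chi,\eta}\varphi_{\chi,s}=0\Rightarrow\Im(\widetilde{A}_{s,t}^{\chi,\eta})\subseteq\tau_{\eta,t}^0$'' is false: the image is a $G'$-submodule, but the vanishing of the operator on one particular $K'$-type vector does not control the composition with the quotient map $\tau_{\eta,t}\to\tau_{\eta,t}/\tau_{\eta,t}^0$. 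Concretely, for $\EE/\FF$ unramified, $\chi^2=1$, $(s,t)=(-\tfrac{1}{2},\tfrac{1}{2}+\tfrac{\pi i}{\ln q_\FF})$ one has $1-q_\EE^{-(2s+1)}=0$, so $\widetilde{A}_{s,t}^{\chi,\eta}\varphi_{\chi,s}=0$, yet Theorem \ref{thm::long_image}(3) gives $\Im(\widetilde{A}_{s,t}^{\chi,\eta})=\tau_{\eta,t}$, not $\tau_{\eta,t}^0$. The criterion that actually decides containment in $\tau_{\eta,t}^0$ is $T_{\eta,t}\circ\widetilde{A}_{s,t}^{\chi,\eta}=0$, which the paper settles by the functional equation $T_{\eta,t}\circ\widetilde{A}_{s,t}^{\chi,\eta}=c_\FF\,\Gamma(|\cdot|_\FF^{2t})\Gamma((\chi^2)_{-t+\frac{1}{2}})\,\widetilde{A}_{s,-t}^{\chi,\eta}$ (Lemma \ref{lem:CompositionWithStdIntertwiner}, proved by evaluating kernels and Proposition \ref{prop:GammaIntegral}), and the analogous criterion for $t\in-\tfrac{1}{2}+\tfrac{\pi i}{\ln q_\FF}\ZZ$ is that the kernel $\widetilde{K}_{s,t}^{\chi,\eta}(x_1+x_2\alpha)$ be independent of $x_1$ (so that the image lies in $\CC\psi_{\eta,t}$); neither follows from tracking $\varphi_{\chi,s}$, and your appeal to restricting to $\pi_{\chi,s}^0$ does not supply a substitute. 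So the skeleton of (2) (image is a submodule, list the strata) is right, but the two kernel-level criteria that do the actual work are missing.
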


Similar statements for the symmetry breaking operators with distribution kernel $\delta$ and $\doublewidetilde{K}_{s,t}^{\chi,\eta}$ in the case $(\chi,s,\eta,t)\in L$ can be found in Remark~\ref{rem:ActionOnSphVectorForCandDoubleTildeA} as well as Propositions~\ref{prop::im_C} and \ref{prop::im_A}.

\subsection*{Application: Discrete components in the restriction of unitary representations}

Finally, in Section \ref{sec::app} we study the restriction of some unitary representations of $G$ to $G'$. In particular, we are interested in unitary representations of $G'$ that occur as direct summands in the restriction.

For $\chi^2=1$ and $s\in(0,\frac{1}{2})$, the representations $\pi_{\chi,s}$ are irreducible and unitarizable, and their completions (also denoted by $\pi_{\chi,s}$) make up the \emph{complementary series}. At the endpoint $s=\frac{1}{2}$, the Steinberg representation $\pi_{\chi,\frac{1}{2}}^0$, which is a proper subrepresentation of $\pi_{\chi,\frac{1}{2}}$, is also irreducible and unitarizable. $\pi_{\chi,\frac{1}{2}}^0$ is in fact a discrete series representation for $G$, i.e. it occurs discretely in the Plancherel decomposition of $L^2(G)$.

\begin{thmalph}[see Theorem \ref{thm:DiscreteComponents}]\label{thm:IntroE}
	Assume that $\chi^2=1$ and $\eta=\chi_{\calO_\FF^\times}$.
	\begin{enumerate}
		\item For $s\in(\frac{1}{4},\frac{1}{2})$ and $t=2s-1$, the restriction of the irreducible unitary representation $\pi_{\chi,s}$ of $G$ to $G'$ contains $\tau_{\eta,t}$ as a direct summand.
		\item For $s=t=\frac{1}{2}$, the restriction of the irreducible unitary representation $\pi_{\chi,\frac{1}{2}}^0$ of $G$ to $G'$ contains $\tau_{\eta,\frac{1}{2}}^0$ as a direct summand.
	\end{enumerate}
\end{thmalph}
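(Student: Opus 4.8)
\textbf{Proof strategy for Theorem~\ref{thm:DiscreteComponents}.}
The plan is to exhibit in each case an explicit nonzero $G'$-intertwining operator between the two unitary representations, to prove that it extends to a bounded operator between the Hilbert space completions, and then to conclude by Schur's lemma together with an orthogonal-complement argument. Throughout, $\chi^2=1$ and $\eta=\chi|_{\calO_\FF^\times}$, so that also $\eta^2=1$, and I write $\calH_{\pi}$, $\calH_{\tau}$ for the Hilbert completions of unitarizable smooth representations $\pi$, $\tau$.

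\textbf{Part (1).} Put $t=2s-1\in(-\tfrac12,0)$ and $A:=\widetilde{A}_{s,t}^{\chi,\eta}\colon\pi_{\chi,s}\to\tau_{\eta,t}$. Since $2s\pm t+\tfrac12\in\{4s-\tfrac12,\tfrac32\}$ is real and positive for $s\in(\tfrac14,\tfrac12)$, hence not in $\tfrac{2\pi i}{\ln q_\FF}\ZZ$, the parameter is not in $\SplusT\cup\SminusT$; by Theorems~\ref{thm:IntroA} and~\ref{thm:IntroB} the kernel $\widetilde{K}_{s,t}^{\chi,\eta}$ is then a nonzero locally integrable function on $\EE$ with full support, and by Theorem~\ref{thm:IntroD}(1) the operator $A$ is nonzero because $A\varphi_{\chi,s}$ is a nonzero multiple of $\psi_{\eta,t}$ (the scalars $1-q_\EE^{-(2s+1)}$, and $1+q_\FF^{t-1/2}$ in the ramified case, do not vanish for $s\in(\tfrac14,\tfrac12)$). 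Both $\pi_{\chi,s}$ (with $s\in(0,\tfrac12)$) and $\tau_{\eta,t}\cong\tau_{\eta,-t}$ (with $-t\in(0,\tfrac12)$) lie in the complementary series, hence are irreducible and unitarizable; the smooth vectors, realized in the non-compact models, are dense in $\calH_{\pi},\calH_{\tau}$ and $A$ maps them into $\tau_{\eta,t}^\infty$. The crux is to show that $A$ extends continuously to $\overline{A}\colon\calH_{\pi}\to\calH_{\tau}$. I would do this by a direct estimate in the non-compact model: the invariant Hermitian norms are $\|f\|_{\calH_{\pi}}^2=\iint_{\EE\times\EE}f(\overline{n}_x)\overline{f(\overline{n}_y)}\,\Theta_s(x-y)\,dx\,dy$ and $\|g\|_{\calH_{\tau}}^2=\iint_{\FF\times\FF}g(\overline{n}_u)\overline{g(\overline{n}_v)}\,\Theta'_t(u-v)\,du\,dv$, where $\Theta_s$, $\Theta'_t$ are the normalized kernels of the Knapp--Stein intertwining operators of $G$ and $G'$ (locally integrable, up to unitary character twists just power functions of $|\blank|_\EE$ and $|\blank|_\FF$), and $Af(\overline{n}_u)=\int_\EE\widetilde{K}_{s,t}^{\chi,\eta}(x)f(\overline{n}_{x+u})\,dx$ for $u\in\FF$. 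Substituting this into $\|Af\|_{\calH_{\tau}}^2$ and changing variables writes it as the pairing of $f\otimes\overline{f}$ with a kernel assembled from $\widetilde{K}_{s,t}^{\chi,\eta}$ and $\Theta'_t$; carrying out the inner $p$-adic integrals (rewriting everything in $\FF$-coordinates via the norm map $N_{\EE/\FF}$) one checks that this kernel is dominated, pointwise up to a constant, by $\Theta_s$, so that $\|Af\|_{\calH_{\tau}}\le C\|f\|_{\calH_{\pi}}$. The affine relation $t=2s-1$ is exactly what forces the relevant $p$-adic integrals to converge, and this estimate --- the non-archimedean analogue of the ones used in the archimedean setting, cf.\ \cite{KS15,Cle17} --- is the main obstacle.

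Granting boundedness, $\overline{A}$ is a nonzero $G'$-intertwiner between unitary representations, so $\overline{A}\,\overline{A}^{*}\in\End_{G'}(\calH_{\tau})=\CC\cdot\id$ by Schur's lemma for the irreducible unitary representation $\tau_{\eta,t}$; writing $\overline{A}\,\overline{A}^{*}=c\cdot\id$, the identity $c\|w\|^2=\|\overline{A}^{*}w\|^2$ and $\overline{A}^{*}\ne0$ (because $\overline{A}\ne0$) give $c>0$, so $c^{-1/2}\overline{A}^{*}\colon\calH_{\tau}\to\calH_{\pi}$ is an isometric $G'$-embedding. Its image is a closed $G'$-invariant subspace isomorphic to $\tau_{\eta,t}$, with $G'$-invariant orthogonal complement, which proves (1).

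\textbf{Part (2).} Here $\pi_{\chi,1/2}$ and $\tau_{\eta,1/2}$ are reducible: by Theorem~\ref{thm:CompositionSeries} each is a nonsplit extension of a one-dimensional representation by its Steinberg subrepresentation $\pi_{\chi,1/2}^0$, resp.\ $\tau_{\eta,1/2}^0$, which is irreducible, square-integrable and unitarizable. The operator $A:=\widetilde{A}_{1/2,1/2}^{\chi,\eta}$ is defined ($2s\pm t+\tfrac12\in\{1,2\}\not\subseteq\tfrac{2\pi i}{\ln q_\FF}\ZZ$, so the parameter is outside $\SplusT\cup\SminusT$) and nonzero by Theorem~\ref{thm:IntroD}(1). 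Composing $A$ with the projection $\tau_{\eta,1/2}\twoheadrightarrow\tau_{\eta,1/2}/\tau_{\eta,1/2}^0$ onto the one-dimensional quotient kills $\pi_{\chi,1/2}^0$ (an infinite-dimensional irreducible representation admits no nonzero map to a one-dimensional one), so $A$ restricts to $A_0\colon\pi_{\chi,1/2}^0\to\tau_{\eta,1/2}^0$; and $A_0\ne0$, since otherwise $A$ would factor through the one-dimensional quotient of $\pi_{\chi,1/2}$, forcing $\operatorname{im}A=\CC\psi_{\eta,1/2}$ (as $A\varphi_{\chi,1/2}\ne0$), which is impossible because $\tau_{\eta,1/2}$ has no one-dimensional $G'$-subrepresentation. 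It then remains to show that $A_0$ extends to a bounded operator between the Hilbert completions of $\pi_{\chi,1/2}^0$ and $\tau_{\eta,1/2}^0$ for their discrete-series unitary structures; these structures are given by the residues at $s=\tfrac12$, resp.\ $t=\tfrac12$, of the Knapp--Stein forms $\Theta_s$, $\Theta'_t$ above, and the estimate is carried out exactly as in Part (1) with $s=t=\tfrac12$ --- again the main point. Once $A_0$ is bounded, the same Schur and orthogonal-complement argument, now applied to the irreducible unitary representation $\tau_{\eta,1/2}^0$, shows that $\tau_{\eta,1/2}^0$ is a direct summand of $\pi_{\chi,1/2}^0|_{G'}$, completing the proof.
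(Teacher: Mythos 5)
Your outline (produce an explicit nonzero $G'$-intertwiner, extend it boundedly to the Hilbert completions, then use Schur's lemma and the orthogonal complement) is the right general strategy, and the Schur/adjoint step is fine \emph{once boundedness is known}. But the proof has a genuine gap precisely at its analytic core: the boundedness of $\widetilde{A}_{s,t}^{\chi,\eta}$ with respect to the complementary-series norms is only asserted, and the route you sketch does not work as stated. The invariant form $\iint f(\overline{n}_x)\overline{f(\overline{n}_y)}\,\Theta_s(x-y)\,dx\,dy$ is not monotone under pointwise domination of kernels, because $f(x)\overline{f(y)}$ has no fixed sign; to conclude $\|Af\|_{\calH_\tau}\leq C\|f\|_{\calH_\pi}$ you would need positive definiteness of a difference of Hermitian forms, which is exactly the statement to be proved. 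The paper avoids this issue entirely by working in the opposite direction with a much simpler operator: not the adjoint of $\widetilde{A}_{s,t}^{\chi,\eta}$, but the transpose $(C_{-s,-t}^{\chi,\eta})^\vee$ of the restriction operator, given by $(C_{-s,-t}^{\chi,\eta})^\vee f(x_1+x_2\alpha)=f(x_1)\delta(x_2)$. Since $\widehat{(C_{-s,-t}^{\chi,\eta})^\vee f}(x_1+x_2\alpha)=\widehat{f}(x_1)$ and both unitary norms diagonalize under the Fourier transform as weighted $L^2$-norms, the isometry (up to a scalar) reduces to the elementary integral $\int_\FF|x_1+x_2\alpha|_\EE^{-2s}\,dx_2=\mathrm{const}\cdot|x_1|_\FF^{1-4s}$ (Proposition~\ref{prop:GammaIntegral}), which converges exactly for $s>\frac{1}{4}$ and forces $-2t=1-4s$. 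In particular the computation, not the statement, determines the admissible relation between $s$ and $t$ (it comes out as $t=2s-\frac{1}{2}$ in Theorem~\ref{thm:DiscreteComponents}); your argument takes the relation from the statement on faith, and the place where it would have to be verified is exactly the estimate you defer.

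In Part (2) there is a second gap: you claim the composition $\pi_{\chi,1/2}^0\to\tau_{\eta,1/2}\to\tau_{\eta,1/2}/\tau_{\eta,1/2}^0$ vanishes ``because an infinite-dimensional irreducible representation admits no nonzero map to a one-dimensional one.'' But $\pi_{\chi,1/2}^0$ is irreducible only as a $G$-representation, while the map is merely $G'$-equivariant, and $\pi_{\chi,1/2}^0|_{G'}$ may very well admit characters of $G'$ as quotients (this is a distinction-type question, not settled by $G$-irreducibility). Indeed, by Lemma~\ref{lem:CompositionWithStdIntertwiner} the composition $T_{\eta,\frac{1}{2}}\circ\widetilde{A}_{\frac{1}{2},\frac{1}{2}}^{\chi,\eta}$ is a nonzero multiple of $\widetilde{A}_{\frac{1}{2},-\frac{1}{2}}^{\chi,\eta}$, and the image of $\widetilde{A}_{\frac{1}{2},\frac{1}{2}}^{\chi,\eta}$ is all of $\tau_{\eta,\frac{1}{2}}$ (Theorem~\ref{thm::long_image}), so your intended $A_0$ exists only if one proves separately that the Steinberg subrepresentation lies in the kernel of this composition; that requires an argument you do not give. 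The paper sidesteps both difficulties at $s=t=\frac{1}{2}$ by again using the delta-kernel operator and renormalizing the invariant forms, where the zero of $\Gamma(|\cdot|_\FF^{4s-1})$ cancels the pole of $\Gamma(|\cdot|_\EE^{1-2s})$, yielding the isometric embedding $J(\eta,\frac{1}{2})_0\hookrightarrow I(\chi,\frac{1}{2})_0$ directly.
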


For the archimedean case $\EE/\FF=\CC/\RR$, the corresponding statement is due to Speh--Venkataramana~\cite{SV11}. Our proof follows essentially the same idea.

\subsection*{Relation to previous work and outlook}

Most of this paper is inspired by the monograph \cite{KS15} by Kobayashi--Speh which treats in detail the same problem for spherical principal series representations of the pair of real reductive groups $(G,G')=(\upO(1,n+1),\upO(1,n))$. For instance, our Theorems~\ref{thm:IntroA} and \ref{thm:IntroB} are the analog of \cite[Theorem 1.5]{KS15}, Theorem~\ref{thm:IntroC} corresponds to \cite[Theorem 1.9]{KS15}, and Theorem~\ref{thm:IntroD} is the counterpart of \cite[Theorems 1.10 and 1.11]{KS15}. Note that the case $n=2$ essentially corresponds to $(\PGL(2,\CC),\PGL(2,\RR))$.

In their subsequent book \cite{KS18} they also study the case of unramified principal series representations, but it seems that there is no complete classification of symmetry breaking operators between not necessarily unramified principal series of $(G,G')=(\PGL(2,\CC),\PGL(2,\RR))$, which would be the archimedean counterpart of our results. It is reasonable that such a classification could be achieved using the same methods.

We also remark that there are more sophisticated methods to study multiplicities between irreducible representations of $p$-adic groups. However, the point of this paper is to present an elementary and rather explicit approach to the problem of constructing and classifying symmetry breaking operators between principal series representations, using similar methods as those that have been applied in the archimedean situation in the literature (see e.g. \cite{Cle16, Cle17,FW20,KS15,KS18}).

A possible application of the results of this paper is the analytic continuation of branching laws for unitary representations of the pair $(G,G')=(\PGL(2,\EE),\PGL(2,\FF))$. In the archimedean case, this was done e.g. in \cite{Wei21}. We hope to return to this in a future work.

\subsection*{Acknowledgements}

We thank Paul Nelson for his help with the evaluation of the integral in Appendix~\ref{app:Integral}. The first named author was partially supported by The Mathematisches Forschungsinstitut Oberwolfach (MFO, Oberwolfach Research Institute for Mathematics). She would like to thank MFO for the perfect working conditions they provide. As well, she was supported by the European Union’s Horizon 2020 research and innovation program under the Marie Sklodowska-Curie grant agreement No 754513, The Aarhus University Research Foundation, and a research grant from the Villum Foundation (Grant No. VIL53023). The second named author was partially supported by a research grant from the Villum Foundation (Grant No. 00025373).

\subsection*{Notation}

For a complex vector space $V$ we denote by $V^\vee=\Hom_\CC(V,\CC)$ its dual space. If $\pi$ is a representation of a group $G$ on $V$, then $\pi^\vee$ denotes the contragredient representation of $G$ on $V^\vee$. For two sets $X$ and $Y$ we denote by $X\setminus Y$ their set-theoretic difference.

\section{Symmetry breaking operators and distribution kernels}\label{sec:SBOs}

We generalize the characterization in \cite[Chapter 3]{KS15} of symmetry breaking operators between induced representations in terms of their distribution kernels from the case of real reductive groups to the case of reductive groups over arbitrary local fields.

\subsection{Distributional sections}

For a locally compact group $G$ let $d^\ell_Gg$ resp. $d^r_Gg$ denote a left resp. right Haar measure on $G$. In the case where $G$ is unimodular, we usually drop the superscript and write $d_Gg$ or simply $dg$.

Now let $G$ be an algebraic group defined over a local field $\FF$ and let $H$ be an algebraic subgroup. For a finite-dimensional continuous complex representation $(\rho_V, V)$ of $H$ let 
$$\calV:=G\times_H V:= (G\times V)/_{(g,v) \sim (gh,\rho_V(h^{-1})v)}$$
denote the corresponding complex homogeneous vector bundle over $G/H$. Note that a section of $\calV$ can be identified with a function $f:G\to V$ such that $f(gh)=\rho_V(h)^{-1}f(g)$ for all $g\in G$, $h\in H$. We will write $C(G/H,\calV)$ for the space of continuous sections of $\calV$ and $C_c(G/H,\calV)$ for the subspace of $C(G/H,\calV)$ of sections with compact support.

Moreover, we write $C^\infty(G/H,\calV)$ for the space of smooth sections in the case where $\FF$ is archimedean and locally constant sections in the case where $\FF$ is non-archimedean. Finally, put $C_c^\infty(G/H,\calV):=C^\infty(G/H,\calV)\cap C_c(G/H,\calV)$. If $\FF$ is archimedean, we endow $C^\infty(G/H,\calV)$ with the usual Fr\'{e}chet topology of compact convergence of all derivatives, and $C_c^\infty(G/H,\calV)$ with the LF topology as an inductive limit of the subspaces of sections with support contained in a fixed compact set. 

The group $G$ acts on both $C(G/H,\calV)$, $C_c(G/H,\calV)$, $C^\infty(G/H,\calV)$ and $C_c^\infty(G/H,\calV)$ by left translation in the following way: for each $g \in G$ and any $f \in C(G/H,\calV)$ we denote $(g\cdot f)(xH):= f(g^{-1}xH)$, for any $xH \in G/H$.

We want to extend the above $G$-action on smooth sections to distributional sections. To do that, let
$$ \Delta_{G/H}:=\frac{(\Delta_G)|_H}{\Delta_H}: H \to \RR_{+}, $$
where $\Delta_G$ and $\Delta_H$ are the modular functions of $G$ and $H$. Consider the line bundle 
$$\Omega_{G/H}:=G\times_H\CC_{\Delta_{G/H}}:=(G\times \CC)/_{(g,z) \sim (gh,\Delta_{G/H}(h^{-1})z)}\text{ over } G/H.$$
The following theorem provides a $G$-invariant integral on $C_c(G/H,\Omega_{G/H})$.



\begin{theorem}
\label{thm::invaraint integral}
Let $G$ be a locally compact group and $H$ a closed subgroup. Then the mapping
$$ C_c(G) \to C_c(G/H,\Omega_{G/H}), \quad f\mapsto \tilde{f}, \quad \tilde{f}(x) = \int_{H}f(xh)\Delta_{G/H}(h)d^{\ell}_Hh \quad (x\in G),$$
is surjective. Moreover, the linear map
$$ C_c(G/H,\Omega_{G/H}) \to \CC, \quad \tilde{f} \mapsto \int_{G/H}\tilde{f}(x)\,d_{G/H}(xH) := \int_G f(g) \,d_G^{\ell}g $$
is well-defined, giving a unique (up to scalar multiples) $G$-invariant integral on $C_c(G/H,\Omega_{G/H})$. In particular, $d_{G/H}$ satisfies the iterated integral formula
$$ \int_G f(g)\,d^{\ell}_Gg = \int_{G/H} \left( \int_H f(xh)\Delta_{G/H}(h)\,d^{\ell}_H h \right)d_{G/H}(xH) \qquad (f\in C_c(G)). $$
\end{theorem}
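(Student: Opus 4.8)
The plan is to verify four things in sequence: that $f\mapsto\tilde{f}$ really lands in $C_c(G/H,\Omega_{G/H})$; that this map is surjective; that the value $\int_G f\,d^{\ell}_Gg$ depends only on $\tilde{f}$ (so that $d_{G/H}$ is well defined); and finally that the resulting functional is $G$-invariant and unique up to scalars. For the first point, if $\supp f\subseteq K$ for some compact $K\subseteq G$, then for $x$ in a fixed compact set the integral defining $\tilde{f}(x)$ runs over the fixed compact subset $\{h\in H:xh\in K\}$ of $H$, so uniform continuity of $f$ yields continuity of $\tilde{f}$, and clearly $\supp\tilde{f}\subseteq p(K)$, where $p\colon G\to G/H$ denotes the projection. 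The substitution $h\mapsto h_0^{-1}h$ in the defining integral, together with the fact that $\Delta_{G/H}\colon H\to\RR_+$ is a homomorphism, gives $\tilde{f}(xh_0)=\Delta_{G/H}(h_0)^{-1}\tilde{f}(x)$, so $\tilde{f}$ is indeed a continuous compactly supported section of $\Omega_{G/H}$.

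For surjectivity, fix $\varphi\in C_c(G/H,\Omega_{G/H})$ and put $C:=\supp\varphi$. Since $p$ is open, cover the compact set $C$ by finitely many $p(V_i)$ with $V_i\subseteq G$ relatively compact open, choose $u_i\in C_c(G)$ with $u_i\ge 0$ and $u_i>0$ on $V_i$, and set $u:=\sum_i u_i\in C_c(G)$; then $\tilde{u}(x)>0$ whenever $xH\in C$, because the integrand $u(xh)\Delta_{G/H}(h)$ is nonnegative and positive for at least one $h$. On the open, right-$H$-invariant set $\{\tilde{u}>0\}$ the quotient $\varphi/\tilde{u}$ is $H$-invariant (numerator and denominator transform by the same factor $\Delta_{G/H}(h)^{-1}$), hence descends to a function which, having support inside $C\subseteq\{\tilde{u}>0\}/H$, extends by $0$ to some $\beta\in C_c(G/H)$. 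Then $f:=u\cdot(\beta\circ p)\in C_c(G)$ satisfies $\tilde{f}(x)=\beta(xH)\,\tilde{u}(x)$, which equals $\varphi(x)$ on $C$ and vanishes off $C$; thus $\tilde{f}=\varphi$. The same argument with $\Delta_{G/H}$ replaced by the trivial character shows that $P\colon C_c(G)\to C_c(G/H)$, $Pg(xH):=\int_H g(xh)\,d^{\ell}_Hh$, is well defined and surjective.

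The key point is that $\tilde{f}=0$ forces $\int_G f\,d^{\ell}_Gg=0$. Here is the computation I would carry out: for arbitrary $g\in C_c(G)$, Fubini (all integrands are continuous with compact support) gives
\[
\int_G g(x)\tilde{f}(x)\,d^{\ell}_Gx=\int_H\Delta_{G/H}(h)\Bigl(\int_G g(x)f(xh)\,d^{\ell}_Gx\Bigr)d^{\ell}_Hh .
\]
Substituting $x\mapsto xh^{-1}$ in the inner integral, and using the relation $\Delta_{G/H}=(\Delta_G)|_H/\Delta_H$, the modular factors combine to $\Delta_H(h)^{-1}$, so another application of Fubini turns the right-hand side into
\[
\int_G f(x)\Bigl(\int_H g(xh^{-1})\Delta_H(h)^{-1}\,d^{\ell}_Hh\Bigr)d^{\ell}_Gx=\int_G f(x)\,Pg(xH)\,d^{\ell}_Gx ,
\]
where the last equality uses the standard identity $\int_H\psi(h^{-1})\Delta_H(h)^{-1}\,d^{\ell}_Hh=\int_H\psi(h)\,d^{\ell}_Hh$. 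Hence, if $\tilde{f}=0$, then $\int_G f(x)\,Pg(xH)\,d^{\ell}_Gx=0$ for every $g\in C_c(G)$, and by surjectivity of $P$ this holds with $Pg$ replaced by an arbitrary $\theta\in C_c(G/H)$; choosing $\theta\equiv 1$ on the compact set $p(\supp f)$ gives $\int_G f\,d^{\ell}_Gg=0$. This makes $\tilde{f}\mapsto\int_G f\,d^{\ell}_Gg$ a well-defined linear functional, denoted $\int_{G/H}(\,\cdot\,)\,d_{G/H}$ and nonzero (take $f\ge 0$, $f\neq 0$), and the asserted iterated integral formula is simply this definition written out.

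Finally, $G$-invariance follows from $\widetilde{g\cdot f}=g\cdot\tilde{f}$, which is immediate from the definition: $\int_{G/H}(g\cdot\tilde{f})\,d_{G/H}=\int_G(g\cdot f)\,d^{\ell}_Gg=\int_G f\,d^{\ell}_Gg=\int_{G/H}\tilde{f}\,d_{G/H}$ by left-invariance of $d^{\ell}_Gg$. For uniqueness, if $J$ is any $G$-invariant functional on $C_c(G/H,\Omega_{G/H})$, then $f\mapsto J(\tilde{f})$ is a left-invariant functional on $C_c(G)$, hence equal to $\lambda\int_G(\,\cdot\,)\,d^{\ell}_Gg$ for some scalar $\lambda$ by uniqueness of Haar measure; since $f\mapsto\tilde{f}$ is surjective, $J=\lambda\int_{G/H}(\,\cdot\,)\,d_{G/H}$. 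I expect the main obstacle to be the bookkeeping in the key computation — arranging the modular-function factors picked up by the substitutions so that the particular twist $\Delta_{G/H}$ built into the definition of $\tilde{f}$ is exactly what makes everything cancel; the surjectivity and uniqueness steps are then routine.
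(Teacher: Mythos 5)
Your proof is correct and is essentially the argument the paper itself invokes: the paper does not prove Theorem \ref{thm::invaraint integral} but refers to the untwisted case in Folland, and your write-up is exactly that standard quotient-integral argument adapted to the twist by $\Delta_{G/H}$ (surjectivity via a positive $u$ with $\tilde{u}>0$ on the support, well-definedness via the Fubini/modular-factor computation against $Pg$, then invariance and Haar uniqueness). The only point worth flagging is that the uniqueness step tacitly uses positivity — uniqueness of Haar measure concerns positive invariant functionals — which is harmless here since an \emph{integral} means a positive functional and $f\geq 0$ implies $\tilde{f}\geq 0$, so $J(\tilde{f})$ is again a positive left-invariant functional on $C_c(G)$.
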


The proof of Theorem \ref{thm::invaraint integral} is essentially identical to the proof for the case $\Delta_{G/H}\equiv1$  which can for instance be found in \cite[Theorem 2.49]{Fol16}.

In view of the integral $d_{G/H}$, we define the \emph{dualizing bundle of $\calV$} to be the homogeneous vector bundle $\calV^*:=G\times_HV^*$ associated with the representation $(\rho_{V^*},V^*):=(\rho_V^{\vee}\otimes \Delta_{G/H},V^\vee\otimes\CC)$, where $(\rho_V^\vee,V^\vee)$ denotes the contragredient representation of $(\rho_V,V)$ on $V^\vee:=\Hom(V,\CC)$. In this way, we obtain a canonical non-degenerate $G$-invariant pairing
$$ C^\infty(G/H,\calV)\otimes C_c^\infty(G/H,\calV^{*}) \to \CC, \quad   f_1\otimes f_2\mapsto \langle f_1,f_2\rangle:= \int_{G/H}\langle f_1(g),f_2(g)\rangle\,d_{G/H}(gH), $$
so we can view $C^\infty(G/H,\calV)$ as a subspace of the dual space $C_c^\infty(G/H,\calV^*)^\vee$.

This motivates to define the distributional sections of $\calV$ as the dual object to compactly supported smooth sections of $\calV^*$
$$ \calD'(G/H,\calV) := C_c^\infty(G/H,\calV^*)^\vee. $$
The $G$-action on $\calD'(G/H,\calV)$ given by
$$ \langle g\cdot u,\varphi\rangle = \langle u,g^{-1}\cdot\varphi\rangle \qquad (g\in G,u\in\calD'(G/H,\calV),\varphi\in C_c^\infty(G/H,\calV^*)) $$
extends the $G$-action on $C^\infty(G/H,\calV)$. Note that $(\calV^*)^*=\calV$, so $\calD'(G/H,\calV^*)=C_c^\infty(G/H,\calV)^\vee$.

\subsection{Distribution kernels of symmetry breaking operators}

Now let $G'$ be another algebraic subgroup of $G$ and $H'$ an algebraic subgroup of $G'$. For a finite-dimensional continuous complex representation $(\rho_W, W)$ of $H'$ we write $\calW=G'\times_{H'}W$ for the corresponding vector bundle over $G'/H'$. Denote by $\Hom_{G'}(C_c^\infty(G/H,\calV),C^\infty(G'/H',\calW))$ the space of $G'$-equivariant linear maps $C_c^\infty(G/H,\calV)\to C^\infty(G'/H',\calW)$ which are additionally continuous in the case where $\FF$ is archimedean.

\begin{definition}
	A element $T\in\Hom_{G'}(C_c^\infty(G/H,\calV),C^\infty(G'/H',\calW))$ is called a \emph{symmetry breaking operator}.
\end{definition}

In order to construct and classify symmetry breaking operators, we relate them to their distribution kernels. These turn out to be elements of the space $\calD'(G/H,\calV^*)\otimes W$ which are invariant under the action of $H'$. Here, $H'$ acts on $\calD'(G/H,\calV^*)$ by restricting the action of $G$. We identify $K\in\calD'(G/H,\calV^*)\otimes W$ with a linear map $C_c^\infty(G/H,\calV)\to W$, so that for $f\in C_c^\infty(G/H,\calV)$ we have $\langle K,f\rangle\in W$. Then the diagonal action of $h'\in H'$ on $K\in\calD'(G/H,\calV^*)\otimes W$ can be written as
\begin{equation}
	\langle(h',h')\cdot K,f\rangle = \rho_W(h')\langle K,(h')^{-1} \cdot f\rangle \qquad (f\in C_c^\infty(G/H,\calV)).\label{eq:H'ActionOnDistributionKernels}
\end{equation}
Denote by $(\calD'(G/H,\calV^*)\otimes W)^{\Delta(H')}$ the subspace of all $H'$-invariant elements.

The following statement readily generalizes \cite[Proposition 3.2]{KS15}:

\begin{proposition}\label{prop:SchwartzKernel}
	The following map is a linear isomorphism:
	\begin{equation*}
		\Hom_{G'}(C_c^\infty(G/H,\calV),C^\infty(G'/H',\calW)) \to (\calD'(G/H,\calV^*)\otimes W)^{\Delta(H')}, \quad T \mapsto K_T := \ev_{eH'}\circ T,
	\end{equation*}
	where $\ev_{eH'}:C^\infty(G'/H',\calW)\to W,\,f\mapsto f(eH')$ is evaluation at the base point $eH'\in G'/H'$.
\end{proposition}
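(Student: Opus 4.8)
The plan is to construct an explicit two-sided inverse to the map $T\mapsto K_T=\ev_{eH'}\circ T$. First I would check that $K_T$ indeed lands in $(\calD'(G/H,\calV^*)\otimes W)^{\Delta(H')}$. For $T\in\Hom_{G'}(C_c^\infty(G/H,\calV),C^\infty(G'/H',\calW))$ and $f\in C_c^\infty(G/H,\calV)$ one has $\langle K_T,f\rangle=(Tf)(eH')\in W$, so $K_T$ is a $W$-valued functional on $C_c^\infty(G/H,\calV)$, i.e. an element of $\calD'(G/H,\calV^*)\otimes W$ (continuity in the archimedean case is inherited from continuity of $T$ and of $\ev_{eH'}$). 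The $H'$-invariance \eqref{eq:H'ActionOnDistributionKernels} is a direct computation: using the $G'$-equivariance of $T$ and the definition of the $G'$-action on sections, for $h'\in H'$,
$$\langle K_T,(h')^{-1}\cdot f\rangle=(T((h')^{-1}\cdot f))(eH')=((h')^{-1}\cdot Tf)(eH')=(Tf)(h'H')=\rho_W(h')^{-1}(Tf)(eH'),$$
where the last step uses that $Tf$ is a section of $\calW$; rearranging gives exactly \eqref{eq:H'ActionOnDistributionKernels}.

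Next I would define the inverse map $K\mapsto T_K$. Given $K\in(\calD'(G/H,\calV^*)\otimes W)^{\Delta(H')}$, set
$$(T_Kf)(g') := \rho_W(g')\,\langle K, (g')^{-1}\cdot f\rangle \qquad (g'\in G',\ f\in C_c^\infty(G/H,\calV)).$$
I must verify three things: (a) $(T_Kf)$ is a well-defined section of $\calW$, i.e. $(T_Kf)(g'h')=\rho_W(h')^{-1}(T_Kf)(g')$ for $h'\in H'$ — this is precisely where the $\Delta(H')$-invariance of $K$ is used, via \eqref{eq:H'ActionOnDistributionKernels}; (b) $T_Kf$ is smooth (locally constant in the non-archimedean case), which follows from the smoothness of the $G$-action on $C_c^\infty(G/H,\calV)$ together with the fact that a distribution paired against a smooth family of test functions depends smoothly on the parameter, and in the archimedean case from continuity/boundedness of $K$; and (c) $T_K$ is $G'$-equivariant, which is immediate from the cocycle identity $\rho_W(g'_1g'_2)=\rho_W(g'_1)\rho_W(g'_2)$ and associativity of the $G$-action. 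Continuity of $T_K$ in the archimedean case follows from the continuity of $K$ and standard estimates on the Fréchet seminorms of $g'\mapsto (g')^{-1}\cdot f$.

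Finally I would check that the two constructions are mutually inverse. Evaluating $K_{T_K}$ on $f$ gives $(T_Kf)(eH')=\rho_W(e)\langle K,f\rangle=\langle K,f\rangle$, so $K_{T_K}=K$. Conversely, for $T$ a symmetry breaking operator, $(T_{K_T}f)(g')=\rho_W(g')\langle K_T,(g')^{-1}\cdot f\rangle=\rho_W(g')\big(T((g')^{-1}\cdot f)\big)(eH')=\rho_W(g')\big((g')^{-1}\cdot Tf\big)(eH')=\rho_W(g')\rho_W(g')^{-1}(Tf)(g')=(Tf)(g')$, using $G'$-equivariance of $T$ once more; hence $T_{K_T}=T$. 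Linearity of both maps is clear. The main obstacle — really the only nontrivial point — is the smoothness/continuity claim (b) for $T_Kf$: one needs that pairing the distribution $K$ against the smoothly varying family $g'\mapsto (g')^{-1}\cdot f$ produces a smooth ($W$-valued) function on $G'$, and that the resulting map $T_K$ is continuous in the archimedean topology. This is handled by the standard argument that the orbit map $g'\mapsto (g')^{-1}\cdot f$ is smooth into $C_c^\infty(G/H,\calV)$ (indeed locally constant when $\FF$ is non-archimedean, since $f$ has compact support), so that differentiating under the pairing is legitimate; everything else is formal bookkeeping with the defining cocycle relations, exactly as in \cite[Proposition 3.2]{KS15}.
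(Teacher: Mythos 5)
Your overall strategy --- check that $K_T$ is $\Delta(H')$-invariant, then build an explicit inverse $K\mapsto T_K$ and verify the section property, smoothness/local constancy, $G'$-equivariance, and that the two maps are mutually inverse --- is the same as the paper's. However, your formula for the inverse,
$$(T_Kf)(g') := \rho_W(g')\,\langle K,(g')^{-1}\cdot f\rangle,$$
is not well defined: $\rho_W$ is a representation of $H'$ only, so $\rho_W(g')$ has no meaning for $g'\in G'\setminus H'$ (in the application, $W=\CC$ carries the character $\eta_{t+\frac{1}{2}}$ of the Borel $B'$, which in general does not extend to $G'$). The correct definition, and the one the paper uses, is $(T_Kf)(g'):=\langle K,(g')^{-1}\cdot f\rangle$ with no twist; the $\Delta(H')$-invariance of $K$ then yields precisely the transformation law $(T_Kf)(g'h')=\rho_W(h')^{-1}(T_Kf)(g')$, so $T_Kf$ is a section of $\calW$, and $G'$-equivariance of $T_K$ is immediate. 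With your twisted formula both checks break: one gets $(T_Kf)(g'h')=(T_Kf)(g')$, which is not the section law, and $T_K(g_0'\cdot f)$ differs from $g_0'\cdot(T_Kf)$ by an extraneous factor $\rho_W(g_0')$.

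The error is masked in your verification of $T_{K_T}=T$ by the incorrect step $\bigl((g')^{-1}\cdot Tf\bigr)(eH')=\rho_W(g')^{-1}(Tf)(g')$: the left-hand side equals $(Tf)(g')$ on the nose, since translating the section and evaluating at the base point just gives $(Tf)(g'\cdot e)$; the relation $(Tf)(g')=\rho_W(g')^{-1}(Tf)(e)$ is valid only for $g'\in H'$. Dropping the $\rho_W(g')$ factor everywhere repairs the argument, and then the rest of what you wrote --- the invariance computation for $K_T$, smoothness/local constancy of $T_Kf$ via the regularity of the orbit map $g'\mapsto(g')^{-1}\cdot f$, and continuity in the archimedean case --- coincides with the paper's proof.
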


\begin{proof}
	With \eqref{eq:H'ActionOnDistributionKernels} it is easy to see that $K_T\in(\calD'(G/H,\calV^*)\otimes W)^{\Delta(H')}$ whenever $T\in\Hom_{G'}(C_c^\infty(G/H,\calV),C^\infty(G'/H',\calW))$, so the map under consideration is defined. Now, for $T \in \Hom_{G'}(C_c^\infty(G/H,\calV),C^\infty(G'/H',\calW))$ and $f\in C_c^\infty(G/H,\calV)$ note that
	\begin{equation}
         \label{eq:TfromKT}
		(Tf)(g') = ((g')^{-1}\cdot (Tf))(1) =(T((g')^{-1}\cdot f))(1) =  \langle K_T,(g')^{-1}\cdot f\rangle \qquad (g'\in G').
	\end{equation}
	This implies that the map $T\mapsto K_T$ is injective. To show surjectivity, we define for a given $K_T\in(\calD'(G/H,\calV^*)\otimes W)^{\Delta(H')}$ and $f\in C_c^\infty(G/H,\calV)$ a function $Tf:G'\to W$ by the right hand side of \eqref{eq:TfromKT}, i.e. $(Tf)(g'):=\langle K_T,(g')^{-1}\cdot f\rangle$. Since $f$ is smooth/locally constant with compact support, the map $g'\mapsto(g')^{-1}\cdot f$ is smooth/locally constant and hence $Tf$ is also smooth/locally constant. Moreover, for all $g'\in G'$, $h'\in H'$:
	\begin{equation*}
	\begin{split}
		(Tf)(g'h') &=  \langle K_T,(g'h')^{-1}\cdot f\rangle= \langle K_T,(h')^{-1} (g')^{-1}\cdot f\rangle\\
		&= \rho_{W}(h')^{-1}\langle(h',h')\cdot K_T,(g')^{-1}\cdot f\rangle = \rho_{W}(h')^{-1}\langle K_T,(g')^{-1}\cdot f\rangle\\
		&= \rho_{W}(h')^{-1}Tf(g'),
	\end{split}
	\end{equation*}
	so $Tf\in C^\infty(G'/H',\calW)$. It remains to show that $f\mapsto Tf$ is $G'$-intertwining, and in the archimedean case additionally continuous. The intertwining property is shown as follows:
	\begin{equation*}
		\begin{split}
			(T(g'\cdot f))(x)&= \langle K_T,x^{-1}g'\cdot f\rangle = \langle K_T,((g')^{-1}x)^{-1}\cdot f\rangle = ((g')^{-1}\cdot Tf)(x)
		\end{split}
	\end{equation*}
	for all $f\in C_c^\infty(G/H,\calV)$ and $g',x\in G'$. Finally, continuity of $T$ is a consequence of the continuity of $K_T$ as a linear map $C_c^\infty(G/H,\calV)\to W$ if $\FF$ is archimedean.
\end{proof}

\subsection{Symmetry breaking operators between principal series}

Now assume that $G/H=G/P$ is a flag variety of a reductive algebraic group $G$ over the local field $\FF$, i.e. $P\subseteq G$ is a parabolic subgroup defined over $\FF$. We assume additionally that $G'\subseteq G$ is a reductive subgroup and that $G'/H'=G'/P'$ is a flag variety for $G'$. In particular, both $G/P$ and $G'/P'$ are compact.

Fix an opposite parabolic subgroup $\overline{P}$ of $P$ and denote by $\overline{N}$ its unipotent radical. Note that if we fix a Levi subgroup of $P$, then there is a unique opposite parabolic subgroup $\overline{P}$ with the same Levi. Note that $\overline{N} \cap P = \{e\}$ and $\overline{N}P$ is open and dense in $G$.

While a distribution on $G/P$ is in general not uniquely determined by its restriction to the open dense subset $\overline{N}P/P\subseteq G/P$, we show that this is indeed the case for the kernel $K_T$ given by Proposition \ref{prop:SchwartzKernel} of a symmetry breaking operator $T\in\Hom_{G'}(C^\infty(G/P,\calV),C^\infty(G'/P',\calW))$ under the following condition:
\begin{equation}
	P'\overline{N}P = G.\label{eq:B'NB=G}
\end{equation}

To make this precise, consider for open subsets $\Omega'\subseteq\Omega\subseteq G/P$ the restriction map
$$ r_{\Omega'}^\Omega:\calD'(\Omega,\calV^*) \to \calD'(\Omega',\calV^*) $$
which is dual to the extension map
\begin{equation}
\label{equ::def_i_omega}
 i_{\Omega'}^\Omega:C_c^\infty(\Omega',\calV)\to C_c^\infty(\Omega,\calV), \quad (i_{\Omega'}^\Omega\varphi)(x) = \begin{cases}\varphi(x)&\mbox{for }x\in \Omega',\\0&\mbox{else.}\end{cases} 
\end{equation}

Of particular importance will be the restriction to subsets of the form $\Omega=UP/P$ with $U\subseteq\overline{N}$ open. On $UP/P\simeq U$ the bundle $\calV$ is trivial, so we obtain an isomorphism
\begin{equation} \label{equ::def_gamma_U}
\gamma_U:C_c^\infty(U) \otimes V \to C_c^\infty(UP/P,\calV) \quad (\gamma_U\varphi)(x): = \rho_V(p^{-1})(\varphi(\overline{n})) \qquad (x=\overline{n}p \in UP), 
\end{equation}
where we view $C_c^\infty(U)\otimes V$ as the space of compactly supported $V$-valued smooth functions on $U$. The corresponding dual map into the space of $V^*$-valued distributions on $U$ is the isomorphism given by
\begin{equation}
	\gamma_U^*:\calD'(UP/P,\calV^*)\to\calD'(U)  \otimes V^{*},  \quad  \langle\gamma_U^*v, \varphi \rangle := \langle v, \gamma_U\varphi \rangle, \label{eq:DefIota*}
\end{equation}
where $v\in\calD'(UP/P,\calV^*)$ and $\varphi \in C_c^\infty(U) \otimes V$, and
$$ \calD'(U)=C_c^\infty(U)^\vee $$
denotes the space of distributions on $U$.

Now, for every $g\in G$, the action of $g^{-1}$ on $C^\infty(G/P,\calV)$ maps $C_c^\infty(g\overline{N}P/P,\calV)$ to $C_c^\infty(\overline{N}P/P,\calV)$, so the dual map $g\cdot:\calD'(\overline{N}P/P,\calV^*)\to\calD'(g\overline{N}P/P,\calV^*)$ makes the following diagram commute:
\begin{equation}\label{eq:RestrictionDiagram} \xymatrix{
	\calD'(G/P,\calV^*) \ar[r]^{g\cdot} \ar[d]_{r^{G/P}_{\overline{N}P/P}} & \calD'(G/P,\calV^*) \ar[d]^{r^{G/P}_{g\overline{N}P/P}} \\
	\calD'(\overline{N}P/P,\calV^*) \ar[r]^{g\cdot} & \calD'(g\overline{N}P/P,\calV^*).
	} \end{equation}
Write $\overline{N}_g=\overline{N}\cap g\overline{N}P$ and denote by $\pi(g)$ the following composition:
$$ \calD'(\overline{N})\otimes V^* \stackrel{(\gamma_{\overline{N}}^*)^{-1}}{\longrightarrow} \calD'(\overline{N}P/P,\calV^*) \stackrel{g\cdot}{\longrightarrow} \calD'(g\overline{N}P/P,\calV^*) \stackrel{r^{g\overline{N}P/P}_{\overline{N}_gP/P}}{\longrightarrow} \calD'(\overline{N}_gP/P,\calV^*) \stackrel{\gamma_{\overline{N}_g}^*}{\longrightarrow} \calD'(\overline{N}_g)\otimes V^*. $$

\begin{proposition}\label{prop:GeneralCharSBOKernels}
Under the assumption \eqref{eq:B'NB=G}, the map 
	$$ \gamma_{\overline{N}}^*\circ r_{\overline{N}P/P}^{G/P}:(\calD'(G/P,\calV^*)\otimes W)^{\Delta(P')} \to \calD'(\overline{N})\otimes V^{*}\otimes W $$
	induces a bijection onto the subspace 
	\begin{equation}
		(\calD'(\overline{N})\otimes V^{*}\otimes W)^{\Delta(P')}:=\{ K \in \calD'(\overline{N})\otimes V^{*}\otimes W \; \vert\; \big[\pi(p)\otimes\rho_W(p)\big]K=K|_{\overline{N}_{p}}\mbox{ for all }p\in P'\}. \label{eq:GeneralEquivariancePropertyKernels}
	\end{equation}
\end{proposition}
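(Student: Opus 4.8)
The plan is to break the statement into three parts: (i) that the target of $\gamma_{\overline{N}}^* \circ r_{\overline{N}P/P}^{G/P}$ lands in the subspace defined by \eqref{eq:GeneralEquivariancePropertyKernels}; (ii) injectivity, which is where the assumption \eqref{eq:B'NB=G} is essential; and (iii) surjectivity onto that subspace.

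For (i), I would take $K \in (\calD'(G/P,\calV^*)\otimes W)^{\Delta(P')}$ and compute $[\pi(p)\otimes\rho_W(p)]$ applied to its image, for $p\in P'$. Unwinding the definition of $\pi(p)$ as the composition $\calD'(\overline{N})\otimes V^* \to \calD'(\overline{N}P/P,\calV^*) \to \calD'(p\overline{N}P/P,\calV^*) \to \calD'(\overline{N}_pP/P,\calV^*) \to \calD'(\overline{N}_p)\otimes V^*$, the key is the commuting diagram \eqref{eq:RestrictionDiagram} with $g=p$: the element $r^{G/P}_{\overline{N}P/P}K$ has the property that applying $p\cdot$ and then restricting to $\overline{N}_pP/P$ agrees with first applying $p\cdot$ to $K$ on all of $G/P$ and then restricting. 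Since $K$ is $\Delta(P')$-invariant, $p\cdot K = \rho_W(p)^{-1}K$ at the level of the $\calD'(G/P,\calV^*)$-factor (reading off \eqref{eq:H'ActionOnDistributionKernels}), so restricting back to $\overline{N}P/P$ and then to $\overline{N}_p$ gives exactly $\rho_W(p)^{-1}$ times the restriction of $\gamma_{\overline{N}}^* r_{\overline{N}P/P}^{G/P}K$ to $\overline{N}_p$. Rearranging yields the identity $[\pi(p)\otimes\rho_W(p)]K = K|_{\overline{N}_p}$. This step is essentially bookkeeping with the diagram and with the definition of the $G$-action on distributional sections.

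For (ii), injectivity: suppose $r_{\overline{N}P/P}^{G/P}K = 0$. I would show $K=0$ by showing it vanishes on $C_c^\infty(g\overline{N}P/P,\calV)$ for every $g\in G$, since these opens cover $G/P$ as $g$ ranges over $G$ (indeed already over a finite set, by compactness). Write $g = p'\overline{n}p$ using \eqref{eq:B'NB=G}; then $g\overline{N}P/P = p'\overline{N}P/P$, so it suffices to treat $g=p'\in P'$. The action of $(p')^{-1}$ maps $C_c^\infty(p'\overline{N}P/P,\calV)$ to $C_c^\infty(\overline{N}P/P,\calV)$, and by the $\Delta(P')$-invariance of $K$ (equation \eqref{eq:H'ActionOnDistributionKernels}), pairing $K$ against a section supported in $p'\overline{N}P/P$ reduces, up to the invertible operator $\rho_W(p')$, to pairing $K$ against a section supported in $\overline{N}P/P$, which vanishes by hypothesis. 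Hence $K=0$. The use of \eqref{eq:B'NB=G} here is exactly what guarantees the $P'$-translates of the big cell already cover $G/P$; without it one could not reconstruct $K$ from its restriction.

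For (iii), surjectivity, I expect the main obstacle. Given $K_0 \in (\calD'(\overline{N})\otimes V^*\otimes W)^{\Delta(P')}$, I want to produce $K\in(\calD'(G/P,\calV^*)\otimes W)^{\Delta(P')}$ restricting to it. The idea is to define $K$ by declaring, for $f\in C_c^\infty(G/P,\calV)$, a partition of $f$ according to supports inside the $P'$-translates $p'\overline{N}P/P$, transport each piece back to $\overline{N}P/P$ via the $G$-action, pair with $(\gamma_{\overline{N}}^*)^{-1}K_0$, and apply the appropriate $\rho_W(p')$; the content is that the equivariance condition \eqref{eq:GeneralEquivariancePropertyKernels} on $K_0$ — which precisely encodes compatibility on overlaps $\overline{N}_{p}$ of the translated big cells — makes this well-defined independent of the choices, and glues to a genuine distributional section on all of $G/P$. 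Concretely I would first use a locally finite cover of $G/P$ by translates $p'_j \overline{N}P/P$ and a subordinate partition of unity, define $K$ piece by piece, and check (a) independence of the partition of unity using \eqref{eq:GeneralEquivariancePropertyKernels}, (b) that the result is $\Delta(P')$-invariant, and (c) that its restriction to $\overline{N}P/P$ recovers $K_0$ via $\gamma_{\overline{N}}^*$. The overlap computation in (a) is the technical heart: on $p'_i\overline{N}P/P \cap p'_j\overline{N}P/P$ one has to compare the two prescriptions, and the cocycle-type identity needed is exactly $[\pi(p)\otimes\rho_W(p)]K_0 = K_0|_{\overline{N}_p}$ for $p = (p'_i)^{-1}p'_j \in P'$, together with the commuting diagram \eqref{eq:RestrictionDiagram}. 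In the non-archimedean case the partition of unity can be taken locally constant, which simplifies matters; in the archimedean case one additionally checks continuity, which follows from continuity of $K_0$ and of the $G$-action on compactly supported sections.
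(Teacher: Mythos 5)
Your overall architecture matches the paper's proof: part (i) is the same diagram-chase using \eqref{eq:RestrictionDiagram}, functoriality of restriction and $(p,1)\cdot A=\rho_W(p)^{-1}A$; your surjectivity argument is the paper's gluing argument in slightly different clothing (the paper defines $K_p:=\rho_W(p)\big[(p,1)\cdot(\gamma_{\overline{N}}^*)^{-1}(K)\big]$ on each translate $p\overline{N}P/P$, checks agreement on overlaps via exactly the identity $\big[\pi(p)\otimes\rho_W(p)\big]K=K|_{\overline{N}_p}$, and invokes the sheaf property of distributions, which is what your partition of unity implements); and your injectivity argument is the paper's, namely translating by $p'\in P'$ and using $\Delta(P')$-invariance.

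One step in your injectivity reduction is false as stated: from $g=p'\overline{n}p$ you claim $g\overline{N}P/P=p'\overline{N}P/P$, which would require $p\overline{N}P=\overline{N}P$ for $p\in P$, and this fails already for $\PGL(2)$ (take $p=\left(\begin{smallmatrix}1&1\\0&1\end{smallmatrix}\right)$ and $\overline{n}_{-1}$: the product has vanishing upper-left entry, so it leaves the big cell). The claim is also unnecessary: the assumption \eqref{eq:B'NB=G} gives directly that the translates $p'\overline{N}P/P$, $p'\in P'$, cover $G/P$ (each $gP$ with $g=p'\overline{n}p$ lies in $p'\overline{N}P/P$), and by the sheaf property a distribution vanishing on every member of this cover is zero; so you should drop the consideration of general $g\in G$ and the erroneous set identity, and argue, as you in fact do in the second half of the paragraph, that $r^{G/P}_{p'\overline{N}P/P}(K)=(p',p')\cdot r^{G/P}_{\overline{N}P/P}(K)=0$ for all $p'\in P'$. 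With that repair the proof is complete and agrees with the paper's.
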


\begin{proof}
Let us prove that given $A \in  (\calD'(G/P,\calV^*)\otimes W)^{\Delta(P')}$ the distribution
$$ K=\gamma_{\overline{N}}^*\circ r_{\overline{N}P/P}^{G/P}(A) $$
is indeed contained in $(\calD'(\overline{N})\otimes V^{*}\otimes W)^{\Delta(P')}$. In fact, by the definition of $\pi(p)$ we get for every $p\in P'$:
$$ \pi(p)K = \gamma_{\overline{N}_{p}}^*\circ r_{\overline{N}_{p}P/P}^{p'\overline{N}P/P}((p,1)\cdot r^{G/P}_{\overline{N}P/P}(A)). $$
By the commutativity of the diagram \eqref{eq:RestrictionDiagram} this can be written as
$$ \gamma_{\overline{N}_{p}}^*\circ r_{\overline{N}_{p}P/P}^{p'\overline{N}P/P}\circ r^{G/P}_{p\overline{N}P/P}((p,1)\cdot A), $$
which, by the functoriality of restriction, equals
$$ \gamma_{\overline{N}_{p}}^*\circ r_{\overline{N}_{p}P/P}^{G/P}((p,1)\cdot A). $$
Now, $A \in  (\calD'(G/P,\calV^*)\otimes W)^{\Delta(P')}$ implies $ (p,p)\cdot A=  A$, thus $(p,1)\cdot A=\rho_W(p)^{-1}A$, so the above expression equals
$$ \rho_W(p)^{-1}\big[\gamma_{\overline{N}_{p}}^*\circ r_{\overline{N}_{p}P/P}^{G/P}(A)\big]. $$
Finally, $\gamma_U^*$ is functorial in $U$, so this can be written as
$$ \rho_W(p)^{-1}\big[\gamma_{\overline{N}}^*\circ r_{\overline{N}P/P}^{G/P}(A)\big]|_{\overline{N}_{p}} = \rho_W(p)^{-1}K|_{\overline{N}_{p}} $$
and hence $K\in(\calD'(\overline{N})\otimes V^{*}\otimes W)^{\Delta(P')}$.

To see that the above map is surjective we use the sheaf property of distributions. For a fixed $K \in (\calD'(\overline{N})\otimes V^{*}\otimes W)^{\Delta(P')}$ and every $p\in P'$ define
$$ K_{p} := \rho_W(p)\big[(p,1)\cdot(\gamma_{\overline{N}}^*)^{-1}(K)\big] \in\calD'(p\overline{N}P/P,\calV^*)\otimes W. $$
We claim that $K_{p}$ and $K_{p'}$ agree on $p\overline{N}P/P\cap p'\overline{N}P/P$ for all $p,p'\in P'$. In fact, acting on both $K_{p}$ and $K_{p'}$ by the inverse of $p'$, we may and do assume that $p'=e$. To show that $K_{p}=K_e$ on $p\overline{N}P/P\cap\overline{N}P/P=\overline{N}_{p}P/P$ we note that by the definition of $\pi(p)$ we have
$$ K_{p}|_{\overline{N}_{p}P/P} = \rho_W(p)\big[(p,1)\cdot(\gamma_{\overline{N}}^*)^{-1}(K)\big]|_{\overline{N}_{p}P/P} = (\gamma_{\overline{N}_{p}}^*)^{-1}\big[\pi(p)\otimes\rho_W(p)\big](K). $$
By the assumption on $K$ we have $\big[\pi(p)\otimes\rho_W(p)\big](K)=K|_{\overline{N}_{p}}$, so the expression above equals
$$ (\gamma_{\overline{N}_{p}}^*)^{-1}(K|_{\overline{N}_{p}}) = K_e|_{\overline{N}_{p}P/P} $$
by the functoriality of $\gamma_U^*$ in $U$. This shows that $\{K_{p}\}_{p\in P'}$ is a family of distributions that pairwise agree on intersections. By the assumption \eqref{eq:B'NB=G}, the domains $p\overline{N}P/P$ ($p\in P$) cover $G/P$, so by the sheaf property of distributions there exists a (unique) $A\in\calD'(G/P,\calV^*)\otimes W$ such that $A|_{p\overline{N}P/P}=K_{p}$. That $A$ is in fact contained in $(\calD'(G/P,\calV^*)\otimes W)^{\Delta(P')}$ follows by restricting it to the cover $p\overline{N}P/P$, $p\in P'$, and using the equivariance property of $K$.

Finally, we show that the above map is injective. Assume $A\in(\calD'(G/P,\calV^*)\otimes W)^{\Delta(P')}$ such that $\gamma_{\overline{N}}^*\circ r_{\overline{N}P/P}^{G/P}(A)=0$. Since $\gamma_{\overline{N}}^*$ is an isomorphism, this is equivalent to $r_{\overline{N}P/P}^{G/P}(A)=0$. But $(p,p)\cdot A=A$ for every $p\in P'$, so the diagram \eqref{eq:RestrictionDiagram} implies
$$ r_{p\overline{N}P/P}^{G/P}(A) = r_{p\overline{N}P/P}^{G/P}((p,p)\cdot A) = (p,p)\cdot r_{\overline{N}P/P}^{G/P}(A) = 0. $$
Since the open sets $p\overline{N}P/P$ ($p\in P'$) cover $G/P$, this shows $A=0$.
\end{proof}

We interpret $\calD'(\overline{N})$ as a space of generalized functions on $\overline{N}$ by identifying a locally integrable function $f: \overline{N} \to\CC$ with a distribution $f\in\calD'(\overline{N})$ by
$$ \langle f,\varphi\rangle = \int_{\overline{N}} f(\overline{n})\varphi(\overline{n})\,d\overline{n} \qquad (\varphi\in C_c^\infty(\overline{N})), $$
where $d\overline{n}$ denotes a fixed Haar measure on $\overline{N}$. It is natural to use the Haar measure on $\overline{N}$, because after suitable normalization of $d\overline{n}$ we have
\begin{equation}
	\int_{G/P}h(xP)\,d_{G/P}(xP) = \int_{\overline{N}} h(\overline{n})\,d\overline{n} \qquad (h\in C(G/P,\Omega_{G/P})).\label{eq:IntegralG/PvsNbar}
\end{equation}
This follows from Theorem \ref{thm::invaraint integral} and the integral formula
$$ \int_G h(g)\,dg = \int_{\overline{N}}\int_P h(\overline{n}p)\,d^r_Pp\,d_{\overline{N}}^\ell\overline{n} = \int_{\overline{N}}\int_P h(\overline{n}p)\Delta_{G/P}(p)\,d^\ell_Pp\,d_{\overline{N}}^\ell\overline{n}. $$

Using the language of generalized functions on $\overline{N}$, the condition in \eqref{eq:GeneralEquivariancePropertyKernels} can be rewritten as follows:

\begin{lemma}
\label{lem::gen_fct_distribution}
Fix $g\in G$ and for $\overline{n}\in\overline{N}_g$ write $g^{-1}\overline{n}=\overline{n}(g^{-1}\overline{n})p(g^{-1}\overline{n})\in\overline{N}P$. Then the map $\pi(g):\calD'(\overline{N})\otimes V^*\to\calD'(\overline{N}_g)\otimes V^*$ is given by
\begin{equation}\label{eq:PartialActionNbar_1}
	(\pi(g)u)(\overline{n}) = \rho_{V^*}(p(g^{-1}\overline{n}))^{-1}u(\overline{n}(g^{-1}\overline{n})) \qquad (\overline{n}\in\overline{N}_g)
\end{equation}
in the sense of generalized functions.
\end{lemma}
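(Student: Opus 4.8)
The plan is to unwind the definition of $\pi(g)$ as the composite of the four maps $(\gamma_{\overline{N}}^*)^{-1}$, $g\cdot$, $r^{g\overline{N}P/P}_{\overline{N}_gP/P}$, $\gamma_{\overline{N}_g}^*$: first on the subspace of smooth sections $C^\infty(\overline{N})\otimes V^*\subseteq\calD'(\overline{N})\otimes V^*$, where every step is a genuine pointwise operation, and then for an arbitrary locally integrable $u$ by a duality computation. Throughout I abbreviate $\phi_g(\overline{n}):=\overline{n}(g^{-1}\overline{n})$, so that $g^{-1}\overline{n}=\phi_g(\overline{n})\,p(g^{-1}\overline{n})$; one checks that $\phi_g$ restricts to an isomorphism $\overline{N}_g\to\overline{N}_{g^{-1}}$ with inverse $\phi_{g^{-1}}$.

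For smooth $u$ I would first observe that each of the four maps restricts on smooth sections to the evident section-level operation: $(\gamma_{\overline{N}}^*)^{-1}$ sends $u$ to the unique section $\widetilde{u}$ of $\calV^*$ over $\overline{N}P/P$ with $\widetilde{u}(\overline{n}p)=\rho_{V^*}(p)^{-1}u(\overline{n})$ (here one uses \eqref{eq:IntegralG/PvsNbar} to see that $\gamma_U^*$ is simply restriction of a section to $U$ on smooth sections), the map $g\cdot$ is ordinary left translation of sections as stated in the text, and $r^{g\overline{N}P/P}_{\overline{N}_gP/P}$ is ordinary restriction. Composing these and using the transformation law of a section of $\calV^*$ gives, for $\overline{n}\in\overline{N}_g$, that $(g\cdot\widetilde{u})(\overline{n})=\widetilde{u}(g^{-1}\overline{n})=\widetilde{u}(\phi_g(\overline{n})p(g^{-1}\overline{n}))=\rho_{V^*}(p(g^{-1}\overline{n}))^{-1}u(\phi_g(\overline{n}))$; applying $\gamma_{\overline{N}_g}^*$ reads off exactly \eqref{eq:PartialActionNbar_1}.

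For general locally integrable $u$ I would fix $\varphi\in C_c^\infty(\overline{N}_g)\otimes V$ and compute $\langle\pi(g)u,\varphi\rangle$ by peeling off the four maps through their defining adjunctions ($\gamma_U^*$ adjoint to $\gamma_U$, restriction adjoint to extension-by-zero $i^{g\overline{N}P/P}_{\overline{N}_gP/P}$, and $g\cdot$ on distributions adjoint to $g^{-1}\cdot$ on test sections). This rewrites $\langle\pi(g)u,\varphi\rangle$ as $\int_{\overline{N}}\langle u(\overline{n}),\Psi(\overline{n})\rangle\,d\overline{n}$, where $\Psi:=g^{-1}\cdot i^{g\overline{N}P/P}_{\overline{N}_gP/P}(\gamma_{\overline{N}_g}\varphi)$, viewed as a function on $\overline{N}$. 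A short direct check shows that $\Psi$ is supported on $\phi_g(\overline{N}_g)$ and that $\Psi(\phi_g(\overline{m}))=\rho_V(p(g^{-1}\overline{m}))\varphi(\overline{m})$ for $\overline{m}\in\overline{N}_g$. Performing the change of variables $\overline{n}=\phi_g(\overline{m})$, with $j_g(\overline{m})$ the Jacobian (module) of $\phi_g$, and moving $\rho_V(p(g^{-1}\overline{m}))$ across the pairing by the identity $\langle\xi,\rho_V(p)v\rangle=\Delta_{G/P}(p)\langle\rho_{V^*}(p)^{-1}\xi,v\rangle$ (which encodes $\rho_{V^*}=\rho_V^\vee\otimes\Delta_{G/P}$), yields
\begin{equation*}
\langle\pi(g)u,\varphi\rangle=\int_{\overline{N}_g}\Delta_{G/P}(p(g^{-1}\overline{m}))\,j_g(\overline{m})\,\bigl\langle\rho_{V^*}(p(g^{-1}\overline{m}))^{-1}u(\phi_g(\overline{m})),\varphi(\overline{m})\bigr\rangle\,d\overline{m}.
\end{equation*}

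The only non-formal point is then the measure identity $\Delta_{G/P}(p(g^{-1}\overline{m}))\,j_g(\overline{m})=1$ on $\overline{N}_g$, which turns the last display into \eqref{eq:PartialActionNbar_1}; I expect this cancellation of the modular twist against the Jacobian of $\phi_g$ to be the main obstacle. I would prove it by feeding sections of $\Omega_{G/P}$ into the $G$-invariance of the integral $d_{G/P}$ and using \eqref{eq:IntegralG/PvsNbar} to translate that invariance, in the coordinate $\phi_g$, into the statement that the Jacobian of $\phi_g$ equals $\Delta_{G/P}(p(g^{-1}\cdot))^{-1}$; alternatively, and more economically, the identity drops out by comparing the last display (valid for all locally integrable $u$) with the smooth-case formula of the previous step applied to an arbitrary smooth $u$, which forces the two scalar prefactors to be reciprocal. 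Either route completes the proof.
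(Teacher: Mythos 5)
Your proof is correct, and at bottom it uses the same two ingredients as the paper's proof: the identification \eqref{eq:IntegralG/PvsNbar} of the invariant integral on $G/P$ with the Haar integral on $\overline{N}$, and the $G$-invariance of $d_{G/P}$. The difference is organizational. The paper runs a single computation valid for any locally integrable $u$: it transfers the pairing $\langle\pi(g)u,\phi\rangle$ to an integral over $\overline{N}P/P$ via \eqref{eq:IntegralG/PvsNbar}, moves $g$ across using the invariance of $d_{G/P}$, and transfers back over $\overline{N}_g$; in this formulation the Jacobian of $\overline{n}\mapsto\overline{n}(g^{-1}\overline{n})$ and the modular twist $\Delta_{G/P}(p(g^{-1}\overline{n}))$ never appear individually — their cancellation is absorbed into the invariance of $d_{G/P}$. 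You instead stay on $\overline{N}$ throughout, which makes the product $\Delta_{G/P}(p(g^{-1}\overline{m}))\,j_g(\overline{m})$ appear explicitly and forces you to prove it equals $1$. Your first route for that identity (feeding compactly supported sections of $\Omega_{G/P}$ supported in $\overline{N}_{g^{-1}}P/P$ into the invariance of $d_{G/P}$ and applying \eqref{eq:IntegralG/PvsNbar} on both sides) is exactly the paper's mechanism in isolated form and works; the comparison route via the smooth-section computation is also legitimate and not circular, but note that the statement that the distributional action of $g$ restricts to ordinary left translation on smooth sections over $\overline{N}P/P$ already requires the invariance of $d_{G/P}$, so it is not really more economical. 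What your version buys is the explicit measure identity $j_g=\Delta_{G/P}(p(g^{-1}\cdot))^{-1}$ (consistent with the $\PGL(2)$ change-of-variables computation in Remark~\ref{rem:CharSBOKernelsDistributionLanguage}); what the paper's version buys is brevity, since it never needs to introduce $j_g$ at all.
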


\begin{proof}
Unwrapping the definition of $\pi(g)$, we have for any $\phi\in C_c^\infty(\overline{N}_g)\otimes V$:
$$ \langle\pi(g)u,\phi\rangle = \langle u,\gamma_{\overline{N}}^{-1}\circ g^{-1}\circ\iota_{\overline{N}_gP/P}^{g\overline{N}P/P}\circ\gamma_{\overline{N}_g}\phi\rangle. $$
If now $u$ is given by a locally integrable $V$-valued function on $\overline{N}$, this equals
$$ \int_{\overline{N}} u(\overline{n})(\gamma_{\overline{N}}^{-1}\circ g^{-1}\circ\iota_{\overline{N}_gP/P}^{g\overline{N}P/P}\circ\gamma_{\overline{N}_g}\phi)(\overline{n})\,d\overline{n}. $$
By \eqref{eq:IntegralG/PvsNbar} the integral over $\overline{N}$ can also be written as an integral over the open subset $\overline{N}P/P$ of $G/P$:
$$ \int_{\overline{N}P/P} (\gamma_{\overline{N}}u)(xP)(g^{-1}\circ\iota_{\overline{N}_gP/P}^{g\overline{N}P/P}\circ\gamma_{\overline{N}_g}\phi)(xP)\,d_{G/P}(xP), $$
where we abuse notation and write $\gamma_{\overline{N}}$ also for its extension to $L^1_{\mathrm{loc}}(\overline{N})\otimes V$, the locally integrable $V$-valued functions on $\overline{N}$, by the same formula. Since the integral over $G/P$ is $G$-invariant, this equals
$$ \int_{g\overline{N}P/P} (\gamma_{\overline{N}}u)(g^{-1}xP)(\iota_{\overline{N}_gP/P}^{g\overline{N}P/P}\circ\gamma_{\overline{N}_g}\phi)(xP)\,d_{G/P}(xP). $$
Note that it suffices to integrate over $\overline{N}_gP/P$ since this set contains the support of $\gamma_{\overline{N}_g}\phi$. Finally, rewriting the integral over $\overline{N}_gP/P$ as an integral over $\overline{N}_g$ using \eqref{eq:IntegralG/PvsNbar} we obtain
$$ \int_{\overline{N}_g} (\gamma_{\overline{N}}u)(g^{-1}\overline{n})\phi(\overline{n})\,d\overline{n}. $$
Since
$$ (\gamma_{\overline{N}}u)(g^{-1}\overline{n}) = \rho_{V^*}(p(g^{-1}\overline{n}))^{-1}u(\overline{n}(g^{-1}\overline{n})) $$
by definition, the proof is complete.
\end{proof}

\section{Symmetry breaking operators for principal series of $\PGL(2)$}
\label{sec::pgl_2_sym}

We apply the theory developed in the previous section to characterize symmetry breaking operators between principal series of $\PGL(2)$ in terms of their distribution kernels.

\subsection{Principal series of $\PGL(2)$}\label{sec:PSforPGL2}

Let $\EE$ be a non-archimedean local field with absolute value $|\cdot|_\EE$. We denote by $\calO_\EE=\{u\in \EE:|u|_\EE \leq1\}$ the ring of integers, by $\calO_\EE^{\times}=\{x\in \EE:|x|_\EE=1\}$ the group of units, and we choose a uniformizer $\varpi_\EE$ for $\calO_\EE$. Write $q_\EE$ for the cardinality of the residue field $\calO_\EE/\varpi_\EE\calO_\EE$. In what follows, we normalize Haar measure $dx$ on $\EE$ so that $\calO_\EE$ has measure $1$. Further, we normalize the absolute value $|\cdot|_\EE$ so that $d(ax)=|a|_\EE\,dx$ for all $a\in\EE^\times$.

We consider the group $G=\PGL(2,\EE)$ and abuse notation by writing an element of $G$ as a $2\times2$ matrix with entries in $\EE$ which represents the corresponding equivalence class in $\PGL(2,\EE)$. Let $B$ be the standard Borel subgroup of $G$ consisting of upper triangular matrices.

For a fixed character $\chi$ of $\calO_\EE^\times$ (i.e. $\chi \in \widehat{\calO_\EE^\times}$) and $s\in\CC$ we denote by $\chi_s$ the character of $\EE^\times$ given by
$$ \chi_s(x): = \chi(\varpi_\EE^{-\nu_\EE(x)}x)|x|_\EE^s \qquad (x\in\EE^\times). $$
Clearly, every character of $\EE^\times$ is of this form since $\EE^\times=\calO_\EE^\times\cdot\{\varpi_\EE^k:k\in\ZZ\}$. The character $\chi_s$ defines a character of $B$, also denoted by $\chi_s$, by
$$ \chi_s\begin{pmatrix}a&b\\0&1\end{pmatrix} = \chi_s(a). $$
Note that $\chi_s$ only depends on $s+\frac{2\pi i}{\ln q_\EE}\ZZ$.

Let $(\pi_{\chi,s},\Ind_B^G(\chi_s))$ denote the smooth representation of $G$ induced from the character $\chi_s$ of $B$, i.e. $\pi_{\chi,s}$ acts by left translation on the space
$$ \Ind_B^G(\chi_s):= \{f\in C^\infty(G):f(gb)=\chi_{s+\frac{1}{2}}(b)^{-1}f(g), \; \forall g\in G,b\in B\}. $$
The representations $\pi_{\chi,s}$ are called the \emph{principal series of $G$}. If $\chi=1$, they are called \emph{unramified}, otherwise they are called \emph{ramified}. Note that $\pi_{\chi,s}$ can also be realized on the space $C^\infty(G/B,\calV)$, where $\calV$ is the homogeneous vector bundle induced from the character $\chi_{s+\frac{1}{2}}=\chi_s\otimes\Delta_{G/B}^{1/2}$ of $B$. Since
$$ \Delta_{B}\begin{pmatrix}a&b\\0&1\end{pmatrix} = |a|_\EE^{-1} \qquad \Delta_{G/B} \begin{pmatrix}a&b\\0&1\end{pmatrix}=(\frac{\Delta_G|_B}{\Delta_B})\begin{pmatrix}a&b\\0&1\end{pmatrix}= |a|_\EE  \qquad (a\in \EE^\times,b\in \EE), $$
the dualizing bundle $\calV^*$ is induced from
$$(\chi_{s+\frac{1}{2}})^{\vee}\otimes \Delta_{G/B}= \overline{\chi}_{-s-\frac{1}{2}} \cdot \Delta_{G/B} = \overline{\chi}_{-s+\frac{1}{2}},$$
 where $\chi^\vee=\overline{\chi}$ is the complex conjugate character of $\chi$ and $\overline{\chi}_{-s+\frac{1}{2}}= \overline{\chi}_{-s}\otimes\Delta_{G/B}^{1/2}$.
 
 From \cite[Theorem 3.3]{JL70} we have the following result:
 
\begin{theorem}\label{thm:CompositionSeries}
	The representation $\pi_{\chi,s}$ is irreducible unless $\chi^2=1$ and $s\in\pm\frac{1}{2}+\frac{i\pi}{\ln q_\EE}\ZZ$. In this case, it has a unique irreducible subrepresentation and the quotient of $\pi_{\chi,s}$ by this subrepresentation is irreducible. More precisely:
	\begin{enumerate}
		\item If $\chi^2=1$ and $s\in-\frac{1}{2}+\frac{i\pi}{\ln q_\EE}\ZZ$, the representation $\pi_{\chi,s}$ has a unique one-dimensional subrepresentation given by the character $\chi_{s+\frac{1}{2}}\circ\det:G\to\CC^\times$.
		\item If $\chi^2=1$ and $s\in\frac{1}{2}+\frac{i\pi}{\ln q_\EE}\ZZ$, the representation $\pi_{\chi,s}$ has a unique one-dimensional quotient given by the character $\chi_{s-\frac{1}{2}}\circ\det:G\to\CC^\times$.
	\end{enumerate}
\end{theorem}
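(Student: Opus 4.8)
\emph{Strategy and a length bound.} The plan is to bound the Jordan--Hölder length of $\pi_{\chi,s}$ by $2$, to show that a reducible $\pi_{\chi,s}$ must contain a finite-dimensional constituent, and then to use Frobenius reciprocity to determine exactly when reducibility occurs and to identify the constituents. For the length bound I would compute the Jacquet module of $\pi_{\chi,s}$ along $\overline{N}$ from the two-orbit Bruhat stratification of $G/B$ into the open orbit $\overline{N}B/B$ and its closed complement: restriction of sections to these strata gives a short exact sequence of $B$-modules, and applying the exact Jacquet functor yields a two-step filtration of the Jacquet module whose one-dimensional successive quotients are the torus character $\chi_s$ and its Weyl conjugate $\overline{\chi}_{-s}$; in particular the Jacquet module is two-dimensional. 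Since every irreducible subquotient of a principal series is non-supercuspidal, hence has nonzero Jacquet module, and since the Jacquet functor is exact, $\pi_{\chi,s}$ has length at most $2$.

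\emph{A reducible $\pi_{\chi,s}$ has a one-dimensional constituent.} A Bruhat/Mackey computation shows that $\pi_{\chi,s}$ carries a one-dimensional space of Whittaker functionals with respect to a fixed nontrivial character of $\overline{N}$: the open $B$-$\overline{N}$ double coset contributes one dimension and the closed one contributes none. On the other hand every infinite-dimensional irreducible smooth representation of $\PGL(2,\EE)$ is generic, whereas the only non-generic irreducible smooth representations are the one-dimensional characters $\mu\circ\det$ with $\mu^2=1$ (equivalently, the smooth characters of the finite abelian group $\EE^\times/(\EE^\times)^2$). Hence, if $\pi_{\chi,s}$ is reducible — so of length exactly $2$ by the previous step — then by exactness of the twisted Jacquet functor and uniqueness of Whittaker functionals exactly one of its two constituents is generic, and the other is a character $\mu\circ\det$, occurring either as a subrepresentation or as a quotient.

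\emph{Frobenius reciprocity and conclusion.} If $\mu\circ\det$ occurs as a subrepresentation, then $\Hom_G(\mu\circ\det,\pi_{\chi,s})\neq 0$, and a short Frobenius-reciprocity computation matching the $B$-character of $\mu\circ\det$ (twisted by the modular character) with $\chi_s$ shows this forces $\chi^2=1$ and $s\in-\tfrac12+\tfrac{i\pi}{\ln q_\EE}\ZZ$, with $\mu=\chi_{s+\frac12}$. Conversely, exactly at these parameters $\chi_{s+\frac12}^2=1$, so $\chi_{s+\frac12}\circ\det$ is a well-defined character of $\PGL(2,\EE)$ and the function $g\mapsto\chi_{s+\frac12}(\det g)$ lies in $\Ind_B^G(\chi_s)$ and spans a $G$-stable line. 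The quotient case follows by applying the same analysis to the contragredient, which by the dualizing-bundle computation already carried out in the text equals $\pi_{\chi,s}^\vee\cong\pi_{\overline{\chi},-s}$: a one-dimensional quotient of $\pi_{\chi,s}$ forces $\chi^2=1$ and $s\in\tfrac12+\tfrac{i\pi}{\ln q_\EE}\ZZ$, with the quotient equal to $\chi_{s-\frac12}\circ\det$. In both reducible cases the length bound shows that the complementary constituent is irreducible and infinite-dimensional, and that it is unique as a subrepresentation (resp. quotient); for all remaining parameters $\pi_{\chi,s}$ has no one-dimensional constituent, so being of length at most $2$ it is already irreducible.

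\emph{Main obstacle.} The genuinely non-formal input is the dichotomy underlying the second step: that every infinite-dimensional irreducible smooth representation of $\PGL(2,\EE)$ is generic (equivalently, admits a Kirillov model), which is what forces all reducibility to be concentrated in a single one-dimensional constituent. An alternative route that avoids this would replace the Whittaker functionals by the standard intertwining operator $M(s)\colon\pi_{\chi,s}\to\pi_{\overline{\chi},-s}$ and the Gindikin--Karpelevich evaluation of the scalar $M(-s)\circ M(s)$, whose zero set for $\chi^2=1$ is exactly $\pm\tfrac12+\tfrac{i\pi}{\ln q_\EE}\ZZ$; the cost is then the meromorphic continuation of $M(s)$ and the explicit $p$-adic integral computing that scalar. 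The remaining steps — the Bruhat computations of the Jacquet module and of the Whittaker space, and the Frobenius-reciprocity bookkeeping — are routine.
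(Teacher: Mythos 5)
The paper does not actually prove this theorem: it is quoted verbatim from \cite[Theorem 3.3]{JL70}. So any genuine argument is ``a different route,'' and yours is the standard one and is correct in substance. The length bound $\leq 2$ via the two-cell Bruhat filtration and exactness of the Jacquet functor, the computation that the space of Whittaker functionals on $\pi_{\chi,s}$ is one-dimensional (open cell contributes one dimension, closed cell none), and the Frobenius/eigenvector matching showing that a one-dimensional subrepresentation must be $\mu\circ\det$ with $\mu=\chi_{s+\frac{1}{2}}$ and $\chi_{s+\frac{1}{2}}^2=1$ (equivalently $\chi^2=1$, $s\in-\tfrac{1}{2}+\tfrac{\pi i}{\ln q_\EE}\ZZ$), dually $s\in\tfrac{1}{2}+\tfrac{\pi i}{\ln q_\EE}\ZZ$ for a one-dimensional quotient via $\pi_{\chi,s}^\vee\cong\pi_{\overline{\chi},-s}$, do pin down the reducibility locus and the constituents exactly as stated. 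Be aware that your one non-formal input --- every infinite-dimensional irreducible smooth representation of $\PGL(2,\EE)$ is generic --- is itself a main theorem of the very source the paper cites, so you are trading one chapter of Jacquet--Langlands for another rather than giving an independent proof; the alternative you sketch (meromorphic continuation of the standard intertwining operator and the explicit scalar $T_{\overline{\chi},-s}\circ T_{\chi,s}$) is the other classical route and is actually closer in spirit to the computations this paper does carry out, cf.\ \eqref{eq:IntertwinerConvolutionFormula} and Appendix~\ref{app:Integral}.

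One small patch is needed where you write that ``the length bound shows \dots that it is unique as a subrepresentation (resp.\ quotient)'': length $2$ alone does not exclude that $\pi_{\chi,s}$ is decomposable, in which case the character and the infinite-dimensional constituent would both be subrepresentations. The fix is already contained in your Frobenius step: a one-dimensional subrepresentation forces $s\in-\tfrac{1}{2}+\tfrac{\pi i}{\ln q_\EE}\ZZ$ while a one-dimensional quotient forces $s\in\tfrac{1}{2}+\tfrac{\pi i}{\ln q_\EE}\ZZ$, and these sets are disjoint (their elements differ by the real number $1$, which does not lie in $\tfrac{\pi i}{\ln q_\EE}\ZZ$, even modulo $\tfrac{2\pi i}{\ln q_\EE}\ZZ$). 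Hence the character cannot be simultaneously a sub and a quotient, so $\pi_{\chi,s}$ is indecomposable in the reducible cases, which gives the uniqueness of the irreducible subrepresentation and the irreducibility of the corresponding quotient.
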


The proper infinite-dimensional quotients/subrepresentations of $\pi_{\chi,s}$ are called \emph{Steinberg representations}. They can most easily be described using the standard intertwining operator. Recall that for sufficiently large $\Re(s)$ the standard intertwining operator $T_{\chi,s}:\pi_{\chi,s}\to\pi_{\overline{\chi},-s}$ is defined by the absolutely convergent integral
$$ T_{\chi,s}f(g) = \int_{\overline{N}}f(gw\overline{n})\,d\overline{n} \qquad (f\in\Ind_B^G(\chi_s)), $$
where
\begin{equation}
	w = \begin{pmatrix}0&1\\1&0\end{pmatrix}\in G\label{eq:LongestWeylGrpElt}
\end{equation}
denotes a representative of the longest Weyl group element. This family of operators can be extended meromorphically to all $s\in\CC$. Since for $x,y\in\FF$, $y\neq0$, we can decompose
$$ \overline{n}_xw\overline{n}_y = \overline{n}_{x+1/y}\begin{pmatrix}y&1\\0&y^{-1}\end{pmatrix}, $$
it follows that
\begin{equation}
	T_{\chi,s}f(\overline{n}_x) = \int_\EE\chi_{s+\frac{1}{2}}(y^2)^{-1}f(\overline{n}_{x+\frac{1}{y}})\,dy = \int_\EE\chi_{s-\frac{1}{2}}(y^2)f(\overline{n}_{x-y})\,dy,\label{eq:IntertwinerConvolutionFormula}
\end{equation}
i.e. on $\overline{N}\simeq\EE$ the operator $T_{\chi,s}$ is given by convolution with the function $\chi_{s-\frac{1}{2}}(y^2)$. In particular, for $\chi^2=1$ and $s\in\frac{1}{2}+\frac{i\pi}{\ln q_\EE}\ZZ$ we obtain an intertwining operator
$$ T_{\chi,s}:\pi_{\chi,s}\to\pi_{\chi,-s}, \quad T_{\chi,s}f(\overline{n}_x) = \int_\EE f(\overline{n}_y)\,dy. $$
The image of this operator are the functions which are constant on $\overline{N}$, i.e the one-dimensional irreducible subrepresentation of $\pi_{\chi,-s}$, and its kernel is the Steinberg representation.

\subsection{The branching problem}

Let $\EE/\FF$ be a quadratic field extension, i.e. $\EE=\FF(\alpha)$ with $\alpha\in \EE\setminus \FF$, $\alpha^2\in \FF^\times\setminus\FF^{\times2}$. After possibly multiplying $\alpha$ by a power of $\varpi_\FF$, we may assume that either $\alpha^2\in \calO_\FF^{\times}$ (in the case where $\EE/\FF$ is unramified, i.e. $q_\EE=q_\FF^2$) or $\alpha^2\in\varpi_\FF \calO_\FF^{\times}$ (in the case where $\EE/\FF$ is ramified, i.e. $q_\EE=q_\FF$). Then the absolute values $|\cdot|_\EE$ and $|\cdot|_\FF$ are related by
$$ |x_1+x_2\alpha|_\EE = |x_1^2-x_2^2\alpha^2|_\FF \qquad (x_1,x_2\in \FF). $$

We consider the pair of groups $(G,G')=(\PGL(2,\EE),\PGL(2,\FF))$ and study the restriction of representations of $G$ to $G'$. With the same notation as for $\EE$, we consider for a character $\eta$ of $\calO_\FF^{\times}$ and $t\in\CC$ the principal series $(\tau_{\eta,t},\Ind_{B'}^{G'}(\eta_t))$. An intertwining operator $A \in \Hom_{G'}(\pi_{\chi,s}|_{G'},\tau_{\eta,t})$ is called \emph{symmetry breaking operator}.

\subsection{Symmetry breaking kernels for $\PGL(2)$}

We now use the results from Section~\ref{sec:SBOs} to identify symmetry breaking operators for $(G,G')=(\PGL(2,\EE),\PGL(2,\FF))$ with their distribution kernels. Putting $(\rho_V,V):=(\chi_{s+\frac{1}{2}},\CC)$ and $(\rho_W,W):=(\eta_{t+\frac{1}{2}},\CC)$, we have
\begin{equation*} 
\begin{split}
\Hom_{G'}(\pi_{\chi,s}|_{G'},\tau_{\eta,t}) :=& \Hom_{G'}(C^\infty(G/B,\calV),C^\infty(G'/B',\calW))\\
=& \Hom_{G'}(C_c^\infty(G/B,\calV),C^\infty(G'/B',\calW)),\\
\end{split} 
\end{equation*}
so Proposition~\ref{prop:SchwartzKernel} applies. Moreover,  the assumption \eqref{eq:B'NB=G} is satisfied in this situation:

\begin{lemma}
For $(G,G')=(\PGL(2,\EE),\PGL(2,\FF))$ with $B\subseteq G$ the Borel subgroup of upper triangular matrices, $\overline{N}$ the subgroup of lower triangular unipotent matrices and $B'=B\cap G'$, we have $B'\overline{N}B=G$.
\end{lemma}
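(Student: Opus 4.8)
The plan is to reduce everything to the standard description of the big Bruhat cell $\overline{N}B$. Representing $g\in G$ by an invertible matrix $\begin{pmatrix}a&b\\c&d\end{pmatrix}$ over $\EE$, I first record that $g\in\overline{N}B$ precisely when $a\neq0$: if $a\neq0$, left multiplication by the lower unipotent $\begin{pmatrix}1&0\\-c/a&1\end{pmatrix}\in\overline{N}$ turns $g$ into the upper triangular (still invertible, with $(2,2)$-entry $(ad-bc)/a$) matrix $\begin{pmatrix}a&b\\0&(ad-bc)/a\end{pmatrix}\in B$; conversely any product $\overline{n}b$ with $\overline{n}\in\overline{N}$, $b\in B$ has the same $(1,1)$-entry as $b$, which is nonzero. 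This recovers the fact, already noted above, that $\overline{N}B$ is open and dense in $G$, with complement the single $B$-coset of elements having vanishing $(1,1)$-entry.

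It therefore remains to show that this complement is contained in $B'\overline{N}B$. So let $g=\begin{pmatrix}0&b\\c&d\end{pmatrix}$, where $c\neq0$ by invertibility. The key step is to left multiply by $b_0':=\begin{pmatrix}1&1\\0&1\end{pmatrix}$, which lies in $B'=B\cap G'$ because all its entries are in $\FF$: we get $b_0'g=\begin{pmatrix}c&b+d\\c&d\end{pmatrix}$, whose $(1,1)$-entry is $c\neq0$, hence $b_0'g\in\overline{N}B$ by the first step, and so $g=(b_0')^{-1}(b_0'g)\in B'\overline{N}B$. Combined with $\overline{N}B\subseteq B'\overline{N}B$, this gives $G=B'\overline{N}B$.

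There is essentially no genuine obstacle here: the content is just that $B'$ does not stabilize the one point of $G/B\simeq\PP^1(\EE)$ omitted by the big cell, and a single well-chosen unipotent element of $B'$ pushes it back into the big cell. The only point requiring a little care is that we are working in $\PGL$ rather than $\GL$, but the conditions used (nonvanishing of a matrix entry, invertibility of the representative) are insensitive to rescaling the representative, so the argument descends to $\PGL(2,\EE)$ without change.
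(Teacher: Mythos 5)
Your proof is correct and follows essentially the same route as the paper: both arguments identify the complement of the big cell $\overline{N}B$ with the single point $\bigl[\begin{smallmatrix}0\\1\end{smallmatrix}\bigr]$ of $G/B\simeq P^1(\EE)$ (equivalently, the coset of elements with vanishing $(1,1)$-entry) and observe that $B'$ contains an element moving it back into the big cell. Your version merely makes this explicit at the matrix level, including the choice $\begin{pmatrix}1&1\\0&1\end{pmatrix}\in B'$, which is a fine, slightly more computational phrasing of the paper's projective-line argument.
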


\begin{proof}
Recall that one can realize $G/B$ as the projective line $P^1(\EE)$. The action of $G$ on $P^1(\EE)$ is given by
$$ \begin{pmatrix}a&b\\c&d\end{pmatrix}\cdot \big[ \begin{smallmatrix}  x \\y \end{smallmatrix} \big] = \big[ \begin{smallmatrix} a x +by \\cx+dy \end{smallmatrix} \big], \qquad \big[ \begin{smallmatrix}  x \\y \end{smallmatrix} \big] \in P^1(\EE), \begin{pmatrix}a&b\\c&d\end{pmatrix} \in G, $$
and $B$ is the stabilizer of the point $\big[ \begin{smallmatrix}  1 \\ 0 \end{smallmatrix} \big] \in P^1(\EE) $. Hence, the set $\overline{N}B/B$ consists of all $\big[ \begin{smallmatrix}  1 \\ x \end{smallmatrix} \big]\in P^1(\EE)$, $x\in\EE$. Consequently, the complement of $\overline{N}B/B$ in $P^1(\EE)$ only consists of the single point $\big[ \begin{smallmatrix} 0 \\ 1 \end{smallmatrix} \big]$, and $B'$ clearly contains matrices that do not stabilize this point.
\end{proof}

It follows that Proposition~\ref{prop:GeneralCharSBOKernels} also applies, so we obtain
$$ \Hom_{G'}(\pi_{\chi,s}|_{G'},\tau_{\eta,t}) \simeq (\calD'(\overline{N})\otimes V^*\otimes W)^{\Delta(B')}. $$
Since both $V$ and $W$ are one-dimensional, the right hand side is a space of scalar-valued distributions on $\overline{N}$. To make the condition in \eqref{eq:GeneralEquivariancePropertyKernels} more precise, we use Lemma~\ref{lem::gen_fct_distribution}. Note that $\rho_{V^*}=\overline{\chi}_{-s+\frac{1}{2}}$, so the condition for $K\in\calD'(\overline{N})$ to be contained in $(\calD'(\overline{N})\otimes V^*\otimes W)^{\Delta(B')}$ becomes
\begin{equation}\label{eq:MorePreciseEquivarianceConditionForKernels}
	K(\overline{n}(b^{-1}\overline{n})) = \overline{\chi}_{-s+\frac{1}{2}}(p(b^{-1}\overline{n}))\eta_{t+\frac{1}{2}}(b)^{-1}K(\overline{n}) \qquad \mbox{for all }b\in B',\overline{n} \in \overline{N}_b.
\end{equation}

Using the isomorphism
$$ \EE\to \overline{N}, \quad x\mapsto \overline{n}_x:=\begin{pmatrix}1&0\\x&1\end{pmatrix}, $$
we can identify $(\calD'(\overline{N})\otimes V^*\otimes W)^{\Delta(B')}$ with a space of distributions on $\EE$ which we denote $ \calD'(\EE)_{s,t}^{\chi,\eta}$. Then we can compute both $\overline{n}(b^{-1}\overline{n}_x)$ and $p(b^{-1}\overline{n}_x)$ explicitly for $b\in B'$ and $x\in\EE$ such that $b^{-1}\overline{n}_x\in\overline{N}B$ and in this way make \eqref{eq:MorePreciseEquivarianceConditionForKernels} more explicit. We first consider $b=\diag(a,1)$ with $a\in\FF^\times$, then
$$ b^{-1}\overline{n}_x =\begin{pmatrix}a&0\\0&1\end{pmatrix}^{-1}\overline{n}_x = \overline{n}_{ax}\begin{pmatrix}a^{-1}&0\\0&1\end{pmatrix} \qquad \mbox{for all }x\in\EE, $$
so $\overline{N}_b=\overline{N}$ and
$$ \overline{n}(b^{-1}\overline{n}_x) = \overline{n}_{ax} \qquad \mbox{and} \qquad p(b^{-1}\overline{n}_x) = \begin{pmatrix}a^{-1}&0\\0&1\end{pmatrix}, $$
and \eqref{eq:MorePreciseEquivarianceConditionForKernels} becomes
$$ K(ax) = \overline{\chi}_{-s+\frac{1}{2}}(a^{-1})\eta_{t+\frac{1}{2}}(a)^{-1}K(x) \qquad (x\in\EE,a\in\FF^\times). $$
Next, we consider $b=\begin{pmatrix}1&y\\0&1\end{pmatrix}\in N'$ with $y\in\FF$ and decompose
$$ b^{-1}\overline{n}_x = \begin{pmatrix}1&y\\0&1\end{pmatrix}^{-1}\overline{n}_x = \overline{n}_{(1-xy)^{-1}x}\begin{pmatrix}1-xy&-y\\0&(1-xy)^{-1}\end{pmatrix} \qquad \mbox{for }x\neq y^{-1}, $$
so $\overline{N}_b=\overline{N}\setminus\{\overline{n}_{y^{-1}}\}$ and
$$ \overline{n}(b^{-1}\overline{n}_x) = \overline{n}_{(1-xy)^{-1}x} \qquad \mbox{and} \qquad p(b^{-1}\overline{n}_x) = \begin{pmatrix}1-xy&-y\\0&(1-xy)^{-1}\end{pmatrix}, $$
and \eqref{eq:MorePreciseEquivarianceConditionForKernels} becomes
$$ K(\tfrac{x}{1-xy}) = \overline{\chi}_{-s+\frac{1}{2}}((1-xy)^2)K(x) \qquad (y\in\FF,x\in\EE\setminus\{y^{-1}\}). $$
Since $\overline{\chi}_{-s+\frac{1}{2}}=(\chi_{s -\frac{1}{2}})^{-1}$, we obtain the following characterization of distribution kernels of symmetry breaking operators between $\pi_{\chi,s}$ and $\tau_{\eta,t}$:

\begin{proposition}\label{prop:CharSBOKernels}
	$\calD'(\EE)_{s,t}^{\chi,\eta}$ consists of the distributions $u\in\calD'(\EE)$ satisfying the following two conditions (in the sense of distributions):
	\begin{enumerate}[(a)]
		\item\label{prop:CharSBOKernels1} For $a\in \FF^\times$: $u(ax)=\chi_{s-\frac{1}{2}}(a)\eta_{t+\frac{1}{2}}(a)^{-1}u(x)$ for all $x\in \EE$,
		\item\label{prop:CharSBOKernels2} For $b\in \FF^\times$: $u(\frac{x}{1-bx})=\chi_{s -\frac{1}{2}}((1-bx)^2)^{-1}u(x)$ for all $x\in \EE\setminus\{b^{-1}\}$.
	\end{enumerate}
\end{proposition}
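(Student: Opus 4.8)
The plan is to establish the two inclusions between $\calD'(\EE)_{s,t}^{\chi,\eta}$ and the set of $u\in\calD'(\EE)$ satisfying conditions \ref{prop:CharSBOKernels1} and \ref{prop:CharSBOKernels2}, building on the identification made just above: by Propositions~\ref{prop:SchwartzKernel} and \ref{prop:GeneralCharSBOKernels} together with Lemma~\ref{lem::gen_fct_distribution}, a distribution $u\in\calD'(\EE)$ lies in $\calD'(\EE)_{s,t}^{\chi,\eta}$ precisely when it satisfies the equivariance identity \eqref{eq:MorePreciseEquivarianceConditionForKernels} for \emph{every} $b\in B'$. The computations carried out before the statement show that \eqref{eq:MorePreciseEquivarianceConditionForKernels} for $b=\diag(a,1)$ is exactly condition \ref{prop:CharSBOKernels1}, and for $b=\begin{pmatrix}1&y\\0&1\end{pmatrix}$ is exactly condition \ref{prop:CharSBOKernels2}, after using $\overline{\chi}_{-s+\frac{1}{2}}=\chi_{s-\frac{1}{2}}^{-1}$. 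Hence the inclusion ``$\subseteq$'' is immediate: any $u\in\calD'(\EE)_{s,t}^{\chi,\eta}$ satisfies \eqref{eq:MorePreciseEquivarianceConditionForKernels} for all $b\in B'$, in particular for these two families.

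For the reverse inclusion the point is that $B'$ is exactly the set of classes of matrices $b=\begin{pmatrix}a&y\\0&1\end{pmatrix}$ with $a\in\FF^\times$, $y\in\FF$; equivalently, $B'$ is generated by the diagonal torus and the unipotent radical treated in the two special cases above. So it suffices to deduce \eqref{eq:MorePreciseEquivarianceConditionForKernels} for a general $b=\begin{pmatrix}a&y\\0&1\end{pmatrix}$ from conditions \ref{prop:CharSBOKernels1} and \ref{prop:CharSBOKernels2}. A short matrix computation gives, for $x\in\EE$ with $1-xy\neq0$, the decomposition
$$ b^{-1}\overline{n}_x = \overline{n}_{ax/(1-xy)}\begin{pmatrix}\frac{(1-xy)^2}{a}&*\\0&1\end{pmatrix} \qquad\text{in }\PGL(2,\EE), $$
so that $\overline{N}_b=\overline{N}\setminus\{\overline{n}_{y^{-1}}\}$ (with $\overline{N}_b=\overline{N}$ when $y=0$), and \eqref{eq:MorePreciseEquivarianceConditionForKernels} for this $b$ becomes
$$ u\!\left(\frac{ax}{1-xy}\right) = \chi_{s-\frac{1}{2}}\!\big((1-xy)^2\big)^{-1}\chi_{s-\frac{1}{2}}(a)\,\eta_{t+\frac{1}{2}}(a)^{-1}\,u(x). $$
This is obtained by first applying condition \ref{prop:CharSBOKernels2} with parameter $b=y$ to pass from $u(x)$ to $u\big(\tfrac{x}{1-xy}\big)$, and then applying condition \ref{prop:CharSBOKernels1} with parameter $a$ to the argument $\tfrac{x}{1-xy}$. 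Since $x\mapsto\tfrac{x}{1-xy}$ and $x\mapsto ax$ extend to automorphisms of $\PP^1(\EE)$ and restrict to isomorphisms of the relevant open subsets of $\EE$, these substitutions are legitimate on the level of distributions, and we obtain \eqref{eq:MorePreciseEquivarianceConditionForKernels} for all $b\in B'$; hence $u\in\calD'(\EE)_{s,t}^{\chi,\eta}$.

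Everything here is elementary. The only things to watch are (i) matching the excluded point $y^{-1}$ in condition \ref{prop:CharSBOKernels2} with the set $\overline{N}_b$ on which \eqref{eq:MorePreciseEquivarianceConditionForKernels} is required for the corresponding $b$, and (ii) reading all identities in the distributional sense throughout, i.e.\ using push-forward of distributions under the above fractional linear maps rather than pointwise composition of functions. I do not expect a genuine obstacle: once the displayed decomposition for general $b$ is in hand, the argument is a one-line composition. (One could alternatively phrase the reverse inclusion via the cocycle relation $\pi(b_1b_2)=\pi(b_1)\circ\pi(b_2)$ for the partial action of Lemma~\ref{lem::gen_fct_distribution}, reducing \eqref{eq:MorePreciseEquivarianceConditionForKernels} for arbitrary $b\in B'$ to the two generating families; but the direct verification above is cleaner and sidesteps keeping track of the domains $\overline{N}_b$.)
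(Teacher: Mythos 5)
Your proposal is correct and follows essentially the same route as the paper: the paper's argument consists precisely of specializing \eqref{eq:MorePreciseEquivarianceConditionForKernels} to $b=\diag(a,1)$ and to unipotent $b$, leaving implicit the converse that these two families of elements of $B'$ suffice. Your explicit decomposition $b^{-1}\overline{n}_x=\overline{n}_{ax/(1-xy)}\,p(b^{-1}\overline{n}_x)$ for a general $b=\begin{pmatrix}a&y\\0&1\end{pmatrix}\in B'$, together with the observation that the excluded point $y^{-1}$ is exactly $\overline{N}\setminus\overline{N}_b$ so no domain is lost when composing conditions (a) and (b), merely spells out that implicit step and is accurate.
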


\begin{remark}\label{rem:CharSBOKernelsDistributionLanguage}

The two conditions in Proposition~\ref{prop:CharSBOKernels} are meant in the sense of generalized functions. We rephrase them in the language of distributions by applying both sides to a test function $\varphi\in C_c^\infty(\EE)$.
\begin{enumerate}[(a)]
	\item Since $d(ax)=|a|_\EE\,dx$, for all $u\in L^1_{\mathrm{loc}}(\EE)$ and $\varphi\in C_c^\infty(\EE)$ we have
	$$ \langle u(a\cdot),\varphi\rangle = \int_\EE u(ax)\varphi(x)\,dx = |a|_\EE^{-1}\int_\EE u(x)\varphi(a^{-1}x)\,dx = |a|_\EE^{-1}\langle u,\varphi(a^{-1}\cdot)\rangle. $$
	Therefore, condition \eqref{prop:CharSBOKernels1} can be rewritten as
	\begin{equation}
		\langle u,\varphi(a^{-1}\cdot)\rangle = |a|_\EE\chi_{s-\frac{1}{2}}(a)\eta_{t+\frac{1}{2}}(a)^{-1}\langle u,\varphi\rangle = (\chi_{s+\frac{1}{2}}\eta_{t+\frac{1}{2}}^{-1})(a)\langle u,\varphi\rangle \qquad (\varphi\in C_c^\infty(\EE)).\label{eq:AltCharSBOKernels1}
	\end{equation}
	\item For the second condition, note that, since $\frac{dx}{|x|_\EE}$ is a Haar measure on the multiplicative group $\EE^\times$ and $\EE\setminus\EE^\times=\{0\}$ is of measure zero, we have the following integral formula:
	\begin{align*}
		& \int_\EE \psi\left(\frac{x}{1-bx}\right)|1-bx|_\EE^{-2}\,dx = \int_\EE \psi\left(\frac{1}{x^{-1}-b}\right)|x^{-1}-b|_\EE^{-2}|x|_\EE^{-2}\,dx\\
		&\stackrel{(y=x^{-1})}{=} \int_\EE \psi\left(\frac{1}{y-b}\right)|y-b|_\EE^{-2}\,dy
		\stackrel{(z=y-b)}{=} \int_\EE\psi(z^{-1})|z|_\EE^{-2}\,dz \stackrel{(w=z^{-1})}{=} \int_\EE\psi(w)\,dw
	\end{align*}
	for $\psi\in L^1(\EE)$, so
	$$ \langle u(\tfrac{\cdot}{1-b\cdot}),\varphi\rangle = \langle u,|1+b\cdot|_\EE^{-2}\varphi(\tfrac{\cdot}{1+b\cdot})\rangle. $$
	With this formula, condition \eqref{prop:CharSBOKernels2} applied to a test function $\varphi\in C_c^\infty(\EE\setminus\{b^{-1}\})$ reads
	\begin{equation}
		\langle u,|1+b\cdot|_\EE^2\varphi(\tfrac{\cdot}{1+b\cdot})\rangle = \langle u,\chi_{s -\frac{1}{2}}((1-b\cdot)^2)^{-1}\varphi\rangle.\label{eq:AltCharSBOKernels2}
	\end{equation}
\end{enumerate}
\end{remark}

We conclude that the construction and classification of symmetry breaking operators $A\in\Hom_H(\pi_{\chi,s}|_H,\tau_{\eta,t})$ can be carried out in terms of their distribution kernels $K\in\calD'(\EE)_{s,t}^{\chi,\eta}$.

\section{A holomorphic family of symmetry breaking kernels}
\label{sec::hol_sym_fam}

We find a family of symmetry breaking kernels $K_{s,t}^{\chi,\eta}\in\calD'(\EE)_{s,t}^{\chi,\eta}$ that depends meromorphically on $s,t\in\CC$ and normalize it to a holomorphic family $\widetilde{K}_{s,t}^{\chi,\eta}$.

\subsection{Classification of symmetry breaking kernels on $\EE^\times$}\label{sec:ClassSBOKernelsEtimes}

We first classify the possible restrictions of a symmetry breaking kernel $u\in\calD'(\EE)_{s,t}^{\chi,\eta}$ to $\EE^\times=\EE\setminus\{0\}$. For this consider the representative $w$ of the longest Weyl group element defined in \eqref{eq:LongestWeylGrpElt}. Since $w\overline{N}=Nw$, the conditions on a distribution on $G/B$ to be $N'$-invariant are easier to study on the open dense subset $w\overline{N}B/B=NwB/B$ than on $\overline{N}B/B$. We therefore first apply the action of $w$ to a possible symmetry breaking kernel in $(\calD'(G/B,\calV^*)\otimes W)^{B'}$.

In the coordinates $\overline{n}_x\in\overline{N}$, $x\in\EE$, this amounts to decomposing $w\overline{n}_x$ into $\overline{N}B$, if possible. For $x\in\EE^\times$ we find
\begin{equation}
\label{equ::some_calc}
\begin{split}
w\overline{n}_x = \begin{pmatrix}0&1\\1&0\end{pmatrix}  \begin{pmatrix}1&0\\x&1\end{pmatrix} &= \begin{pmatrix}x&1\\1&0\end{pmatrix}= \begin{pmatrix}1&0\\1/x&1\end{pmatrix}  \begin{pmatrix}x&0\\0&-x^{-1}\end{pmatrix} \begin{pmatrix}1&1/x \\0&1\end{pmatrix}\\
& \equiv  \begin{pmatrix}1&0\\1/x&1\end{pmatrix}  \begin{pmatrix}-x^{2}&0\\0&1\end{pmatrix} \begin{pmatrix}1&1/x \\0&1\end{pmatrix},
\end{split}
\end{equation}
where the last identity is modulo multiples of the identity matrix. Hence the action of $w$ on $u\in\calD'(G/B,\calV^*)$ is given by
$$ \pi_{\overline{\chi},-s}(w)u(\overline{n}_x) = \overline{\chi}_{-s+\frac{1}{2}}(-x^2)^{-1}u(\overline{n}_{1/x}) = \chi_{s-\frac{1}{2}}(-1)\cdot\chi_{s-\frac{1}{2}}(x^2)u(\overline{n}_{1/x}) \qquad (x\in\EE^\times). $$
This motivates the definition of the following involutive linear isomorphism:
$$ \kappa:\calD'(\EE^\times)\to\calD'(\EE^\times), \quad (\kappa u)(x):=\chi_{s-\frac{1}{2}}(x^2)u(\tfrac{1}{x}). $$

\begin{lemma}
\label{lem:CharSBOKernelsKappa}
	If $u\in\calD'(\EE)_{s,t}^{\chi,\eta}$, then $v=\kappa(u|_{\EE^\times})$ satisfies
	\begin{enumerate}[(a)]
		\item\label{lem:CharSBOKernelsKappa1} For $a\in \FF^\times$: $v(ax)=(\chi_{s-\frac{1}{2}}\eta_{t+\frac{1}{2}})(a)v(x)$ for all $x\in \EE^\times$,
		\item\label{lem:CharSBOKernelsKappa2} For $y\in \FF$: $v(x+y)=v(x)$ for all $x\in \EE^\times\setminus\{-y\}$.
	\end{enumerate}
\end{lemma}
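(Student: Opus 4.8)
The strategy is to transport the two defining conditions of Proposition~\ref{prop:CharSBOKernels} for $u\in\calD'(\EE)_{s,t}^{\chi,\eta}$ through the isomorphism $\kappa$ and see what they become. Recall $\kappa$ conjugates the involution $x\mapsto x^{-1}$ (twisted by the automorphy factor $\chi_{s-\frac12}(x^2)$), which is exactly the factor appearing in the action of $w$. So the plan is: first restrict $u$ to $\EE^\times$ (both conditions in Proposition~\ref{prop:CharSBOKernels} make sense there, and condition (b) even becomes unconstrained near $0$), then push forward along $\kappa$ and simplify.

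\textbf{Step 1: the scaling condition.} For $a\in\FF^\times$ and $x\in\EE^\times$ compute $(\kappa u)(ax) = \chi_{s-\frac12}(a^2x^2)\,u((ax)^{-1}) = \chi_{s-\frac12}(a^2)\,\chi_{s-\frac12}(x^2)\,u(a^{-1}\cdot x^{-1})$. Now apply Proposition~\ref{prop:CharSBOKernels}\eqref{prop:CharSBOKernels1} with $a$ replaced by $a^{-1}$: $u(a^{-1}\cdot x^{-1}) = \chi_{s-\frac12}(a^{-1})\eta_{t+\frac12}(a^{-1})^{-1}u(x^{-1}) = \chi_{s-\frac12}(a)^{-1}\eta_{t+\frac12}(a)\,u(x^{-1})$. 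Plugging in and using multiplicativity of $\chi_{s-\frac12}$, the factor $\chi_{s-\frac12}(a^2)\chi_{s-\frac12}(a)^{-1} = \chi_{s-\frac12}(a)$, so $(\kappa u)(ax) = \chi_{s-\frac12}(a)\eta_{t+\frac12}(a)\,\chi_{s-\frac12}(x^2)u(x^{-1}) = (\chi_{s-\frac12}\eta_{t+\frac12})(a)\,v(x)$, which is \eqref{lem:CharSBOKernelsKappa1}.

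\textbf{Step 2: the translation condition.} This is the part requiring care. One must see that condition \eqref{prop:CharSBOKernels2} of Proposition~\ref{prop:CharSBOKernels} — equivariance under the fractional-linear maps $x\mapsto x/(1-bx)$ with $b\in\FF^\times$ — becomes, after conjugation by inversion $x\mapsto x^{-1}$, exactly invariance under translations $x\mapsto x+b$ on $\EE^\times$ (with the automorphy factor cancelling). Concretely, for $b\in\FF^\times$ and $x\in\EE^\times$ with $x\neq -b$ (equivalently $x^{-1}\neq -b^{-1}$), one has the identity of fractional-linear transformations $\bigl(\frac{1}{x}\bigr)\big/\bigl(1-b\cdot\frac1x\bigr) = \frac{1}{x-b} = \frac{1}{(x+(-b))}$... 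I should instead set it up so that $v(x+y) = \chi_{s-\frac12}((x+y)^2)\,u((x+y)^{-1})$ and recognize $(x+y)^{-1} = \frac{x^{-1}}{1 - (-y)\,x^{-1}}$ — wait, $\frac{x^{-1}}{1+yx^{-1}} = \frac{1}{x+y}$, yes — so with $b=-y\in\FF$ (the case $y=0$ being trivial), Proposition~\ref{prop:CharSBOKernels}\eqref{prop:CharSBOKernels2} applied at the point $x^{-1}$ gives $u\bigl(\frac{x^{-1}}{1+yx^{-1}}\bigr) = \chi_{s-\frac12}\bigl((1+yx^{-1})^2\bigr)^{-1}u(x^{-1})$. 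Multiplying by $\chi_{s-\frac12}((x+y)^2)$ and noting $\chi_{s-\frac12}((x+y)^2)\cdot\chi_{s-\frac12}((1+yx^{-1})^2)^{-1} = \chi_{s-\frac12}\bigl(\tfrac{(x+y)^2}{(1+yx^{-1})^2}\bigr) = \chi_{s-\frac12}(x^2)$, one gets $v(x+y) = \chi_{s-\frac12}(x^2)u(x^{-1}) = v(x)$, which is \eqref{lem:CharSBOKernelsKappa2}. Throughout, all equalities of generalized functions should be read via the test-function reformulations in Remark~\ref{rem:CharSBOKernelsDistributionLanguage}; the substitutions $x\mapsto x^{-1}$ and $x\mapsto x+y$ are measure-preserving up to the explicit Jacobians already recorded there, and restricting to $\EE^\times$ removes the excluded point $\{-y\}$ from consideration.

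\textbf{Main obstacle.} The only real subtlety is bookkeeping the automorphy factors and making sure the fractional-linear substitutions are handled rigorously in the distributional sense rather than just pointwise — i.e. translating Step~2 into the pairing-with-test-functions language of \eqref{eq:AltCharSBOKernels1}–\eqref{eq:AltCharSBOKernels2}, and checking that $\kappa$, which is a priori only defined as a map of distributions on $\EE^\times$, interacts correctly with these changes of variables. The algebra of the $\chi_{s-\frac12}$-factors collapsing to give translation-invariance is the conceptual heart, but it is a short computation once the setup is in place.
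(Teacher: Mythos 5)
Your proposal is correct and follows essentially the same route as the paper: both parts are obtained by transporting conditions \eqref{prop:CharSBOKernels1} and \eqref{prop:CharSBOKernels2} of Proposition~\ref{prop:CharSBOKernels} through $\kappa$ (applying the scaling condition with $a^{-1}$, and the fractional-linear condition at the point $x^{-1}$ with $b=-y$), with the $\chi_{s-\frac12}$-factors collapsing exactly as in the paper's computation. The only difference is cosmetic: the paper works directly in the generalized-function notation without the extra remarks on test-function bookkeeping.
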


\begin{proof}
	If $u\in\calD'(\EE)_{s,t}^{\chi,\eta}$, then for $a\in \FF^\times$:
	\begin{align*}
		(\kappa u)(ax) &= \chi_{s-\frac{1}{2}}((ax)^2)u(\tfrac{1}{ax})\\
		&= \chi_{s-\frac{1}{2}}(a^2)\chi_{s-\frac{1}{2}}(x^2)\chi_{s-\frac{1}{2}}(\tfrac{1}{a})\eta_{t+\frac{1}{2}}(\tfrac{1}{a})^{-1}u(\tfrac{1}{x})\\
		&= \chi_{s-\frac{1}{2}}(a)\eta_{t+\frac{1}{2}}(a)(\kappa u)(x),
	\end{align*}
	and for $y\in \FF$:
	\begin{align*}
		(\kappa u)(x+y) &= \chi_{s-\frac{1}{2}}((x+y)^2)u(\tfrac{1}{x+y}) = \chi_{s-\frac{1}{2}}((x+y)^2)u(\tfrac{1/x}{1+y/x})\\
		&= \chi_{s-\frac{1}{2}}((x+y)^2)\chi_{s-\frac{1}{2}}((1+\tfrac{y}{x})^2)^{-1}u(\tfrac{1}{x})\\
		&= \chi_{s-\frac{1}{2}}(x^2)u(\tfrac{1}{x}) = (\kappa u)(x).\qedhere
	\end{align*}
\end{proof}

It follows from Lemma~\ref{lem:CharSBOKernelsKappa}~\eqref{lem:CharSBOKernelsKappa2} that if $u\in\calD'(\EE)_{s,t}^{\chi,\eta}$, then $v:=\kappa(u|_{\EE^\times})$ is of the form $v(x)=w(x_2)$  for some $v_1\in\calD'(\FF)$,  where $x=x_1+x_2\alpha \in \EE^{\times}$. 
Moreover, Lemma~\ref{lem:CharSBOKernelsKappa}~\eqref{lem:CharSBOKernelsKappa1} implies that $w$ is homogeneous of degree $\chi_{s-\frac{1}{2}}|_{\FF^\times}\cdot\eta_{t+\frac{1}{2}}$. Homogeneous distribution are classified in \cite[\S2.3]{GG63}, and we recall the result in Appendix~\ref{app:HomogeneousDistributions}. From Theorem~\ref{thm:HomogeneousDistributions} we conclude that unless $\chi|_{\calO^\times_\FF}\cdot\eta=1$ and $2s+t+\frac{1}{2}\in\frac{2\pi i}{\ln q_\FF}\ZZ$, i.e. $(\chi,s,\eta,t)\in\SplusT$, we have
$$ v(x_1+\alpha x_2) = \const\times(\chi_{s-\frac{1}{2}}\eta_{t+\frac{1}{2}})(x_2) $$
and hence $u|_{\EE^\times}=\kappa(v)$ is of the form
$$ u(x) = \const\times\chi_{s-\frac{1}{2}}(x^2)(\chi_{s-\frac{1}{2}}\eta_{t+\frac{1}{2}})((\tfrac{1}{x})_2), $$
where $\frac{1}{x}=(\frac{1}{x})_1+(\frac{1}{x})_2\alpha$ with $(\frac{1}{x})_1,(\frac{1}{x})_2\in\FF$. Note that
$$ \frac{1}{x} = \frac{1}{x_1+x_2\alpha} = \frac{x_1-x_2\alpha}{x_1^2-x_2^2\alpha^2}, $$
so $(\frac{1}{x})_2=(x_1^2-x_2^2\alpha^2)^{-1}x_2$ and hence
\begin{equation*}
	u(x) = \const\times\chi_{s-\frac{1}{2}}(x^2)(\chi_{s-\frac{1}{2}}\eta_{t+\frac{1}{2}})(\tfrac{x_2}{x_1^2-x_2^2\alpha^2}).
\end{equation*}
In the case where where $(\chi,s,\eta,t)\in\SplusT$ we have by the same arguments that
$$ v(x_1+\alpha x_2) = \const\times\delta(x_2) $$
and hence $u|_{\EE^\times}=\kappa(v)$ is of the form
\begin{align*}
	u(x) &= \const\times\chi_{s-\frac{1}{2}}(x^2)\delta((\tfrac{1}{x})_2) = \const\times\chi_{s-\frac{1}{2}}(x^2)\delta(\tfrac{x_2}{x_1^2-x_2^2\alpha^2})\\
	&= \const\times|x_1^2-x_2^2\alpha^2|_\FF\chi_{s-\frac{1}{2}}(x^2)\delta(x_2) = \const\times\chi_{s}(x^2)\delta(x_2)= \const\times\chi_{s}(x_1^2)\delta(x_2).
\end{align*}

If we denote by $\calD'(\EE^\times)_{s,t}^{\chi,\eta}$ the space of all distributions $u\in\calD'(\EE^\times)$ satisfying the conditions of Proposition~\ref{prop:CharSBOKernels} wherever it makes sense, this shows:

\begin{proposition}\label{prop:CharacterizationSBOKernelsPGL2}
$\dim\calD'(\EE^\times)_{s,t}^{\chi,\eta}=1$ for all $(\chi,s,\eta,t)$. More precisely, $\calD'(\EE^\times)_{s,t}^{\chi,\eta}$ is spanned by the distribution
\begin{align*}
	u(x) = \begin{cases}\eta_{t+\frac{1}{2}}(x_1^2)^{-1}\delta(x_2)&\mbox{if $(\chi,s,\eta,t)\in\SplusT$,}\\\chi_{s-\frac{1}{2}}(x^2)(\chi_{s-\frac{1}{2}}\eta_{t+\frac{1}{2}})(\tfrac{x_2}{x_1^2-x_2^2\alpha^2})&\mbox{else.}\end{cases}
\end{align*}
\end{proposition}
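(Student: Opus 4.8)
The plan is to combine the reduction already carried out in the preceding paragraphs with the classification of homogeneous distributions on $\FF$ recalled in the appendix (Theorem~\ref{thm:HomogeneousDistributions}). The discussion before the proposition establishes, via the map $\kappa$ and Lemma~\ref{lem:CharSBOKernelsKappa}, that any $u\in\calD'(\EE^\times)_{s,t}^{\chi,\eta}$ gives rise to $v=\kappa(u)\in\calD'(\EE^\times)$ which is $\FF$-translation invariant in the $x_1$-direction and homogeneous of degree $\chi_{s-\frac12}|_{\FF^\times}\cdot\eta_{t+\frac12}$ under scaling by $\FF^\times$. Hence $v$ descends to a distribution $v_1$ on $\FF$ (the $x_2$-coordinate) that is homogeneous of that same degree. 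So the first step is simply to assemble these facts into a clean statement: the assignment $u\mapsto v_1$ is a linear injection from $\calD'(\EE^\times)_{s,t}^{\chi,\eta}$ into the space of homogeneous distributions on $\FF$ of degree $\chi_{s-\frac12}|_{\FF^\times}\cdot\eta_{t+\frac12}$.

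Next I would invoke Theorem~\ref{thm:HomogeneousDistributions}: the space of homogeneous distributions on $\FF$ of a prescribed degree $\mu$ is one-dimensional, spanned by $|x_2|_\FF^{\mu}$-type functions when $\mu$ is not of the ``exceptional'' form, and by $\delta$ precisely when the character is $|\cdot|_\FF^{-1}$ composed appropriately — which in our normalization is exactly the condition $\chi|_{\calO_\FF^\times}\cdot\eta=1$ together with $2s+t+\frac12\in\frac{2\pi i}{\ln q_\FF}\ZZ$, i.e. $(\chi,s,\eta,t)\in\SplusT$. This immediately gives $\dim\calD'(\EE^\times)_{s,t}^{\chi,\eta}\le 1$. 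The explicit spanning distribution is then obtained by applying $\kappa^{-1}=\kappa$ to the generator $v(x)=(\chi_{s-\frac12}\eta_{t+\frac12})(x_2)$ (resp. $\delta(x_2)$) and using the elementary identity $(\frac1x)_2=x_2/(x_1^2-x_2^2\alpha^2)$ together with $|x_1^2-x_2^2\alpha^2|_\FF=|x|_\EE$ and $\chi_{s-\frac12}(x^2)\cdot|x|_\EE=\chi_s(x^2)$; in the $\SplusT$ case one further simplifies $\chi_s(x^2)\delta(x_2)=\chi_s(x_1^2)\delta(x_2)$ and rewrites $\chi_s(x_1^2)=\eta_{t+\frac12}(x_1^2)^{-1}$ using the defining relation $\chi|_{\calO_\FF^\times}\cdot\eta=1$ and $2s+t+\frac12\in\frac{2\pi i}{\ln q_\FF}\ZZ$ (so that $\chi_s|_{\FF^\times}=\eta_{t+\frac12}^{-1}$ as characters of $\FF^\times$). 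All of this is carried out in the text preceding the proposition, so the proof is essentially a matter of citing it.

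The only thing that still requires an argument is the lower bound $\dim\calD'(\EE^\times)_{s,t}^{\chi,\eta}\ge 1$, i.e. that the candidate distribution $u$ displayed in the statement really does lie in $\calD'(\EE^\times)_{s,t}^{\chi,\eta}$ — that is, satisfies conditions (a) and (b) of Proposition~\ref{prop:CharSBOKernels} on $\EE^\times$. For this I would run the computation of Lemma~\ref{lem:CharSBOKernelsKappa} in reverse: since $v=\kappa(u)$ is by construction $\FF$-translation invariant and homogeneous of the right degree, the identities in the proof of Lemma~\ref{lem:CharSBOKernelsKappa} (which are equivalences, not just implications) show that $u=\kappa(v)$ satisfies (a) and (b). The main (and really only) obstacle is bookkeeping: one must check that the exceptional case in Theorem~\ref{thm:HomogeneousDistributions} is triggered by exactly the condition $(\chi,s,\eta,t)\in\SplusT$ as defined in the introduction, matching the characters under the chosen normalizations of $|\cdot|_\FF$ and of the induction (the $\frac12$-shift). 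Once that identification is made, everything else is a direct substitution.
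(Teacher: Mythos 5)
Your proposal follows essentially the same route as the paper: the paper's "proof" is precisely the discussion preceding the proposition, where the involution $\kappa$ and Lemma~\ref{lem:CharSBOKernelsKappa} reduce the problem to $\FF$-translation-invariant distributions homogeneous of degree $\chi_{s-\frac{1}{2}}|_{\FF^\times}\cdot\eta_{t+\frac{1}{2}}$, Theorem~\ref{thm:HomogeneousDistributions} gives one-dimensionality (with the $\delta$ case occurring exactly for $(\chi,s,\eta,t)\in\SplusT$), and applying $\kappa$ back with $(\tfrac{1}{x})_2=x_2/(x_1^2-x_2^2\alpha^2)$ yields the displayed generator. Your extra remark that the lower bound follows by running the (reversible) computations of Lemma~\ref{lem:CharSBOKernelsKappa} backwards, using that $\kappa$ is involutive, is a correct and slightly more explicit rendering of what the paper leaves implicit.
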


\subsection{The meromorphic family}
\label{sec::mero_family}

We define a function $K_{s,t}^{\chi,\eta}$ on $\EE\setminus\FF$ by
$$ K_{s,t}^{\chi,\eta}(x) = \chi_{s-\frac{1}{2}}(x^2)(\chi_{s-\frac{1}{2}}\eta_{t+\frac{1}{2}})(\tfrac{x_2}{x_1^2-x_2^2\alpha^2}),\qquad \mbox{where } x=x_1+x_2\alpha \in \EE^{\times}. $$
Note that since $x^2=(x_1^2-x_2^2\alpha^2)\frac{x_1+x_2\alpha}{x_1-x_2\alpha}$, this can also be written as
\begin{align*}
	K_{s,t}^{\chi,\eta}(x) &= \chi\left(\frac{x_1+x_2\alpha}{x_1-x_2\alpha}\right)\chi_{s-\frac{1}{2}}(x_2)\eta_{t+\frac{1}{2}}(\tfrac{x_2}{x_1^2-x_2^2\alpha^2})\\
	&= 	\chi\left(\frac{x_1+x_2\alpha}{x_1-x_2\alpha}\right)\chi_0(x_2)\eta_0(\tfrac{x_2}{x_1^2-x_2^2\alpha^2})|x|_\EE^{-t-\frac{1}{2}}|x_2|_\FF^{2s+t-\frac{1}{2}}.
\end{align*}
Note that $K_{s,t}^{\chi,\eta}$ only depends on $s$ modulo $\frac{2\pi i}{\ln q_\EE}\ZZ$ and on $t$ modulo $\frac{2\pi i}{\ln q_\FF}\ZZ$.

Since
$$ |K_{s,t}^{\chi,\eta}(x)|=|x|_\EE^{-\Re(t+\frac{1}{2})}|x_2|_\FF^{\Re(2s+t-\frac{1}{2})}, $$
it is clear that $K_{s,t}^{\chi,\eta}$ is locally integrable on $\EE$ for $-\Re(t+\frac{1}{2}),\Re(2s+t-\frac{1}{2})\geq0$. In fact, it is locally integrable for a slightly larger range of parameters:

\begin{lemma}\label{lem:KernelL1loc}
$K_{s,t}^{\chi,\eta}$ is locally integrable on $\EE$ if and only if $\Re(2s\pm t+\frac{1}{2})>0$.
\end{lemma}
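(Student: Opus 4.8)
The plan is to read off local integrability directly from the explicit expression for $|K_{s,t}^{\chi,\eta}|$ recorded just above, namely $|K_{s,t}^{\chi,\eta}(x)| = |x|_\EE^{-A}|x_2|_\FF^{B}$ with the real exponents $A:=\Re(t)+\tfrac12$ and $B:=\Re(2s)+\Re(t)-\tfrac12$. Since $B+1=\Re(2s+t+\tfrac12)$ and $B+2-2A=\Re(2s-t+\tfrac12)$, the assertion is equivalent to: $|K_{s,t}^{\chi,\eta}|\in L^1_{\mathrm{loc}}(\EE)$ if and only if $B>-1$ and $2A-B-2<0$. The first step is an elementary comparison of absolute values: writing $\|x\|:=\max\{|x_1|_\FF,|x_2|_\FF\}$ for $x=x_1+x_2\alpha$, one has $C^{-1}\|x\|^2\le|x|_\EE\le\|x\|^2$ for all $x\in\EE$, with a constant $C\ge1$ depending only on $\EE/\FF$. (The upper bound holds because $\|x\|\le1$ forces $x\in\calO_\EE$, and one rescales by $\FF^\times$; the lower bound follows, in the ramified case, from the fact that $\nu_\FF(x_1^2)$ and $\nu_\FF(x_2^2\alpha^2)$ have opposite parity, so that in fact $|x|_\EE=\max\{|x_1|_\FF^2,q_\FF^{-1}|x_2|_\FF^2\}$, and, in the unramified case, from a lower bound for the distance of the non-square unit $\alpha^2$ from the squares of $\calO_\FF$.) As local integrability is insensitive to multiplying the integrand by a function bounded above and below by positive constants, $|K_{s,t}^{\chi,\eta}|$ may be replaced by $h(x):=\|x\|^{-2A}|x_2|_\FF^{B}$, the measure $dx$ on $\EE=\FF\oplus\FF\alpha$ being, up to a constant, the product of the Haar measures on the two $\FF$-factors.

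Next I would check local integrability of $h$ point by point. Off $\FF=\{x_2=0\}$ (a null set in $\EE$) the function $h$ is locally constant and strictly positive, hence bounded on every compact subset of $\EE\setminus\FF$, so only points of $\FF$ need attention. Near a point $x_0\in\FF^\times$ one has $\|x\|=|x_0|_\FF$ and $|x_2|_\FF\le\epsilon$ on a sufficiently small ball, so the integral of $h$ over that ball is a positive multiple of $\int_{|x_2|_\FF\le\epsilon}|x_2|_\FF^{B}\,dx_2$, which is finite precisely when $B>-1$.

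The heart of the matter is the integral over a neighbourhood of $0$, i.e.\ over $\calO_\EE=\{\|x\|\le1\}$. I would stratify it by the sets $S_n=\{\|x\|=q_\FF^{-n}\}$, $n\ge0$, on which $\|x\|^{-2A}=q_\FF^{2nA}$ is constant. From $\{\|x\|\le q_\FF^{-n}\}=\varpi_\FF^{n}\calO_\FF\times\varpi_\FF^{n}\calO_\FF$ and Fubini, $\int_{\|x\|\le q_\FF^{-n}}|x_2|_\FF^{B}\,dx=q_\FF^{-n}\int_{|x_2|_\FF\le q_\FF^{-n}}|x_2|_\FF^{B}\,dx_2$, which is infinite for $B\le-1$ and, for $B>-1$, equals $c_1 q_\FF^{-n(B+2)}$ with $c_1:=(1-q_\FF^{-1})/(1-q_\FF^{-(B+1)})>0$; subtracting consecutive values, $\int_{S_n}|x_2|_\FF^{B}\,dx=c_1(1-q_\FF^{-(B+2)})q_\FF^{-n(B+2)}$, and therefore
$$ \int_{\calO_\EE}h(x)\,dx=c_1(1-q_\FF^{-(B+2)})\sum_{n\ge0}q_\FF^{n(2A-B-2)}, $$
a geometric series which converges if and only if $2A-B-2<0$.

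Combining the two local contributions, $h\in L^1_{\mathrm{loc}}(\EE)$ exactly when $B>-1$ and $2A-B-2<0$ (the requirement near $\FF^\times$ being subsumed by the one at $0$), hence the same holds for $K_{s,t}^{\chi,\eta}$, and this is precisely the condition $\Re(2s\pm t+\tfrac12)>0$; conversely, whenever one of the two strict inequalities fails the integral over $\calO_\EE$ diverges (or, when $B\le-1$, already a ball around a point of $\FF^\times$), which gives the ``only if'' direction. The only ingredient above that is not a routine $p$-adic integral of the kind already computed in Section~\ref{sec::mero_family} is the comparison $C^{-1}\|x\|^2\le|x|_\EE\le\|x\|^2$ of the first step; in particular its lower bound in the unramified case is where I would expect to need the most care.
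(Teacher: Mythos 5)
Your argument is correct, and it is organized differently from the paper's. The paper proves the lemma only implicitly, by citing the explicit evaluations of $\langle K_{s,t}^{\chi,\eta},\varphi\rangle$ carried out in Sections~\ref{sec:SecondTerm} and \ref{sec:ThirdTerm}, where the decomposition of $\EE$ into $\FF\times(\FF\setminus V)$, $(\FF\setminus V)\times V$ and $V\times V$ and the resulting geometric series in $q_\FF^{-(2s\pm t+\frac{1}{2})}$ already exhibit the convergence range; your proof instead works directly with $|K_{s,t}^{\chi,\eta}|$ and is self-contained. The one ingredient not in the paper is the two-sided comparison $C^{-1}\|x\|^2\le|x|_\EE\le\|x\|^2$ with $\|x\|=\max(|x_1|_\FF,|x_2|_\FF)$, and it does hold: in the ramified case by the parity-of-valuation argument you give (indeed $|x|_\EE=\max(|x_1|_\FF^2,q_\FF^{-1}|x_2|_\FF^2)$), and in the unramified case because $u\mapsto|u^2-\alpha^2|_\FF$ is a positive continuous function on the compact set $\calO_\FF$ (as $\alpha^2$ is a nonsquare unit), so it is bounded below, and homogeneity under scaling by $\FF^\times$ reduces to $\|x\|=1$. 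With that comparison your point-by-point analysis (bounded away from $\FF$, the condition $B>-1$ near points of $\FF^\times$, and the stratified computation over $\{\|x\|\le q_\FF^{-n}\}$ near $0$ giving convergence iff $2A-B-2<0$) is a clean and complete proof of both directions, since all integrands are positive; the exponent bookkeeping $B+1=\Re(2s+t+\tfrac12)$, $B+2-2A=\Re(2s-t+\tfrac12)$ checks out. Two cosmetic remarks: the identity $\calO_\EE=\{\|x\|\le1\}$ can fail in residue characteristic $2$ (e.g.\ for the unramified extension, $\calO_\FF[\alpha]$ may be a proper subring of $\calO_\EE$), but this is immaterial since any compact neighbourhood of $0$ suffices; and the product measure $dx_1\,dx_2$ is indeed a Haar measure on $(\EE,+)$, hence proportional to $dx$, exactly as the paper itself uses when identifying $\EE\simeq\FF\times\FF$. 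Compared with the paper's route, yours isolates the integrability statement from the character-sum computations and handles the ``only if'' direction more transparently; the paper's route gets the lemma for free from work it needs anyway for the meromorphic continuation.
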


\begin{proof}
This will follow from the computations in Sections~\ref{sec:SecondTerm} and \ref{sec:ThirdTerm}.
\end{proof}

In particular, $K_{s,t}^{\chi,\eta}$ is a distribution in $\calD'(\EE)$ for $\Re(2s\pm t+\frac{1}{2})>0$. To meromorphically extend $K_{s,t}^{\chi,\eta}\in\calD'(\EE)$ in $s,t\in\CC$ and determine its residues, we let $\varphi\in C_c^\infty(\EE)$ and consider $\langle K_{s,t}^{\chi,\eta},\varphi\rangle$. Since $\varphi$ is locally constant with compact support, there is a neighborhood $V=\varpi_\FF^n\calO_\FF$ of $0$ in $\FF$ such that $\varphi$ is constant on each coset $x+(V\times V)$. Here, we identify $\EE\simeq\FF\times\FF$ by $x_1+x_2\alpha\mapsto(x_1,x_2)$. We write
\begin{multline}
	\langle K_{s,t}^{\chi,\eta},\varphi\rangle = \int_{\FF\times(\FF\setminus V)}K_{s,t}^{\chi,\eta}(x)\varphi(x)\,dx + \int_{(\FF\setminus V)\times V} K_{s,t}^{\chi,\eta}(x)\varphi(x)\,dx + \int_{V \times V}K_{s,t}^{\chi,\eta}(x)\varphi(x)\,dx\label{eq:MeroContThreeTerms}
\end{multline}
and analyze the three terms separately.

\subsubsection{The first term}

The integral over $\FF\times(\FF\setminus V)$ turns out to be holomorphic in $s,t\in\CC$.

\begin{proposition}\label{prop:FirstTerm}
	The map
	$$ (s,t)\mapsto\int_{\FF\times(\FF\setminus V)}K_{s,t}^{\chi,\eta}(x)\varphi(x)\,dx $$
	defines a holomorphic function in $(s,t)\in\CC^2$.
\end{proposition}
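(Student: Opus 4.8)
The plan is to show that the integrand $K_{s,t}^{\chi,\eta}(x)\varphi(x)$, as a function of $x\in\FF\times(\FF\setminus V)$, is supported in a compact set on which the kernel stays bounded away from the singular locus, and then to deduce holomorphy from the fact that the integral is a finite sum of integrals over balls on which the integrand depends on $(s,t)$ through a product of factors of the form $|c|_\FF^{s}$, $|c|_\FF^{t}$ with $c$ ranging over a \emph{finite} set of nonzero constants.

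First I would record the estimate already noted in the text, namely
$$ |K_{s,t}^{\chi,\eta}(x)| = |x|_\EE^{-\Re(t+\frac12)}\,|x_2|_\FF^{\Re(2s+t-\frac12)} = |x_1^2-x_2^2\alpha^2|_\FF^{-\Re(t+\frac12)}\,|x_2|_\FF^{\Re(2s+t-\frac12)}. $$
On the region of integration we have $x_2\in\FF\setminus V$, hence $|x_2|_\FF\geq q_\FF^{-n+1}>0$, so the factor $|x_2|_\FF^{\Re(2s+t-\frac12)}$ is harmless. Because $\varphi$ has compact support, say contained in $\varpi_\FF^{-m}\calO_\FF\times\varpi_\FF^{-m}\calO_\FF$, the $x_2$-variable is also bounded above, so $x_2$ ranges over the compact set $(\varpi_\FF^{-m}\calO_\FF)\setminus V$, which is a finite disjoint union of cosets of $V=\varpi_\FF^n\calO_\FF$. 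On each such coset $\varphi$ is constant, $|x_2|_\FF$ is constant, and $\chi_0(x_2)$, $\eta_0(x_2/(x_1^2-x_2^2\alpha^2))$, $\chi((x_1+x_2\alpha)/(x_1-x_2\alpha))$ are locally constant in a neighborhood that we may shrink $V$ to accommodate; so after a further finite subdivision the only $(s,t)$-dependence is through $|x|_\EE^{-t-\frac12}=|x_1^2-x_2^2\alpha^2|_\FF^{-t-\frac12}$ and the constant $|x_2|_\FF^{2s+t-\frac12}$.

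Next I would handle the $x_1$-integral. Fixing $x_2\neq0$, the polynomial $x_1^2-x_2^2\alpha^2$ has no root $x_1\in\FF$ (since $\alpha\notin\FF$), so $|x_1^2-x_2^2\alpha^2|_\FF$ is bounded below by a positive constant depending on $x_2$ as $x_1$ ranges over the compact set $\varpi_\FF^{-m}\calO_\FF$; indeed $|x_1^2-x_2^2\alpha^2|_\FF=|x_1-x_2\alpha|_\EE\,|x_1+x_2\alpha|_\EE\geq |x_2\alpha|_\EE^2\cdot(\text{const})$ away from a compact neighborhood, and is bounded and bounded away from $0$ throughout. Hence $x_1\mapsto|x_1^2-x_2^2\alpha^2|_\FF$ takes only finitely many values on $\varpi_\FF^{-m}\calO_\FF$ after a finite partition into balls (it is a locally constant function on the complement of its zero set, which is empty). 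Partitioning the domain accordingly, the entire integral in the proposition becomes a finite sum $\sum_j c_j\, a_j^{-t-\frac12} b_j^{2s+t-\frac12}$ with $c_j\in\CC$, $a_j,b_j\in q_\FF^{\ZZ}$ positive reals; each summand is entire in $(s,t)$, so the sum is holomorphic on $\CC^2$.

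The main obstacle is bookkeeping rather than substance: one must make sure that after all the successive finite refinements of the partition the functions $\chi$, $\eta$, $|x_2|_\FF$, $|x_1^2-x_2^2\alpha^2|_\FF$ are \emph{simultaneously} constant on each piece, so that no residual $x$-dependence remains that could interact badly with $(s,t)$. This is guaranteed by local constancy of all these functions on the compact region (which avoids $x_2=0$ and the empty zero set of $x_1^2-x_2^2\alpha^2$), together with compactness forcing finiteness of the cover; I would phrase the argument so as to choose $V$ small enough at the outset that $\varphi$, $\chi(\cdot)$ and $\eta(\cdot)$ are all constant on cosets of $V\times V$, reducing the whole matter to a manifestly finite sum of exponential monomials in $q_\FF^{s}$ and $q_\FF^{t}$.
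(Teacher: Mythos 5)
Your proof is correct and follows essentially the same route as the paper: the whole point is that on $\FF\times(\FF\setminus V)$ intersected with $\supp\varphi$ both $|x_2|_\FF$ and $|x|_\EE=|x_1^2-x_2^2\alpha^2|_\FF$ are bounded above and away from $0$, so the integral converges absolutely and locally uniformly for all $(s,t)$ (the paper stops there, while you make holomorphy explicit via the standard compactness/local-constancy reduction to a finite sum of monomials in $q_\FF^{\pm s}$, $q_\FF^{\pm t}$, which is a fine way to finish). Two cosmetic remarks: refine the partition of the \emph{fixed} domain rather than ``shrinking $V$'' (that would change the region of integration), and note that with the paper's normalization $|x_1\pm x_2\alpha|_\EE=|x_1^2-x_2^2\alpha^2|_\FF$, so your parenthetical factorization $|x_1^2-x_2^2\alpha^2|_\FF=|x_1-x_2\alpha|_\EE\,|x_1+x_2\alpha|_\EE$ is off by a square --- harmless, since the lower bound you need already follows from continuity and nonvanishing on a compact set.
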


\begin{proof}
	For $(x_1,x_2)\in\FF\times(\FF\setminus V)$ we have
	\begin{equation*}
		|x|_\EE = |x_1^2-x_2^2\alpha^2|_\FF > |\varpi_\FF^{2n}\alpha^2|_\FF > 0 \qquad \mbox{and} \qquad |x_2|_\FF > |\varpi_\FF^n|_\FF > 0,
	\end{equation*}
	so the integral converges absolutely for all $s,t\in\CC$.
\end{proof}

\subsubsection{The second term}\label{sec:SecondTerm}

In this subsection we prove the following statement for the integral over $(\FF\setminus V)\times V$ in \eqref{eq:MeroContThreeTerms}:

\begin{proposition}\label{prop:SecondTerm}
	The map
	$$ (s,t)\mapsto\int_{(\FF\setminus V)\times V} K_{s,t}^{\chi,\eta}(x)\varphi(x)\,dx $$
	extends to a meromorphic function in $(s,t)\in\CC^2$.
	\begin{itemize}
		\item If $\chi|_{\calO_\FF^{\times}}\cdot\eta\neq1$ it is holomorphic in $(s,t)\in\CC^2$.
		\item If $\chi|_{\calO_\FF^{\times}}\cdot\eta=1$ it is holomorphic in $\{(s,t)\in\CC^2:2s+t+\frac{1}{2}\not\in\frac{2\pi i}{\ln q_\FF}\ZZ\}$ with residue at $2s+t+\frac{1}{2}\in\frac{2\pi i}{\ln q_\FF}\ZZ$ given by
		$$ \lim_{2s+t+\frac{1}{2}\to0}(1-q_\FF^{-(2s+t+\frac{1}{2})})\int_{(\FF\setminus V)\times V} K_{s,t}^{\chi,\eta}(x)\varphi(x)\,dx = (1-q_\FF^{-1})\int_{\FF\setminus V}\eta_{t+\frac{1}{2}}(x_1^2)^{-1}\varphi(x_1)\,dx_1. $$
	\end{itemize}
\end{proposition}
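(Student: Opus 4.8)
The plan is to evaluate $\int_{(\FF\setminus V)\times V}K_{s,t}^{\chi,\eta}(x)\varphi(x)\,dx$ as an iterated integral, performing the integration in the variable $x_2$ (the one forced to be $\varpi_\FF$-adically small) first. On $(\FF\setminus V)\times V$ we have $|x_2|_\FF\le q_\FF^{-n}$ and $|x_1|_\FF\ge q_\FF^{-(n-1)}$, so since $|\alpha^2|_\FF\le 1$ a short ultrametric computation gives $|x_2^2\alpha^2|_\FF<|x_1^2|_\FF$, whence $|x|_\EE=|x_1^2-x_2^2\alpha^2|_\FF=|x_1|_\FF^2$ and $\nu_\FF(x_1^2-x_2^2\alpha^2)=2\nu_\FF(x_1)$; moreover $\varphi(x_1+x_2\alpha)=\varphi(x_1)$ since $\varphi$ is constant on cosets of $V\times V$ and $x_2\in V$. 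By the alternative form of $K_{s,t}^{\chi,\eta}$ recorded just before Lemma~\ref{lem:KernelL1loc}, on this region
$$ K_{s,t}^{\chi,\eta}(x_1+x_2\alpha)=c(x_1,x_2)\,|x_1|_\FF^{-2t-1}\,|x_2|_\FF^{2s+t-\frac12},\qquad c(x_1,x_2):=\chi\!\Big(\tfrac{x_1+x_2\alpha}{x_1-x_2\alpha}\Big)\chi_0(x_2)\,\eta_0\!\Big(\tfrac{x_2}{x_1^2-x_2^2\alpha^2}\Big), $$
where $c$ is locally constant, of absolute value $1$, and independent of $(s,t)$. Since $\varphi$ has compact support, $x_1$ ranges only over a bounded subset of $\FF\setminus V$ (a finite union of spheres $|x_1|_\FF=q_\FF^{-\ell}$), so by Fubini the integral equals $\int_{\FF\setminus V}\varphi(x_1)\,|x_1|_\FF^{-2t-1}\,J(x_1;s,t)\,dx_1$, with $J(x_1;s,t):=\int_V c(x_1,x_2)\,|x_2|_\FF^{2s+t-\frac12}\,dx_2$, and it suffices to analyse $J(x_1;\cdot,\cdot)$ for these finitely many $x_1$.

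I then decompose $V$ into the spheres $S_m=\varpi_\FF^m\calO_\FF^\times$, $m\ge n$, on which $|x_2|_\FF^{2s+t-\frac12}$ is the constant $q_\FF^{-m(2s+t-\frac12)}$, so that $J(x_1;s,t)=\sum_{m\ge n}q_\FF^{-m(2s+t-\frac12)}\int_{S_m}c(x_1,x_2)\,dx_2$, the series converging for $\Re(2s+t+\frac12)>0$. The crux is to compute $\int_{S_m}c(x_1,x_2)\,dx_2$ for large $m$. I claim there is a threshold $m_0$ — uniform over the finitely many valuations $\ell=\nu_\FF(x_1)$ occurring in $\supp\varphi$ — such that for $m\ge m_0$ the function $x_2\mapsto c(x_1,x_2)$ on $S_m$ equals $\eta_0(x_1)^{-2}$ times the character $v\mapsto(\chi|_{\calO_\FF^\times}\cdot\eta)(v)$ of the unit $v=\varpi_\FF^{-m}x_2$. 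Indeed, for $m$ large $(x_2/x_1)\alpha$ lies in a small enough neighbourhood of $0$ in $\EE$ that $\chi(\frac{x_1+x_2\alpha}{x_1-x_2\alpha})=1$; $(x_2/x_1)^2\alpha^2$ lies deep enough in $\FF$ that $\eta_0(\frac{x_2}{x_1^2-x_2^2\alpha^2})=\eta_0(x_2)\eta_0(x_1)^{-2}$; and writing $x_2=\varpi_\FF^m v$ one finds $\chi_0(x_2)\eta_0(x_2)=(\chi|_{\calO_\FF^\times}\cdot\eta)(v)$ — here the uniformizers must be chosen so that $\varpi_\FF=\varpi_\EE^2$ in the ramified case (e.g.\ $\varpi_\EE=\alpha$, $\varpi_\FF=\alpha^2$), which makes $\chi_0(\varpi_\FF)=1$ and removes an otherwise spurious $m$-dependent unit factor; this is precisely what puts the poles below onto the lattice $\frac{2\pi i}{\ln q_\FF}\ZZ$. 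By orthogonality of characters of $\calO_\FF^\times$ it follows that for $m\ge m_0$ one has $\int_{S_m}c(x_1,x_2)\,dx_2=0$ if $\chi|_{\calO_\FF^\times}\cdot\eta\ne1$ and $\int_{S_m}c(x_1,x_2)\,dx_2=\eta_0(x_1)^{-2}(1-q_\FF^{-1})q_\FF^{-m}$ if $\chi|_{\calO_\FF^\times}\cdot\eta=1$.

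Both cases now follow. If $\chi|_{\calO_\FF^\times}\cdot\eta\ne1$, only the terms $n\le m<m_0$ survive in $J(x_1;s,t)$, each of the form $c_m(x_1)q_\FF^{-m(2s+t-\frac12)}$ with $c_m(x_1)$ independent of $(s,t)$; hence $J(x_1;\cdot,\cdot)$ — and so the whole integral, a finite sum over the $x_1$-spheres multiplied by the entire factor $|x_1|_\FF^{-2t-1}$ — is holomorphic on $\CC^2$. If $\chi|_{\calO_\FF^\times}\cdot\eta=1$, then
$$ J(x_1;s,t)=\Big(\textstyle\sum_{n\le m<m_0}q_\FF^{-m(2s+t-\frac12)}\int_{S_m}c(x_1,x_2)\,dx_2\Big)+\eta_0(x_1)^{-2}(1-q_\FF^{-1})\,\frac{q_\FF^{-m_0(2s+t+\frac12)}}{1-q_\FF^{-(2s+t+\frac12)}}, $$
with the first summand entire and the second meromorphic with simple poles exactly at $2s+t+\frac12\in\frac{2\pi i}{\ln q_\FF}\ZZ$. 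Multiplying by $1-q_\FF^{-(2s+t+\frac12)}$ and letting $2s+t+\frac12\to0$ annihilates the entire part and the finite sum, while the geometric tail tends to $\eta_0(x_1)^{-2}(1-q_\FF^{-1})$; restoring the outer factor $|x_1|_\FF^{-2t-1}$, using $|x_1|_\FF^{-2t-1}\eta_0(x_1)^{-2}=\eta_{t+\frac12}(x_1^2)^{-1}$, and integrating $\varphi(x_1)$ over $\FF\setminus V$ (a finite sum, so the limit passes inside the integral) yields exactly $(1-q_\FF^{-1})\int_{\FF\setminus V}\eta_{t+\frac12}(x_1^2)^{-1}\varphi(x_1)\,dx_1$, the asserted residue.

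The step I expect to be the main obstacle is the sphere-integral computation: identifying, uniformly in $x_1\in\supp\varphi$, the restriction of $c(x_1,\cdot)$ to $S_m$ for large $m$ as a single multiplicative character of $\calO_\FF^\times$ and pinning down the scalar $\eta_0(x_1)^{-2}$. This calls for careful separate bookkeeping of $\nu_\EE$ versus $\nu_\FF$ and of the characters $\chi_0,\eta_0$ in the unramified ($q_\EE=q_\FF^2$) and the ramified ($q_\EE=q_\FF$) cases; it is here that the normalisation $\varpi_\FF=\varpi_\EE^2$ in the ramified case is genuinely needed, precisely so that the single pole lands on $\frac{2\pi i}{\ln q_\FF}\ZZ$ as in the definition of $\SplusT$. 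The remaining ingredients — the region geometry, Fubini, summation of the geometric series, and the final matching of modulus factors — are routine.
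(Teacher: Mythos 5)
Your argument is correct and follows essentially the same route as the paper: on $(\FF\setminus V)\times V$ one has $|x|_\EE=|x_1|_\FF^2$, the $x_2$-integral is decomposed into spheres $\varpi_\FF^m\calO_\FF^\times$, and for $m$ large the integrand restricts (uniformly in $x_1\in\supp\varphi\setminus V$) to $\eta_0(x_1^2)^{-1}$ times a character of $\calO_\FF^\times$, so orthogonality kills the tail unless $\chi|_{\calO_\FF^\times}\cdot\eta=1$, in which case the geometric series $\sum_m q_\FF^{-m(2s+t+\frac{1}{2})}$ produces exactly the stated pole and residue — this is the content of the paper's Lemma~\ref{lem::zero_third_term} and the surrounding computation. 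The only differences are organizational (Fubini and pointwise treatment of $x_1$ instead of expanding $\varphi$ into indicators of $(y_\ell+V)\times V$, and factoring $\eta_0(x_1^2-x_2^2\alpha^2)=\eta_0(x_1^2)\eta_0(1-(x_2\alpha/x_1)^2)$ directly rather than a covering argument), plus your explicit remark on choosing $\varpi_\FF,\varpi_\EE$ compatibly, which the paper leaves implicit.
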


We remark that the last expression can be viewed as the distribution $(1-q_\FF^{-1})\eta_{t+\frac{1}{2}}(x_1^2)^{-1}\delta(x_2)$ applied to $\varphi$.

First note that $x\in(\FF\setminus V)\times V$ implies
$$ |x|_\EE = |x_1^2-x_2^2\alpha^2|_\FF = |x_1^2|_\FF = |x_1|_\FF^2, $$
so the integral under consideration becomes
$$ \int_{(\FF\setminus V)\times V}\chi\left(\frac{x_1+x_2\alpha}{x_1-x_2\alpha}\right)\chi_0(x_2)\eta_0\left(\frac{x_2}{x_1^2-x_2^2\alpha^2}\right)|x_1|_\FF^{-2t-1}|x_2|_\FF^{2s+t-\frac{1}{2}}\varphi(x)\,dx. $$
Since $\varphi$ has compact support and is constant on all cosets $x+(V\times V)$, the restriction of $\varphi$ to $(\FF\setminus V)\times V$ is a finite linear combination of characteristic functions of translates of $V\times V$: $\varphi|_{(\FF\setminus V)\times V}=\sum_\ell c_\ell\chi_{(y_\ell+V)\times V}$ with $c_\ell=\varphi(y_\ell)\in\CC$ and $y_\ell\in\FF\setminus V$. The contribution of the $\ell$'th term to the integral is $\varphi(y_\ell)$ times
\begin{equation}
\label{equ::third_term2}
\begin{split}
& \int\limits_{y_\ell+\varpi_\FF^n\calO_\FF}\int\limits_{\varpi_\FF^n\mathcal{O}_\FF} \chi\left(\frac{x_1  +\alpha x_2}{x_1 -\alpha x_2}\right)\chi_0(x_2)\eta_0(x_2)\eta_0(x_1^2-x_2^2\alpha^2)^{-1}|x_1|_\FF^{-2t-1}|x_2|_\FF^{2s+t-\frac{1}{2}}\,dx_2\,dx_1\\
=& \int\limits_{y_\ell+\varpi_\FF^n\calO_\FF}|x_1|_\FF^{-2t-1}\Bigg(\sum\limits_{j=n}^{\infty} \int\limits_{\varpi_\FF^j \calO_\FF^{\times}} \chi\left(\frac{x_1  +\alpha x_2}{x_1 -\alpha x_2}\right)(\chi_0\eta_0)(x_2)\eta_0(x_1^2-x_2^2\alpha^2)^{-1}|x_2|_\FF^{2s+t-\frac{1}{2}}\,dx_2\Bigg)dx_1\\
=& \int\limits_{y_\ell+\varpi_\FF^n\calO_\FF}|x_1|_\FF^{-2t-1}\Bigg(\sum\limits_{j=n}^{\infty}q_\FF^{-j(2s+t-\frac{1}{2})} \int\limits_{\varpi_\FF^j \calO_\FF^{\times}} \chi\left(\frac{x_1  +\alpha x_2}{x_1 -\alpha x_2}\right)(\chi\eta)(\varpi_\FF^{-j}x_2)\eta_0(x_1^2-x_2^2\alpha^2)^{-1}\,dx_2\Bigg)dx_1.
\end{split}
\end{equation}

The inner integral is computed in the following lemma:

\begin{lemma}\label{lem::zero_third_term}
	For every $n\in\ZZ$ there is $N\in\ZZ$ such that:
	\begin{enumerate}
		\item For any $x_1 \in \FF\setminus\varpi_\FF^n\calO_\FF$ and any $x_2 \in \varpi_\FF^N \mathcal{O}_\FF$ we have:
		$$ \chi\left(\frac{x_1  +\alpha x_2}{x_1 -\alpha x_2}\right)=1 \qquad \mbox{and} \qquad \eta_0(x_1^{2}-\alpha^{2} x_2^{2})=\eta_0(x_1^2). $$
		\item For any $x_1\in\FF\setminus\varpi_\FF^n\calO_\FF$ and any $j \geq N$ we have
		$$ \int\limits_{\varpi_\FF^j \calO_\FF^{\times}} \chi\left(\frac{x_1  +\alpha x_2}{x_1 -\alpha x_2}\right)(\chi\eta)(\varpi_\FF^{-j}x_2)\eta_0(x_1^2-x_2^2\alpha^2)^{-1}\,dx_2=\begin{cases}q_\FF^{-j}(1-q_\FF^{-1})\eta_0(x_1^2)^{-1}&\mbox{if $\chi|_{\calO_\FF^{\times}}\cdot\eta=1$,}\\0&\mbox{if $\chi|_{\calO_\FF^{\times}}\cdot\eta\neq1$.}\end{cases}. $$
	\end{enumerate}
\end{lemma}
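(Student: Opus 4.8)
The plan is to prove Lemma~\ref{lem::zero_third_term} by a continuity/uniformity argument for part (1) and by exploiting the homogeneity of the character $\chi\eta$ on $\calO_\FF^\times$ for part (2).

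\textbf{Part (1).} The key observation is that for $x_1\in\FF\setminus\varpi_\FF^n\calO_\FF$ we have $|x_1|_\FF\geq q_\FF^{-(n-1)}$, so $x_1$ is bounded away from $0$ uniformly in this region. First I would rewrite $\frac{x_1+\alpha x_2}{x_1-\alpha x_2}=1+\frac{2\alpha x_2}{x_1-\alpha x_2}$ and estimate $\left|\frac{2\alpha x_2}{x_1-\alpha x_2}\right|_\EE$; once $|x_2|_\FF$ is small enough compared to $|x_1|_\FF$, the denominator satisfies $|x_1-\alpha x_2|_\EE=|x_1|_\EE=|x_1|_\FF^2$, so the whole quotient lies in $1+\varpi_\EE^M\calO_\EE$ for $M$ as large as we like, uniformly over $x_1\in\FF\setminus\varpi_\FF^n\calO_\FF$, provided $\nu_\FF(x_2)\geq N$ for a suitable $N=N(n)$. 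Since $\chi$ is a (locally constant) character of $\calO_\EE^\times$, it is trivial on some $1+\varpi_\EE^M\calO_\EE$, which gives $\chi\left(\frac{x_1+\alpha x_2}{x_1-\alpha x_2}\right)=1$. Similarly $x_1^2-x_2^2\alpha^2=x_1^2(1-\frac{x_2^2\alpha^2}{x_1^2})$ and $\frac{x_2^2\alpha^2}{x_1^2}\in\varpi_\FF^{M'}\calO_\FF$ for $M'$ large when $\nu_\FF(x_2)\geq N$, so $\eta_0(x_1^2-x_2^2\alpha^2)=\eta_0(x_1^2)$ after possibly enlarging $N$, since $\eta$ is trivial on some $1+\varpi_\FF^{M'}\calO_\FF$. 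The only subtlety is making the choice of $N$ uniform in $x_1$; this follows because all the estimates depend on $x_1$ only through the lower bound $|x_1|_\FF\geq q_\FF^{-(n-1)}$.

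\textbf{Part (2).} Fix $x_1\in\FF\setminus\varpi_\FF^n\calO_\FF$ and $j\geq N$. On the domain $\varpi_\FF^j\calO_\FF^\times$ we have $\nu_\FF(x_2)=j\geq N$, so by part (1) both $\chi\left(\frac{x_1+\alpha x_2}{x_1-\alpha x_2}\right)=1$ and $\eta_0(x_1^2-x_2^2\alpha^2)^{-1}=\eta_0(x_1^2)^{-1}$ are constant in $x_2$. Pulling these out, the integral becomes $\eta_0(x_1^2)^{-1}\int_{\varpi_\FF^j\calO_\FF^\times}(\chi\eta)(\varpi_\FF^{-j}x_2)\,dx_2$. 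Substituting $x_2=\varpi_\FF^j u$ with $u\in\calO_\FF^\times$ and $dx_2=q_\FF^{-j}du$ gives $q_\FF^{-j}\eta_0(x_1^2)^{-1}\int_{\calO_\FF^\times}(\chi\eta)(u)\,du$. Now $(\chi\eta)|_{\calO_\FF^\times}=\chi|_{\calO_\FF^\times}\cdot\eta$ is a character of the compact group $\calO_\FF^\times$, so $\int_{\calO_\FF^\times}(\chi\eta)(u)\,du$ equals $\mathrm{vol}(\calO_\FF^\times)=1-q_\FF^{-1}$ if this character is trivial and $0$ otherwise, which gives exactly the two cases in the statement.

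\textbf{Main obstacle.} The only genuinely delicate point is the uniformity of $N$ in part (1): one must verify that the threshold on $\nu_\FF(x_2)$ guaranteeing $\frac{x_1+\alpha x_2}{x_1-\alpha x_2}\in 1+\varpi_\EE^M\calO_\EE$ and $x_1^2-x_2^2\alpha^2\in x_1^2(1+\varpi_\FF^{M'}\calO_\FF)$ can be chosen independently of $x_1$ over the whole region $\FF\setminus\varpi_\FF^n\calO_\FF$. This is where the two cases $\EE/\FF$ unramified versus ramified enter through the normalization $\alpha^2\in\calO_\FF^\times$ resp. $\alpha^2\in\varpi_\FF\calO_\FF^\times$, but in both cases $|\alpha^2|_\FF$ is a fixed constant and the estimate $\left|\frac{2\alpha x_2}{x_1-\alpha x_2}\right|_\EE\leq C\,q_\FF^{-(\nu_\FF(x_2)-\text{const})}$ holds with $C$ depending only on $n$ and $\alpha$; everything else is a routine non-archimedean computation which I would not grind through here. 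Once $N$ is fixed, part (2) is immediate.
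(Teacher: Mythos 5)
Your proposal is correct and follows essentially the same route as the paper: rewrite $\frac{x_1+\alpha x_2}{x_1-\alpha x_2}=1+\frac{2\alpha x_2}{x_1-\alpha x_2}$, use the uniform lower bound $|x_1|_\FF>|\varpi_\FF^n|_\FF$ together with local constancy of $\chi$ near $1$ to choose $N$ independently of $x_1$, and then in part (2) pull out the constant factors and reduce to the integral of the character $\chi|_{\calO_\FF^\times}\cdot\eta$ over $\calO_\FF^\times$, which is $1-q_\FF^{-1}$ or $0$ by orthogonality. The only (harmless) deviation is your treatment of the $\eta_0$ factor, where writing $x_1^2-x_2^2\alpha^2=x_1^2\bigl(1-\tfrac{x_2^2\alpha^2}{x_1^2}\bigr)$ and using multiplicativity of $\eta_0$ plus its triviality on $1+\varpi_\FF^{M'}\calO_\FF$ slightly streamlines the paper's covering-and-rescaling argument for the same conclusion.
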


\begin{proof}
\begin{enumerate}
	\item We first consider the term
	$$ \chi\left(\frac{x_1  +\alpha x_2}{x_1 -\alpha x_2}\right) = \chi\left(1+\frac{2\alpha x_2}{x_1 -\alpha x_2}\right). $$
	Since $\chi$ is locally constant, it is constant in a neighborhood of $1$. But
	$$ \left|\frac{2\alpha x_2}{x_1-\alpha x_2}\right|_\EE = |2\alpha|_\EE\cdot |x_2|_\FF^2\cdot|x|_\EE^{-1}=|2\alpha|_\EE\cdot |x_2|_\FF^2\cdot|x_1|_\FF^{-2}, $$
	and if $x_1\in\FF\setminus\varpi_\FF^n\calO_\FF$ we have $|x_1|_\FF>|\varpi_\FF^n|_\FF>0$. It follows that for any given neighborhood of $1$ we can find $N>0$ such that $1+\frac{2\alpha x_2}{x_1 -\alpha x_2}$ is contained in this neighborhood for all $x_1\in\FF\setminus\varpi_\FF^n\calO_\FF$ and $x_2\in\varpi_\FF^N\calO_\FF$.\\
	Now consider the term $\eta_0(x_1^2-x_2^2\alpha^2)$. Since $\eta_0$ is locally constant, we can for every fixed $y\neq0$ find $N\in\ZZ$ such that $\eta_0(x_1^2-x_2^2\alpha^2)=\eta_0(y^2)=\eta_0(x_1^2)$ for all $x_1\in y+\varpi_\FF^N\calO_\FF$ and $x_2\in\varpi_\FF^N\calO_\FF$. Covering the compact set $\varpi_\FF^n \calO_\FF^{\times}$ by finitely many of the sets $y+\varpi_\FF^N\calO_\FF$ we obtain $N\in\ZZ$ with the property that $\eta_0(x_1^2-x_2^2\alpha^2)=\eta_0(x_1^2)$ for all $x_1\in\varpi_\FF^n \calO_\FF^{\times}$ and $x_2\in\varpi_\FF^N\calO_\FF$. Finally, using the fact that $\eta_0$ is multiplicative, a rescaling argument shows that this in fact holds for all $x_1\in\varpi_\FF^j \calO_\FF^{\times}$ with $j\leq n$, hence for all $x_1\in\FF\setminus\varpi_\FF^n\calO_\FF$.
	\item By (1) and (2) we find for $x_1\in\FF\setminus\varpi_\FF^n\calO_\FF$ and $j\geq N$:
	\begin{multline*}
		\int\limits_{\varpi_\FF^j \calO_\FF^{\times}} \chi\left(\frac{x_1  +\alpha x_2}{x_1 -\alpha x_2}\right)(\chi\eta)(\varpi_\FF^{-j}x_2)\eta_0(x_1^2-x_2^2\alpha^2)^{-1}\,dx_2\\
		= \eta_0(x_1^2)^{-1}\int\limits_{\varpi_\FF^j \calO_\FF^{\times}} (\chi\eta)(\varpi_\FF^{-j}x_2)\,dx_2 = |\varpi_\FF^j|_\FF\cdot\eta_0(x_1^2)^{-1}\int\limits_{\calO_\FF^{\times}} (\chi\eta)(x_2)\,dx_2.
	\end{multline*}
	This is the integral of the character $\chi|_{\calO_\FF^{\times}}\cdot\eta$ of the compact abelian group $\calO_\FF^{\times}$, hence it vanishes if $\chi|_{\calO_\FF^{\times}}\cdot\eta$ is non-trivial and it equals the volume of $\calO_\FF^{\times}$ if $\chi|_{\calO_\FF^{\times}}\cdot\eta$ is the trivial character (see \cite[Lemma  4.1]{Sally98}). The volume of $\calO_\FF^{\times}$ is $(1-q_\FF^{-1})$, so the claim follows.\qedhere
\end{enumerate}
\end{proof}

\begin{proof}[Proof of Proposition~\ref{prop:SecondTerm}]
Assume first that $\chi|_{\calO_\FF^{\times}}\cdot\eta\neq1$, then \eqref{equ::third_term2} becomes
\begin{equation}
	\int\limits_{y_\ell+\varpi_\FF^n\calO_\FF}|x_1|_\FF^{-2t-1}\Bigg(\sum\limits_{j=n}^{N-1}q_\FF^{-j(2s+t-\frac{1}{2})} \int\limits_{\varpi_\FF^j \calO_\FF^{\times}} \chi\left(\frac{x_1  +\alpha x_2}{x_1 -\alpha x_2}\right)(\chi\eta)(\varpi_\FF^{-j}x_2)\eta_0(x_1^2-x_2^2\alpha^2)^{-1}\,dx_2\Bigg)dx_1.\label{equ::third_term2a}
\end{equation}
This expression is obviously holomorphic in $s,t\in\CC$. If $\chi|_{\calO_\FF^{\times}}\cdot\eta=1$, then in addition to \eqref{equ::third_term2a} we get the term
\begin{equation}\label{equ::third_term2b}
\begin{split}
	& \int\limits_{y_\ell+\varpi_\FF^n\calO_\FF}|x_1|_\FF^{-2t-1}\Bigg(\sum\limits_{j=N}^{\infty}q_\FF^{-j(2s+t-\frac{1}{2})} q_\FF^{-j}(1-q_\FF^{-1})\eta_0(x_1^2)^{-1}\Bigg)dx_1\\
	={}& \Bigg((1-q_\FF^{-1})\sum\limits_{j=N}^{\infty}q_\FF^{-j(2s+t+\frac{1}{2})}\Bigg)\int\limits_{y_\ell+\varpi_\FF^n\calO_\FF}|x_1|_\FF^{-2t-1}\eta_0(x_1^2)^{-1}\,dx_1\\
	={}& (1-q_\FF^{-1})\frac{q_\FF^{-N(2s+t+\frac{1}{2})}}{1-q_\FF^{-(2s+t+\frac{1}{2})}}\int\limits_{y_\ell+\varpi_\FF^n\calO_\FF}|x_1|_\FF^{-2t-1}\eta_0(x_1^2)^{-1}\,dx_1.
\end{split}
\end{equation}
Putting together \eqref{equ::third_term2}, \eqref{equ::third_term2a} and \eqref{equ::third_term2b} shows that in this case
\begin{multline*}
	\lim_{2s+t+\frac{1}{2}\to0}(1-q_\FF^{-(2s+t+\frac{1}{2})})\int_{(\FF\setminus V)\times V}K_{s,t}^{\chi,\eta}(x)\varphi(x)\,dx\\
	= (1-q_\FF^{-1})\sum_\ell\varphi(y_\ell)\int_{y_\ell+\varpi_\FF^n}|x_1|_\FF^{-2t-1}\eta_0(x_1^2)^{-1}\,dx_1\\
	= (1-q_\FF^{-1})\int_{\FF\setminus\varpi_\FF^n\calO_\FF}\eta_{t+\frac{1}{2}}(x_1^2)^{-1}\varphi(x_1)\,dx_1.\qedhere
\end{multline*}
\end{proof}

\subsubsection{The third term}\label{sec:ThirdTerm}

The integral over $V\times V$ in \eqref{eq:MeroContThreeTerms} is the most involved term. To state formulas for its residues, we use the constant $c_\FF>0$ and the gamma factors introduced in Appendix~\ref{app:Integral}.

\begin{proposition}\label{prop:ThirdTerm}
	The map
	$$ (s,t)\mapsto \int_{V\times V}K_{s,t}^{\chi,\eta}(x)\varphi(x)\, dx $$
	extends to a meromorphic function in $(s,t)\in\CC^2$.
	\begin{itemize}
		\item If $\chi|_{\calO_\FF^{\times}}\cdot\eta^{-1}\neq1$ it is holomorphic in $(s,t)\in\CC^2$.
		\item If $\chi|_{\calO_\FF^{\times}}\cdot\eta^{-1}=1$ and $\chi|_{\calO_\FF^{\times}}\cdot\eta\neq1$ it is holomorphic in $\{(s,t)\in\CC^2:2s-t+\frac{1}{2}\not\in\frac{2\pi i}{\ln q_\FF}\ZZ\}$ with residue at $2s-t+\frac{1}{2}\in\frac{2\pi i}{\ln q_\FF}\ZZ$ given by
		\begin{multline}\label{eq:ThirdTermResidue-}
			\qquad\quad\lim_{2s-t+\frac{1}{2}\to0}(1-q_\FF^{-(2s-t+\frac{1}{2})})\int_{V\times V} K_{s,t}^{\chi,\eta}(x)\varphi(x)\,dx\\
			=c_\FF(1-q_\FF^{-1})\chi_0(-\alpha)^{-2}|\alpha^2|_\FF^{-t+\frac{1}{2}}\Gamma((\chi^2|_{\calO_\FF^{\times}})_{2t})\Gamma((\chi^{-2})_{-t+\frac{1}{2}})\cdot\varphi(0).
		\end{multline}
		\item If $\chi|_{\calO_\FF^{\times}}\cdot\eta^{-1}=1$ and $\chi|_{\calO_\FF^{\times}}\cdot\eta=1$ it is holomorphic in $\{(s,t)\in\CC^2:2s\pm t+\frac{1}{2}\not\in\frac{2\pi i}{\ln q_\FF}\ZZ\}$ with residue at $2s-t+\frac{1}{2}\in\frac{2\pi i}{\ln q_\FF}\ZZ$ given by \eqref{eq:ThirdTermResidue-}, and  with residue at $2s+t+\frac{1}{2}\in\frac{2\pi i}{\ln q_\FF}\ZZ$ given by
		\begin{equation}\label{eq:ThirdTermResidue+}
			\lim_{2s+t+\frac{1}{2}\to0}(1-q_\FF^{-(2s+t+\frac{1}{2})})\int_{V\times V} K_{s,t}^{\chi,\eta}(x)\varphi(x)\,dx = \frac{(1-q_\FF^{-1})^2q_\FF^{2tn}}{1-q_\FF^{2t}}.
		\end{equation}
	\end{itemize}
\end{proposition}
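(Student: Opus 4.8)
The plan is to reduce the integral over $V\times V$ to a single geometric series whose coefficient is one fixed two-variable $p$-adic integral, and to evaluate that integral using Appendix~\ref{app:Integral}. First, $V\times V=\varpi_\FF^n\calO_\FF\times\varpi_\FF^n\calO_\FF$ is one of the cosets on which $\varphi$ is constant, so $\varphi|_{V\times V}\equiv\varphi(0)$ and the third term equals $\varphi(0)\cdot J(s,t)$ with $J(s,t):=\int_{V\times V}K_{s,t}^{\chi,\eta}(x)\,dx$. I would then peel off shells around the origin: writing $K_{s,t}^{\chi,\eta}$ in the product form from the start of Section~\ref{sec::mero_family}, decomposing $V\times V$ up to the null set $\{0\}$ into the shells $S_k:=\varpi_\FF^k(\calO_\FF\times\calO_\FF)\setminus\varpi_\FF^{k+1}(\calO_\FF\times\calO_\FF)$ for $k\geq n$, and substituting $x_i=\varpi_\FF^k u_i$ on $S_k$. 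A short computation with the scaling $|\varpi_\FF^k y|_\FF=q_\FF^{-k}|y|_\FF$ (and the behaviour of $\chi_0,\eta_0$ on powers of $\varpi_\FF$) turns $\int_{S_k}K_{s,t}^{\chi,\eta}$ into $q_\FF^{-k(2s-t+\frac12)}$ times the $k$-independent integral
$$ I(s,t):=\int_{\Sigma}\chi\!\left(\tfrac{u_1+u_2\alpha}{u_1-u_2\alpha}\right)(\chi_0\eta_0)(u_2)\,\eta_0(u_1^2-u_2^2\alpha^2)^{-1}|u_1^2-u_2^2\alpha^2|_\FF^{-t-\frac12}|u_2|_\FF^{2s+t-\frac12}\,du_1\,du_2 $$
over $\Sigma:=(\calO_\FF\times\calO_\FF)\setminus\varpi_\FF(\calO_\FF\times\calO_\FF)$. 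Summing over $k\geq n$ gives, on the region $\Re(2s\pm t+\frac12)>0$ where everything converges absolutely (cf.\ Lemma~\ref{lem:KernelL1loc}),
$$ J(s,t)=\frac{q_\FF^{-n(2s-t+\frac12)}}{1-q_\FF^{-(2s-t+\frac12)}}\,I(s,t), $$
which already isolates the pole at $2s-t+\frac12\in\frac{2\pi i}{\ln q_\FF}\ZZ$ with prefactor residue $1$.

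It remains to understand $I(s,t)$. On $\Sigma$ the form $u_1^2-u_2^2\alpha^2$ never vanishes (since $\alpha\notin\FF$), so $|u_1^2-u_2^2\alpha^2|_\FF$ is bounded away from $0$ and the only singular locus left is $\{u_2=0\}$. I would split $\Sigma$ into $\{|u_2|_\FF=1\}$, which contributes an entire function, and the shells $\{|u_1|_\FF=1,\,|u_2|_\FF=q_\FF^{-m}\}$, $m\geq1$, on which for large $m$ the factors $\chi(\tfrac{u_1+u_2\alpha}{u_1-u_2\alpha})$ and $\eta_0(u_1^2-u_2^2\alpha^2)^{-1}$ stabilise to $1$ and $\eta_0(u_1^2)^{-1}$; the resulting tail is a geometric series in $q_\FF^{-(2s+t+\frac12)}$ whose pole coefficient factors into an $\calO_\FF^\times$-integral in $u_2$ (zero unless $\chi|_{\calO_\FF^\times}\cdot\eta=1$) times one in $u_1$ (zero unless $\eta^2=1$). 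This yields the pole at $2s+t+\frac12\in\frac{2\pi i}{\ln q_\FF}\ZZ$ with residue $(1-q_\FF^{-1})^2$, present exactly in the case of the third bullet; combined with the prefactor evaluated on $\{2s+t+\frac12\equiv0\}$ (where $2s-t+\frac12\equiv-2t$) this reproduces \eqref{eq:ThirdTermResidue+}. For \eqref{eq:ThirdTermResidue-} I would restrict $I(s,t)$ to the hyperplane $2s-t+\frac12\in\frac{2\pi i}{\ln q_\FF}\ZZ$: there it is precisely the integral evaluated in Appendix~\ref{app:Integral}, which after a change of variables decoupling the norm form $u_1^2-u_2^2\alpha^2$ becomes a product of two one-dimensional Tate integrals, giving $c_\FF$, the factor $\chi_0(-\alpha)^{-2}|\alpha^2|_\FF^{-t+\frac12}$, the gamma factors $\Gamma((\chi^2|_{\calO_\FF^\times})_{2t})\Gamma((\chi^{-2})_{-t+\frac12})$, and an $\calO_\FF^\times$-character integral that vanishes unless $\chi|_{\calO_\FF^\times}\cdot\eta^{-1}=1$. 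Multiplying by the prefactor residue $1$ gives \eqref{eq:ThirdTermResidue-}; when $\chi|_{\calO_\FF^\times}\cdot\eta^{-1}=1$ fails, $I$ vanishes on that hyperplane to first order and $J$ is holomorphic there, which together with the analysis at $\{2s+t+\frac12\equiv0\}$ yields the first and second bullets. Multiplying $J$ by $\varphi(0)$ finishes the proof.

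The hard part will be the exact evaluation of $I(s,t)$ on $\{2s-t+\frac12\in\frac{2\pi i}{\ln q_\FF}\ZZ\}$, i.e.\ the content of Appendix~\ref{app:Integral}: this is where the two gamma factors genuinely appear, via the substitution decoupling $u_1^2-u_2^2\alpha^2$, and it must be carried out with care, since this form takes only even valuations when $\EE/\FF$ is unramified but all valuations when it is ramified, and since the various unit-character and uniformizer-normalisation constants ($\chi_0(-\alpha)$, $\chi_0(\varpi_\FF)$, $|\alpha^2|_\FF$) all have to be tracked. The remaining ingredients — constancy of $\varphi$ on $V\times V$, the shell decomposition, and the geometric summations — are elementary $p$-adic bookkeeping.
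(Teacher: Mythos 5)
Your proof is correct and reaches the stated residues, but it is organized differently from the paper's. You use the exact $\FF^\times$-homogeneity of $K_{s,t}^{\chi,\eta}$ to collapse the integral over $V\times V$ into the geometric series $\sum_{k\geq n}q_\FF^{-k(2s-t+\frac{1}{2})}$ times a single integral $I(s,t)$ over the unit shell $\Sigma$, so the pole along $2s-t+\frac{1}{2}\in\frac{2\pi i}{\ln q_\FF}\ZZ$ is isolated at once, and the $2s+t+\frac{1}{2}$ pole comes from the stabilization of the characters near $u_2=0$ inside $I$. The paper instead decomposes $V\times V$ into the blocks $\varpi_\FF^i\calO_\FF^\times\times\varpi_\FF^j\calO_\FF^\times$, substitutes $a=x_1/x_2$ in each block (Lemma~\ref{lem:ZeroIntegralx1x2}), and separates the two poles only through the double-sum rearrangement before \eqref{equ::fourth_termC} together with Lemma~\ref{lem:AnotherIntegral}; your route avoids that bookkeeping. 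Both arguments ultimately rest on the same two inputs: orthogonality of unit characters on $\calO_\FF^\times$ (yielding the conditions $\chi|_{\calO_\FF^\times}\cdot\eta^{\pm1}=1$) and Proposition~\ref{prop:GammaIntegral} for \eqref{eq:ThirdTermResidue-}. In particular, evaluating $I$ on the hyperplane $2s-t+\frac{1}{2}\equiv 0$ is not ``precisely'' the Appendix~\ref{app:Integral} integral: you still need the decoupling substitution $a=u_1/u_2$ (the paper's Lemma~\ref{lem:ZeroIntegralx1x2}); on that hyperplane the exponent of $|u_2|_\FF$ becomes $-1$, so the $u_2$-shell integrals are $a$-independent and give $(1-q_\FF^{-1})$ times the $\chi\eta^{-1}$-orthogonality factor, and the remaining $a$-integral $\int_\FF(\chi^{-2})_{-t-\frac{1}{2}}(a-\alpha)\,da$ is then evaluated by Proposition~\ref{prop:GammaIntegral}, producing exactly the constant in \eqref{eq:ThirdTermResidue-}. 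Two minor points: when $\chi|_{\calO_\FF^\times}\cdot\eta^{-1}\neq1$ this decoupling in fact gives $I\equiv0$ identically (not merely a first-order zero on the hyperplane), which is what the holomorphy in the first bullet requires; and your scaling identity $\int_{S_k}K_{s,t}^{\chi,\eta}=q_\FF^{-k(2s-t+\frac{1}{2})}I(s,t)$ tacitly uses $\chi_0(\varpi_\FF)=1$, i.e.\ the compatible choice of uniformizers ($\varpi_\EE=\varpi_\FF$ unramified, resp.\ $\varpi_\FF=\alpha^2$, $\varpi_\EE=\alpha$ ramified) --- the same normalization the paper uses implicitly in Lemma~\ref{lem:ZeroIntegralx1x2} --- and since you flag this constant, it is a shared convention rather than a gap. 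Your payoff is a cleaner separation of the two poles; the paper's payoff is that both residues drop out of one explicit rearranged double sum.
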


Since $\varphi$ is constant on $V\times V$, the third term in \eqref{eq:MeroContThreeTerms} equals $\varphi(0)$ times
$$ \int\limits_{V\times V}\chi\left(\frac{x_1+x_2\alpha}{x_1-x_2\alpha}\right)\chi_0(x_2)\eta_0\left(\frac{x_2}{x_1^2-x_2^2\alpha^2}\right)|x|_\EE^{-t-\frac{1}{2}}|x_2|_\FF^{2s+t-\frac{1}{2}}\,dx. $$
Decomposing $V=\varpi_\FF^n\calO_\FF=\{0\}\sqcup\bigsqcup_{i=n}^\infty\varpi_\FF^i \calO_\FF^{\times}$ and neglecting the set $\{0\}$ which is of measure zero, we rewrite this integral as
\begin{multline}
\label{equ::fourth_term}
	\sum\limits_{i=n}^\infty\sum\limits_{j=n}^\infty|\varpi_\FF^{2j}-\varpi_\FF^{2i}\alpha^2|_\FF^{-t-\frac{1}{2}}|\varpi_\FF^i|_\FF^{2s+t-\frac{1}{2}}\\
	\times\int_{\varpi_\FF^i \calO_\FF^{\times}}(\chi_0\eta_0)(x_2)\int_{\varpi_\FF^j \calO_\FF^{\times}}\chi\left(\frac{x_1+x_2\alpha}{x_1-x_2\alpha}\right)\eta_0(x_1^2-x_2^2\alpha^2)^{-1}\,dx_1\,dx_2.
\end{multline}

The double integral simplifies as follows:

\begin{lemma}\label{lem:ZeroIntegralx1x2}
For $i,j\in\ZZ$ we have
\begin{multline*}
	\int\limits_{\varpi_\FF^i \calO_\FF^{\times}} (\chi_0\eta_0)(x_2)   \int\limits_{\varpi_\FF^j \calO_\FF^{\times}} \chi\left(\frac{x_1+\alpha x_2}{x_1-\alpha x_2}\right)\eta_0(x_1^2-x_2^2\alpha^2)^{-1}\,dx_1\,dx_2\\
	= \begin{cases}q_\FF^{-2i}(1-q_\FF^{-1})\int_{\varpi_\FF^{j-i} \calO_\FF^{\times}}\chi_0(x_1-\alpha)^{-2}\,dx_1&\mbox{for $\chi|_{\calO_\FF^{\times}}\cdot\eta^{-1}=1$,}\\0&\mbox{for $\chi|_{\calO_\FF^{\times}}\cdot\eta^{-1}\neq1$.}\end{cases}
\end{multline*}
\end{lemma}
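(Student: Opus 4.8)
The plan is to decouple the double integral by substituting $x_1 = x_2 y$ in the inner integral. Since $x_2 \in \varpi_\FF^i\calO_\FF^\times$ is fixed with $\nu_\FF(x_2) = i$, as $x_1$ ranges over $\varpi_\FF^j\calO_\FF^\times$ the new variable $y = x_1/x_2$ ranges over $\varpi_\FF^{j-i}\calO_\FF^\times$, and $dx_1 = |x_2|_\FF\,dy = q_\FF^{-i}\,dy$. Two elementary identities do the work: $\frac{x_1+\alpha x_2}{x_1-\alpha x_2} = \frac{y+\alpha}{y-\alpha}$ (the factor $x_2$ cancels) and $x_1^2 - x_2^2\alpha^2 = x_2^2(y^2-\alpha^2)$, the latter giving $\eta_0(x_1^2-x_2^2\alpha^2)^{-1} = \eta_0(x_2)^{-2}\eta_0(y^2-\alpha^2)^{-1}$. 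Hence the double integral factors as
$$ q_\FF^{-i}\left(\int_{\varpi_\FF^i\calO_\FF^\times}\chi_0(x_2)\eta_0(x_2)^{-1}\,dx_2\right)\left(\int_{\varpi_\FF^{j-i}\calO_\FF^\times}\chi\left(\frac{y+\alpha}{y-\alpha}\right)\eta_0(y^2-\alpha^2)^{-1}\,dy\right). $$

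First I would handle the $x_2$-integral. It is the integral over $\varpi_\FF^i\calO_\FF^\times$ of the character $\chi_0\eta_0^{-1}$ of $\FF^\times$, whose restriction to $\calO_\FF^\times$ is $\chi|_{\calO_\FF^\times}\cdot\eta^{-1}$. Writing $x_2 = \varpi_\FF^i v$ with $v \in \calO_\FF^\times$ and using orthogonality of characters of the compact group $\calO_\FF^\times$ (as in \cite[Lemma 4.1]{Sally98}, already invoked in the proof of Lemma~\ref{lem::zero_third_term}), this integral vanishes whenever $\chi|_{\calO_\FF^\times}\cdot\eta^{-1}\neq1$; this already yields the second case of the lemma. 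When $\chi|_{\calO_\FF^\times}\cdot\eta^{-1}=1$ it equals $\mathrm{vol}(\varpi_\FF^i\calO_\FF^\times)=q_\FF^{-i}(1-q_\FF^{-1})$ up to the scalar $(\chi_0\eta_0^{-1})(\varpi_\FF^i)$.

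It remains to treat the $y$-integral in the case $\chi|_{\calO_\FF^\times}\cdot\eta^{-1}=1$. Here I would rewrite $\frac{y+\alpha}{y-\alpha} = \frac{y^2-\alpha^2}{(y-\alpha)^2}$, which lies in $\calO_\EE^\times$ since $y^2-\alpha^2 = (y+\alpha)(y-\alpha)$ is the norm of $y-\alpha$ and the valuation is Galois-invariant, so that $\chi\!\left(\frac{y+\alpha}{y-\alpha}\right) = \chi_0(y^2-\alpha^2)\,\chi_0(y-\alpha)^{-2}$ and the $y$-integrand becomes $\chi_0(y-\alpha)^{-2}\cdot(\chi_0\eta_0^{-1})(y^2-\alpha^2)$. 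Under the hypothesis $\chi|_{\calO_\FF^\times}\cdot\eta^{-1}=1$ one checks, comparing $\chi_0|_{\FF^\times}$ with $\eta_0$ (using the chosen uniformizers, separately in the unramified and ramified cases), that the character $\chi_0\eta_0^{-1}$ of $\FF^\times$ is trivial; then the residual phases $(\chi_0\eta_0^{-1})(\varpi_\FF^i)$ and $(\chi_0\eta_0^{-1})(y^2-\alpha^2)$ are $1$, the $y$-integral collapses to $\int_{\varpi_\FF^{j-i}\calO_\FF^\times}\chi_0(y-\alpha)^{-2}\,dy$, and assembling the factors $q_\FF^{-i}\cdot q_\FF^{-i}(1-q_\FF^{-1})$ produces the asserted prefactor $q_\FF^{-2i}(1-q_\FF^{-1})$.

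The structural core of the proof is this separation of variables, which is forced by the two algebraic identities above; the part requiring care — and what I expect to be the main obstacle — is the bookkeeping of the three characters in play ($\chi$ on $\calO_\EE^\times$ and the extensions $\chi_0,\eta_0$ to $\FF^\times$), in particular verifying that $\chi|_{\calO_\FF^\times}\cdot\eta^{-1}=1$ forces $\chi_0\eta_0^{-1}$ to be trivial on all of $\FF^\times$, together with correctly tracking the change-of-variables Jacobians $q_\FF^{-i}$.
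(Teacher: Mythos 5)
Your proof is correct and follows essentially the same route as the paper's: the substitution $x_1=x_2a$ with Jacobian $|x_2|_\FF$, the orthogonality of $\chi|_{\calO_\FF^\times}\cdot\eta^{-1}$ on the compact group $\calO_\FF^\times$ for the $x_2$-integral, and the rewriting $\chi\bigl(\tfrac{a+\alpha}{a-\alpha}\bigr)=\chi_0(a^2-\alpha^2)\,\chi_0(a-\alpha)^{-2}$ are exactly the steps of the paper's argument. The residual phases $(\chi_0\eta_0^{-1})(\varpi_\FF^i)$ and $(\chi_0\eta_0^{-1})(a^2-\alpha^2)$ that you flag are treated tacitly in the paper (which passes from $(\chi_0\eta_0^{-1})$ on $\varpi_\FF^i\calO_\FF^\times$ to $\chi\eta^{-1}$ on $\calO_\FF^\times$ and drops $(\chi_0\eta_0^{-1})(x_1^2-\alpha^2)$ without comment, implicitly using compatibly chosen uniformizers), so your bookkeeping is, if anything, slightly more careful than the printed proof.
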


\begin{proof}
Writing
$$ \frac{x_1+x_2\alpha}{x_1-x_2\alpha} = \frac{\frac{x_1}{x_2}+\alpha}{\frac{x_1}{x_2}-\alpha} \qquad \mbox{and} \qquad x_1^2-x_2^2\alpha^2 = x_2^2\Big(\Big(\frac{x_1}{x_2}\Big)^2-\alpha^2\Big) $$
and substituting $a=\frac{x_1}{x_2}$ with $dx_1=|x_2|_\FF\,da$ yields
\begin{align}
	& \int\limits_{\varpi_F^i \calO_\FF^{\times}} (\chi_0\eta_0)(x_2)   \int\limits_{\varpi_F^j \calO_\FF^{\times}} \chi\left(\frac{x_1+\alpha x_2}{x_1-\alpha x_2}\right)\eta_0(x_1^2-x_2^2\alpha^2)^{-1}\,dx_1\,dx_2\notag\\
	={}& q_\FF^{-i}\int\limits_{\varpi_F^i \calO_\FF^{\times}} (\chi_0\eta_0^{-1})(x_2)\int\limits_{\varpi_F^{j-i}\calO_\FF^{\times}} \chi\left(\frac{x_1+\alpha}{x_1-\alpha}\right)\eta_0(x_1^2-\alpha^2)^{-1}\,dx_1\,dx_2\notag\\
	={}& q_\FF^{-i}\int\limits_{\varpi_F^{j-i} \calO_\FF^{\times}} \chi\left(\frac{x_1+\alpha}{x_1-\alpha}\right)\eta_0(x_1^2-\alpha^2)^{-1}\int\limits_{\varpi_F^i \calO_\FF^{\times}} (\chi_0\eta_0^{-1})(x_2)\,dx_2\,dx_1.\label{eq:ZeroIntegralx1x2A}
\end{align}
The inner integral can be evaluated as in the proof of Lemma~\ref{lem::zero_third_term}:
$$ \int\limits_{\varpi_\FF^i \calO_\FF^{\times}} (\chi_0\eta_0^{-1})(x_2)\,dx_2 = q_\FF^{-i}\int_{\calO_\FF^{\times}}(\chi\eta^{-1})(x_2)\,dx_2 = \begin{cases}q_\FF^{-i}(1-q_\FF^{-1})&\mbox{for $\chi|_{\calO_\FF^{\times}}\cdot\eta^{-1}=1$,}\\0&\mbox{for $\chi|_{\calO_\FF^{\times}}\cdot\eta^{-1}\neq1$.}\end{cases} $$
Assume that $\chi|_{\calO_\FF^{\times}}\cdot\eta^{-1}=1$, i.e. $\eta=\chi|_{\calO_\FF^{\times}}$, then \eqref{eq:ZeroIntegralx1x2A} becomes
\begin{multline*}
	q_\FF^{-2i}(1-q_\FF^{-1})\int_{\varpi_\FF^{j-i}\calO_\FF^{\times}}\chi\left(\frac{x_1+\alpha}{x_1-\alpha}\right)\eta_0(x_1^2-\alpha^2)^{-1}\,dx_1\\
	= q_\FF^{-2i}(1-q_\FF^{-1})\int_{\varpi_\FF^{j-i}\calO_\FF^{\times}}\chi_0(x_1-\alpha)^{-2}\,dx_1.\qedhere
\end{multline*}
\end{proof}

This shows Proposition~\ref{prop:ThirdTerm} for $\chi|_{\calO_\FF^{\times}}\cdot\eta^{-1}\neq1$ since in this case the integral vanishes. Now assume that $\chi|_{\calO_\FF^{\times}}\cdot\eta^{-1}=1$, then the previous lemma allows to rewrite \eqref{equ::fourth_term} as
\begin{align}
	& (1-q_\FF^{-1})\sum\limits_{i=n}^\infty\sum\limits_{j=n}^\infty|\varpi_\FF^{2j}-\varpi_\FF^{2i}\alpha^2|_\FF^{-t-\frac{1}{2}}|\varpi_\FF^i|_\FF^{2s+t+\frac{3}{2}}\int_{\varpi_\FF^{j-i}\calO_\FF^{\times}}\chi_0(x_1-\alpha)^{-2}\,dx_1\notag\\
	={}& (1-q_\FF^{-1})\sum\limits_{i=n}^\infty|\varpi_\FF^i|_\FF^{2s-t+\frac{1}{2}}\sum\limits_{j=n}^\infty\int_{\varpi_\FF^{j-i} \calO_\FF^{\times}}\chi_0(x_1-\alpha)^{-2}|x_1^2-\alpha^2|_\FF^{-t-\frac{1}{2}}\,dx_1\notag\\
	={}& (1-q_\FF^{-1})q_\FF^{-n(2s-t+\frac{1}{2})}\sum\limits_{i=0}^\infty q_\FF^{-i(2s-t+\frac{1}{2})}\sum_{m=-\infty}^i\int_{\varpi_\FF^{-m} \calO_\FF^{\times}}\chi_0(x_1-\alpha)^{-2}|x_1^2-\alpha^2|_\FF^{-t-\frac{1}{2}}\,dx_1.\label{equ::fourth_termB}
\end{align}

\begin{lemma}\label{lem:AnotherIntegral}
There exists $M>0$ such that for $k\geq M$:
\begin{align*}
	\int_{\varpi_\FF^{-k} \calO_\FF^{\times}}\chi_0(x_1-\alpha)^{-2}\,dx_1 &= \begin{cases}(1-q_\FF^{-1})q_\FF^{k}&\mbox{for $\chi^2|_{\calO_\FF^{\times}}=1$,}\\0&\mbox{for $\chi^2|_{\calO_\FF^{\times}}\neq1$.}\end{cases}
\end{align*}
\end{lemma}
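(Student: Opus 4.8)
The plan is to observe that for $k$ large the point $\alpha$ is negligible compared with $x_1$, so that $\chi_0(x_1-\alpha)$ may be replaced by $\chi_0(x_1)$, and then to evaluate the resulting integral by recognising it as a dilate of a character integral over $\calO_\FF^\times$, exactly in the spirit of the proof of Lemma~\ref{lem::zero_third_term}.

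\emph{Choice of $M$.} Since $\chi$ is locally constant, fix $c\geq 0$ with $\chi\equiv 1$ on $1+\varpi_\EE^c\calO_\EE$. For $x_1\in\varpi_\FF^{-k}\calO_\FF^{\times}$ one has $|x_1|_\EE=|x_1|_\FF^2=q_\FF^{2k}$, hence $|\alpha x_1^{-1}|_\EE=|\alpha|_\EE q_\FF^{-2k}$, and this is $\leq q_\EE^{-c}$ as soon as $k$ exceeds some $M$ depending only on $\alpha$ and $c$. For such $k$ and every $x_1\in\varpi_\FF^{-k}\calO_\FF^{\times}$ we then have $1-\alpha x_1^{-1}\in 1+\varpi_\EE^c\calO_\EE$, so $\chi_0(1-\alpha x_1^{-1})=1$ and therefore $\chi_0(x_1-\alpha)=\chi_0(x_1)\chi_0(1-\alpha x_1^{-1})=\chi_0(x_1)$, uniformly in $x_1$. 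Consequently
$$ \int_{\varpi_\FF^{-k}\calO_\FF^{\times}}\chi_0(x_1-\alpha)^{-2}\,dx_1 = \int_{\varpi_\FF^{-k}\calO_\FF^{\times}}\chi_0(x_1)^{-2}\,dx_1. $$

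\emph{Evaluation.} Substituting $x_1=\varpi_\FF^{-k}u$ with $u\in\calO_\FF^{\times}$ (so $dx_1=q_\FF^k\,du$) and using $\chi_0(u)=\chi(u)$ for $u\in\calO_\FF^{\times}\subseteq\calO_\EE^{\times}$, the right-hand side becomes
$$ q_\FF^k\,\chi_0(\varpi_\FF)^{2k}\int_{\calO_\FF^{\times}}\big(\chi^2|_{\calO_\FF^{\times}}\big)(u)^{-1}\,du. $$
The integral of the character $\chi^2|_{\calO_\FF^{\times}}$ over the compact group $\calO_\FF^{\times}$ equals its volume $1-q_\FF^{-1}$ if $\chi^2|_{\calO_\FF^{\times}}=1$ and vanishes otherwise (cf. \cite[Lemma 4.1]{Sally98}). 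In the non-vanishing case the remaining factor $\chi_0(\varpi_\FF)^{2k}$ is trivial: with the normalisation $\varpi_\EE=\varpi_\FF$ when $\EE/\FF$ is unramified and $\varpi_\EE=\alpha$ when $\EE/\FF$ is ramified (using $\alpha^2\in\varpi_\FF\calO_\FF^{\times}$), one has $\varpi_\FF\varpi_\EE^{-e}\in\calO_\FF^{\times}$ for $e\in\{1,2\}$ the ramification index, so $\chi_0(\varpi_\FF)=\chi_0(\varpi_\EE)^e\chi(\varpi_\FF\varpi_\EE^{-e})=\chi(\varpi_\FF\varpi_\EE^{-e})$ and hence $\chi_0(\varpi_\FF)^2=\big(\chi^2|_{\calO_\FF^{\times}}\big)(\varpi_\FF\varpi_\EE^{-e})=1$. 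Putting these together gives exactly the asserted value.

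\emph{Main obstacle.} There is no real difficulty here; the two points that require attention are that $M$ can be chosen uniformly in $x_1$ (this is exactly the content of the bound $|\alpha x_1^{-1}|_\EE=|\alpha|_\EE q_\FF^{-2k}$, which depends on $x_1$ only through $|x_1|_\FF=q_\FF^{k}$), and that the spurious factor $\chi_0(\varpi_\FF)^{2k}$ collapses to $1$ precisely because $\chi^2$ is trivial on $\calO_\FF^{\times}$ in the case where the integral does not vanish.
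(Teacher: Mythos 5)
Your proof is correct and follows essentially the same route as the paper: use local constancy of $\chi_0$ to replace $\chi_0(x_1-\alpha)$ by $\chi_0(x_1)$ uniformly on $\varpi_\FF^{-k}\calO_\FF^{\times}$ for $k\geq M$, then reduce to the integral of $\chi^{-2}$ over $\calO_\FF^{\times}$ and apply character orthogonality. The only difference is that you explicitly track the factor $\chi_0(\varpi_\FF)^{2k}$ produced by the substitution $x_1=\varpi_\FF^{-k}u$ and dispose of it by a compatible choice of uniformizers in the non-vanishing case, a normalization point that the paper's own proof passes over silently.
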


\begin{proof}
Since $\chi_0$ is locally constant on $\calO_\EE^{\times}$ (see e.g. \cite[\S1.4]{GG63}), there exists $M>0$ such that $\chi_0(1+y)=\chi_0(1)=1$ for all $y\in\varpi_{\EE}^M\calO_\EE$. After possibly replacing $M$ by $2M$, we have $\chi_0(1+y)=1$ for all $y\in\varpi_\FF^M\calO_\EE$. Recall that we assume $\alpha\in\calO_\EE$. Then $k\geq M$ implies $\varpi_\FF^k\alpha\in\varpi_\FF^M\calO_\EE$, so for all $x_1\in\varpi_\FF^{-k} \calO_\FF^{\times}$ we obtain
$$ \chi_0(x_1-\alpha) =\chi_0(x_1)\chi_0(1-x_1^{-1}\alpha) = \chi_0(x_1). $$
This allows us to compute the integral for $k\geq M$:
\begin{equation*}
	\int_{\varpi_\FF^{-k} \calO_\FF^{\times}}\chi_0(x_1-\alpha)^{-2}\,dx_1 = \int_{\varpi_\FF^{-k} \calO_\FF^{\times}}\chi_0(x_1)^{-2}\,dx_1 = q_\FF^{k}\int_{\calO_\FF^{\times}}\chi(u)^{-2}\,du,
\end{equation*}
which is the integral of the character $\chi^{-2}|_{ \calO_\FF^{\times}}$ over the compact group $\calO_\FF^{\times}$ (see \cite[Lemma 4.1]{Sally98}).
\end{proof}

Note that under the assumption $\chi|_{\calO_\FF^{\times}}\cdot\eta^{-1}=1$, the condition $\chi^2|_{\calO_\FF^{\times}}=1$ is equivalent to $\chi|_{\calO_\FF^{\times}}\cdot\eta=1$. In view of the previous lemma, we split the summation in \eqref{equ::fourth_termB} into
\begin{equation*}
\begin{split}
 \sum_{i=0}^\infty\sum_{m=-\infty}^i &= \Bigg(\sum_{i=0}^{M-1} + \sum_{i=M}^\infty \Bigg)\sum_{m=-\infty}^i = \sum_{i=0}^{M-1}\Bigg( \sum_{m=-\infty}^{M-1} - \sum_{m=i+1}^{M-1}  \Bigg) + \sum_{i=M}^\infty \Bigg(  \sum_{m=-\infty}^{M-1} + \sum_{m=M}^{i}  \Bigg) \\
&=\sum_{i=0}^\infty\sum_{m=-\infty}^{M-1}-\sum_{i=0}^{M-2}\sum_{m=i+1}^{M-1}+\sum_{i=M}^\infty\sum_{m=M}^i.
\end{split}
\end{equation*}
(Note that in the second double sum, the term for $i=M-1$ vanishes since in this case $\sum_{m=i+1}^{M-1}=\sum_{m=M}^{M-1}=0$.) Then \eqref{equ::fourth_termB} becomes
\begin{align}
	&(1-q_\FF^{-1})q_\FF^{-n(2s-t+\frac{1}{2})}\Bigg(\sum_{i=0}^\infty q_\FF^{-i(2s-t+\frac{1}{2})}\int_{\varpi_\FF^{-(M-1)}\calO_\FF}\chi_0(x_1-\alpha)^{-2}|x_1^2-\alpha^2|_{\FF}^{-t-\frac{1}{2}}\,dx_1\notag\\
	&\hspace{3cm}-\sum_{i=0}^{M-2} q_\FF^{-i(2s-t+\frac{1}{2})}\sum_{m=i+1}^{M-1}\int_{\varpi_\FF^{-m} \calO_\FF^{\times}}\chi_0(x_1-\alpha)^{-2}|x_1^2-\alpha^2|_\FF^{-t-\frac{1}{2}}\,dx_1\notag\\
	&\hspace{3cm}+\sum_{i=M}^\infty q_\FF^{-i(2s-t+\frac{1}{2})}\sum_{m=M}^i\int_{\varpi_\FF^{-m} \calO_\FF^{\times}}\chi_0(x_1-\alpha)^{-2}|x_1^2-\alpha^2|_\FF^{-t-\frac{1}{2}}\,dx_1\Bigg).\label{equ::fourth_termC}
\end{align}

The second term in \eqref{equ::fourth_termC} is a finite sum and hence holomorphic in $s,t\in\CC$. The first term becomes
$$ \frac{(1-q_\FF^{-1})q_\FF^{-n(2s-t+\frac{1}{2})}}{1-q_\FF^{-(2s-t+\frac{1}{2})}}\int_{\varpi_\FF^{-(M-1)}\calO_\FF}\chi_0(x_1-\alpha)^{-2}|x_1^2-\alpha^2|_{\FF}^{-t-\frac{1}{2}}\,dx_1 $$
which is holomorphic in $\{(s,t)\in\CC^2:2s-t+\frac{1}{2}\not\in\frac{2\pi i}{\ln q_\FF}\ZZ\}$ with the obvious residue at $1-q_\FF^{-(2s-t+\frac{1}{2})}=0$. Finally, the third term vanishes for $\chi|_{\calO_\FF^{\times}}\cdot\eta\neq1$, and for $\chi|_{\calO_\FF^{\times}}\cdot\eta=1$ it becomes
\begin{multline*}
	(1-q_\FF^{-1})^2q_\FF^{-n(2s-t+\frac{1}{2})}\sum_{i=M}^\infty q_\FF^{-i(2s-t+\frac{1}{2})}\sum_{m=M}^iq_\FF^{-2tm}\\
	= (1-q_\FF^{-1})^2q_\FF^{-n(2s-t+\frac{1}{2})}q_\FF^{-M(2s+t+\frac{1}{2})}\frac{1}{(1-q_\FF^{-(2s-t+\frac{1}{2})})(1-q_\FF^{-(2s+t+\frac{1}{2})})}.
\end{multline*}
This shows the holomorphicity statements in Proposition~\ref{prop:ThirdTerm}. It remains to prove the claimed residue formulas.

Following the previous discussion, multiplying \eqref{equ::fourth_termC} with $(1-q_\FF^{-(2s-t+\frac{1}{2})})$ and setting $2s-t+\frac{1}{2}=0$ yields
$$ (1-q_\FF^{-1})\int_{\varpi_\FF^{-(M-1)}\calO_\FF}\chi_0(x_1-\alpha)^{-2}|x_1^2-\alpha^2|_{\FF}^{-t-\frac{1}{2}}\,dx_1 $$
if $\chi|_{\calO_\FF^{\times}}\cdot\eta\neq1$, and in the case $\chi|_{\calO_\FF^{\times}}\cdot\eta=1$ additionally
$$ (1-q_\FF^{-1})\frac{(1-q_\FF^{-1})q_\FF^{-M(2s+t+\frac{1}{2})}}{1-q_\FF^{-(2s+t+\frac{1}{2})}}. $$
In view of the choice of $M$ in Lemma~\ref{lem:AnotherIntegral}, the fact that $2s+t+\frac{1}{2}=2t$, the previous computations, and the change of variables $x_1= -y\alpha^2$, this expression equals (in each of the cases $\chi|_{\calO_\FF^{\times}}\cdot\eta\neq1$ or $\chi|_{\calO_\FF^{\times}}\cdot\eta=1$)
\begin{multline*}
	(1-q_\FF^{-1})\int_\FF \chi_0(x_1-\alpha)^{-2}|x_1^2-\alpha^2|_\FF^{-t-\frac{1}{2}}\,dx_1\\
	= (1-q_\FF^{-1})\chi_0(-\alpha)^{-2}|\alpha^2|_\FF^{-t+\frac{1}{2}}\int_\FF \chi_0(1+y\alpha)^{-2}|1+y\alpha|_\EE^{-t-\frac{1}{2}}\,dy.
\end{multline*}
Evaluating the remaining integral with \eqref{prop:GammaIntegral} shows \eqref{eq:ThirdTermResidue-}.

On the other hand, multiplying \eqref{equ::fourth_termC} with $(1-q_\FF^{-(2s+t+\frac{1}{2})})$ and setting $2s+t+\frac{1}{2}=0$ yields
$$ \frac{(1-q_\FF^{-1})^2q_\FF^{-n(2s-t+\frac{1}{2})}}{1-q_\FF^{-(2s-t+\frac{1}{2})}}. $$
Since in this case $2s-t+\frac{1}{2}=-2t$, this shows \eqref{eq:ThirdTermResidue+}.

\subsubsection{Holomorphic renormalization}
\label{subsec::holom_renor}

For a character $\eta_t$ of $\FF^\times$, $\eta\in\widehat{\calO_\FF^\times}$, $t\in\CC$, recall the local non-archimedean $L$-factor
\begin{equation}
	L(s,\eta_t) = \begin{cases}(1-q_\FF^{-s-t})^{-1}&\mbox{if $\eta=1$,}\\1&\mbox{if $\eta\neq1$.}\end{cases} \qquad\qquad (s\in\CC).\label{eq:DefLocalLFactor}
\end{equation}

We define the following renormalization of $K_{s,t}^{\chi,\eta}$:
$$ \widetilde{K}_{s,t}^{\chi,\eta} := L(\tfrac{1}{2},\chi_s|_{\FF^\times}\cdot\eta_t)^{-1}L(\tfrac{1}{2},\chi_s|_{\FF^\times}\cdot\eta_t^{-1})^{-1}\cdot K_{s,t}^{\chi,\eta}, $$
or more explicitly:
$$ \widetilde{K}_{s,t}^{\chi,\eta} = K_{s,t}^{\chi,\eta}\times\begin{cases}(1-q_\FF^{-(2s+t+\frac{1}{2})})(1-q_\FF^{-(2s-t+\frac{1}{2})})&\mbox{if $\chi|_{\calO_\FF^{\times}}\cdot\eta=1$ and $\chi|_{\calO_\FF^{\times}}\cdot\eta^{-1}=1$,}\\(1-q_\FF^{-(2s+t+\frac{1}{2})})&\mbox{if $\chi|_{\calO_\FF^{\times}}\cdot\eta=1$ and $\chi|_{\calO_\FF^{\times}}\cdot\eta^{-1}\neq1$,}\\(1-q_\FF^{-(2s-t+\frac{1}{2})})&\mbox{if $\chi|_{\calO_\FF^{\times}}\cdot\eta\neq1$ and $\chi|_{\calO_\FF^{\times}}\cdot\eta^{-1}=1$,}\\1&\mbox{if $\chi|_{\calO_\FF^{\times}}\cdot\eta\neq1$ and $\chi|_{\calO_\FF^{\times}}\cdot\eta^{-1}\neq1$.}\end{cases} $$
With this renormalization we obtain the following result:

\begin{theorem}\label{thm:HolomorphicRenormalization}
	For all $\chi\in\widehat{\calO_\EE^{\times}}$ and $\eta\in\widehat{\calO_\FF^{\times}}$ the map $(s,t)\mapsto\widetilde{K}_{s,t}^{\chi,\eta}$ extends to a holomorphic function from $\CC^2$ to $\calD'(\EE)$, i.e. for every $\varphi\in C_c^\infty(\EE)$ the function $\CC^2\to\CC,\,(s,t)\mapsto\langle\widetilde{K}_{s,t}^{\chi,\eta},\varphi\rangle$ is holomorphic. Moreover, $\widetilde{K}_{s,t}^{\chi,\eta}\in\calD'(\EE)_{s,t}^{\chi,\eta}$ for all $s,t\in\CC$.
\end{theorem}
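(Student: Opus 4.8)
The plan is to prove the two assertions of the theorem separately and, for the second one, to pass from the region of absolute convergence to all of $\CC^2$ by analytic continuation.

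\emph{Strategy, and holomorphy of $\widetilde K_{s,t}^{\chi,\eta}$.} Fix $\varphi\in C_c^\infty(\EE)$, choose $n$ so that $\varphi$ is constant on the cosets of $V\times V$ with $V=\varpi_\FF^n\calO_\FF$, and split $\langle K_{s,t}^{\chi,\eta},\varphi\rangle$ into the three terms of \eqref{eq:MeroContThreeTerms}. By Proposition~\ref{prop:FirstTerm} the first term is entire in $(s,t)$, and by Propositions~\ref{prop:SecondTerm} and \ref{prop:ThirdTerm} the second and third extend meromorphically to $\CC^2$ with at most simple poles: the second has a pole along the hyperplane $H_+=\{2s+t+\tfrac12\in\tfrac{2\pi i}{\ln q_\FF}\ZZ\}$ only when $\chi|_{\calO_\FF^\times}\cdot\eta=1$, the third has a pole along $H_-=\{2s-t+\tfrac12\in\tfrac{2\pi i}{\ln q_\FF}\ZZ\}$ only when $\chi|_{\calO_\FF^\times}\cdot\eta^{-1}=1$, and in the doubly degenerate case $\chi|_{\calO_\FF^\times}\cdot\eta=\chi|_{\calO_\FF^\times}\cdot\eta^{-1}=1$ the third additionally acquires a pole of order at most $2$ along $H_+\cap H_-$ (visible in the term with denominator $(1-q_\FF^{-(2s-t+\frac12)})(1-q_\FF^{-(2s+t+\frac12)})$ produced in Section~\ref{sec:ThirdTerm}). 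By \eqref{eq:DefLocalLFactor}, $L(\tfrac12,\chi_s|_{\FF^\times}\cdot\eta_t)^{-1}$ equals $1-q_\FF^{-(2s+t+\frac12)}$ exactly when $\chi|_{\calO_\FF^\times}\cdot\eta=1$ and equals $1$ otherwise, and likewise $L(\tfrac12,\chi_s|_{\FF^\times}\cdot\eta_t^{-1})^{-1}$ equals $1-q_\FF^{-(2s-t+\frac12)}$ exactly when $\chi|_{\calO_\FF^\times}\cdot\eta^{-1}=1$; each such factor is an entire function of $(s,t)$ vanishing to order exactly one along $H_+$, respectively $H_-$. Hence the renormalizing factor $L(\tfrac12,\chi_s|_{\FF^\times}\cdot\eta_t)^{-1}L(\tfrac12,\chi_s|_{\FF^\times}\cdot\eta_t^{-1})^{-1}$ is precisely the product of the entire linear factors cutting out the polar hyperplanes that actually occur (and is trivial when a hyperplane is absent), so multiplying $\langle K_{s,t}^{\chi,\eta},\varphi\rangle$ by it cancels the simple poles along $H_\pm$ as well as the order-$2$ pole along $H_+\cap H_-$. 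Thus $(s,t)\mapsto\langle\widetilde K_{s,t}^{\chi,\eta},\varphi\rangle$ is holomorphic on all of $\CC^2$; since $\varphi$ was arbitrary, $(s,t)\mapsto\widetilde K_{s,t}^{\chi,\eta}$ is a holomorphic family of distributions, coinciding with $L(\tfrac12,\chi_s|_{\FF^\times}\cdot\eta_t)^{-1}L(\tfrac12,\chi_s|_{\FF^\times}\cdot\eta_t^{-1})^{-1}K_{s,t}^{\chi,\eta}$ on the region $\{\Re(2s\pm t+\tfrac12)>0\}$ of Lemma~\ref{lem:KernelL1loc}.

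\emph{Membership, first on the domain of convergence.} On $\{\Re(2s\pm t+\tfrac12)>0\}$ we have $\widetilde K_{s,t}^{\chi,\eta}=c(s,t)K_{s,t}^{\chi,\eta}$ with $c(s,t)\neq0$ (each $L$-factor is finite and non-zero there, since $|q_\FF^{-(2s\pm t+\frac12)}|<1$), so by the linearity in $u$ of the two conditions of Proposition~\ref{prop:CharSBOKernels} it is enough to show $K_{s,t}^{\chi,\eta}\in\calD'(\EE)_{s,t}^{\chi,\eta}$. On the full-measure subset $\EE\setminus\FF\subseteq\EE$ the locally integrable function $K_{s,t}^{\chi,\eta}$ agrees with the distribution spanning $\calD'(\EE^\times)_{s,t}^{\chi,\eta}$ in Proposition~\ref{prop:CharacterizationSBOKernelsPGL2} (here $(\chi,s,\eta,t)\notin\SplusT$), hence it satisfies conditions \eqref{prop:CharSBOKernels1} and \eqref{prop:CharSBOKernels2} pointwise there; equivalently this is a direct computation from the defining formula, using for \eqref{prop:CharSBOKernels1} that $(ax)_1=ax_1$, $(ax)_2=ax_2$ and $\chi_{s-\frac12}((ax)^2)=\chi_{s-\frac12}(a)^2\chi_{s-\frac12}(x^2)$ for $a\in\FF^\times$, and for \eqref{prop:CharSBOKernels2} the computation already performed for $\kappa$ in Lemma~\ref{lem:CharSBOKernelsKappa}. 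Since $\FF$ and $\{0\}$ have Haar measure zero and $K_{s,t}^{\chi,\eta}$ is locally integrable on $\EE$, these almost-everywhere identities are equivalent to the distributional identities \eqref{eq:AltCharSBOKernels1} and \eqref{eq:AltCharSBOKernels2}, so $\widetilde K_{s,t}^{\chi,\eta}\in\calD'(\EE)_{s,t}^{\chi,\eta}$ whenever $\Re(2s\pm t+\tfrac12)>0$.

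\emph{Analytic continuation.} To obtain membership for all $(s,t)\in\CC^2$ it suffices, by Proposition~\ref{prop:CharSBOKernels} and Remark~\ref{rem:CharSBOKernelsDistributionLanguage}, to verify \eqref{eq:AltCharSBOKernels1} for all $a\in\FF^\times$, $\varphi\in C_c^\infty(\EE)$ and \eqref{eq:AltCharSBOKernels2} for all $b\in\FF^\times$, $\varphi\in C_c^\infty(\EE\setminus\{b^{-1}\})$, now with $u=\widetilde K_{s,t}^{\chi,\eta}$. For fixed $a$, $b$, $\varphi$ both sides of each identity are holomorphic functions of $(s,t)\in\CC^2$: the left-hand sides are $\langle\widetilde K_{s,t}^{\chi,\eta},\psi\rangle$ with $\psi=\varphi(a^{-1}\cdot)$, respectively $\psi$ the test function $|1+b\cdot|_\EE^{2}\varphi(\tfrac{\cdot}{1+b\cdot})$ on the left of \eqref{eq:AltCharSBOKernels2}, which do not depend on $(s,t)$ and genuinely lie in $C_c^\infty(\EE)$ — for the second because the Möbius map $x\mapsto\frac{x}{1+bx}$ carries the compact set $\supp\varphi\subseteq\EE\setminus\{b^{-1}\}$ into a compact subset of $\EE$, staying away from the pole $x=-b^{-1}$ — so their holomorphy follows from the first part; on the right-hand sides, $(\chi_{s+\frac12}\eta_{t+\frac12}^{-1})(a)$ depends holomorphically on $(s,t)$, and on $\supp\varphi$ the function $\chi_{s-\frac12}((1-b\cdot)^2)^{-1}\varphi$ is a finite linear combination $\sum_k c_k(s)\varphi_k$ with $\varphi_k\in C_c^\infty(\EE)$ and $c_k$ holomorphic. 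Since by the previous step these identities already hold on the non-empty open set $\{\Re(2s\pm t+\tfrac12)>0\}\subseteq\CC^2$, the identity theorem for holomorphic functions of several complex variables shows they hold on all of $\CC^2$, whence $\widetilde K_{s,t}^{\chi,\eta}\in\calD'(\EE)_{s,t}^{\chi,\eta}$ for all $s,t\in\CC$.

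\emph{Expected main difficulty.} Essentially all the analytic content sits in Propositions~\ref{prop:FirstTerm}--\ref{prop:ThirdTerm}. Granting those, the two points needing care are: identifying each polar hyperplane with the entire linear factor supplied by the corresponding $L$-factor, which is immediate from \eqref{eq:DefLocalLFactor} and the definitions of $\SplusT$, $\SminusT$ together with the simplicity of the poles; and checking that the test functions obtained by pulling $\varphi$ back along the dilation $x\mapsto ax$ and the Möbius map $x\mapsto\frac{x}{1+bx}$ remain compactly supported locally constant functions, which is where one must be attentive near the excluded point $b^{-1}$.
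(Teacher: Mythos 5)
Your proof is correct and takes essentially the same route as the paper's: holomorphy from the three-term decomposition \eqref{eq:MeroContThreeTerms} together with Propositions~\ref{prop:FirstTerm}--\ref{prop:ThirdTerm} and the observation that the inverted $L$-factors are exactly the linear factors along the polar hyperplanes, membership on $\{\Re(2s\pm t+\tfrac{1}{2})>0\}$ via local integrability (Lemma~\ref{lem:KernelL1loc}) and the fact that $\FF$ has measure zero in $\EE$, and then analytic continuation of the holomorphically varying conditions of Proposition~\ref{prop:CharSBOKernels}, which you merely make more explicit via the identity theorem applied to the pairings in Remark~\ref{rem:CharSBOKernelsDistributionLanguage}. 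The only minor imprecision is that in the doubly degenerate case the third term has simple poles along the whole hyperplane $2s+t+\tfrac{1}{2}\in\tfrac{2\pi i}{\ln q_\FF}\ZZ$, not only near its intersection with the other hyperplane; this is harmless, since the product of the two inverted $L$-factors cancels all of these poles in any case.
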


\begin{proof}	
	Propositions~\ref{prop:FirstTerm}, \ref{prop:SecondTerm} and \ref{prop:ThirdTerm} applied to \eqref{eq:MeroContThreeTerms} show that $\langle\widetilde{K}_{s,t}^{\chi,\eta},\varphi\rangle$ is holomorphic in $(s,t)\in\CC^2$ for every $\varphi\in C_c^\infty(\EE)$. It remains to show that $\widetilde{K}_{s,t}^{\chi,\eta}\in\calD'(\EE)_{s,t}^{\chi,\eta}$ for all $s,t\in\CC$. First note that by the considerations in Section~\ref{sec:ClassSBOKernelsEtimes}, the restriction of $K_{s,t}^{\chi,\eta}$ to $\EE^\times$ satisfies the conditions of Proposition~\ref{prop:CharSBOKernels}. For $(s,t)\in\CC^2$ with $\Re(2s\pm t+\frac{1}{2})>0$ the distribution $K_{s,t}^{\chi,\eta}$ is locally integrable by Lemma~\ref{lem:KernelL1loc}, and since $\FF$ is of measure zero in $\EE$, the conditions of Proposition~\ref{prop:CharSBOKernels} also hold on $\EE$ in the distribution sense. Consequently, the renormalized kernel $\widetilde{K}_{s,t}^{\chi,\eta}$ satisfies the conditions of Proposition~\ref{prop:CharSBOKernels} whenever $\Re(2s\pm t+\frac{1}{2})>0$, but since it is holomorphic in $(s,t)\in\CC^2$ and the conditions of Proposition~\ref{prop:CharSBOKernels} depend holomorphically on $(s,t)$, it satisfies the conditions of Proposition~\ref{prop:CharSBOKernels} for all $(s,t)\in\CC^2$, i.e. $\widetilde{K}_{s,t}^{\chi,\eta}\in\calD'(\EE)_{s,t}^{\chi,\eta}$.
\end{proof}

Using the residue calculations from Propositions~\ref{prop:SecondTerm} and \ref{prop:ThirdTerm} we now find the values of $\widetilde{K}_{s,t}^{\chi,\eta}$ at $(\chi,s,\eta,t)\in \SminusT\cup \SplusT$. These special values turn out to be scalar multiples of the Dirac distribution $\delta\in\calD'(\EE)$ at $x=0$ and a holomorphic family of distributions that is supported on $\FF\subseteq\EE$ which we now define. For $\xi\in\widehat{\calO_\FF^\times}$ recall from Appendix~\ref{app:HomogeneousDistributions} the nowhere vanishing holomorphic family of distributions $\widetilde{\xi}_\alpha\in\calD'(\FF)$, $\alpha\in\CC$. For $\eta\in\widehat{\calO_\FF^\times}$ we define a family of distributions $L_t^\eta$ on $\EE$ by
$$ L_t^\eta(x_1+x_2\alpha) = \widetilde{(\eta^{-2})}_{-2t-1}(x_1)\cdot\delta(x_2) \qquad (t\in\CC). $$
Note that the restriction of $L_t^\eta$ to $\EE^\times$ is (up to a scalar) the distribution that occurs in Proposition~\ref{prop:CharacterizationSBOKernelsPGL2} in the case $(\chi,s,\eta,t)\in\SplusT\setminus\SminusT$. By Theorem~\ref{thm:HomogeneousDistributions}, $L_t^\eta$ depends holomorphically on $t\in\CC$ and
$$ L_t^\eta(x_1+x_2\alpha) = \begin{cases}\eta_{t+\frac{1}{2}}(x_1^2)^{-1}\cdot\delta(x_2)&\mbox{if $\eta^2\neq1$},\\(1-q_\FF^{2t})\eta_{t+\frac{1}{2}}(x_1^2)^{-1}\cdot\delta(x_2)&\mbox{if $\eta^2=1$ and $2t\not\in\frac{2\pi i}{\ln q_\FF}\ZZ$,}\\(1-q_\FF^{-1})\delta(x_1)\cdot\delta(x_2)&\mbox{if $\eta^2=1$ and $2t\in\frac{2\pi i}{\ln q_\FF}\ZZ$.}\end{cases} $$

\begin{theorem}[Residue Identities]\label{thm:ResidueIdentities}
	\begin{enumerate}
		\item\label{thm:ResidueIdentities1} For $(\chi,s,\eta,t)\in\SplusT$ we have
		$$ \widetilde{K}_{s,t}^{\chi,\eta} = (1-q_\FF^{-1})\cdot L_t^\eta. $$
		\item\label{thm:ResidueIdentities2} For $(\chi,s,\eta,t)\in\SminusT$ we have
		\begin{multline*}
			\qquad\qquad\widetilde{K}_{s,t}^{\chi,\eta} = c_\FF(1-q_\FF^{-1})\chi_0(-\alpha)^{-2}|\alpha^2|_\FF^{-t+\frac{1}{2}}\\
			\times\Gamma((\chi^2|_{\calO^\times_\FF})_{2t})\Gamma((\chi^{-2})_{-t+\frac{1}{2}})L(\tfrac{1}{2},\chi_s|_{\FF^\times}\cdot\eta_t)^{-1}\cdot\delta,
		\end{multline*}
		where the constant $c_\FF>0$ and the gamma factors are as in Appendix~\ref{app:Integral}.
	\end{enumerate}
\end{theorem}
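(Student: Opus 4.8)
The plan is to compute the value of $\widetilde{K}_{s,t}^{\chi,\eta}$ at a point of $\SplusT$ (resp. $\SminusT$) by recalling that $\widetilde{K}_{s,t}^{\chi,\eta}=L(\tfrac{1}{2},\chi_s|_{\FF^\times}\cdot\eta_t)^{-1}L(\tfrac{1}{2},\chi_s|_{\FF^\times}\cdot\eta_t^{-1})^{-1}K_{s,t}^{\chi,\eta}$, pairing with an arbitrary $\varphi\in C_c^\infty(\EE)$, and letting $(s,t)$ tend to the point. By the proof of Theorem~\ref{thm:HolomorphicRenormalization}, i.e. Propositions~\ref{prop:SecondTerm} and \ref{prop:ThirdTerm}, the function $\langle K_{s,t}^{\chi,\eta},\varphi\rangle$ has at most simple poles along the hyperplanes $2s\pm t+\tfrac{1}{2}\in\tfrac{2\pi i}{\ln q_\FF}\ZZ$, while the corresponding $L$-factor vanishes simply there, so the limit is the suitably normalized residue. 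I use the decomposition \eqref{eq:MeroContThreeTerms} into three terms: the first is holomorphic by Proposition~\ref{prop:FirstTerm} and drops out, and the residues of the second and third terms are given explicitly by Propositions~\ref{prop:SecondTerm} and \ref{prop:ThirdTerm}. In each case the resulting functional must then be recognized using the explicit description of $L_t^\eta$ recalled just before the theorem and the gamma integral of Appendix~\ref{app:Integral}.

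For the first identity, fix $(\chi,s,\eta,t)\in\SplusT$, so $\chi|_{\calO_\FF^\times}\cdot\eta=1$ and $2s+t+\tfrac{1}{2}\in\tfrac{2\pi i}{\ln q_\FF}\ZZ$. Outside the overlap with $\SminusT$ (that is, unless $\eta^2=1$ and $2t\in\tfrac{2\pi i}{\ln q_\FF}\ZZ$), only the factor $1-q_\FF^{-(2s+t+1/2)}$ of the normalization vanishes at the point. Multiplying by it and taking the limit (noting, when $\eta^2=1$, that the second factor tends to the nonzero constant $1-q_\FF^{2t}$), the second term contributes $(1-q_\FF^{-1})\int_{\FF\setminus V}\eta_{t+\frac{1}{2}}(x_1^2)^{-1}\varphi(x_1)\,dx_1$ by Proposition~\ref{prop:SecondTerm}, and the third term contributes \eqref{eq:ThirdTermResidue+} if $\eta^2=1$ and nothing otherwise, being then holomorphic. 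It remains to identify this functional with $(1-q_\FF^{-1})\langle L_t^\eta,\varphi\rangle$: for $\eta^2\neq1$ the contribution of $V\times V$ to $\langle L_t^\eta,\varphi\rangle$ vanishes because $\int_{\calO_\FF^\times}\eta^{-2}=0$, and equality is immediate; for $\eta^2=1$ one matches \eqref{eq:ThirdTermResidue+} with the analytically continued geometric series defining the $V$-part of $\langle L_t^\eta,\varphi\rangle$.

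For the second identity, fix $(\chi,s,\eta,t)\in\SminusT$. First I show $\widetilde{K}_{s,t}^{\chi,\eta}$ is supported in $\{0\}$: by Section~\ref{sec:ClassSBOKernelsEtimes} and Proposition~\ref{prop:CharacterizationSBOKernelsPGL2}, the restriction $K_{s,t}^{\chi,\eta}|_{\EE^\times}$ is, up to a scalar, a homogeneous distribution in the $x_2$-variable whose only poles lie along $\SplusT$; hence near a point of $\SminusT\setminus\SplusT$ the distribution $\widetilde{K}_{s,t}^{\chi,\eta}|_{\EE^\times}$ is the product of the vanishing factor $1-q_\FF^{-(2s-t+1/2)}$ with something holomorphic, so $\widetilde{K}_{s,t}^{\chi,\eta}|_{\EE^\times}=0$ and $\widetilde{K}_{s,t}^{\chi,\eta}=c\,\delta$ for a scalar $c$. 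Off the overlap the first and second terms of \eqref{eq:MeroContThreeTerms} are holomorphic along $\SminusT$, so the limit equals the residue \eqref{eq:ThirdTermResidue-} of the third term, multiplied, when the second normalizing factor is present, by its limiting value $1-q_\FF^{-2t}$. A direct computation of the exponent of $\chi_s|_{\FF^\times}\cdot\eta_t$ (using $|x|_\EE=|x|_\FF^2$ for $x\in\FF^\times$ together with $2s-t+\tfrac{1}{2}\in\tfrac{2\pi i}{\ln q_\FF}\ZZ$) identifies $1-q_\FF^{-2t}$ with $L(\tfrac{1}{2},\chi_s|_{\FF^\times}\cdot\eta_t)^{-1}$, yielding exactly the asserted constant.

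The main obstacle is the overlap $\SplusT\cap\SminusT$, which occurs precisely when $\eta^2=1$ and $2t\in\tfrac{2\pi i}{\ln q_\FF}\ZZ$. There $K_{s,t}^{\chi,\eta}$ is singular along both hyperplanes through the point, $\widetilde{K}_{s,t}^{\chi,\eta}$ is normalized by two simultaneously vanishing $L$-factors, and one must take an iterated limit in which the double pole of the third term yields a finite value; concretely, the product of $(1-q_\FF^{-(2s+t+1/2)})(1-q_\FF^{-(2s-t+1/2)})$ with the doubly singular summand in \eqref{equ::fourth_termC} tends to $(1-q_\FF^{-1})^2$, so $\widetilde{K}_{s,t}^{\chi,\eta}=(1-q_\FF^{-1})^2\delta$, consistent with the first identity. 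Checking that this also agrees with the second identity on the overlap requires the explicit residue of the gamma factor from Appendix~\ref{app:Integral}, since there $L(\tfrac{1}{2},\chi_s|_{\FF^\times}\cdot\eta_t)^{-1}$ vanishes while $\Gamma((\chi^2|_{\calO_\FF^\times})_{2t})$ has a pole; away from this overlap everything reduces to bookkeeping of the case distinctions in Propositions~\ref{prop:SecondTerm} and \ref{prop:ThirdTerm}.
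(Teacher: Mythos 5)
Your plan is correct and follows essentially the same route as the paper: assemble the residues of the three terms in \eqref{eq:MeroContThreeTerms} from Propositions~\ref{prop:FirstTerm}, \ref{prop:SecondTerm} and \ref{prop:ThirdTerm}, identify the surviving normalizing factor on $\SminusT$ with $L(\tfrac{1}{2},\chi_s|_{\FF^\times}\cdot\eta_t)^{-1}=1-q_\FF^{-2t}$, and match the $V$-contribution with the analytically continued integral $\int_V\eta_{t+\frac{1}{2}}(x_1^2)^{-1}\,dx_1$, which is precisely the matching step the paper singles out as the remaining work. Your additional support argument for part (2) and the explicit treatment of the overlap $\SplusT\cap\ScapT\hspace{-1.5em}\SminusT$ (where the paper only remarks that the two expressions agree) are consistent elaborations of the same computation rather than a different method.
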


We remark that for $(\chi,s,\eta,t)\in\SplusT\cap\SminusT$ the expressions in Theorem~\ref{thm:ResidueIdentities}~\eqref{thm:ResidueIdentities1} and \eqref{thm:ResidueIdentities2} agree.

\begin{proof}
These results follow by putting together Propositions~\ref{prop:FirstTerm}, \ref{prop:SecondTerm} and \ref{prop:ThirdTerm}. It only remains to match the formulas in Propositions~\ref{prop:SecondTerm} and \ref{prop:ThirdTerm} in the case $(\chi,s,\eta,t)\in\SplusT$. More precisely, for $(\chi,s,\eta,t)\in\SplusT$ we have to show that
$$ (1-q_\FF^{-1})\int_V\eta_{t+\frac{1}{2}}(x_1^2)^{-1}\varphi(x_1)\,dx_1 = \varphi(0)\times\begin{cases}\frac{(1-q_\FF^{-1})^2q_\FF^{2tn}}{1-q_\FF^{2t}}&\mbox{if $\chi|_{\calO^\times_\FF}\cdot\eta^{-1}=1$,}\\0&\mbox{if $\chi|_{\calO^\times_\FF}\cdot\eta^{-1}\neq1$.}\end{cases} $$ 
Since $\varphi(x_1)=\varphi(0)$ for $x_1\in V$, this amounts to computing the integral
$$ \int_V\eta_{t+\frac{1}{2}}(x_1^2)^{-1}\,dx_1 $$
with $V=\varpi_\FF^n\calO_\FF$ which is similar to the computations in the previous subsections.
\end{proof}

Now we can determine the zeros of the function $(s,t)\mapsto\widetilde{K}_{s,t}^{\chi,\eta}$ and the support of $\widetilde{K}_{s,t}^{\chi,\eta}$ for all parameters, where $\widetilde{K}_{s,t}^{\chi,\eta}$ is seen as a distribution in $\calD'(\EE)_{s,t}^{\chi,\eta}$. To simplify the statement, we put
$$ L = \{(\chi,s,\eta,t):(s,t)\in L^{\chi,\eta}\}, $$
where $L^{\chi,\eta}$ is a subset of $\CC^2/(\frac{2\pi i}{\ln q_\EE}\ZZ\times\frac{2\pi i}{\ln q_\FF}\ZZ)$. (Recall that $\widetilde{K}_{s,t}^{\chi,\eta}$ only depends on $(s,t)$ modulo $\frac{2\pi i}{\ln q_\EE}\ZZ\times\frac{2\pi i}{\ln q_\FF}\ZZ$.) If $\chi|_{\calO_\FF^\times}\cdot\eta\neq1$ or $\chi|_{\calO_\FF^\times}\cdot\eta^{-1}\neq1$ we define
$$ L^{\chi,\eta}=\emptyset $$
and for $\chi|_{\calO_\FF^\times}\cdot\eta=\chi|_{\calO_\FF^\times}\cdot\eta^{-1}=1$ we put
\begin{equation}
\label{equ::L}
 L^{\chi,\eta} = \begin{cases}\{(0,\frac{1}{2}),(\frac{\pi i}{\ln q_\EE},\frac{1}{2}+\frac{\pi i}{\ln q_\FF})\}&\mbox{if $\chi^2\neq1$ and $\EE/\FF$ is unramified,}\\
\begin{array}{ll}\hspace{-.18cm}\{(0,\frac{1}{2}),(\frac{\pi i}{\ln q_\EE},\frac{1}{2}),\\\hspace{.3cm}(\frac{\pi i}{2\ln q_\EE},\frac{1}{2}+\frac{\pi i}{\ln q_\FF}),(\frac{3\pi i}{2\ln q_\EE},\frac{1}{2}+\frac{\pi i}{\ln q_\FF})\}\end{array}&\mbox{if $\chi^2\neq1$ and $\EE/\FF$ is ramified,}\\\{(-\frac{1}{2},-\frac{1}{2}),(-\frac{1}{2}+\frac{\pi i}{\ln q_\EE},-\frac{1}{2}+\frac{\pi i}{\ln q_\FF})\}&\mbox{if $\chi^2=1$ and $\EE/\FF$ is unramified}\\\begin{array}{ll}\hspace{-.18cm}\{(-\frac{1}{2},-\frac{1}{2}),(-\frac{1}{2}+\frac{\pi i}{\ln q_\EE},-\frac{1}{2}),\\\hspace{.3cm}(\frac{\pi i}{2\ln q_\EE},\frac{1}{2}+\frac{\pi i}{\ln q_\FF}),(\frac{3\pi i}{2\ln q_\EE},\frac{1}{2}+\frac{\pi i}{\ln q_\FF})\}\end{array}&\mbox{if $\chi^2=1$ and $\EE/\FF$ is ramified.}\end{cases} 
\end{equation}
Note that $L\subseteq\SminusT$ and $L\cap\SplusT=\emptyset$.

\begin{theorem}\label{thm:ZerosAndSupportOfKernel}
$\widetilde{K}_{s,t}^{\chi,\eta}=0$ if and only if $(\chi,s,\eta,t)\in L$. Moreover, we have
$$ \supp\widetilde{K}_{s,t}^{\chi,\eta} = \begin{cases}\emptyset&\mbox{for $(\chi,s,\eta,t)\in L$,}\\\{0\}&\mbox{for $(\chi,s,\eta,t)\in\SminusT\setminus L$,}\\\FF&\mbox{for $(\chi,s,\eta,t)\in\SplusT\setminus\SminusT$,}\\\EE&\mbox{else.}\end{cases} $$
\end{theorem}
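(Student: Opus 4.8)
The plan is to run a case distinction according to the position of $(\chi,s,\eta,t)$ relative to $\SplusT$ and $\SminusT$, using three inputs: Theorem~\ref{thm:HolomorphicRenormalization} (holomorphy of $(s,t)\mapsto\widetilde K_{s,t}^{\chi,\eta}$, and $\widetilde K_{s,t}^{\chi,\eta}\in\calD'(\EE)_{s,t}^{\chi,\eta}$ for all parameters), Proposition~\ref{prop:CharacterizationSBOKernelsPGL2} ($\calD'(\EE^\times)_{s,t}^{\chi,\eta}$ is one-dimensional with an explicit spanning distribution), and Theorem~\ref{thm:ResidueIdentities} (the explicit values on $\SplusT\cup\SminusT$). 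Two elementary observations about supports are used throughout: $\supp\widetilde K_{s,t}^{\chi,\eta}\supseteq\supp(\widetilde K_{s,t}^{\chi,\eta}|_{\EE^\times})$, and (since $\FF$ has measure zero in $\EE$ and $\overline{\EE\setminus\FF}=\EE$) a distribution on $\EE$ whose restriction to the open set $\EE\setminus\FF$ is a nowhere vanishing function has support $\EE$, whereas a distribution of the shape $v(x_1)\otimes\delta(x_2)$, under $\EE\cong\FF\times\FF$, $x_1+x_2\alpha\leftrightarrow(x_1,x_2)$, has support $\supp(v)\times\{0\}$. Note that Theorems~\ref{thm:ResidueIdentities} and the ``else'' case will each produce a nonzero $\widetilde K_{s,t}^{\chi,\eta}$ outside $\SminusT$, so together with the $\SminusT$ analysis this also yields the ``$\widetilde K_{s,t}^{\chi,\eta}=0\iff(\chi,s,\eta,t)\in L$'' assertion (recall $L\subseteq\SminusT$ and $L\cap\SplusT=\emptyset$).

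\emph{Generic parameters} $(\chi,s,\eta,t)\notin\SplusT\cup\SminusT$ (the ``else'' row). Here both normalizing $L$-factors take finite nonzero values, so $\widetilde K_{s,t}^{\chi,\eta}$ is a nonzero scalar times the holomorphic $K_{s,t}^{\chi,\eta}$. Its restriction to $\EE^\times$ lies in the one-dimensional space $\calD'(\EE^\times)_{s,t}^{\chi,\eta}$, and for $\Re(2s\pm t+\tfrac12)>0$ it equals, by Lemma~\ref{lem:KernelL1loc}, the spanning element $\chi_{s-\frac12}(x^2)(\chi_{s-\frac12}\eta_{t+\frac12})(\tfrac{x_2}{x_1^2-x_2^2\alpha^2})$ of Proposition~\ref{prop:CharacterizationSBOKernelsPGL2}. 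Since the pairing with any $\varphi\in C_c^\infty(\EE^\times)$ depends holomorphically on $(s,t)$ on the connected complement in $\CC^2$ of $\SplusT\cup\SminusT$, an identity-theorem argument (dividing by the pairing with a fixed $\varphi_0$ supported in $\EE\setminus\FF$) shows $\widetilde K_{s,t}^{\chi,\eta}|_{\EE^\times}$ is a nonzero multiple of that spanning element for all such parameters. As this element restricts to a nowhere vanishing function on $\EE\setminus\FF$, we conclude $\supp\widetilde K_{s,t}^{\chi,\eta}=\EE$.

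\emph{The row} $(\chi,s,\eta,t)\in\SplusT\setminus\SminusT$. Here Theorem~\ref{thm:ResidueIdentities}\eqref{thm:ResidueIdentities1} gives $\widetilde K_{s,t}^{\chi,\eta}=(1-q_\FF^{-1})L_t^\eta$ with $L_t^\eta=\widetilde{(\eta^{-2})}_{-2t-1}(x_1)\otimes\delta(x_2)$. First I rule out the degenerate case $\eta^2=1$, $2t\in\tfrac{2\pi i}{\ln q_\FF}\ZZ$ (in which $L_t^\eta\propto\delta$): together with the defining relations of $\SplusT$ (namely $\chi|_{\calO_\FF^\times}\eta=1$ and $2s+t+\tfrac12\in\tfrac{2\pi i}{\ln q_\FF}\ZZ$) these would force $\chi|_{\calO_\FF^\times}\eta^{-1}=1$ and $2s-t+\tfrac12\in\tfrac{2\pi i}{\ln q_\FF}\ZZ$, i.e.\ $(\chi,s,\eta,t)\in\SminusT$, contrary to assumption. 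Hence $\widetilde{(\eta^{-2})}_{-2t-1}$ is a nonzero multiple of the nowhere vanishing function $\eta_{t+\frac12}(x_1^2)^{-1}$, so $\widetilde K_{s,t}^{\chi,\eta}\ne0$ and $\supp\widetilde K_{s,t}^{\chi,\eta}=\FF\times\{0\}\cong\FF$.

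\emph{The rows} $(\chi,s,\eta,t)\in\SminusT$ (covering both $\supp=\{0\}$ and $\supp=\emptyset$). By Theorem~\ref{thm:ResidueIdentities}\eqref{thm:ResidueIdentities2}, $\widetilde K_{s,t}^{\chi,\eta}=c\cdot\delta$ for the explicit constant $c$ there, so $\supp\widetilde K_{s,t}^{\chi,\eta}$ is $\{0\}$ if $c\neq0$ and $\emptyset$ otherwise; it remains to show $c=0$ exactly on $L$. Since $c_\FF$, $1-q_\FF^{-1}$, $\chi_0(-\alpha)^{-2}$ and $|\alpha^2|_\FF^{-t+\frac12}$ are nonzero, $c$ vanishes precisely when $\Gamma((\chi^2|_{\calO_\FF^\times})_{2t})=0$, $\Gamma((\chi^{-2})_{-t+\frac12})=0$, or $L(\tfrac12,\chi_s|_{\FF^\times}\eta_t)^{-1}=0$. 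The last occurs exactly on $\SplusT$, but on $\SminusT\cap\SplusT$ the agreement of the two residue formulas (and the previous paragraph) shows $c$ is in fact a nonzero multiple of the $\delta$-coefficient of $L_t^\eta$, so $c\neq0$ there; on $\SminusT\setminus\SplusT$ the $L$-factor contributes a finite nonzero number and the vanishing of $c$ is governed solely by the zeros of the two gamma factors. Using the formulas from Appendix~\ref{app:Integral}, these gamma factors vanish only for an unramified underlying character, and then along arithmetic progressions in $t$ (of modulus $\tfrac{2\pi i}{\ln q_\FF}$, resp.\ $\tfrac{2\pi i}{\ln q_\EE}$); intersecting with the relation $2s-t+\tfrac12\in\tfrac{2\pi i}{\ln q_\FF}\ZZ$ defining $\SminusT$ and splitting according to whether $\chi^2=1$ and whether $\EE/\FF$ is ramified (the remaining cases giving $L^{\chi,\eta}=\emptyset$, where one checks $c\neq0$ throughout), a direct computation matches the zero locus of $c$ with the list \eqref{equ::L}. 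This last bookkeeping — determining which of the two gamma factors can be ``unramified'' in each of the four cases and translating the resulting congruences in $t$, hence in $s$, into the four explicit finite sets $L^{\chi,\eta}$ — is the only genuinely delicate step; everything else is formal.
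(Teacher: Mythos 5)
Your overall strategy coincides with the paper's: off $\SplusT\cup\SminusT$ the continued kernel restricts to a nowhere vanishing function on $\EE\setminus\FF$, and on $\SplusT\cup\SminusT$ one reads off the support from the explicit values in Theorem~\ref{thm:ResidueIdentities}. Your treatment of $\SplusT\setminus\SminusT$ (ruling out the degenerate form of $L_t^\eta$ by showing it would force membership in $\SminusT$) and of $\SplusT\cap\SminusT$ is correct, and in fact a bit more explicit than the paper's. But the decisive step — showing that on $\SminusT\setminus\SplusT$ the scalar $c$ from Theorem~\ref{thm:ResidueIdentities}~\eqref{thm:ResidueIdentities2} vanishes exactly on $L$ — is both deferred and set up incorrectly. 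You assert that $c$ vanishes precisely when $\Gamma((\chi^2|_{\calO_\FF^\times})_{2t})=0$, or $\Gamma((\chi^{-2})_{-t+\frac12})=0$, or $L(\tfrac12,\chi_s|_{\FF^\times}\cdot\eta_t)^{-1}=0$. This is false as stated: by Lemma~\ref{lem:PropertiesGammaFactors} the gamma factors also have simple poles, and the product must be read as a meromorphic function of $t$, so a zero of one factor can be cancelled by a pole of the other. Concretely, take $\chi^2=1$, $\EE/\FF$ unramified, $\eta=\chi|_{\calO_\FF^\times}$, $(s,t)=(0,\tfrac12)\in\SminusT\setminus\SplusT$: then $\Gamma(|\cdot|_\FF^{2t})$ has a simple zero at $t=\tfrac12$ while $\Gamma(|\cdot|_\EE^{-t+\frac12})$ has a simple pole there, so the product is finite and nonzero and $(0,\tfrac12)\notin L^{\chi,\eta}$; your criterion would place it in the zero locus and would give $\supp\widetilde{K}_{0,\frac12}^{\chi,\eta}=\emptyset$ instead of the correct $\{0\}$.

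This pole--zero cancellation cannot be dismissed as bookkeeping: it is exactly what makes the unramified and ramified cases differ in \eqref{equ::L} (for $q_\EE=q_\FF^2$ the poles of $\Gamma((\chi^{-2})_{-t+\frac12})$ cancel all zeros of $\Gamma((\chi^2|_{\calO_\FF^\times})_{2t})$, for $q_\EE=q_\FF$ only half of them), and carrying out the three-case analysis ($\chi^2|_{\calO_\FF^\times}\neq1$; $\chi^2|_{\calO_\FF^\times}=1$ but $\chi^2\neq1$; $\chi^2=1$) via Lemma~\ref{lem:PropertiesGammaFactors} — including the check that the poles of the first factor at $t\in\frac{\pi i}{\ln q_\FF}\ZZ$ cannot occur on $\SminusT\setminus\SplusT$ — is the actual content of the ``$\widetilde{K}_{s,t}^{\chi,\eta}=0\iff(\chi,s,\eta,t)\in L$'' assertion; your proposal stops short of it. A minor further point: in the generic case your identity-theorem argument requires $\langle u_{s,t},\varphi_0\rangle\neq0$ for all $(s,t)$ before dividing; this works if $\varphi_0$ is the indicator of a small coset in $\EE\setminus\FF$ on which the kernel is a constant times powers of $q_\FF$, but it is simpler to note directly, as the paper does, that for any test function supported in $\EE\setminus\FF$ the pairing is an everywhere convergent, everywhere holomorphic integral against a nowhere vanishing function, so the support is all of $\EE$ as soon as the normalizing factors are nonzero.
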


\begin{proof}
	First note that since $K_{s,t}^{\chi,\eta}|_{\EE\setminus\FF}$ is a locally integrable and nowhere vanishing function, we have $\supp\widetilde{K}_{s,t}^{\chi,\eta}=\EE$ unless we have renormalized by $(1-q_\FF^{-(2s+t+\frac{1}{2})})$ or $(1-q_\FF^{-(2s-t+\frac{1}{2})})$ and one of these factors vanishes, i.e. $(\chi,s,\eta,t)\in\SminusT\cup \SplusT$. In these two cases, Theorem~\ref{thm:ResidueIdentities} expresses $\widetilde{K}_{s,t}^{\chi,\eta}$ as a constant multiple of the Dirac distribution $\delta$ or the nowhere vanishing family $L_t^\eta$ whose support is either $\FF$ or $\{0\}$. It remains to check whether the constant vanishes. In Theorem~\ref{thm:ResidueIdentities}~\eqref{thm:ResidueIdentities1} the constant $(1-q_\FF^{-1})$ is obviously non-zero, so $\widetilde{K}_{s,t}^{\chi,\eta}\neq0$ whenever $(\chi,s,\eta,t)\in\SplusT$. For $(\chi,s,\eta,t)\in\SminusT\setminus \SplusT$ Theorem~\ref{thm:ResidueIdentities}~\eqref{thm:ResidueIdentities2} shows that $\widetilde{K}_{s,t}^{\chi,\eta}=0$ if and only if
	$$ \Gamma((\chi^2|_{\calO^\times_\FF})_{2t})\Gamma((\chi^{-2})_{-t+\frac{1}{2}})\times\begin{cases}1&\mbox{if $\chi|_{\calO^\times_\FF}\cdot\eta\neq1$},\\(1-q_\FF^{-(2s+t+\frac{1}{2})})&\mbox{if $\chi|_{\calO^\times_\FF}\cdot\eta=1$ and $2s+t+\frac{1}{2}\notin\frac{2\pi i}{\ln q_\FF}\ZZ$,}\end{cases} $$
	vanishes. To understand poles and zeros of the gamma factors, we distinguish three cases and apply Lemma~\ref{lem:PropertiesGammaFactors}:
\begin{enumerate}[(1)]
	\item $\chi^2|_{\calO^\times_\FF}\neq1$. In this case, both gamma factors are holomorphic in $t$ and nowhere vanishing.
	\item $\chi^2|_{\calO^\times_\FF}=1$ and $\chi^2\neq1$. Here, the second gamma factor is holomorphic and nowhere vanishing, and the first gamma factor has simple poles at $t\in\frac{\pi i}{\ln q_\FF}\ZZ$ and simple zeros at $t\in\frac{1}{2}+\frac{\pi i}{\ln q_\FF}\ZZ$. Since $2s-t+\frac{1}{2}\in\frac{2\pi i}{\ln q_\FF}\ZZ$ and $2s+t+\frac{1}{2}\notin\frac{2\pi i}{\ln q_\FF}\ZZ$, the poles at $t\in\frac{\pi i}{\ln q_\FF}\ZZ$ do not appear.
	\item $\chi^2=1$. In addition to the zeros of the first gamma factor, the second gamma factor has simple poles at $t\in\frac{1}{2}+\frac{2\pi i}{\ln q_\EE}\ZZ$ and simple zeros at $t\in-\frac{1}{2}+\frac{2\pi i}{\ln q_\EE}\ZZ$. Note that for $q_\EE=q_\FF^2$ (i.e. $\EE/\FF$ is unramified) the poles cancel the zeros of the first gamma factor, while for $q_\EE=q_\FF$ (i.e. $\EE/\FF$ is ramified) they only cancel half of them.
\end{enumerate}
	Since $\chi^2|_{\calO^\times_\FF}=1$ is equivalent to $\chi|_{\calO^\times_\FF}\cdot\eta=1$, this gives the following description of the set $L^{\chi,\eta}$ of all $(s,t)\in\CC^2$ such that $\widetilde{K}_{s,t}^{\chi,\eta}=0$: $L^{\chi,\eta}=\emptyset$ if $\chi|_{\calO_\FF^\times}\cdot\eta\neq1$ or $\chi_{\calO_\FF^\times}\cdot\eta^{-1}\neq1$, and in the case $\chi|_{\calO_\FF^\times}\cdot\eta=\chi_{\calO_\FF^\times}\cdot\eta^{-1}=1$ we have
	$$ L^{\chi,\eta} = \begin{cases}\{(s,t):2s-t+\frac{1}{2}\in\frac{2\pi i}{\ln q_\FF}\ZZ,t\in\frac{1}{2}+\frac{\pi i}{\ln q_\FF}\ZZ\}&\mbox{if $\chi^2\neq1$,}\\\{(s,t):2s-t+\frac{1}{2}\in\frac{2\pi i}{\ln q_\FF}\ZZ,t\in-\frac{1}{2}+\frac{\pi i}{\ln q_\FF}\ZZ\}&\mbox{if $\chi^2=1$ and $q_\EE=q_\FF^2$,}\\\begin{array}{ll}\hspace{-.18cm}\{(s,t):2s-t+\frac{1}{2}\in\frac{2\pi i}{\ln q_\FF}\ZZ,t\in\frac{1}{2}+\frac{\pi i}{\ln q_\FF}(2\ZZ+1)\}\\\cup\,\{(s,t):2s-t+\frac{1}{2}\in\frac{2\pi i}{\ln q_\FF}\ZZ,t\in-\frac{1}{2}+\frac{2\pi i}{\ln q_\FF}\ZZ\}\end{array}&\mbox{if $\chi^2=1$ and $q_\EE=q_\FF$.}\end{cases} $$
	This is the same set as defined above.
\end{proof}

\section{Classification of symmetry breaking kernels for $\PGL(2)$}
\label{sec::class_sym}

In this section we determine the space $\calD'(\EE)_{s,t}^{\chi,\eta}$ of symmetry breaking kernels explicitly for all $(\chi,s,\eta,t)$, following the methods of \cite[Chapter 11]{KS15}.

Recall from Section~\ref{sec:ClassSBOKernelsEtimes} the space $\calD'(\EE^\times)_{s,t}^{\chi,\eta}$ of all distributions on $\EE^\times$ satisfying the same conditions as the distributions in $\calD'(\EE)_{s,t}^{\chi,\eta}$, but only on $\EE^\times$. By Proposition~\ref{prop:CharacterizationSBOKernelsPGL2} the space $\calD'(\EE^\times)_{s,t}^{\chi,\eta}$ is one-dimensional for all $(\chi,s,\eta,t)$. To understand the difference between $\calD'(\EE^\times)_{s,t}^{\chi,\eta}$ and $\calD'(\EE)_{s,t}^{\chi,\eta}$ we consider the restriction map $\calD'(\EE)_{s,t}^{\chi,\eta}\to\calD'(\EE^\times)_{s,t}^{\chi,\eta}$. Its kernel is equal to the space $\calD_{\{0\}}'(\EE)_{s,t}^{\chi,\eta}$ of distributions in $\calD'(\EE)_{s,t}^{\chi,\eta}$ whose support is contained in $\{0\}$. This yields an exact sequence
\begin{equation}
	0 \to \calD_{\{0\}}'(\EE)_{s,t}^{\chi,\eta} \to \calD'(\EE)_{s,t}^{\chi,\eta} \to \calD'(\EE^\times)_{s,t}^{\chi,\eta}\label{eq:ExactSequence}
\end{equation}
and hence an upper bound for the dimension of $\calD'(\EE)_{s,t}^{\chi,\eta}$:
\begin{equation}
	\dim\calD'(\EE)_{s,t}^{\chi,\eta} \leq \dim\calD_{\{0\}}'(\EE)_{s,t}^{\chi,\eta} + \dim\calD'(\EE^\times)_{s,t}^{\chi,\eta} = \dim\calD_{\{0\}}'(\EE)_{s,t}^{\chi,\eta} + 1.\label{eq:MultiplicityInequality}
\end{equation}
The following lemma makes the upper bound more explicit by determining $\calD_{\{0\}}'(\EE)_{s,t}^{\chi,\eta}$, and it also provides a lower bound based on the existence of the non-trivial holomorphic family $\widetilde{K}_{s,t}^{\chi,\eta}$:

\begin{lemma}\label{lem:DimensionEstimates}
	\begin{enumerate}
		\item\label{lem:DimensionEstimates1} $\displaystyle\calD_{\{0\}}'(\EE)_{s,t}^{\chi,\eta} = \begin{cases}\CC\delta&\mbox{if $(\chi,s,\eta,t)\in\SminusT$,}\\\{0\}&\mbox{else.}\end{cases}$
		\item\label{lem:DimensionEstimates2} $\dim\calD'(\EE)_{s,t}^{\chi,\eta}\geq1$ for all $s,t\in\CC$, $\chi\in\widehat{\calO^\times_\EE}$, $\eta\in\widehat{\calO^\times_\FF}$.
	\end{enumerate}
\end{lemma}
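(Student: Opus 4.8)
The plan is to prove part~\eqref{lem:DimensionEstimates1} first and then deduce part~\eqref{lem:DimensionEstimates2} from it together with Theorems~\ref{thm:HolomorphicRenormalization} and~\ref{thm:ZerosAndSupportOfKernel}. For part~\eqref{lem:DimensionEstimates1}, the starting point is the elementary fact that, since $\EE$ is totally disconnected, every distribution $u\in\calD'(\EE)$ with $\supp u\subseteq\{0\}$ is a scalar multiple of $\delta$: for $\varphi\in C_c^\infty(\EE)$ the function $\varphi-\varphi(0)\mathbf{1}_{\varpi_\EE^N\calO_\EE}$ vanishes in a neighbourhood of $0$ once $N$ is large enough, so $\langle u,\varphi\rangle=\varphi(0)\langle u,\mathbf{1}_{\varpi_\EE^N\calO_\EE}\rangle$ and the pairing on the right is independent of large $N$. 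Hence $\calD_{\{0\}}'(\EE)_{s,t}^{\chi,\eta}=\CC\delta$ if $\delta\in\calD'(\EE)_{s,t}^{\chi,\eta}$ and $\calD_{\{0\}}'(\EE)_{s,t}^{\chi,\eta}=\{0\}$ otherwise, so it remains to decide for which parameters $\delta$ satisfies the two conditions of Proposition~\ref{prop:CharSBOKernels}.

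I would check these in their distributional forms \eqref{eq:AltCharSBOKernels1} and \eqref{eq:AltCharSBOKernels2}. Condition~\eqref{prop:CharSBOKernels2} holds automatically for $u=\delta$: for $b\in\FF^\times$ the map $x\mapsto x/(1+bx)$ fixes $0$ with unit local Jacobian there, and the locally constant factor $\chi_{s-\frac{1}{2}}((1-b\,\cdot)^2)^{-1}$ takes the value $1$ at $0$, so both sides of \eqref{eq:AltCharSBOKernels2} reduce to $\varphi(0)$. For condition~\eqref{prop:CharSBOKernels1}, formula \eqref{eq:AltCharSBOKernels1} with $u=\delta$ becomes $\varphi(0)=(\chi_{s+\frac{1}{2}}\eta_{t+\frac{1}{2}}^{-1})(a)\,\varphi(0)$ for all $a\in\FF^\times$ and $\varphi\in C_c^\infty(\EE)$, hence it holds exactly when the character $(\chi_{s+\frac{1}{2}}\eta_{t+\frac{1}{2}}^{-1})|_{\FF^\times}$ is trivial. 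Using $|a|_\EE=|a|_\FF^2$ for $a\in\FF^\times$, this restricted character equals $\chi_s|_{\FF^\times}\cdot\eta_t^{-1}\cdot|\cdot|_\FF^{1/2}$, so triviality is equivalent to $\chi_s|_{\FF^\times}\cdot\eta_t^{-1}=|\cdot|_\FF^{-1/2}$ on $\FF^\times$; by \eqref{eq:DefLocalLFactor} and the discussion following Theorem~\ref{thm:IntroA} this is precisely the condition that $L(\tfrac12,\chi_s|_{\FF^\times}\cdot\eta_t^{-1})$ has a pole, i.e.\ $(\chi,s,\eta,t)\in\SminusT$. This proves~\eqref{lem:DimensionEstimates1}.

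Part~\eqref{lem:DimensionEstimates2} then splits into two cases. If $(\chi,s,\eta,t)\notin\SminusT$, then $(\chi,s,\eta,t)\notin L$ as well (since $L\subseteq\SminusT$), so by Theorem~\ref{thm:ZerosAndSupportOfKernel} the distribution $\widetilde{K}_{s,t}^{\chi,\eta}$, which belongs to $\calD'(\EE)_{s,t}^{\chi,\eta}$ by Theorem~\ref{thm:HolomorphicRenormalization}, is non-zero. If $(\chi,s,\eta,t)\in\SminusT$, then $\delta\in\calD'(\EE)_{s,t}^{\chi,\eta}$ by part~\eqref{lem:DimensionEstimates1} and $\delta\neq0$. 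In either case $\dim\calD'(\EE)_{s,t}^{\chi,\eta}\geq1$. The only point I expect to require genuine care is the explicit identification in part~\eqref{lem:DimensionEstimates1} of the restriction $\chi_s|_{\FF^\times}$ (equivalently, of when $\chi_{s+\frac{1}{2}}\eta_{t+\frac{1}{2}}^{-1}$ is trivial on $\FF^\times$) with membership in $\SminusT$: this is transparent in the unramified case, where one may take $\varpi_\EE=\varpi_\FF$, but in the ramified case it requires keeping track of the relation $\alpha^2\in\varpi_\FF\calO_\FF^\times$ between the uniformizers. Everything else in the argument is formal, relying only on $\EE$ being totally disconnected and on the characterizations already established.
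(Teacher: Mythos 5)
Your proof is correct, and for part~\eqref{lem:DimensionEstimates1} it follows the paper's own route: reduce to deciding when $\delta$ satisfies the two conditions of Proposition~\ref{prop:CharSBOKernels} in the distributional forms \eqref{eq:AltCharSBOKernels1}--\eqref{eq:AltCharSBOKernels2}, note that the second condition is automatic for $\delta$, and that the first holds precisely when $\chi_{s+\frac{1}{2}}|_{\FF^\times}\cdot\eta_{t+\frac{1}{2}}^{-1}=1$, i.e.\ when $(\chi,s,\eta,t)\in\SminusT$. You additionally supply the (correct) elementary argument that a distribution on the totally disconnected space $\EE$ with support in $\{0\}$ is a scalar multiple of $\delta$, which the paper merely asserts, and the uniformizer bookkeeping you flag at the end is treated no more carefully in the paper's own proof, so it is not a gap relative to it. The only genuine divergence is in part~\eqref{lem:DimensionEstimates2}: at parameters where $\widetilde{K}_{s,t}^{\chi,\eta}=0$ the paper restricts the holomorphic family to a line through the zero and divides by $(s-s_0)^k$ to extract a nonzero element of $\calD'(\EE)_{s_0,t_0}^{\chi,\eta}$ --- a regularization that anticipates the construction of $\doublewidetilde{K}_{s,t}^{\chi,\eta}$ in Proposition~\ref{prop::kernel_tilde_tilde} --- whereas you bypass this entirely by observing that the zero locus $L$ is contained in $\SminusT$, so part~\eqref{lem:DimensionEstimates1} already provides the nonzero kernel $\delta$ at those points, while outside $\SminusT$ Theorems~\ref{thm:HolomorphicRenormalization} and \ref{thm:ZerosAndSupportOfKernel} give $\widetilde{K}_{s,t}^{\chi,\eta}\neq0$. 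Your variant is shorter and uses only results available at this stage; what it gives up is just the explicit non-$\delta$ element at points of $L$, which the paper reconstructs separately later anyway.
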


\begin{proof}
Since the space of distributions with support in $\{0\}$ is spanned by $\delta$, it suffices for \eqref{lem:DimensionEstimates1} to show that $\delta$ satisfies the conditions in Proposition~\ref{prop:CharSBOKernels} if and only if $(\chi,s,\eta,t)\in\SminusT$. For this we use the reformulation in Remark~\ref{rem:CharSBOKernelsDistributionLanguage}. First note that $\delta$ always satisfies \eqref{eq:AltCharSBOKernels2} since for any $\varphi\in C_c^\infty(\EE\setminus \{b^{-1}\})$:
$$ \langle \delta,|1+b\cdot|_\EE^2\varphi(\tfrac{(\cdot)}{1+b(\cdot)})\rangle = \varphi(0) = \langle\delta,\chi_{s-\frac{1}{2}}((1-b(\cdot))^2)^{-1}\varphi\rangle. $$
Moreover, \eqref{eq:AltCharSBOKernels2} is satisfied by $\delta$ if and only if $\chi_{s+\frac{1}{2}}|_{\calO_\FF^{\times}}\cdot\eta_{t+\frac{1}{2}}^{-1}=1$, i.e. $\chi|_{\calO_\FF^{\times}}\cdot\eta^{-1}=1$ and $2(s+\frac{1}{2})-(t+\frac{1}{2})\in\frac{2\pi i}{\ln q_\FF}\ZZ$. This is equivalent to $(\chi,s,\eta,t)\in\SminusT$.

\eqref{lem:DimensionEstimates2} is clear for all parameters such that $\widetilde{K}_{s,t}^{\chi,\eta}\neq0$. But if $\widetilde{K}_{s,t}^{\chi,\eta}=0$ for some $(s,t)=(s_0,t_0)$, we can consider the restriction of the holomorphic function $(s,t)\mapsto\widetilde{K}_{s,t}^{\chi,\eta}$ to a line through $(s_0,t_0)$, say $s\mapsto\widetilde{K}_{s,t_0}^{\chi,\eta}$. This function vanishes at $s=s_0$, but is not identically zero by Theorem~\ref{thm:ZerosAndSupportOfKernel}. It follows that for some $k\geq0$, the function $s\mapsto(s-s_0)^{-k}\widetilde{K}_{s,t_0}^{\chi,\eta}$ is holomorphic and non-zero at $s=s_0$. Its value at $s=s_0$ is contained in $\calD'(\EE)_{s_0,t_0}^{\chi,\eta}$ since the conditions in Proposition~\ref{prop:CharSBOKernels} depend continuously on $s,t\in\CC$.
\end{proof}

Together with Lemma~\ref{lem:DimensionEstimates}, the inequality \eqref{eq:MultiplicityInequality} implies that $\dim\calD'(\EE)_{s,t}^{\chi,\eta}=1$ for $(\chi,s,\eta,t)\notin\SminusT$. By Theorem~\ref{thm:ZerosAndSupportOfKernel} we have $\widetilde{K}_{s,t}^{\chi,\eta}\neq0$ in this case, so $\calD'(\EE)_{s,t}^{\chi,\eta}=\CC\widetilde{K}_{s,t}^{\chi,\eta}$ for $(\chi,s,\eta,t)\notin\SminusT$. For $(\chi,s,\eta,t)\in\SminusT$ we study the restriction map $\calD'(\EE)_{s,t}^{\chi,\eta}\to\calD'(\EE^\times)_{s,t}^{\chi,\eta}$.

\begin{lemma}
\label{lem::restriction_map_zero}
	The restriction map $ \calD'(\EE)_{s,t}^{\chi,\eta} \to \calD'(\EE^\times)_{s,t}^{\chi,\eta} $ is 
        identically zero if $(\chi,s,\eta,t)\in\SminusT\setminus L$.
 \end{lemma}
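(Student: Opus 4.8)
The plan is to show that the restriction map cannot be surjective. Since $(\chi,s,\eta,t)\in\SminusT$, by Lemma~\ref{lem:DimensionEstimates}~\eqref{lem:DimensionEstimates1} its kernel is exactly $\CC\delta$, and $\calD'(\EE^\times)_{s,t}^{\chi,\eta}$ is one-dimensional by Proposition~\ref{prop:CharacterizationSBOKernelsPGL2}; so it suffices to produce, for each $(\chi,s,\eta,t)\in\SminusT\setminus L$, a distribution $H_0\in\calD'(\EE)$ which restricts to a non-zero generator of $\calD'(\EE^\times)_{s,t}^{\chi,\eta}$ but such that \emph{no} $u\in\calD'(\EE)$ with $u|_{\EE^\times}=H_0|_{\EE^\times}$ lies in $\calD'(\EE)_{s,t}^{\chi,\eta}$. (This also re-proves $\dim\calD'(\EE)_{s,t}^{\chi,\eta}=1$ here.) I will argue by contradiction and obtain $H_0$ as the holomorphic part of a Laurent expansion of the meromorphic family.

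The tool is the partially renormalized family
$$ K''_{s',t'} := L(\tfrac{1}{2},\chi_{s'}|_{\FF^\times}\cdot\eta_{t'})^{-1}\,K_{s',t'}^{\chi,\eta} = L(\tfrac{1}{2},\chi_{s'}|_{\FF^\times}\cdot\eta_{t'}^{-1})\,\widetilde{K}_{s',t'}^{\chi,\eta}, $$
in which only the $\SplusT$-pole of $K_{s',t'}^{\chi,\eta}$ has been cancelled. Near $(\chi,s,\eta,t)$ this is a meromorphic $\calD'(\EE)$-valued family with a \emph{simple} pole precisely along $\SminusT$ (removing the $\SplusT$-pole is what matters when $(\chi,s,\eta,t)\in\SplusT\cap\SminusT$, where $K_{s',t'}^{\chi,\eta}$ itself has a double pole); by Theorem~\ref{thm:HolomorphicRenormalization} and the continuity in $(s',t')$ of the conditions of Proposition~\ref{prop:CharSBOKernels}, it satisfies $K''_{s',t'}\in\calD'(\EE)_{s',t'}^{\chi,\eta}$ whenever $(\chi,s',\eta,t')\notin\SminusT$; its residue along $\SminusT$ at $(s,t)$ is a non-zero multiple of $\delta$, because $\widetilde{K}_{s,t}^{\chi,\eta}=c\,\delta$ with $c\neq0$ by Theorems~\ref{thm:ResidueIdentities} and \ref{thm:ZerosAndSupportOfKernel} (using $(\chi,s,\eta,t)\notin L$); and its restriction $K''_{s',t'}|_{\EE^\times}=L(\tfrac12,\chi_{s'}|_{\FF^\times}\cdot\eta_{t'})^{-1}K_{s',t'}^{\chi,\eta}|_{\EE^\times}$ is, by Proposition~\ref{prop:CharacterizationSBOKernelsPGL2} and the computations of Section~\ref{sec:ClassSBOKernelsEtimes}, a holomorphic, nowhere-vanishing family of generators of $\calD'(\EE^\times)_{s',t'}^{\chi,\eta}$ (the $\SplusT$-renormalization makes it holomorphic across $\SplusT$, and on $\EE^\times$ it has no pole along $\SminusT$).

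Now fix $t$ and consider $w\mapsto K''_{s+w,t}$; this is transversal to $\SminusT$ at $w=0$, so it has there a simple pole and we may write $K''_{s+w,t}=\dfrac{c'\delta}{w}+H(w)$ with $c'=\dfrac{c}{2\ln q_\FF}\neq0$ and $H$ holomorphic near $0$, valued in $\calD'(\EE)$; by the previous paragraph $H_0:=H(0)$ restricts on $\EE^\times$ to a non-zero generator of $\calD'(\EE^\times)_{s,t}^{\chi,\eta}$. For $0<|w|$ small one has $K''_{s+w,t}\in\calD'(\EE)_{s+w,t}^{\chi,\eta}$, so \eqref{eq:AltCharSBOKernels1} gives, for $a\in\FF^\times$, $\langle K''_{s+w,t},\varphi(a^{-1}\cdot)\rangle=(\chi_{s+w+\frac12}\eta_{t+\frac12}^{-1})(a)\langle K''_{s+w,t},\varphi\rangle=|a|_\FF^{2w}\langle K''_{s+w,t},\varphi\rangle$, where I used that $(\chi_{s+\frac12}\eta_{t+\frac12}^{-1})|_{\FF^\times}=1$ since $(\chi,s,\eta,t)\in\SminusT$ and $|a|_\EE=|a|_\FF^2$. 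Substituting the Laurent expansion, multiplying by $w$, and comparing the coefficients of $w$ as $w\to0$ yields
$$ \langle H_0,\varphi(a^{-1}\cdot)\rangle-\langle H_0,\varphi\rangle = 2c'\ln|a|_\FF\,\varphi(0) = -c\,\nu_\FF(a)\,\varphi(0)\qquad(a\in\FF^\times,\ \varphi\in C_c^\infty(\EE)). $$
Finally, any $u\in\calD'(\EE)$ with $u|_{\EE^\times}=H_0|_{\EE^\times}$ satisfies $u=H_0+\lambda\delta$ for some $\lambda\in\CC$, because a distribution on $\EE$ supported in $\{0\}$ is a multiple of $\delta$; and $\delta$ satisfies \eqref{eq:AltCharSBOKernels1} exactly (the character being trivial on $\FF^\times$), so $u$ has the same non-zero defect in \eqref{eq:AltCharSBOKernels1} as $H_0$. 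Hence $u\notin\calD'(\EE)_{s,t}^{\chi,\eta}$; in particular the generator of $\calD'(\EE^\times)_{s,t}^{\chi,\eta}$ is not in the image of the restriction map, so the latter is zero.

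The main obstacle is purely organizational: one must isolate a genuinely simple pole of the family at the given point, which forces the use of $K''$ rather than $K_{s',t'}^{\chi,\eta}$ (double pole at $\SplusT\cap\SminusT$) or $\widetilde{K}_{s',t'}^{\chi,\eta}$ (no pole at all), together with a verification that $K''_{s',t'}|_{\EE^\times}$ is still a non-vanishing holomorphic family of generators of $\calD'(\EE^\times)_{s',t'}^{\chi,\eta}$; the remaining manipulations are elementary computations with the characters $\chi_s,\eta_t$.
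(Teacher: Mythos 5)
Your proof is correct and follows essentially the same route as the paper: you Laurent-expand the partially renormalized family $L(\tfrac12,\chi|_{\FF^\times}\cdot\eta)^{-1}K^{\chi,\eta}$ along a line transversal to $\SminusT$, use Theorems~\ref{thm:ResidueIdentities} and \ref{thm:ZerosAndSupportOfKernel} (hence $(\chi,s,\eta,t)\notin L$) to see the residue is a non-zero multiple of $\delta$, and compare first-order terms in the $\FF^\times$-homogeneity condition \eqref{eq:AltCharSBOKernels1} to produce a $\ln|a|_\FF$-defect for the constant term that no multiple of $\delta$ can cancel. The only differences from the paper's argument are cosmetic: you fix $t$ and vary $s$ instead of moving along $(s+z,t-2z)$, and you phrase the first-order comparison directly rather than via the operators $D_z^a$, but the underlying computation is the same.
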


\begin{proof}
	The proof follows the ideas of \cite[Chapter 11.4]{KS15}.

	For $(\chi,s,\eta,t)\in\SminusT$ the normalizing factor $L(\frac{1}{2},\chi_s|_{\FF^\times}\cdot\eta_t^{-1})$ of $\widetilde{K}_{s,t}^{\chi,\eta}$ has a pole of order one. From Theorem~\ref{thm:HolomorphicRenormalization} it follows that $\widetilde{K}_{s,t}^{\chi,\eta}$ is holomorphic in $s$ and $t$, so the meromorphic function
	$$ z\mapsto u_z = L(\tfrac{1}{2},\chi_{s+z}|_{\FF^\times}\cdot\eta_{t-2z}^{-1})\cdot\widetilde{K}_{s+z,t-2z}^{\chi,\eta} = L(\tfrac{1}{2},\chi_{s+z}|_{\FF^\times}\cdot\eta_{t-2z})^{-1}\cdot K_{s+z,t-2z}^{\chi,\eta} $$
	has at most a pole of order one at $z=0$ and hence it can be expanded as
	$$ u_z = u_{-1}z^{-1}+u_0+u_1z+\cdots $$
	with $u_j\in\calD'(\EE)$. Since $(\chi,s,\eta,t)\not\in L$, Theorem~\ref{thm:ZerosAndSupportOfKernel} implies $\supp u_{-1}=\{0\}$. Hence, $u_z|_{\EE^\times}$ is holomorphic at $z=0$ and $u_0|_{\EE^\times}=(u_z|_{\EE^\times})|_{z=0}$ is the following distribution:
	\begin{align*}
		u_0|_{\EE^\times}(x_1+x_2\alpha) &= L(\tfrac{1}{2},\chi_s|_{\FF^\times}\cdot\eta_t)^{-1}\cdot K_{s,t}^{\chi,\eta}|_{\EE^\times}(x_1+x_2\alpha)\\
		&= \chi_{s-\frac{1}{2}}(x^2)(\chi_{s-\frac{1}{2}}\eta_{t+\frac{1}{2}})(x_1^2-x_2^2\alpha^2)^{-1}\cdot L(\tfrac{1}{2},\chi_s|_{\FF^\times}\cdot\eta_t)^{-1}(\chi_{s-\frac{1}{2}}\eta_{t+\frac{1}{2}})(x_2).
	\end{align*}
	Note that the first factor $\chi_{s-\frac{1}{2}}(x^2)(\chi_{s-\frac{1}{2}}\eta_{t+\frac{1}{2}})(x_1^2-x_2^2\alpha^2)^{-1}$ is smooth and nowhere vanishing on $\EE^\times$ and the second factor is non-zero by Theorem~\ref{thm:HomogeneousDistributions}. It follows that $u_0|_{\EE^\times}$	is a non-zero element of $\calD'(\EE^\times)_{s,t}^{\chi,\eta}$.
	
	For $a\in\FF^\times$ and $z\in\CC$ we define the operator $D_z^a$ on $\calD'(\EE)$ by 
	$$ D_z^au(x) = u(ax)-|a|_\FF^{4z-2}u(x). $$
	Then Proposition~\ref{prop:CharSBOKernels} and Theorem~\ref{thm:HolomorphicRenormalization} imply $D_z^au_z=0$ for all $a\in\FF^\times$ and all $z\in\CC$ for which $u_z$ is defined. Since $|a|_\FF^z=1+\ln|a|_\FF\cdot z+\calO(z^2)$ we can write
	$$ D_z^a = D_0^a+zD_1^a+\calO(z^2) $$
	with
	$$ D_0^au(x) = u(ax)-|a|_\FF^{-2}u(x) \qquad \mbox{and} \qquad D_1^au(x) = -4(\ln|a|_\FF)|a|_\FF^{-2}u(x). $$
	Note that $D_1^a$ is a multiple of the identity, so it commutes with $D_0^a$. Comparing coefficients of $z^{-1}$ and $z^0$ in $D_z^au_z=0$ yields
	\begin{equation}
		D_0^au_{-1} = 0 \;\; \mbox{and} \;\; D_0^au_0 + D_1^au_{-1} = 0.\label{eq:ClassProofIdentitiesForLaurentCoeff}
	\end{equation}
	
	If now $v\in\calD'(\EE)_{s,t}^{\chi,\eta}$, then by Proposition~\ref{prop:CharacterizationSBOKernelsPGL2} and the observation above $v|_{\EE^\times}=c\cdot u_0|_{\EE^\times}$ for some $c\in\CC$. This implies $(v-cu_0)|_{\EE^\times}=0$, so $\supp(v-cu_0)\subseteq\{0\}$ and hence $v-cu_0$ is a multiple of $\delta$. But $\delta(ax)=|a|_\FF^{-2}\delta(x)$ for all $a\in\FF^\times$, whence
	$$ D_0^a(v-cu_0) = 0. $$
	On the other hand $v\in\calD'(\EE)_{s,t}^{\chi,\eta}$ implies $D_0^av=0$ by Proposition~\ref{prop:CharSBOKernels}, so by \eqref{eq:ClassProofIdentitiesForLaurentCoeff}:
	$$ 0 = c\cdot D_0^au_0 = -c\cdot D_1^au_{-1} = 4c\cdot(\ln|a|_\FF)|a|_\FF^{-2}u_{-1} \qquad \mbox{for all }a\in\FF^\times. $$
	But since $u_{-1}\neq0$ this gives $c=0$, so $v|_{\EE^\times}=0$.
\end{proof}

By the previous statement combined with the exact sequence \eqref{eq:ExactSequence} and Lemma~\ref{lem:DimensionEstimates}, we also find $\dim\calD'(\EE)_{s,t}^{\chi,\eta}=1$ for $(\chi,s,\eta,t)\in\SminusT\setminus L$, and hence, by Theorem~\ref{thm:ZerosAndSupportOfKernel}, $\calD'(\EE)_{s,t}^{\chi,\eta}=\CC\widetilde{K}_{s,t}^{\chi,\eta}$.

It remains to study the case $(\chi,s,\eta,t)\in L$, and we show that in this case we actually have $\dim\calD'(\EE)_{s,t}^{\chi,\eta}=2$. One symmetry breaking kernel in $\calD'(\EE)_{s,t}^{\chi,\eta}$ is $\delta$ (see Lemma~\ref{lem:DimensionEstimates}). We obtain a linearly independent kernel by regularizing $(s,t)\mapsto K_{s,t}^{\chi,\eta}$ along a hyperplane in $\CC^2$.

\begin{proposition}
\label{prop::kernel_tilde_tilde}
	Let $(\chi,s,\eta,t)\in L$. Then the function
	$$ \CC\to\calD'(\EE), \quad z\mapsto (1-q_\FF^{-2z})^{-1}\widetilde{K}_{s+z,t}^{\chi,\eta} = (1-q_\FF^{-(2s+2z+t+\frac{1}{2})})K_{s+z,t}^{\chi,\eta} $$
	is weakly holomorphic at $z=0$ and its value at $z=0$ defines a distribution $\doublewidetilde{K}_{s,t}^{\chi,\eta}$ in $\calD'(\EE)_{s,t}^{\chi,\eta}$ with support equal to $\EE$.	
\end{proposition}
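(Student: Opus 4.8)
The plan is to work along the one-dimensional slice $z\mapsto(s+z,t)$ and exploit that $(\chi,s,\eta,t)\in L\subseteq\SminusT$ with $L\cap\SplusT=\emptyset$, and that $L^{\chi,\eta}\neq\emptyset$ forces $\chi|_{\calO_\FF^\times}\cdot\eta=\chi|_{\calO_\FF^\times}\cdot\eta^{-1}=1$ by \eqref{equ::L}. First I would record the factorisation along this slice: since $2s-t+\frac{1}{2}\in\frac{2\pi i}{\ln q_\FF}\ZZ$ one has $q_\FF^{-(2(s+z)-t+\frac{1}{2})}=q_\FF^{-2z}$, hence $1-q_\FF^{-(2(s+z)-t+\frac{1}{2})}=1-q_\FF^{-2z}$, while $1-q_\FF^{-(2s+t+\frac{1}{2})}\neq0$ because $(\chi,s,\eta,t)\notin\SplusT$. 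Inserting the explicit renormalisation of $\widetilde K$ (the case $\chi|_{\calO_\FF^\times}\cdot\eta=\chi|_{\calO_\FF^\times}\cdot\eta^{-1}=1$) then gives $\widetilde K_{s+z,t}^{\chi,\eta}=(1-q_\FF^{-2z})(1-q_\FF^{-(2s+2z+t+\frac{1}{2})})K_{s+z,t}^{\chi,\eta}$, which in particular identifies the two expressions displayed in the statement.

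For weak holomorphicity at $z=0$, the key point is that $(\chi,s,\eta,t)\in L$ forces $\widetilde K_{s,t}^{\chi,\eta}=0$ by Theorem~\ref{thm:ZerosAndSupportOfKernel}. Thus for each $\varphi\in C_c^\infty(\EE)$ the function $z\mapsto\langle\widetilde K_{s+z,t}^{\chi,\eta},\varphi\rangle$, which is entire by Theorem~\ref{thm:HolomorphicRenormalization}, vanishes at $z=0$, so it has a zero of order at least one there; this cancels the simple pole of $(1-q_\FF^{-2z})^{-1}$ at $z=0$, and no further singularities of $(1-q_\FF^{-2z})^{-1}\widetilde K_{s+z,t}^{\chi,\eta}$ accumulate at $z=0$. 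Hence $z\mapsto v_z:=(1-q_\FF^{-2z})^{-1}\widetilde K_{s+z,t}^{\chi,\eta}$ is weakly holomorphic near $z=0$, and I would define $\doublewidetilde K_{s,t}^{\chi,\eta}:=v_0$.

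To see $\doublewidetilde K_{s,t}^{\chi,\eta}\in\calD'(\EE)_{s,t}^{\chi,\eta}$ I would pass the equivariance relations of Proposition~\ref{prop:CharSBOKernels} to the limit in the test-function form of Remark~\ref{rem:CharSBOKernelsDistributionLanguage}. For $z\neq0$ near $0$ the distribution $v_z$ is a nonzero scalar multiple of $\widetilde K_{s+z,t}^{\chi,\eta}\in\calD'(\EE)_{s+z,t}^{\chi,\eta}$, so \eqref{eq:AltCharSBOKernels1} and \eqref{eq:AltCharSBOKernels2} hold for $v_z$ with $s$ replaced by $s+z$. Fixing $a,b\in\FF^\times$ and $\varphi\in C_c^\infty(\EE)$, both sides of each identity are holomorphic in $z$ near $0$: the scalar prefactors $(\chi_{s+z+\frac{1}{2}}\eta_{t+\frac{1}{2}}^{-1})(a)$ and $x\mapsto\chi_{s+z-\frac{1}{2}}((1-bx)^2)^{-1}$ depend holomorphically on $z$, and $z\mapsto v_z$ is weakly holomorphic. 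Hence the identities persist at $z=0$, so $v_0=\doublewidetilde K_{s,t}^{\chi,\eta}$ satisfies the conditions of Proposition~\ref{prop:CharSBOKernels} at $(s,t)$.

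Finally, for the support I would restrict to $\EE\setminus\FF$, on which $K_{s+z,t}^{\chi,\eta}$ is the locally integrable function $x\mapsto\chi_{s+z-\frac{1}{2}}(x^2)(\chi_{s+z-\frac{1}{2}}\eta_{t+\frac{1}{2}})\bigl(\tfrac{x_2}{x_1^2-x_2^2\alpha^2}\bigr)$, which depends holomorphically on $z$ and, on every compact subset of $\EE\setminus\FF$ (where $x_2$ stays away from $0$), is dominated uniformly for $z$ near $0$. Since restriction $\calD'(\EE)\to\calD'(\EE\setminus\FF)$ is weakly continuous, dominated convergence yields $\doublewidetilde K_{s,t}^{\chi,\eta}|_{\EE\setminus\FF}=(1-q_\FF^{-(2s+t+\frac{1}{2})})\,K_{s,t}^{\chi,\eta}|_{\EE\setminus\FF}$ as locally integrable functions, a nonzero scalar (again since $(\chi,s,\eta,t)\notin\SplusT$) times a nowhere-vanishing function, hence nowhere vanishing; so its support is all of $\EE\setminus\FF$. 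As $\FF$ has empty interior in $\EE$, $\overline{\EE\setminus\FF}=\EE$, whence $\supp\doublewidetilde K_{s,t}^{\chi,\eta}=\EE$. I expect the main obstacle to be the limiting argument of the third paragraph: it genuinely depends on having the equivariance conditions in the weak form of Remark~\ref{rem:CharSBOKernelsDistributionLanguage}, so that holomorphy (hence continuity) in $z$ of both sides can be used, together with the observation that the scalar coefficients occurring there vary holomorphically in $z$.
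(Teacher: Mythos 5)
Your proposal is correct and follows essentially the same route as the paper: weak holomorphy of $z\mapsto\widetilde{K}_{s+z,t}^{\chi,\eta}$ together with its vanishing at $z=0$ (Theorems \ref{thm:HolomorphicRenormalization} and \ref{thm:ZerosAndSupportOfKernel}) cancels the simple zero of $1-q_\FF^{-2z}$, the equivariance conditions of Proposition \ref{prop:CharSBOKernels} are passed to the limit, and the support is read off from the factorization $(1-q_\FF^{-(2s+2z+t+\frac{1}{2})})K_{s+z,t}^{\chi,\eta}$ restricted to $\EE\setminus\FF$, whose closure is $\EE$. Your dominated-convergence justification of the restriction step and the explicit test-function form of the limiting equivariance argument merely spell out details the paper leaves implicit.
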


\begin{proof}
	By Theorem~\ref{thm:HolomorphicRenormalization} the function
	$$ f:\CC\to\calD'(\EE)_{s,t}^{\chi,\eta}, \quad f(z)=\widetilde{K}_{s+z,t}^{\chi,\eta} $$
	is weakly holomorphic, and by Theorem~\ref{thm:ZerosAndSupportOfKernel} it satisfies $f(0)=0$. Therefore,
	$$ g(z)=(1-q_\FF^{-2z})^{-1}f(z) $$
	is holomorphic at $z=0$. Since $f(z)\in\calD'(\EE)_{s+z,t}^{\chi,\eta}$ for all $z\in\CC$ and the conditions in Proposition~\ref{prop:CharSBOKernels} depend continuously on $s$, it follows that $g(z)$ is contained in $\calD'(\EE)_{s+z,t}^{\chi,\eta}$ for all $z\in\CC$, in particular $g(0)\in\calD'(\EE)_{s,t}^{\chi,\eta}$. To show that the distribution $g(0)$ has support equal to $\EE$, first note that $(\chi,s,\eta,t)\in\SminusT\setminus\SplusT$, so $1-q_\FF^{-(2s-t+\frac{1}{2})}=0$ while $1-q_\FF^{-(2s+t+\frac{1}{2})}\neq0$. It follows that
	$$ g(z) = (1-q_\FF^{-(2s+2z+t+\frac{1}{2})})K_{s+z,t}^{\chi,\eta} $$
	with $1-q_\FF^{-(2s+2z+t+\frac{1}{2})}\neq0$ for $z=0$. Therefore, the restriction of $g(0)$ to $\EE\setminus\FF$ is a non-trivial scalar multiple of the nowhere vanishing function $K_{s,t}^{\chi,\eta}|_{\EE\setminus\FF}$. In particular, $\EE\setminus\FF\subseteq\supp g(0)$ and hence $\supp g(0)=\EE$.
\end{proof}

Clearly $\delta$ and $\doublewidetilde{K}_{s,t}^{\chi,\eta}$ are linearly independent since they have different supports. This completes the proof of the following classification result:

\begin{corollary}\label{cor::dim_of D_E_chi_eta}
$$ \calD'(\EE)_{s,t}^{\chi,\eta} = \begin{cases}\CC\widetilde{K}_{s,t}^{\chi,\eta}&\mbox{for $(\chi,s,\eta,t)\notin L$,}\\\CC\delta\oplus\CC\doublewidetilde{K}_{s,t}^{\chi,\eta}&\mbox{for $(\chi,s,\eta,t)\in L$.}\end{cases} $$
\end{corollary}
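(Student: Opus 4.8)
The plan is to split into three cases according to whether $(\chi,s,\eta,t)$ lies in $\SminusT$, and within $\SminusT$ whether it lies in $L$. Since $L\subseteq\SminusT$ (recorded right after \eqref{equ::L}), these cases are mutually exclusive and exhaustive, and the two subcases of $\SminusT$ are precisely ``$\notin L$'' versus ``$\in L$''. All the ingredients needed are already in place, so the proof amounts to combining them.

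For $(\chi,s,\eta,t)\notin\SminusT$ I would argue as follows. By Lemma~\ref{lem:DimensionEstimates}\eqref{lem:DimensionEstimates1} we have $\calD_{\{0\}}'(\EE)_{s,t}^{\chi,\eta}=0$, so the inequality \eqref{eq:MultiplicityInequality} (which already incorporates $\dim\calD'(\EE^\times)_{s,t}^{\chi,\eta}=1$ from Proposition~\ref{prop:CharacterizationSBOKernelsPGL2}) gives $\dim\calD'(\EE)_{s,t}^{\chi,\eta}\le1$, while Lemma~\ref{lem:DimensionEstimates}\eqref{lem:DimensionEstimates2} gives the opposite bound; hence the dimension is exactly $1$. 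As $(\chi,s,\eta,t)\notin L$ in this range, Theorem~\ref{thm:ZerosAndSupportOfKernel} shows $\widetilde{K}_{s,t}^{\chi,\eta}\neq0$, and since it lies in $\calD'(\EE)_{s,t}^{\chi,\eta}$ by Theorem~\ref{thm:HolomorphicRenormalization}, we conclude $\calD'(\EE)_{s,t}^{\chi,\eta}=\CC\widetilde{K}_{s,t}^{\chi,\eta}$. For $(\chi,s,\eta,t)\in\SminusT\setminus L$ the same conclusion follows by a slightly different route: Lemma~\ref{lem::restriction_map_zero} makes the restriction map $\calD'(\EE)_{s,t}^{\chi,\eta}\to\calD'(\EE^\times)_{s,t}^{\chi,\eta}$ vanish, so the exact sequence \eqref{eq:ExactSequence} together with Lemma~\ref{lem:DimensionEstimates}\eqref{lem:DimensionEstimates1} identifies $\calD'(\EE)_{s,t}^{\chi,\eta}$ with $\calD_{\{0\}}'(\EE)_{s,t}^{\chi,\eta}=\CC\delta$, a one-dimensional space; since $\widetilde{K}_{s,t}^{\chi,\eta}$ is a nonzero element of it (again Theorem~\ref{thm:ZerosAndSupportOfKernel}, using $(\chi,s,\eta,t)\notin L$), it spans, so $\calD'(\EE)_{s,t}^{\chi,\eta}=\CC\widetilde{K}_{s,t}^{\chi,\eta}$.

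The remaining case is $(\chi,s,\eta,t)\in L$. Here I would first exhibit two explicit linearly independent elements of $\calD'(\EE)_{s,t}^{\chi,\eta}$: the Dirac distribution $\delta$, which belongs to the space by Lemma~\ref{lem:DimensionEstimates}\eqref{lem:DimensionEstimates1} since $L\subseteq\SminusT$, and the regularized kernel $\doublewidetilde{K}_{s,t}^{\chi,\eta}$, which belongs to the space and has support equal to $\EE$ by Proposition~\ref{prop::kernel_tilde_tilde}. Because $\supp\delta=\{0\}\neq\EE=\supp\doublewidetilde{K}_{s,t}^{\chi,\eta}$, these are linearly independent, so $\dim\calD'(\EE)_{s,t}^{\chi,\eta}\ge2$. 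For the matching upper bound I would again invoke \eqref{eq:MultiplicityInequality} together with Lemma~\ref{lem:DimensionEstimates}\eqref{lem:DimensionEstimates1}, which in this case reads $\dim\calD'(\EE)_{s,t}^{\chi,\eta}\le\dim\CC\delta+1=2$. Hence $\calD'(\EE)_{s,t}^{\chi,\eta}=\CC\delta\oplus\CC\doublewidetilde{K}_{s,t}^{\chi,\eta}$, completing the proof.

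Every step uses only results established earlier in this section, so there is no genuine obstacle; the single point worth stating carefully is the opening observation that the three cases are mutually exclusive and exhaustive, i.e.\ that $L\subseteq\SminusT$, which is exactly what makes ``$\notin\SminusT$'' together with ``$\in\SminusT\setminus L$'' cover all parameters outside $L$.
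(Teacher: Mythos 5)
Your proof is correct and follows essentially the same route as the paper: the upper bounds from the exact sequence \eqref{eq:ExactSequence} and Lemma~\ref{lem:DimensionEstimates}, the vanishing of the restriction map (Lemma~\ref{lem::restriction_map_zero}) on $\SminusT\setminus L$, and the pair $\delta$, $\doublewidetilde{K}_{s,t}^{\chi,\eta}$ with distinct supports on $L$. No discrepancies to report.
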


\section{Mapping properties of symmetry breaking operators}
\label{sec::mapp_prop}

In this section we study some mapping properties of symmetry breaking operators between principal series of $\PGL(2)$. For this, we denote by $\widetilde{A}_{s,t}^{\chi,\eta}:\pi_{\chi,s}\to\tau_{\eta,t}$ the intertwining operator with distribution kernel $\widetilde{K}_{s,t}^{\chi,\eta}$, i.e.
\begin{equation}
	\widetilde{A}_{s,t}^{\chi,\eta}f(\overline{n}_y) = \int_\EE\widetilde{K}_{s,t}^{\chi,\eta}(x)f(\overline{n}_{x+y})\,dx \qquad (f\in\Ind_B^G(\chi_s),y\in\FF),\label{eq:DefinitionAtilde}
\end{equation}
where the integral is to be understood in the sense of distributions. Note that $\widetilde{A}_{s,t}^{\chi,\eta}=0$ for $(\chi,s,\eta,t)\in L$. In the same way, we denote by $\doublewidetilde{A}_{s,t}^{\chi,\eta}$ the symmetry breaking operator with kernel $\doublewidetilde{K}_{s,t}^{\chi,\eta}$ for $(\chi,s,\eta,t)\in L$, and by $C_{s,t}^{\chi,\eta}$ the operator with kernel $\delta(x)$ in the case $(\chi,s,\eta,t)\in\SminusT$, i.e. the restriction map
\begin{equation}
	C_{s,t}^{\chi,\eta}f(\overline{n}_y) = f(\overline{n}_y) \qquad (f\in\Ind_B^G(\chi_s),y\in\FF).\label{eq:DefinitionC}
\end{equation}

In the first part we determine the action of symmetry breaking operators on vectors which transform by a character under the action of a maximal compact subgroup, and in the second part we find the images of all symmetry breaking operators in the case where $\chi|_{\calO_\FF^\times}\cdot\eta=\chi|_{\calO_\FF^\times}\cdot\eta^{-1}=1$.

\subsection{Evaluation on one-dimensional types}

The restriction of the representation $\pi_{\chi,s}$ of $G=\PGL(2,\EE)$ to the maximal compact subgroup $K=\PGL(2,\calO_\EE)$ contains a one-dimensional subrepresentation of $K$ if and only if $\chi^2=1$, i.e. $\chi$ is real-valued (see e.g. \cite[Lemma 2.2, Theorem 3.3, \S3.2]{Sil70} for the case of $p$-adic fields). This one-dimensional subrepresentation is unique and given by the character $\chi\circ\det:K\to\CC^\times$. (Note that since $\chi^2=1$, the composition $\chi\circ\det$ is well-defined on $\PGL(2,\calO_\EE)$.) It contains a unique vector $\varphi_{\chi,s}$ with $\varphi_{\chi,s}(1)=1$ which is given by
$$ \varphi_{\chi,s}(kb) = \chi_{s+\frac{1}{2}}(b)^{-1}\chi(\det k)^{-1} \qquad (k\in K,b\in B). $$

\begin{lemma}
For $x\in\EE$ we have
$$ \varphi_{\chi,s}(\overline{n}_x) = \max(1,|x|_\EE)^{-(2s+1)}. $$
\end{lemma}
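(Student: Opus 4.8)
The plan is to compute $\varphi_{\chi,s}(\overline{n}_x)$ directly from its defining formula $\varphi_{\chi,s}(kb)=\chi_{s+\frac12}(b)^{-1}\chi(\det k)^{-1}$ by writing $\overline{n}_x$ in the form $kb$ with $k\in K=\PGL(2,\calO_\EE)$ and $b\in B$; this is always possible since $G=KB$ (Iwasawa decomposition), and for $\PGL(2)$ over a non-archimedean field such a decomposition of $\overline{n}_x$ can be made completely explicit. I would split into the two cases $x\in\calO_\EE$ and $x\notin\calO_\EE$. In the first case $\overline{n}_x$ already lies in $K$ and has trivial determinant, so the value is $1=\max(1,|x|_\EE)^{-(2s+1)}$, which handles $|x|_\EE\le1$.

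In the remaining case $|x|_\EE>1$ we have $x^{-1}\in\calO_\EE$, and I would use the factorization
$$ \overline{n}_x=\begin{pmatrix}1&0\\x&1\end{pmatrix}=\begin{pmatrix}x^{-1}&1\\1&0\end{pmatrix}\begin{pmatrix}x&1\\0&-x^{-1}\end{pmatrix}, $$
observing that the first factor has entries in $\calO_\EE$ and determinant $-1\in\calO_\EE^\times$, hence defines an element $k\in K$ with $\chi(\det k)=\chi(-1)$, while the second factor, after rescaling in $\PGL(2,\EE)$ by $-x$, equals $\begin{pmatrix}-x^2&-x\\0&1\end{pmatrix}\in B$. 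Then $\varphi_{\chi,s}(\overline{n}_x)=\chi_{s+\frac12}(-x^2)^{-1}\chi(-1)^{-1}$, and the last step is to evaluate $\chi_{s+\frac12}(-x^2)$: writing $x=\varpi_\EE^{\nu_\EE(x)}u$ with $u\in\calO_\EE^\times$ and using $\chi^2=1$ one gets $\chi(\varpi_\EE^{-\nu_\EE(-x^2)}(-x^2))=\chi(-u^2)=\chi(-1)$ and $|{-x^2}|_\EE=|x|_\EE^2$, so $\chi_{s+\frac12}(-x^2)=\chi(-1)\,|x|_\EE^{2s+1}$. The two factors $\chi(-1)$ then cancel (again because $\chi(-1)^2=1$) and we are left with $|x|_\EE^{-(2s+1)}=\max(1,|x|_\EE)^{-(2s+1)}$.

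There is no genuine obstacle here; the statement reduces to an elementary computation once the explicit $KB$-decomposition of $\overline{n}_x$ is written down. The only points requiring a little care are that we work in $\PGL$ rather than $\GL$, so the upper triangular factor must first be rescaled into the normalized shape $\begin{pmatrix}a&*\\0&1\end{pmatrix}$ before feeding it to the character $\chi_{s+\frac12}$, and that the sign character $\chi(-1)$ (well-defined on $\PGL(2,\calO_\EE)$ precisely because $\chi^2=1$) must be tracked through both factors, where it ultimately cancels.
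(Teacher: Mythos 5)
Your proof is correct and follows essentially the same route as the paper: a case split on $|x|_\EE\leq1$ versus $|x|_\EE>1$ together with an explicit $KB$-factorization of $\overline{n}_x$, the only cosmetic difference being that your compact factor has determinant $-1$ (so you track and cancel $\chi(-1)$ using $\chi^2=1$), whereas the paper chooses a factor of determinant $1$ whose upper-triangular part is a power of $\varpi_\EE$, avoiding the sign bookkeeping. All the matrix identities and the evaluation $\chi_{s+\frac12}(-x^2)=\chi(-1)|x|_\EE^{2s+1}$ check out, so no gap.
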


\begin{proof}
For $|x|_\EE\leq1$, the matrix $\overline{n}_x$ is contained in $K$, so we can choose $b=1$ in the decomposition into $KB$ to find
$$ \varphi_{\chi,s}(\overline{n}_x) = 1. $$
For $|x|_\EE>1$ we write
$$ \overline{n}_x = \begin{pmatrix}1&0\\x&1\end{pmatrix} = \begin{pmatrix}\varpi^{-v(x)}_{\EE}&-\varpi^{v(x)}_{\EE}/x\\\varpi^{-v(x)}_{\EE}x&0\end{pmatrix}\begin{pmatrix}\varpi^{v(x)}_{\EE}&\varpi^{v(x)}_{\EE}/x\\0&\varpi^{-v(x)}_{\EE}\end{pmatrix}\in KB, $$
so
\begin{equation*}
	\varphi_{\chi,s}(\overline{n}_x)=|\varpi^{2v(x)}_{\EE}|_{\EE}^{-(s+\frac{1}{2})}=|x|_\EE^{-(2s+1)}.\qedhere
\end{equation*}
\end{proof}

Since $\widetilde{A}_{s,t}^{\chi,\eta}$ is $G'$-intertwining and the vector $\varphi_{\chi,s}$ transforms by the character $\chi\circ\det$ under the action of $K$, its image $\widetilde{A}_{s,t}^{\chi,\eta}\varphi_{\chi,s}$ transforms by the character $\chi|_{\calO_\FF^\times}\circ\det$ under the action of $K'=\PGL(2,\calO_\FF)$. But $\tau_{\eta,t}|_{K'}$ contains a one-dimensional subrepresentation given by the character $\chi|_{\calO_\FF^\times}\circ\det$ if and only if $\eta=\chi|_{\calO_\FF^\times}$, so we find
$$ \widetilde{A}_{s,t}^{\chi,\eta}\varphi_{\chi,s} = 0 \qquad \mbox{if }\chi|_{\calO_\FF^\times}\neq\eta. $$
Note that under the assumption $\chi^2=1$, we have the following equivalences:
$$ \chi|_{\calO_\FF^\times}=\eta \quad \Leftrightarrow \quad \chi|_{\calO_\FF^\times}\cdot\eta=1 \quad \Leftrightarrow \quad \chi|_{\calO_\FF^\times}\cdot\eta^{-1}=1. $$

In what follows, assume that these conditions are satisfied, then $\widetilde{A}_{s,t}^{\chi,\eta}\varphi_{\chi,s}$ is a constant multiple of $\psi_{\eta,t}$, the unique vector in $\tau_{\eta,t}$ which transforms by the character $\eta\circ\det$ of $K'$ and is normalized by $\psi_{\eta,t}(1)=1$. To determine the constant, we evaluate \eqref{eq:DefinitionAtilde} at $y=0$ and use $\psi_ {\eta,t}(1)=1$, so
$$ \widetilde{A}_{s,t}^{\chi,\eta}\varphi_{\chi,s} = \widetilde{A}_{s,t}^{\chi,\eta}\varphi_{\chi,s}(1)\cdot\psi_{\eta,t}, $$
where
$$ \widetilde{A}_{s,t}^{\chi,\eta}\varphi_{\chi,s}(1) = (1-q_\FF^{-(2s-t+\frac{1}{2})})(1-q_\FF^{-(2s+t+\frac{1}{2})})\int_\EE|x|_\EE^{-t-\frac{1}{2}}|x_2|_\FF^{2s+t-\frac{1}{2}}\max(1,|x|_\EE)^{-(2s+1)}\,dx. $$
The integral is computed in the following lemma:

\begin{lemma}
For $|\alpha^2|_\FF=1$:
\begin{equation*}
	\int_\EE|x|_\EE^{-t-\frac{1}{2}}|x_2|_\FF^{2s+t-\frac{1}{2}}\max(1,|x|_\EE)^{-(2s+1)}\,dx =\\
	\frac{(1-q_\FF^{-1})(1-q_\FF^{-4s-2})}{(1-q_\FF^{-(2s-t+\frac{1}{2})})(1-q_\FF^{-(2s+t+\frac{1}{2})})},
\end{equation*}
and for $|\alpha^2|_\FF=q_\FF^{-1}$:
\begin{equation*}
	\int_\EE|x|_\EE^{-t-\frac{1}{2}}|x_2|_\FF^{2s+t-\frac{1}{2}}\max(1,|x|_\EE)^{-(2s+1)}\,dx =\\
	\frac{(1-q_\FF^{-1})(1+q_\FF^{t-\frac{1}{2}})(1-q_\FF^{-(2s+1)})}{(1-q_\FF^{-(2s-t+\frac{1}{2})})(1-q_\FF^{-(2s+t+\frac{1}{2})})}.
\end{equation*}
\end{lemma}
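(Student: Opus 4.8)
The plan is to reduce the integral to two elementary $p$-adic integrals by means of the inversion $x\mapsto x^{-1}$ on $\EE^\times$, and then to evaluate those by summing geometric series. Throughout I identify $\EE\cong\FF^2$ via $x=x_1+x_2\alpha$.

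\emph{Step 1: the dictionary between $\EE$ and $\FF^2$.} Using the identity $|x|_\EE=|x_1^2-x_2^2\alpha^2|_\FF$ together with $v_\FF(x_1^2)=2v_\FF(x_1)$ and $v_\FF(x_2^2\alpha^2)=2v_\FF(x_2)+v_\FF(\alpha^2)$, a comparison of valuations gives
\[
 |x|_\EE=\begin{cases}\max(|x_1|_\FF,|x_2|_\FF)^2&\text{if }|\alpha^2|_\FF=1,\\\max(|x_1|_\FF^2,q_\FF^{-1}|x_2|_\FF^2)&\text{if }|\alpha^2|_\FF=q_\FF^{-1};\end{cases}
\]
in the ramified case the two valuations are always distinct (even versus odd), and in the unramified case they differ unless $x_1,x_2\in\calO_\FF^\times$, in which case $x_1^2-x_2^2\alpha^2$ is again a unit because $\overline{\alpha^2}$ is a non-square in the residue field of $\FF$. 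From this one reads off that $\calO_\EE=\{|x|_\EE\le1\}=\calO_\FF\oplus\calO_\FF\alpha$, that the maximal ideal $\mathfrak p_\EE=\{|x|_\EE<1\}$ equals $\varpi_\FF\calO_\FF\oplus\varpi_\FF\calO_\FF\alpha$ in the unramified and $\varpi_\FF\calO_\FF\oplus\calO_\FF\alpha$ in the ramified case, and — since $\calO_\FF\oplus\calO_\FF\alpha$ has $dx_1\,dx_2$-measure $1$ — that the normalization of $dx$ forces $dx=dx_1\,dx_2$ by uniqueness of Haar measure.

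\emph{Step 2: the inversion.} Split $\EE=\calO_\EE\sqcup(\EE\setminus\calO_\EE)$. On $\calO_\EE$ one has $\max(1,|x|_\EE)=1$. On $\EE\setminus\calO_\EE$ one has $\max(1,|x|_\EE)^{-(2s+1)}=|x|_\EE^{-(2s+1)}$, so the integrand is $|x|_\EE^{-(2s+t+\frac32)}|x_2|_\FF^{2s+t-\frac12}$; substituting $y=x^{-1}$ and using $d(x^{-1})=|x|_\EE^{-2}\,dx$ (invariance of $dx/|x|_\EE$ under inversion), $|x|_\EE=|y|_\EE^{-1}$, and $|x_2|_\FF=|y|_\EE^{-1}|y_2|_\FF$ (from $y_2=-x_2/(x_1^2-x_2^2\alpha^2)$), all powers of $|y|_\EE$ cancel and the integral over $\EE\setminus\calO_\EE$ becomes $\int_{\mathfrak p_\EE}|y_2|_\FF^{2s+t-\frac12}\,dy$. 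Assuming first $\Re(2s\pm t+\tfrac12)>0$, so that all integrals converge absolutely (Lemma~\ref{lem:KernelL1loc}), we obtain
\[
 \int_\EE|x|_\EE^{-t-\frac12}|x_2|_\FF^{2s+t-\frac12}\max(1,|x|_\EE)^{-(2s+1)}\,dx=\int_{\calO_\EE}|x|_\EE^{-t-\frac12}|x_2|_\FF^{2s+t-\frac12}\,dx+\int_{\mathfrak p_\EE}|x_2|_\FF^{2s+t-\frac12}\,dx.
\]

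\emph{Step 3: evaluating the two integrals.} The second integral is one-dimensional: integrating out $x_1$ contributes the measure $q_\FF^{-1}$ of $\varpi_\FF\calO_\FF$, and the remaining $x_2$-integral is a geometric series equal to $\frac{1-q_\FF^{-1}}{1-q_\FF^{-(2s+t+1/2)}}$ in the ramified case ($x_2$ over $\calO_\FF$) and to $\frac{q_\FF^{-(2s+t+3/2)}(1-q_\FF^{-1})}{1-q_\FF^{-(2s+t+1/2)}}$ in the unramified case ($x_2$ over $\varpi_\FF\calO_\FF$). For the first integral I would write $u=v_\FF(x_1)$, $w=v_\FF(x_2)$ (the locus $\{(u,w)\}$ having measure $q_\FF^{-u-w}(1-q_\FF^{-1})^2$), insert the formula for $|x|_\EE$ from Step 1, and split the double sum over $(u,w)\in\ZZ_{\ge0}^2$ according to the sign of $u-w$ — three ranges in the unramified case, where $|x|_\EE^{-t-\frac12}=q_\FF^{2(t+\frac12)\min(u,w)}$, and the two ranges $u\le w$, $u>w$ in the ramified case. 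Each range is a double geometric series; after summation and a short algebraic simplification — in which the apparent pole at $q_\FF^{2t}=1$ cancels in the ramified case — the first integral equals $\frac{(1-q_\FF^{-1})(1-q_\FF^{-(2s+t+3/2)})}{(1-q_\FF^{-(2s+t+1/2)})(1-q_\FF^{-(2s-t+1/2)})}$ in the unramified case and $(1-q_\FF^{-1})^2\big[\frac{1}{(1-q_\FF^{-(2s+t+1/2)})(1-q_\FF^{-(2s-t+1/2)})}+\frac{q_\FF^{t-\frac12}}{(1-q_\FF^{-1})(1-q_\FF^{-(2s-t+1/2)})}\big]$ in the ramified case. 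Adding the corresponding second integral and simplifying — the numerators collapse to $1-q_\FF^{-4s-2}$, respectively $(1+q_\FF^{t-\frac12})(1-q_\FF^{-(2s+1)})$ — yields the two claimed formulas; as both sides are rational functions of $q_\FF^{\pm s}$ and $q_\FF^{\pm t}$, the identities hold for all $(s,t)$ by meromorphic continuation. The only real difficulty is the bookkeeping in Step 3: organizing the case split correctly, keeping track of the half-integer exponents, and checking the cancellation of the spurious pole at $q_\FF^{2t}=1$ in the ramified case.
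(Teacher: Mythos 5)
Your proposal is correct and arrives at both stated formulas, but it is organized differently from the paper's computation. The paper keeps the factor $\max(1,|x|_\EE)^{-(2s+1)}$ inside a single double series over the valuations $(m,n)=(\nu_\FF(x_1),\nu_\FF(x_2))\in\ZZ^2$, splits according to $n\geq m$ versus $n<m$ (using the same dictionary $|x_1^2-x_2^2\alpha^2|_\FF=\max(|x_1|_\FF^2,\,|x_2|_\FF^2|\alpha^2|_\FF)$ that you set up in Step 1), and then splits the resulting geometric series once more according to the value of the $\max$. You instead decompose $\EE=\calO_\EE\sqcup(\EE\setminus\calO_\EE)$ and dispose of the $\max$ factor on the unbounded piece via the inversion $x\mapsto x^{-1}$: the exponent bookkeeping is right ($|y|_\EE^{2s+t+\frac32}\cdot|y|_\EE^{-(2s+t-\frac12)}\cdot|y|_\EE^{-2}=1$), so that piece collapses to the elementary integral $\int_{\mathfrak{p}_\EE}|y_2|_\FF^{2s+t-\frac12}\,dy$, and only the integral over $\calO_\EE$ needs a genuine double series, now over $\ZZ_{\geq0}^2$. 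This is a somewhat slicker treatment of the $K$-spherical factor; the remaining work (valuation dictionary plus geometric series, with separate unramified/ramified bookkeeping) is the same in spirit as the paper's. I checked your intermediate values: the integrals over $\calO_\EE$ you state are correct in both cases, and adding the boundary piece does collapse the numerators to $1-q_\FF^{-4s-2}$, respectively $(1+q_\FF^{t-\frac12})(1-q_\FF^{-(2s+1)})$.

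Two small points of hygiene. In Step 3 your unramified bookkeeping is internally inconsistent: the $x_2$-integral over $\varpi_\FF\calO_\FF$ alone equals $(1-q_\FF^{-1})q_\FF^{-(2s+t+\frac12)}/(1-q_\FF^{-(2s+t+\frac12)})$, so the quantity you wrote, $(1-q_\FF^{-1})q_\FF^{-(2s+t+\frac32)}/(1-q_\FF^{-(2s+t+\frac12)})$, is already the full second integral including the factor $q_\FF^{-1}$ from $x_1$; multiplying it by $q_\FF^{-1}$ once more would break the final collapse, so state clearly which convention you use. Also, Lemma~\ref{lem:KernelL1loc} only gives local integrability; absolute convergence over all of $\EE$ additionally requires convergence at infinity, which is exactly $\Re(2s+t+\frac12)>0$ and is supplied by your inversion computation itself rather than by that citation. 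Finally, in the unramified case the valuations of $x_1^2$ and $x_2^2\alpha^2$ agree whenever $\nu_\FF(x_1)=\nu_\FF(x_2)$, not only for units; one should scale to units before invoking that $\overline{\alpha^2}$ is a non-square — the conclusion $|x|_\EE=\max(|x_1|_\FF,|x_2|_\FF)^2$ is of course correct and is the same fact the paper uses implicitly.
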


\begin{proof}
We write the integral over $x\in\EE$ as a series of integrals over $\varpi_\FF^m\calO_\FF^\times\times\varpi_\FF^n\calO_\FF^\times$. Using that $\operatorname{vol}(\varpi_\FF^m\calO_\FF^\times)=(1-q_\FF^{-1})q_\FF^{-m}$ we find
$$ = (1-q_\FF^{-1})^2\sum_{m\in\ZZ}\sum_{n\in\ZZ}q_\FF^{-m-n}|(\varpi_{\FF}^m)^2-(\varpi_{\FF}^n)^2\alpha^2|_\FF^{-t-\frac{1}{2}}|\varpi_\FF^n|_\FF^{2s+t-\frac{1}{2}}\max(1,|(\varpi_\FF^m)^2-(\varpi_\FF^n)^2\alpha^2|_\FF)^{-(2s+1)}. $$
Recall that $|(\varpi_{\FF}^m)^2-(\varpi_{\FF}^n)^2\alpha^2|_\FF$ equals $|(\varpi_{\FF}^m)^2|_\FF$ if $n \geq m$ and $|(\varpi_{\FF}^n)^2\alpha^2|_\FF$ if $m >n$, which is true for both cases $|\alpha^2|_\FF=1$ or $|\alpha^2|_\FF=q_\FF^{-1}$. Then by writing $\sum\limits_{m\in\ZZ}\sum\limits_{n\in\ZZ}= \sum\limits_{m\in\ZZ}(\sum\limits_{n<m}+ \sum\limits_{m \leq n}) $, we have
\begin{align*}
	={}& (1-q_\FF^{-1})^2 \sum_{m\in\ZZ}\sum_{m\leq n}q_\FF^{-m-n}|\varpi_{\FF}^{2m}|_\FF^{-t-\frac{1}{2}}|\varpi_\FF^n|_\FF^{2s+t-\frac{1}{2}}\max(1,|\varpi_\FF^{2m}|_\FF)^{-(2s+1)}\\
	&+ (1-q_\FF^{-1})^2 \sum_{m\in\ZZ}\sum_{m>n}q_\FF^{-m-n}|\varpi_{\FF}^{2n}\alpha^2|_\FF^{-t-\frac{1}{2}}|\varpi_\FF^n|_\FF^{2s+t-\frac{1}{2}}\max(1,|\varpi_\FF^{2n}\alpha^2|_\FF)^{-(2s+1)}\\
	={}& (1-q_\FF^{-1})^2 \sum_{m\in\ZZ}\sum_{m\leq n}q_\FF^{2mt}q_\FF^{-n(2s+t+\frac{1}{2})}\max(1,q_\FF^{-2m})^{-(2s+1)}\\
	&+ (1-q_\FF^{-1})^2|\alpha^2|_\FF^{-t-\frac{1}{2}}  \sum_{m\in\ZZ}\sum_{m>n}q_\FF^{-m}q_\FF^{-n(2s-t-\frac{1}{2})}\max(1,q_\FF^{-2n}|\alpha^2|_\FF )^{-(2s+1)}.
\end{align*}
In the first sum we write $n=m+k$ and in the second sum we write $m=n+k+1$:
\begin{multline*}
	(1-q_\FF^{-1})^2\sum_{m\in\ZZ}q_\FF^{-m(2s-t+\frac{1}{2})}\max(1,q_\FF^{-2m})^{-(2s+1)}\sum_{k=0}^\infty q_\FF^{-k(2s+t+\frac{1}{2})}\\
	+ q_\FF^{-1}(1-q_\FF^{-1})^2|\alpha^2|_\FF^{-t-\frac{1}{2}}\sum_{n\in\ZZ}q_\FF^{-n(2s-t+\frac{1}{2})}\max(1,q_\FF^{-2n}|\alpha^2|_\FF)^{-(2s+1)}\sum_{k=0}^\infty q_\FF^{-k}.
\end{multline*}
Finally, the sums over $k$ are geometric series, and splitting the sums according to $\max(1,q_\FF^{-2n}|\alpha^2|_\FF)$  and $\max(1,q_\FF^{-2m})$ we get:
\begin{multline*}
	\frac{1-q_\FF^{-1}}{1-q_\FF^{-(2s-t+\frac{1}{2})}} \left(\frac{1-q_\FF^{-1}}{1-q_\FF^{-(2s+t+\frac{1}{2})}} + |\alpha^2|_\FF^{-t-\frac{1}{2}}q_\FF^{-1}  \right)\\
	+ q_\FF^{-(2s+t+\frac{3}{2})}\frac{1-q_\FF^{-1}}{1-q_\FF^{-(2s+t+\frac{3}{2})}} \left(\frac{1-q_\FF^{-1}}{1-q_\FF^{-(2s+t+\frac{1}{2})}} + |\alpha^2|_\FF^{-2s -t-\frac{3}{2}}q_\FF^{-1}  \right).
\end{multline*}
Considering the cases $|\alpha^2|_\FF=1$ and $|\alpha^2|_\FF=q_\FF^{-1}$ separately and simplifying the resulting expression shows the claimed formulas.
\end{proof}

\begin{corollary}
\label{cor::sph_vectors}
Assume $\chi^2=1$, then $\widetilde{A}_{s,t}^{\chi,\eta}\varphi_{\chi,s}=0$ unless $\eta=\chi|_{\calO_\FF^\times}$ in which case we have
$$ \widetilde{A}_{s,t}^{\chi,\eta}\varphi_{\chi,s} = (1-q_\FF^{-1})\psi_{\eta,t}\times\begin{cases}(1-q_\EE^{-(2s+1)})&\mbox{for $\EE/\FF$ unramified,}\\(1-q_\EE^{-(2s+1)})(1+q_\FF^{t-\frac{1}{2}})&\mbox{for $\EE/\FF$ ramified.}\end{cases} $$
\end{corollary}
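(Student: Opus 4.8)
The plan is to combine the three facts that have already been assembled in this section. First, by the $G'$-equivariance of $\widetilde{A}_{s,t}^{\chi,\eta}$ and the fact that $\varphi_{\chi,s}$ spans the $\chi\circ\det$ isotype of $K=\PGL(2,\calO_\EE)$, the image $\widetilde{A}_{s,t}^{\chi,\eta}\varphi_{\chi,s}$ lies in the $\chi|_{\calO_\FF^\times}\circ\det$ isotype of $K'=\PGL(2,\calO_\FF)$ acting on $\tau_{\eta,t}$. Since this isotype is non-zero precisely when $\eta=\chi|_{\calO_\FF^\times}$, and is then one-dimensional and spanned by $\psi_{\eta,t}$, we immediately get $\widetilde{A}_{s,t}^{\chi,\eta}\varphi_{\chi,s}=0$ when $\eta\neq\chi|_{\calO_\FF^\times}$, and $\widetilde{A}_{s,t}^{\chi,\eta}\varphi_{\chi,s}=c\cdot\psi_{\eta,t}$ for a scalar $c$ otherwise. (All of this is already spelled out in the paragraphs preceding the corollary, so in the proof I would just cite it.)

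Second, to pin down the scalar $c$, I would evaluate the defining integral formula \eqref{eq:DefinitionAtilde} at $\overline{n}_y=1$ (i.e.\ $y=0$): since $\psi_{\eta,t}(1)=1$, we have $c=\widetilde{A}_{s,t}^{\chi,\eta}\varphi_{\chi,s}(1)$, which by \eqref{eq:DefinitionAtilde} and the explicit renormalization factor from Section~\ref{subsec::holom_renor} (here $\chi|_{\calO_\FF^\times}\cdot\eta=\chi|_{\calO_\FF^\times}\cdot\eta^{-1}=1$, so the renormalizing factor is $(1-q_\FF^{-(2s-t+\frac{1}{2})})(1-q_\FF^{-(2s+t+\frac{1}{2})})$) equals
$$(1-q_\FF^{-(2s-t+\frac{1}{2})})(1-q_\FF^{-(2s+t+\frac{1}{2})})\int_\EE|x|_\EE^{-t-\frac{1}{2}}|x_2|_\FF^{2s+t-\frac{1}{2}}\max(1,|x|_\EE)^{-(2s+1)}\,dx,$$
using the formula $\varphi_{\chi,s}(\overline{n}_x)=\max(1,|x|_\EE)^{-(2s+1)}$ from the lemma above and the expression for $|K_{s,t}^{\chi,\eta}(x)|$ (all the character factors have absolute value one on $\EE^\times$, and since $\chi^2=1$ and $\eta=\chi|_{\calO_\FF^\times}$ they are in fact identically one on the relevant domain, so the integrand is exactly $|x|_\EE^{-t-\frac{1}{2}}|x_2|_\FF^{2s+t-\frac{1}{2}}\max(1,|x|_\EE)^{-(2s+1)}$).

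Third, I would invoke the lemma computing this $p$-adic integral: in the unramified case ($|\alpha^2|_\FF=1$) it equals $\frac{(1-q_\FF^{-1})(1-q_\FF^{-4s-2})}{(1-q_\FF^{-(2s-t+\frac{1}{2})})(1-q_\FF^{-(2s+t+\frac{1}{2})})}$ and in the ramified case ($|\alpha^2|_\FF=q_\FF^{-1}$) it equals $\frac{(1-q_\FF^{-1})(1+q_\FF^{t-\frac{1}{2}})(1-q_\FF^{-(2s+1)})}{(1-q_\FF^{-(2s-t+\frac{1}{2})})(1-q_\FF^{-(2s+t+\frac{1}{2})})}$. Multiplying by the renormalization factor cancels the two denominator factors, leaving $(1-q_\FF^{-1})(1-q_\FF^{-4s-2})$ in the unramified case and $(1-q_\FF^{-1})(1+q_\FF^{t-\frac{1}{2}})(1-q_\FF^{-(2s+1)})$ in the ramified case. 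Finally I would translate $q_\FF$ back to $q_\EE$: for $\EE/\FF$ unramified $q_\EE=q_\FF^2$ so $q_\FF^{-4s-2}=q_\EE^{-(2s+1)}$, and for $\EE/\FF$ ramified $q_\EE=q_\FF$ so $q_\FF^{-(2s+1)}=q_\EE^{-(2s+1)}$, giving exactly the two stated formulas. There is no real obstacle here — the corollary is a bookkeeping consequence of the lemma on the integral together with the $K$-type argument; the only point requiring a word of care is checking that the character factors in $K_{s,t}^{\chi,\eta}$ really do disappear from the integrand under the hypotheses $\chi^2=1$, $\eta=\chi|_{\calO_\FF^\times}$, which is immediate from the second displayed form of $K_{s,t}^{\chi,\eta}$ in Section~\ref{sec::mero_family}.
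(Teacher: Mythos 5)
Your proposal follows the paper's proof essentially verbatim: the $K'$-isotype argument (vanishing unless $\eta=\chi|_{\calO_\FF^\times}$, and reduction to the scalar $\widetilde{A}_{s,t}^{\chi,\eta}\varphi_{\chi,s}(1)$ via $\psi_{\eta,t}(1)=1$) is exactly the discussion preceding the corollary, the evaluation at $\overline{n}_0$ with the renormalizing factor $(1-q_\FF^{-(2s-t+\frac{1}{2})})(1-q_\FF^{-(2s+t+\frac{1}{2})})$ produces the same integral, and the concluding lemma computing that integral together with the substitution $q_\EE=q_\FF^2$ (unramified) resp.\ $q_\EE=q_\FF$ (ramified) is precisely how the paper finishes. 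The one point you single out for care --- that the character factors of $K_{s,t}^{\chi,\eta}$ become trivial under $\chi^2=1$ and $\eta=\chi|_{\calO_\FF^\times}$ --- is treated at the same (tacit) level in the paper, so your argument matches it in both structure and detail.
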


Note that if $\EE/\FF$ is unramified we have $\calO_\FF^{\times}\subseteq\calO_\EE^{\times2}$, so the condition $\chi^2=1$ implies $\chi|_{\calO_\FF^\times}=1$.

\begin{remark}\label{rem:ActionOnSphVectorForCandDoubleTildeA}
Clearly, $C_{s,t}^{\chi,\eta}\varphi_{\chi,s}=\psi_{\eta,t}$ for $(\chi,s,\eta,t)\in\SminusT$ and $\chi^2=1$. In the case $(\chi,s,\eta,t)\in L$ one can also determine the action of the operator $\doublewidetilde{A}_{s,t}^{\chi,\eta}$ by taking residues in Corollary~\ref{cor::sph_vectors}.
\end{remark}

\subsection{Images of symmetry breaking operators}

In this section we determine the image of $\widetilde{A}_{s,t}^{\chi,\eta} \in\Hom_{G'}(\pi_{\chi,s}|_{G'},\tau_{\eta,t})$ for all $s,t\in\CC$ in the case where $\chi|_{\calO_\FF^\times}\cdot\eta=\chi|_{\calO_\FF^\times}\cdot\eta^{-1}=1$. Recall from Section \ref{subsec::holom_renor} that in this case
$$ \widetilde{K}_{s,t}^{\chi,\eta}(x) = (1-q_\FF^{2s+t+\frac{1}{2}})(1-q_\FF^{2s-t+\frac{1}{2}})\cdot\chi_0(x^2)|x|_\EE^{-t-\frac{1}{2}}|x_2|_{\FF}^{2s+t-\frac{1}{2}}. $$

We recall from Theorem~\ref{thm:CompositionSeries} that the representation $\tau_{\eta,t}$ is irreducible unless $t\in\pm\frac{1}{2}+\frac{i\pi}{\ln q_\FF}\ZZ$. For $t\in-\frac{1}{2}+\frac{i\pi}{\ln q_\FF}\ZZ$ it has a one-dimensional subrepresentation spanned by the function $\psi_{\eta,t}$ defined in the previous section. Note that $\psi_{\eta,t}(\overline{n}_y)=1$ for all $y\in\FF$. As explained in Section~\ref{sec:PSforPGL2}, the irreducible subrepresentation $\tau_{\eta,t}^0$ of $\tau_{\eta,t}$ for $t\in\frac{1}{2}+\frac{i\pi}{\ln q_\FF}\ZZ$ is the kernel of the intertwining operator
$$ T_{\eta,t}:\tau_{\eta,t}\to\tau_{\eta,-t}, \quad T_{\eta,t}f(\overline{n}_y) = \int_\FF f(\overline{n}_z)\,dz, $$
which maps onto the functions which are constant on $\overline{N}'$, i.e. onto $\CC\psi_{\eta,-t}$.

\begin{theorem}
\label{thm::long_image}
	Assume that $\chi|_{\calO_\FF^\times}\cdot\eta=\chi|_{\calO_\FF^\times}\cdot\eta^{-1}=1$, and let $s,t \in \CC$. The image of $\widetilde{A}_{s,t}^{\chi,\eta}$ is given as follows:
	\begin{enumerate}
		\item For $\chi^2\neq1$ and $\EE/\FF$ unramified:
		$$ \Im(\widetilde{A}_{s,t}^{\chi,\eta}) = \begin{cases}\{0\}&\mbox{if }(s,t)\in\{(0,\frac{1}{2}),(\frac{\pi i}{\ln q_\EE},\frac{1}{2}+\frac{\pi i}{\ln q_\FF})\}=L^{\chi,\eta},\\\CC\psi_{\eta,t}&\mbox{if }(s,t)\in \{(0,-\frac{1}{2}),(\frac{\pi i}{\ln q_\EE},-\frac{1}{2}+\frac{\pi i}{\ln q_\FF})\},\\\tau_{\eta,t}^0&\mbox{if }(s,t)\in\left[\CC\times(\frac{1}{2}+\frac{\pi i}{\ln q_\FF}\ZZ)\right]\setminus L^{\chi,\eta},\\\tau_{\eta,t}&\mbox{else}.\end{cases} $$
		\item For $\chi^2\neq1$ and $\EE/\FF$ ramified:
		$$ \Im(\widetilde{A}_{s,t}^{\chi,\eta}) = \begin{cases}\{0\}&\begin{array}{l}\mbox{if }(s,t)\in\{(0,\frac{1}{2}),(\frac{\pi i}{\ln q_\EE},\frac{1}{2}),\\\hspace{2.5cm}(\frac{\pi i}{2\ln q_\EE},\frac{1}{2}+\frac{\pi i}{\ln q_\FF}),(\frac{3\pi i}{2\ln q_\EE},\frac{1}{2}+\frac{\pi i}{\ln q_\FF})\}=L^{\chi,\eta},\end{array}\\\CC\psi_{\eta,t}&\begin{array}{l}\mbox{if }(s,t)\in\{ (0,-\frac{1}{2}),(\frac{\pi i}{\ln q_\EE},-\frac{1}{2}),\\\hspace{2.5cm}(\frac{\pi i}{2\ln q_\EE},-\frac{1}{2}+\frac{\pi i}{\ln q_\FF}),(\frac{3\pi i}{2\ln q_\EE},-\frac{1}{2}+\frac{\pi i}{\ln q_\FF})\},\end{array}\\\tau_{\eta,t}^0&\mbox{if }(s,t)\in\left[\CC\times(\frac{1}{2}+\frac{\pi i}{\ln q_\FF}\ZZ)\right]\setminus L^{\chi,\eta},\\\tau_{\eta,t}&\mbox{else}.\end{cases} $$		
		\item For $\chi^2=1$ and $\EE/\FF$ unramified:
		$$ \Im(\widetilde{A}_{s,t}^{\chi,\eta}) = \begin{cases}\{0\}&\mbox{if }(s,t)\in\{(-\frac{1}{2},-\frac{1}{2}),(-\frac{1}{2}+\frac{\pi i}{\ln q_\EE},-\frac{1}{2}+\frac{\pi i}{\ln q_\FF})\}=L^{\chi,\eta},\\\CC\psi_{\eta,t}&\mbox{if }(s,t)\in\left[\CC\times(-\frac{1}{2}+\frac{\pi i}{\ln q_\FF}\ZZ)\right]\setminus L^{\chi,\eta},\\\tau_{\eta,t}^0&\mbox{if }(s,t)\in\{(-\frac{1}{2},\frac{1}{2}),(-\frac{1}{2}+\frac{\pi i}{\ln q_\EE},\frac{1}{2}+\frac{\pi i}{\ln q_\FF})\},\\\tau_{\eta,t}&\mbox{else}.\end{cases} $$
		\item For $\chi^2=1$ and $\EE/\FF$ ramified:
		$$ \Im(\widetilde{A}_{s,t}^{\chi,\eta}) = \begin{cases}\{0\}&\begin{array}{l}\mbox{if }(s,t)\in\{(-\frac{1}{2},-\frac{1}{2}),(-\frac{1}{2}+\frac{\pi i}{\ln q_\EE},-\frac{1}{2}),\\\hspace{2.5cm}(\frac{\pi i}{2\ln q_\EE},\frac{1}{2}+\frac{\pi i}{\ln q_\FF}),(\frac{3\pi i}{2\ln q_\EE},\frac{1}{2}+\frac{\pi i}{\ln q_\FF})\}=L^{\chi,\eta},\end{array}\\\CC\psi_{\eta,t}&\begin{array}{l}\mbox{if }(s,t)\in\left[(\CC\setminus(-\frac{1}{2}+\frac{\pi i}{\ln q_\EE}\ZZ))\times\{-\frac{1}{2}\}\right]\\\hspace{2.5cm}\cup\,\{(\frac{\pi i}{2\ln q_\EE},-\frac{1}{2}+\frac{\pi i}{\ln q_\FF}),(\frac{3\pi i}{2\ln q_\EE},-\frac{1}{2}+\frac{\pi i}{\ln q_\FF})\},\end{array}\\\tau_{\eta,t}^0&\begin{array}{l}\mbox{if }(s,t)\in\left[(\CC\times\{\frac{1}{2}+\frac{\pi i}{\ln q_\FF}\})\setminus\{(\frac{\pi i}{2\ln q_\EE},\frac{1}{2}+\frac{\pi i}{\ln q_\FF}),(\frac{3\pi i}{2\ln q_\EE},\frac{1}{2}+\frac{\pi i}{\ln q_\FF})\}\right]\\\hspace{2.5cm}\cup\,\{(-\frac{1}{2},\frac{1}{2}),(-\frac{1}{2}+\frac{\pi i}{\ln q_\EE},\frac{1}{2})\},\end{array}\\\tau_{\eta,t}&\begin{array}{l}\mbox{else.}\end{array}\end{cases} $$
	\end{enumerate}
\end{theorem}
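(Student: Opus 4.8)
The overall strategy is to exploit the $G'$-equivariance of $\widetilde{A}_{s,t}^{\chi,\eta}$ together with our knowledge of the composition series of $\pi_{\chi,s}$ (Theorem~\ref{thm:CompositionSeries}), the composition series of $\tau_{\eta,t}$, and the explicit action on the spherical/one-dimensional $K$-type from Corollary~\ref{cor::sph_vectors}. Since the image $\Im(\widetilde{A}_{s,t}^{\chi,\eta})$ is a $G'$-subrepresentation of $\tau_{\eta,t}$, and $\tau_{\eta,t}$ has length at most $2$, there are only four possibilities for the image: $\{0\}$, $\CC\psi_{\eta,t}$ (when $t\in-\frac{1}{2}+\frac{i\pi}{\ln q_\FF}\ZZ$), $\tau_{\eta,t}^0$ (when $t\in\frac{1}{2}+\frac{i\pi}{\ln q_\FF}\ZZ$), or all of $\tau_{\eta,t}$. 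The task is therefore to determine, for each $(s,t)$, which of these occurs. First I would dispose of the vanishing case: by Theorem~\ref{thm:ZerosAndSupportOfKernel}, $\widetilde{K}_{s,t}^{\chi,\eta}=0$ exactly on $L^{\chi,\eta}$, so $\Im(\widetilde{A}_{s,t}^{\chi,\eta})=\{0\}$ precisely there, which accounts for the first line in each of the four cases.

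Next I would handle the generic case where $\tau_{\eta,t}$ is irreducible (i.e. $t\notin\pm\frac{1}{2}+\frac{i\pi}{\ln q_\FF}\ZZ$): here the image, being a non-zero subrepresentation, must be all of $\tau_{\eta,t}$. This gives the ``else'' line. For the reducible cases, when $t\in-\frac{1}{2}+\frac{i\pi}{\ln q_\FF}\ZZ$, the only proper non-zero subrepresentation is $\CC\psi_{\eta,t}$, and Corollary~\ref{cor::sph_vectors} tells us whether $\varphi_{\chi,s}$ (or more precisely its image) is non-zero — when $\chi^2=1$ the spherical vector survives unless the explicit scalar vanishes, pinning down whether the image is $\CC\psi_{\eta,t}$ or larger. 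When $\chi^2\neq1$ there is no spherical vector, so one needs a separate argument: one observes that $\psi_{\eta,t}$ spans the unique one-dimensional subrepresentation, and checks directly from the integral formula \eqref{eq:DefinitionAtilde} whether the image is contained in $\CC\psi_{\eta,t}$, e.g. by testing against the long intertwiner $T_{\eta,t}$ or by noting that the image of a non-spherical SBO into a reducible $\tau_{\eta,t}$ with $t\in-\frac12+\dots$ must land in the socle. When $t\in\frac{1}{2}+\frac{i\pi}{\ln q_\FF}\ZZ$, the proper non-zero subrepresentation is $\tau_{\eta,t}^0=\ker T_{\eta,t}$, so one needs to decide whether $T_{\eta,t}\circ\widetilde{A}_{s,t}^{\chi,\eta}=0$ or not; this composition is again $G'$-equivariant and lands in $\CC\psi_{\eta,-t}$, so it is determined by a single scalar, which one computes by evaluating at $\overline{n}_0$ using the convolution formula for $T_{\eta,t}$ and the explicit kernel $\widetilde{K}_{s,t}^{\chi,\eta}$.

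The main obstacle, and the bulk of the work, is the bookkeeping at the ``boundary'' parameters — the points where $(s,t)$ sits in $L$, on the hyperplanes $\SplusT$ or $\SminusT$, or at $t\in\pm\frac12+\frac{i\pi}{\ln q_\FF}\ZZ$ — especially in the ramified case (4), where these loci intersect in a complicated way and the image can be $\CC\psi_{\eta,t}$, $\tau_{\eta,t}^0$, or $\tau_{\eta,t}$ depending on finer information. Here I would proceed by: (i) using the residue identities of Theorem~\ref{thm:ResidueIdentities} to identify $\widetilde{K}_{s,t}^{\chi,\eta}$ explicitly as a multiple of $\delta$ or of $L_t^\eta$ on $\SminusT$ resp. $\SplusT\setminus\SminusT$, and reading off the corresponding operator (the restriction map $C_{s,t}^{\chi,\eta}$ of \eqref{eq:DefinitionC}, whose image is easily computed, or the operator with kernel $L_t^\eta$), and (ii) for the remaining isolated points, computing the relevant scalar invariants ($T_{\eta,t}\circ\widetilde{A}_{s,t}^{\chi,\eta}$ at the base point, and the value $\widetilde{A}_{s,t}^{\chi,\eta}\varphi_{\chi,s}(1)$ from Corollary~\ref{cor::sph_vectors} when $\chi^2=1$) and checking which vanish. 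A useful consistency check throughout is that on the complement of all special loci the image is $\tau_{\eta,t}$, and the image can only shrink as $(s,t)$ degenerates; combined with continuity/holomorphy of the family $\widetilde{K}_{s,t}^{\chi,\eta}$ this constrains the possibilities at each boundary point, so the computations in (ii) only need to decide between at most two options.
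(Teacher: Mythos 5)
Your skeleton is sound and largely coincides with the paper's: the zero set is read off from Theorem~\ref{thm:ZerosAndSupportOfKernel}, the irreducible $\tau_{\eta,t}$ case is immediate, and for $t\in\frac{1}{2}+\frac{\pi i}{\ln q_\FF}\ZZ$ the criterion $T_{\eta,t}\circ\widetilde{A}_{s,t}^{\chi,\eta}=0$ is exactly the paper's route (Lemma~\ref{lem:CompositionWithStdIntertwiner}: by one-dimensionality of the relevant Hom space the composition equals $c_\FF\Gamma(|\cdot|_\FF^{2t})\Gamma((\chi^2)_{-t+\frac{1}{2}})\widetilde{A}_{s,-t}^{\chi,\eta}$, the constant being computed by evaluating the convolved kernel at $x=\alpha$ via Proposition~\ref{prop:GammaIntegral}; note you also need a meromorphic-continuation remark at the parameters where that Hom space is two-dimensional).

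The genuine gap is in the socle case $t\in-\frac{1}{2}+\frac{\pi i}{\ln q_\FF}\ZZ$, where you must decide between $\Im=\CC\psi_{\eta,t}$ and $\Im=\tau_{\eta,t}$. Both tools you propose fail here. First, Corollary~\ref{cor::sph_vectors} cannot ``pin down whether the image is $\CC\psi_{\eta,t}$ or larger'': non-vanishing of $\widetilde{A}_{s,t}^{\chi,\eta}\varphi_{\chi,s}$ only shows $\Im\supseteq\CC\psi_{\eta,t}$ (and its vanishing shows nothing about the image, since the $\eta\circ\det$-isotypic component of $\pi_{\chi,s}|_{K'}$ need not be spanned by $\varphi_{\chi,s}$); indeed in case (3) the scalar in Corollary~\ref{cor::sph_vectors} is non-zero for generic $s$ while the image is still only $\CC\psi_{\eta,t}$. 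Second, the claim that the image of a symmetry breaking operator into a reducible $\tau_{\eta,t}$ with $t\in-\frac{1}{2}+\frac{\pi i}{\ln q_\FF}\ZZ$ ``must land in the socle'' is false: the theorem itself asserts $\Im=\tau_{\eta,t}$ at such $t$ for generic $s$ in cases (1), (2) and in case (4) with $t=-\frac{1}{2}+\frac{\pi i}{\ln q_\FF}$ (and composing with $T_{\eta,t}$ is not available here, since the socle is not described as a kernel of an intertwiner). What is actually needed, and what the paper proves in its second lemma, is the criterion that $\Im\subseteq\CC\psi_{\eta,t}$ if and only if $\widetilde{A}_{s,t}^{\chi,\eta}f$ is constant on $\overline{N}'$ for every $f$, equivalently the kernel $\widetilde{K}_{s,t}^{\chi,\eta}(x_1+x_2\alpha)$ is independent of $x_1$; deciding exactly when this holds requires a separate case analysis (on $\supp\widetilde{K}_{s,t}^{\chi,\eta}$, using Theorem~\ref{thm:ResidueIdentities} on $\SplusT$ and $\SminusT$, and the explicit character formula off $\FF$, which forces $\chi^2=1$ and $t\in-\frac{1}{2}+\frac{2\pi i}{\ln q_\EE}\ZZ$ in the generic-support case). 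This computation produces precisely the exceptional sets appearing in the statement, and it is the part your proposal does not supply.
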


The proof follows from Theorem~\ref{thm:ZerosAndSupportOfKernel} together with the following two lemmas treating the cases $t\in\frac{1}{2}+\frac{i\pi}{\ln q_\FF}\ZZ$ and $t\in-\frac{1}{2}+\frac{i\pi}{\ln q_\FF}\ZZ$ where $\tau_{\eta,t}$ is reducible. 

 We start with the case $t\in\frac{1}{2}+\frac{i\pi}{\ln q_\FF}\ZZ$. The image of $\widetilde{A}_{s,t}^{\chi,\eta}$ is contained in the subrepresentation $\tau_{\eta,t}^0$ if and only if $T_{\eta,t}\circ\widetilde{A}_{s,t}^{\chi,\eta}=0$. This composition can be expressed in terms of $\widetilde{A}_{s,-t}^{\chi,\eta}$:

\begin{lemma}\label{lem:CompositionWithStdIntertwiner}
	Assume that $\chi|_{\calO_\FF^\times}\cdot\eta=\chi|_{\calO_\FF^\times}\cdot\eta^{-1}=1$ and $t\in\frac{1}{2}+\frac{\pi i}{\ln q_\FF}\ZZ$. Then
	$$ T_{\eta,t}\circ\widetilde{A}_{s,t}^{\chi,\eta} = c_\FF\Gamma(|\cdot|_\FF^{2t})\Gamma((\chi^2)_{-t+\frac{1}{2}}) \cdot \widetilde{A}_{s,-t}^{\chi,\eta} $$
	as an identity of holomorphic functions in $s\in\CC$. In particular, $\Im(\widetilde{A}_{s,t}^{\chi,\eta})\subseteq\tau_{\eta,t}^0$ if and only if
	$$ \begin{cases}\mbox{always}&\mbox{if $\chi^2\neq1$,}\\(s,t)\in\{(-\frac{1}{2},\frac{1}{2}),(-\frac{1}{2}+\frac{i\pi}{\ln q_\EE},\frac{1}{2}+\frac{i\pi}{\ln q_\FF})\}&\mbox{if $\chi^2=1$ and $\EE/\FF$ is unramified,}\\t=\frac{1}{2}+\frac{\pi i}{\ln q_\FF}\mbox{ or }(s,t)\in\{(-\frac{1}{2},\frac{1}{2}),(-\frac{1}{2}+\frac{\pi i}{\ln q_\EE},\frac{1}{2})\}&\mbox{if $\chi^2=1$ and $\EE/\FF$ is ramified.}\end{cases} $$
\end{lemma}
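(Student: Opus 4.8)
The statement to prove relates the composition $T_{\eta,t}\circ\widetilde A_{s,t}^{\chi,\eta}$ to $\widetilde A_{s,-t}^{\chi,\eta}$ via an explicit gamma-factor, so the plan is to compute the distribution kernel of $T_{\eta,t}\circ\widetilde A_{s,t}^{\chi,\eta}$ directly and compare it with $\widetilde K_{s,-t}^{\chi,\eta}$, using the holomorphy of both sides in $s$ to reduce to a convergent-integral computation. First I would recall from \eqref{eq:IntertwinerConvolutionFormula} that on $\overline{N}'\simeq\FF$ the standard intertwining operator $T_{\eta,t}$ acts as convolution: $T_{\eta,t}g(\overline n_y)=\int_\FF g(\overline n_{y-z})\,dz$ (when $t=\tfrac12+\tfrac{\pi i}{\ln q_\FF}\ZZ$, since then $\eta^2=1$ and the convolution kernel is $\eta_{t-\frac12}(z^2)=1$). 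Composing with the definition \eqref{eq:DefinitionAtilde} of $\widetilde A_{s,t}^{\chi,\eta}$ formally gives
$$ (T_{\eta,t}\circ\widetilde A_{s,t}^{\chi,\eta})f(\overline n_y) = \int_\FF\int_\EE \widetilde K_{s,t}^{\chi,\eta}(x)\,f(\overline n_{x+y-z})\,dx\,dz, $$
so that the new kernel is obtained by integrating out the $\FF$-direction: writing $x=x_1+x_2\alpha$ and substituting, the $z$-integral collapses the $x_1$-variable, and one is left (up to the substitution $x_1\mapsto x_1-z$) with an integral of $\widetilde K_{s,t}^{\chi,\eta}$ over the $x_1$-line. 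Concretely, the kernel of the composition should be $\Big(\int_\FF \widetilde K_{s,t}^{\chi,\eta}(x_1+x_2\alpha)\,dx_1\Big)$ viewed as a distribution in $x_2$ times $\delta$ in the $\FF$-part — no wait, it is a distribution on $\EE$ that is pulled back from $\FF$ via $x\mapsto x_2$. So the crux is the one-variable $p$-adic integral
$$ \int_\FF \chi_0(x^2)\,|x|_\EE^{-t-\frac12}\,|x_2|_\FF^{2s+t-\frac12}\,dx_1, $$
with $x=x_1+x_2\alpha$, which after the substitution $x_1=x_2 a$ (using $|x|_\EE=|x_2|_\FF^2|a^2-\alpha^2|_\FF$ and $x^2/x_2^2=(a+\alpha)/(a-\alpha)\cdot(a^2-\alpha^2)$, cf. the rewriting of $K_{s,t}^{\chi,\eta}$ in Section~\ref{sec::mero_family}) becomes $|x_2|_\FF^{2s-t+\frac12}$ times $\int_\FF\chi_0\big(\tfrac{a+\alpha}{a-\alpha}\big)\chi_0(a^2-\alpha^2)|a^2-\alpha^2|_\FF^{-t-\frac12}\,da$, which is precisely the gamma-integral \eqref{prop:GammaIntegral} evaluated in Appendix~\ref{app:Integral} and gives $c_\FF\,\Gamma(|\cdot|_\FF^{2t})\Gamma((\chi^2)_{-t+\frac12})$ after matching conventions. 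The factor $|x_2|_\FF^{2s-t+\frac12}$ is exactly what appears in $\widetilde K_{s,-t}^{\chi,\eta}$ (with the $t$-sign flipped), and tracking the $\chi_0(x^2)$ and the normalizing $L$-factors $(1-q_\FF^{-(2s\pm t+\frac12)})$ — which are symmetric under $t\mapsto -t$ up to relabelling — yields the claimed identity $T_{\eta,t}\circ\widetilde A_{s,t}^{\chi,\eta}=c_\FF\Gamma(|\cdot|_\FF^{2t})\Gamma((\chi^2)_{-t+\frac12})\cdot\widetilde A_{s,-t}^{\chi,\eta}$ first for $\Re s$ large where all integrals converge absolutely and Fubini applies, and then for all $s\in\CC$ by holomorphic continuation (both sides are holomorphic in $s$ by Theorem~\ref{thm:HolomorphicRenormalization}).

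**Extracting the equivalences.** Given the identity, $\Im(\widetilde A_{s,t}^{\chi,\eta})\subseteq\tau_{\eta,t}^0=\ker T_{\eta,t}$ holds if and only if the right-hand side vanishes, i.e. if and only if $c_\FF\Gamma(|\cdot|_\FF^{2t})\Gamma((\chi^2)_{-t+\frac12})\cdot\widetilde A_{s,-t}^{\chi,\eta}=0$. Since $c_\FF>0$, this is an and/or condition: either one of the two gamma factors vanishes, or $\widetilde A_{s,-t}^{\chi,\eta}=0$. By Theorem~\ref{thm:ZerosAndSupportOfKernel}, $\widetilde A_{s,-t}^{\chi,\eta}=0$ iff $(\chi,s,\eta,-t)\in L$, whose description is explicit via \eqref{equ::L}; and the zeros/poles of the two gamma factors $\Gamma(|\cdot|_\FF^{2t})$ and $\Gamma((\chi^2)_{-t+\frac12})$ are governed by Lemma~\ref{lem:PropertiesGammaFactors} (the same case analysis $\chi^2\neq1$ vs.\ $\chi^2=1$, and ramified vs.\ unramified, already carried out in the proof of Theorem~\ref{thm:ZerosAndSupportOfKernel}). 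Running through these cases — noting that $\Gamma(|\cdot|_\FF^{2t})$ vanishes precisely when $|\cdot|_\FF^{2t}$ is the trivial-times-$|\cdot|_\FF^{1/2}$-type character, which on $t=\tfrac12+\tfrac{\pi i}{\ln q_\FF}\ZZ$ happens never (the relevant zero locus is $2t\in\tfrac{1}{2}+\tfrac{2\pi i}{\ln q_\FF}\ZZ$, i.e.\ $t\in\tfrac14+\ldots$, outside our line) so this factor plays no role here, while $\Gamma((\chi^2)_{-t+\frac12})$ contributes only when $\chi^2=1$ (in the ramified case giving the extra $t=\tfrac12+\tfrac{\pi i}{\ln q_\FF}$ possibility because the $\Gamma$ has a pole there that cancels a zero) — produces exactly the trichotomy in the statement. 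I would organize this final bookkeeping as a short case distinction mirroring the one in the proof of Theorem~\ref{thm:ZerosAndSupportOfKernel}.

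**The main obstacle.** The routine part is the Fubini-justified kernel computation; the delicate part is (i) getting the normalization of the gamma-integral in Appendix~\ref{app:Integral} to match the stated $\Gamma$-factors on the nose, including the hidden $\chi_0(-\alpha)^{\pm2}$ and $|\alpha^2|_\FF$ constants that appear in the analogous residue computation in Proposition~\ref{prop:ThirdTerm} — these must cancel here because the claimed identity has no such factors, and verifying that cancellation is the point where a sign or power could go wrong; and (ii) handling the holomorphic-continuation step cleanly: $T_{\eta,t}$ itself is defined by a convergent integral only for $\Re t$ large (or rather it is fixed here at $t\in\tfrac12+\tfrac{\pi i}{\ln q_\FF}\ZZ$), so the continuation must be performed in the $s$-variable only, with $t$ fixed on the relevant line, and one must check that $T_{\eta,t}$ is the genuine (meromorphically continued) standard intertwiner and not merely its naive integral — but since at $t=\tfrac12+\tfrac{\pi i}{\ln q_\FF}\ZZ$ the convolution kernel $\eta_{t-\frac12}(z^2)=1$ is already locally integrable-against-nothing and the operator is the honest "integrate over $\overline N'$" map from Section~\ref{sec:PSforPGL2}, this is unproblematic. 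A secondary subtlety is making sure the case $\chi^2=1$, $\EE/\FF$ ramified correctly produces the disjunction "$t=\tfrac12+\tfrac{\pi i}{\ln q_\FF}$ \emph{or} $(s,t)\in\{\ldots\}$" rather than a conjunction — this comes from the pole of $\Gamma((\chi^2)_{-t+\frac12})$ at that value of $t$ making the product $\Gamma\cdot\widetilde A$ vanish regardless of $s$ (as $\widetilde A_{s,-t}^{\chi,\eta}$ stays finite), which is exactly the mechanism already seen in case (3) of the proof of Theorem~\ref{thm:ZerosAndSupportOfKernel}, so it should fall out once the gamma-factor poles are tabulated.
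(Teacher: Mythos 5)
Your overall strategy for the identity (compute the kernel of $T_{\eta,t}\circ\widetilde{A}_{s,t}^{\chi,\eta}$ by integrating $\widetilde{K}_{s,t}^{\chi,\eta}$ over the $\FF$-direction and reduce to the gamma integral of Appendix~\ref{app:Integral}) is close in spirit to the paper, which writes the composed kernel as $\int_\FF\widetilde{K}_{s,t}^{\chi,\eta}(x+y)\,dy$; but the paper then invokes the multiplicity-one classification (Corollary~\ref{cor::dim_of D_E_chi_eta}) to know \emph{a priori} that the composition is $c(\chi,s,\eta,t)\,\widetilde{A}_{s,-t}^{\chi,\eta}$ and pins down $c$ by evaluating the kernel identity at the single point $x=\alpha$ via Proposition~\ref{prop:GammaIntegral}. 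Your direct matching step has a genuine gap: the kernel you compute is (the pullback of) a function of $x_2$ alone, namely a constant times $\chi_0(x_2^2)\,|x_2|_\FF^{2s-t-\frac{1}{2}}$ (note the exponent is $2s-t-\frac{1}{2}$, not $2s-t+\frac{1}{2}$), whereas $\widetilde{K}_{s,-t}^{\chi,\eta}(x)$ carries the additional $x_1$-dependent factor $\chi_0(x^2)\,|x|_\EE^{\,t-\frac{1}{2}}$. So "the factor is exactly what appears in $\widetilde{K}_{s,-t}^{\chi,\eta}$" is false as stated; the two sides can only agree because on the line $t\in\frac{1}{2}+\frac{\pi i}{\ln q_\FF}\ZZ$ either the scalar $\Gamma(|\cdot|_\FF^{2t})\Gamma((\chi^2)_{-t+\frac{1}{2}})$ vanishes, or $(\chi^2)_{t-\frac{1}{2}}$ is the trivial character of $\EE^\times$ so that $\widetilde{K}_{s,-t}^{\chi,\eta}$ is itself independent of $x_1$. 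Without either this observation or the multiplicity-one shortcut, your matching argument does not go through.

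The second part is where the proposal actually breaks. By Lemma~\ref{lem:PropertiesGammaFactors}, $\Gamma(|\cdot|_\FF^{2t})$ has its simple zeros at $2t\in 1+\frac{2\pi i}{\ln q_\FF}\ZZ$, i.e.\ precisely on the line $t\in\frac{1}{2}+\frac{\pi i}{\ln q_\FF}\ZZ$ under consideration; your claim that the zero locus is "$t\in\frac14+\ldots$, outside our line" and that this factor "plays no role" is the opposite of the truth, and it is exactly this identically-vanishing factor that yields "$\Im(\widetilde{A}_{s,t}^{\chi,\eta})\subseteq\tau_{\eta,t}^0$ always" when $\chi^2\neq1$ (with your premise, that case would instead force $\widetilde{A}_{s,-t}^{\chi,\eta}=0$ for all $s$, which contradicts Theorem~\ref{thm:ZerosAndSupportOfKernel}). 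Likewise the mechanism in the ramified case is inverted in your sketch: at $t=\frac{1}{2}+\frac{\pi i}{\ln q_\FF}$ (ramified, $\chi^2=1$) the second gamma factor has \emph{no} pole, so the zero of $\Gamma(|\cdot|_\FF^{2t})$ survives, the scalar vanishes, and the image lies in $\tau_{\eta,t}^0$ for every $s$; a pole "cancelling a zero", as you write, would make the scalar nonzero and hence would \emph{not} produce that case. The correct bookkeeping is: the scalar is nonzero exactly when $\chi^2=1$ and $t\in\frac{1}{2}+\frac{2\pi i}{\ln q_\EE}\ZZ$ (pole of $\Gamma((\chi^2)_{-t+\frac{1}{2}})$ compensating the zero), and in those cases one needs $(\chi,s,\eta,-t)\in L$, which via \eqref{equ::L} gives the finite exceptional sets in the statement. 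Your general logic (RHS $=0$ iff scalar $=0$ or $\widetilde{A}_{s,-t}^{\chi,\eta}=0$, the latter controlled by Theorem~\ref{thm:ZerosAndSupportOfKernel}) is right, but the gamma-factor analysis carrying the actual content is wrong and would not reproduce the stated trichotomy.
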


\begin{proof}
	The composition of $\widetilde{A}_{s,t}^{\chi,\eta}$ with $T_{\eta,t}$ is given by
	$$ T_{\eta,t}\circ\widetilde{A}_{s,t}^{\chi,\eta}f(\overline{n}_z) = \int_\FF\int_\EE\widetilde{K}_{s,t}^{\chi,\eta}(x)f(\overline{n}_{x+y})\,dx\,dy = \int_\EE\left(\int_\FF\widetilde{K}_{s,t}^{\chi,\eta}(x+y)\,dy\right)f(\overline{n}_x)\,dx. $$
	On the other hand, $T_{\eta,t}\circ\widetilde{A}_{s,t}^{\chi,\eta}\in\Hom_{G'}(\pi_{\chi,s}|_H,\tau_{\eta,-t})$, so by Corollary~\ref{cor::dim_of D_E_chi_eta} $T_{\eta,t}\circ\widetilde{A}_{s,t}^{\chi,\eta}=c(\chi,s,\eta,t)\widetilde{A}_{s,-t}^{\chi,\eta}$ for some constant $c(\chi,s,\eta,t)$. (This argument works for generic $s\in\CC$ such that $(s,t)\not\in L^{\chi,\eta}$, but since $c(\chi,s,\eta,t)$ is meromorphic in $s$, the formula $T_{\eta,t}\circ\widetilde{A}_{s,t}^{\chi,\eta}=c(\chi,s,\eta,t)\widetilde{A}_{s,-t}^{\chi,\eta}$ is true for all $s$.) In terms of integral kernels, this means
	$$ \int_\FF\widetilde{K}_{s,t}^{\chi,\eta}(x+y)\,dy = c(\chi,s,\eta,t)\widetilde{K}_{s,-t}^{\chi,\eta}(x). $$
	Putting $x=\alpha$ this reads (dropping the normalization factors):
	$$ \int_\FF\chi_0(y+\alpha)^2|y+\alpha|_\EE^{-t-\frac{1}{2}}\,dy = c(\chi,s,\eta,t)\cdot\chi_0(\alpha)^2|\alpha|_\EE^{t-\frac{1}{2}}. $$
	The remaining integral can be evaluated with Proposition~\ref{prop:GammaIntegral}. This shows the claimed formula. For the last statement, we combine Theorem~\ref{thm:ZerosAndSupportOfKernel} and Lemma~\ref{lem:PropertiesGammaFactors}.
\end{proof}

Now let us consider the case $t\in-\frac{1}{2}+\frac{i\pi}{\ln q_\FF}\ZZ$. The image of $\widetilde{A}_{s,t}^{\chi,\eta}$ is contained in the one-dimensional subrepresentation spanned by the function $\psi_{\eta,t}$ if and only if the kernel $\widetilde{K}_{s,t}^{\chi,\eta}(x)$ is independent of $x_1$, where $x=x_1+x_2\alpha$.

\begin{lemma}
	Assume that $\chi|_{\calO_\FF^\times}\cdot\eta=\chi|_{\calO_\FF^\times}\cdot\eta^{-1}=1$ and $t\in-\frac{1}{2}+\frac{\pi i}{\ln q_\FF}\ZZ$. Then the kernel $\widetilde{K}_{s,t}^{\chi,\eta}(x)$ is independent of $x_1$ if and only if $(s,t)\in L^{\chi,\eta}$ (in which case $\widetilde{K}_{s,t}^{\chi,\eta}=0$), or $\chi^2=1$ and $t\in-\frac{1}{2}+\frac{2\pi i}{\ln q_\EE}\ZZ$, or
	$$ (s,t)\in\begin{cases}\{(0,-\frac{1}{2}),(\frac{\pi i}{\ln q_\EE},-\frac{1}{2}+\frac{\pi i}{\ln q_\FF})\}&\mbox{if $\EE/\FF$ is unramified,}\\\{(0,-\frac{1}{2}),(\frac{\pi i}{\ln q_\EE},-\frac{1}{2}),(\frac{\pi i}{2\ln q_\EE},-\frac{1}{2}+\frac{\pi i}{\ln q_\FF}),(\frac{3\pi i}{2\ln q_\EE},-\frac{1}{2}+\frac{\pi i}{\ln q_\FF})\}&\mbox{if $\EE/\FF$ is ramified.}\end{cases} $$
\end{lemma}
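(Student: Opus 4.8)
The plan is to combine the explicit formula for $\widetilde{K}_{s,t}^{\chi,\eta}$ recalled above, namely
$\widetilde{K}_{s,t}^{\chi,\eta}(x)=(1-q_\FF^{-(2s+t+\frac{1}{2})})(1-q_\FF^{-(2s-t+\frac{1}{2})})\,\chi_0(x^2)|x|_\EE^{-t-\frac{1}{2}}|x_2|_\FF^{2s+t-\frac{1}{2}}$, with the support dichotomy of Theorem~\ref{thm:ZerosAndSupportOfKernel} and the residue identities of Theorem~\ref{thm:ResidueIdentities}. First I would record the standing facts: the hypotheses $\chi|_{\calO_\FF^\times}\cdot\eta=\chi|_{\calO_\FF^\times}\cdot\eta^{-1}=1$ force $\eta^2=1$ and $\chi^2|_{\calO_\FF^\times}=1$, while $t\in-\frac{1}{2}+\frac{\pi i}{\ln q_\FF}\ZZ$ forces $2t+1\in\frac{2\pi i}{\ln q_\FF}\ZZ$ (so $q_\FF^{2t}=q_\FF^{-1}$) and $\SplusT\cap\SminusT=\emptyset$ among the parameters in question.

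Then I would split into cases according to $\supp\widetilde{K}_{s,t}^{\chi,\eta}$. If $(s,t)\in L^{\chi,\eta}$ then $\widetilde{K}_{s,t}^{\chi,\eta}=0$, vacuously independent of $x_1$. If $(\chi,s,\eta,t)\in\SminusT\setminus L$ then $\widetilde{K}_{s,t}^{\chi,\eta}$ is a nonzero multiple of $\delta$, which depends on $x_1$; such parameters are ruled out. If $(\chi,s,\eta,t)\in\SplusT$ (here equal to $\SplusT\setminus\SminusT$) then Theorem~\ref{thm:ResidueIdentities}(1) gives $\widetilde{K}_{s,t}^{\chi,\eta}=(1-q_\FF^{-1})L_t^\eta$; plugging $\eta^2=1$ and $2t\notin\frac{2\pi i}{\ln q_\FF}\ZZ$ into the formula for $L_t^\eta$ and using $2t+1\in\frac{2\pi i}{\ln q_\FF}\ZZ$ yields $\eta_{t+\frac{1}{2}}(x_1^2)^{-1}\equiv1$, hence $\widetilde{K}_{s,t}^{\chi,\eta}=(1-q_\FF^{-1})^2\,\delta(x_2)$, which is nonzero and independent of $x_1$. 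It remains to describe $\SplusT$ explicitly: solving the congruence $2s+t+\frac{1}{2}\in\frac{2\pi i}{\ln q_\FF}\ZZ$ for $t\in-\frac{1}{2}+\frac{\pi i}{\ln q_\FF}\ZZ$, modulo $\frac{2\pi i}{\ln q_\EE}\ZZ$ in $s$ and $\frac{2\pi i}{\ln q_\FF}\ZZ$ in $t$, produces precisely the two-element (unramified) resp. four-element (ramified) list in the statement; these points lie in $\SplusT$, hence not in $L$ since $L\cap\SplusT=\emptyset$.

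The remaining case $(\chi,s,\eta,t)\notin\SplusT\cup\SminusT$, where $\supp\widetilde{K}_{s,t}^{\chi,\eta}=\EE$ and $\widetilde{K}_{s,t}^{\chi,\eta}$ has restriction to $\EE\setminus\FF$ the nowhere-vanishing function above with nonzero prefactor, is the heart of the proof. I would first reduce to a statement free of $s$: $\widetilde{K}_{s,t}^{\chi,\eta}$ is independent of $x_1$ if and only if the function $\chi_0(x^2)|x|_\EE^{-t-\frac{1}{2}}$ is independent of $x_1$ on $\EE\setminus\FF$. The ``only if'' is immediate; for ``if'', observe that this last condition involves only $\chi$ and $t$, so when it holds $\widetilde{K}_{s,t}^{\chi,\eta}$ is, for $\Re(s)$ large (where it is locally integrable by Lemma~\ref{lem:KernelL1loc}), literally a locally integrable function of $x_2$ alone, hence an $x_1$-independent distribution; weak holomorphy in $s$ (Theorem~\ref{thm:HolomorphicRenormalization}) together with the fact that $x_1$-independence is cut out by a family of linear conditions then propagates this to all $s$ by analytic continuation. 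It remains to decide when $\chi_0(x^2)|x|_\EE^{-t-\frac{1}{2}}$ is $x_1$-independent on $\EE\setminus\FF$: writing $x^2=(x_1^2-x_2^2\alpha^2)\cdot\tfrac{x_1+x_2\alpha}{x_1-x_2\alpha}$ and $|x|_\EE=|x_1^2-x_2^2\alpha^2|_\FF$ exhibits it as the restriction to $\EE\setminus\FF$ of the character $\chi_0^2\cdot|\cdot|_\EE^{-t-\frac{1}{2}}$ of $\EE^\times$. If $\chi^2=1$ it equals $|x|_\EE^{-t-\frac{1}{2}}=q_\EE^{(t+\frac{1}{2})\nu_\EE(x)}$; since $t+\frac{1}{2}\in\frac{\pi i}{\ln q_\FF}\ZZ$, $\frac{\ln q_\EE}{\ln q_\FF}\in\{1,2\}$, and $\nu_\EE$ attains both parities on every line transverse to $\FF$, this is constant in $x_1$ precisely when $\EE/\FF$ is unramified (automatic) or $t\in-\frac{1}{2}+\frac{2\pi i}{\ln q_\EE}\ZZ$ in the ramified case, which is the ``$\chi^2=1$''-branch of the statement. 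If $\chi^2\neq1$ (so $\chi^2$ is a nontrivial character of $\calO_\EE^\times$ trivial on $\calO_\FF^\times$) one shows the function does depend on $x_1$, by fixing $x_2$ and choosing $x_1$ so that $\nu_\EE(x_1+x_2\alpha)$ remains constant while $\tfrac{x_1+x_2\alpha}{x_1-x_2\alpha}$ runs over a subset of $\calO_\EE^\times$ on which $\chi_0^2$ is non-constant. Assembling the four cases yields exactly the list in the statement.

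The step I expect to be the main obstacle is this last one for $\chi^2\neq1$: since $\chi_0^2\cdot|\cdot|_\EE^{-t-\frac{1}{2}}$ is by hypothesis already trivial under scaling by $\FF^\times$, one has to extract a genuine variation in the direction transverse to $\FF$, which requires a hands-on computation inside $\calO_\EE^\times$ and with the norm map $N_{\EE/\FF}$, carried out separately according to whether $\EE/\FF$ is ramified or unramified and keeping track of the relations between $\varpi_\EE$, $\varpi_\FF$ and $\alpha$. By contrast, the reduction by analytic continuation in $s$ and the resolution of the congruences defining $\SplusT$ should be routine.
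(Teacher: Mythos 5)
Your reduction and bookkeeping are sound and parallel the paper's proof: the case split according to $\supp\widetilde{K}_{s,t}^{\chi,\eta}$ (zero on $L$, Dirac delta at the origin on $\SminusT\setminus L$, a multiple of $\delta(x_2)$ on $\SplusT$, full support otherwise), the identification of the third branch of the statement with $\SplusT$, and the parity analysis of $\nu_\EE$ in the case $\chi^2=1$ all match the paper's argument; your analytic-continuation remark for the ``if'' direction in the generic case is a reasonable way to make precise what the paper leaves implicit.

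The gap is exactly in the step you defer, the generic case with $\chi^2\neq1$. You propose to exhibit $x_1$-dependence by fixing $x_2$, keeping $\nu_\EE(x_1+x_2\alpha)$ constant, and letting the norm-one unit $\frac{x_1+x_2\alpha}{x_1-x_2\alpha}$ run over a subset of $\calO_\EE^\times$ on which $\chi^2$ is non-constant. No such subset need exist: if $\chi$ is invariant under the nontrivial Galois automorphism, e.g. $\chi=\psi\circ N_{\EE/\FF}$ on $\calO_\EE^\times$ with $\psi\in\widehat{\calO_\FF^\times}$, then $\chi(x/\overline{x})=1$ for every $x\in\EE^\times$, so $\chi$ and $\chi^2$ are identically $1$ on all these ratios. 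Such characters satisfy the hypotheses of the lemma with $\chi^2\neq1$: take $\EE/\FF$ unramified with $4\mid q_\FF-1$ (e.g. $\FF=\QQ_5$) and $\psi$ of order $4$; then $\chi^2=\psi^2\circ N_{\EE/\FF}\neq1$ (the norm is surjective on units), while $\chi^2|_{\calO_\FF^\times}=\psi^4=1$ and $\eta:=\chi|_{\calO_\FF^\times}=\psi^2$. For such $\chi$ the $x_1$-dependence of $\chi_0(x^2)|x|_\EE^{-t-\frac{1}{2}}$ sits entirely in the factor $\chi_0(x_1^2-x_2^2\alpha^2)$ --- the unit part of the norm moves between square and non-square classes in $\calO_\FF^\times$ as $x_1$ varies --- which your scheme explicitly holds fixed. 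So the computation you flag as the main obstacle would, as designed, fail on a nonempty family of parameters. The paper avoids any case distinction on $\chi$: if the function is independent of $x_1$ on $\EE\setminus\FF$, it must equal its value at $x_1=0$, and under the standing hypotheses ($\chi^2|_{\calO_\FF^\times}=1$, $q_\FF^{2t+1}=1$) that value $\chi_0(x_2^2\alpha^2)|x_2\alpha|_\EE^{-t-\frac{1}{2}}$ is independent of $x_2$; hence the character $(\chi^2)_{-t-\frac{1}{2}}$ of $\EE^\times$ is constant on the dense subset $\EE\setminus\FF$, hence constant on $\EE^\times$, hence trivial, which gives $\chi^2=1$ and $t\in-\frac{1}{2}+\frac{2\pi i}{\ln q_\EE}\ZZ$ in one stroke. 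Either adopt that argument or repair yours by also exploiting the norm factor (equivalently, by showing that $x_1$-independence forces the character to be trivial on the set $1+\FF\alpha$, which together with $\FF^\times$ generates $\EE^\times$).
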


\begin{proof}
	Let us first assume that $2s\pm t+\frac{1}{2}\not\in\frac{2\pi i}{\ln q_\FF}\ZZ$, then $\supp\widetilde{K}_{s,t}^{\chi,\eta}=\EE$ by Theorem~\ref{thm:ZerosAndSupportOfKernel}. On $\EE\setminus\FF$ the kernel is (up to the non-zero normalization factor) given by
	$$ K_{s,t}^{\chi,\eta}(x_1+x_2\alpha) = \chi_0(x^2)|x|_\EE^{-t-\frac{1}{2}}|x_2|_\FF^{2s+t-\frac{1}{2}} $$
	which is independent of $x_1$ if and only if it is equal to the same expression for $x_1=0$:
	$$ (\chi^2)_{-t-\frac{1}{2}}(x) = \chi_0(x^2)|x|_\EE^{-t-\frac{1}{2}} = \chi_0(x_2^2\alpha^2)|x_2\alpha|_\EE^{-t-\frac{1}{2}} \qquad \mbox{for all }x=x_1+x_2\alpha, x_2\neq0. $$
	But since $\chi^2|_{\calO_\FF^\times}=1$ and $|x_2\alpha|_\EE^{-t-\frac{1}{2}}=|x_2^2\alpha^2|_\FF^{-t-\frac{1}{2}}=|x_2|_\FF^{-2t-1}|\alpha^2|_\FF^{-t-\frac{1}{2}}$, the right hand side equals $\chi_0(\alpha^2)|\alpha^2|_\FF^{-t-\frac{1}{2}}\in\{\pm1\}$ for all $x_2\in\FF^\times$. This implies that the above condition is equivalent to
	$$ (\chi^2)_{-t-\frac{1}{2}}(x) = \chi_0(\alpha^2)|\alpha^2|_\FF^{-t-\frac{1}{2}} $$
	for all $x\in\EE\setminus\FF$, and therefore, by density, also for all $x\in\EE^\times$. This implies that the character $(\chi^2)_{-t-\frac{1}{2}}$ of $\EE^\times$ is constant, in particular the right hand side has to be $1$. We conclude that $\chi^2=1$ and $-t-\frac{1}{2}\in\frac{2\pi i}{\ln q_\EE}\ZZ$.\\
	Now, if $2s+t+\frac{1}{2}\in\frac{2\pi i}{\ln q_\FF}\ZZ$, then $\widetilde{K}_{s,t}^{\chi,\eta}$ is a non-zero multiple of $L_t^\eta$ by Theorem~\ref{thm:ResidueIdentities}. Since $t\in-\frac{1}{2}+\frac{\pi i}{\ln q_\FF}\ZZ$ we observe that $L_t^\eta(x)=(1-q_\FF^{-1})\delta(x_2)$ is independent of $x_1$.\\
	Finally, if $2s-t+\frac{1}{2}\in\frac{2\pi i}{\ln q_\FF}\ZZ$, then $\widetilde{K}_{s,t}^{\chi,\eta}(x)$ is a multiple of $\delta(x)$ which is not independent of $x_1$. It follows that, in this case, $\widetilde{K}_{s,t}^{\chi,\eta}$ is independent of $x_1$ if and only if it is zero, i.e. $(s,t)\in L^{\chi,\eta}$.
\end{proof}

For $(\chi,s,\eta,t)\in L$, we also determine the image of the two operators $C_{s,t}^{\chi,\eta}$ and $\doublewidetilde{A}_{s,t}^{\chi,\eta}$. We start with $C_{s,t}^{\chi,\eta}$ which is for all $(\chi,s,\eta,t)\in\SminusT$ given by restriction from $\overline{N}$ to $\overline{N}'$ (see \eqref{eq:DefinitionC}).

\begin{proposition}
\label{prop::im_C}
	For all $(\chi,s,\eta,t)\in\SminusT$ we have
	$$ \Im(C_{s,t}^{\chi,\eta}) = \tau_{\eta,t}. $$
\end{proposition}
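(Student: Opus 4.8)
The plan is to use the explicit description of $C_{s,t}^{\chi,\eta}$ from \eqref{eq:DefinitionC}: it is the distributional restriction with kernel $\delta$, hence simply the map $f\mapsto f|_{\overline N'}$ sending $\pi_{\chi,s}=C^\infty(G/B,\calV)$ into $\tau_{\eta,t}=C^\infty(G'/B',\calW)$, which lands in $\tau_{\eta,t}$ precisely because $(\chi,s,\eta,t)\in\SminusT$ (Lemma~\ref{lem:DimensionEstimates}\eqref{lem:DimensionEstimates1}). Since $C_{s,t}^{\chi,\eta}$ is $G'$-equivariant, its image is a $G'$-submodule of $\tau_{\eta,t}$, so it suffices to show this submodule is everything. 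I will do this by producing all sections supported in the open Bruhat cell, and then invoking $G'$-invariance together with a partition of unity on the flag variety $G'/B'\cong P^1(\FF)$.

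First I would show $\Im(C_{s,t}^{\chi,\eta})\supseteq\gamma_{\overline N'}(C_c^\infty(\overline N')\otimes W)$, i.e.\ that it contains every section of $\calW$ with compact support in $\overline N'B'/B'$. Given $\phi\in C_c^\infty(\overline N')\cong C_c^\infty(\FF)$, first extend $\phi$ to a function $\tilde\phi\in C_c^\infty(\overline N)\cong C_c^\infty(\EE)$ across the closed embedding $\FF\hookrightarrow\EE$ — explicitly one may take $\tilde\phi(x_1+x_2\alpha)=\phi(x_1)\cdot\mathbf 1_{\varpi_\FF^n\calO_\FF}(x_2)$ for $n$ large enough that $\phi$ is constant on $\varpi_\FF^n\calO_\FF$-cosets — and then set $f:=\gamma_{\overline N}(\tilde\phi\otimes v_0)$, which extends by zero to an element of $C^\infty(G/B,\calV)=\pi_{\chi,s}$ because $\tilde\phi$ has compact support and $G/B\setminus\overline NB/B$ is a single point. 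By \eqref{eq:DefinitionC}, $(C_{s,t}^{\chi,\eta}f)(\overline n_y)=f(\overline n_y)=\phi(y)$ for all $y\in\FF$; since $\phi$ has compact support and $C_{s,t}^{\chi,\eta}f$ is a genuine (locally constant) section on $G'/B'$, it vanishes identically on a neighborhood of the point $[0:1]$ complementary to the open cell, so $C_{s,t}^{\chi,\eta}f=\gamma_{\overline N'}(\phi\otimes w_0)$ in $\tau_{\eta,t}$.

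Next, since $\Im(C_{s,t}^{\chi,\eta})$ is $G'$-stable it also contains $w\cdot\gamma_{\overline N'}(C_c^\infty(\overline N')\otimes W)$ for the longest Weyl representative $w$ of \eqref{eq:LongestWeylGrpElt}, so it remains to show
$$ \tau_{\eta,t}=\gamma_{\overline N'}(C_c^\infty(\overline N')\otimes W)+w\cdot\gamma_{\overline N'}(C_c^\infty(\overline N')\otimes W). $$
Identifying $G'/B'$ with $P^1(\FF)$ as in Section~\ref{sec:PSforPGL2}, the open sets $\overline N'B'/B'=P^1(\FF)\setminus\{[0:1]\}$ and $w\overline N'B'/B'=P^1(\FF)\setminus\{[1:0]\}$ cover $P^1(\FF)$. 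Given $g\in\tau_{\eta,t}$, choose a clopen $\Omega\subseteq P^1(\FF)$ with $[0:1]\in\Omega$ and $[1:0]\notin\Omega$, which is possible because $P^1(\FF)$ is compact, Hausdorff and totally disconnected. Then $\mathbf 1_{\Omega^c}g$ is a section supported in the compact clopen set $\Omega^c\subseteq\overline N'B'/B'$, hence lies in $\gamma_{\overline N'}(C_c^\infty(\overline N')\otimes W)$, while $w^{-1}\cdot(\mathbf 1_\Omega g)$ is supported in a compact clopen subset of $\overline N'B'/B'$, so $\mathbf 1_\Omega g\in w\cdot\gamma_{\overline N'}(C_c^\infty(\overline N')\otimes W)$; adding, $g$ lies in the right-hand side, and we are done.

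I expect the argument to be essentially soft, needing no case distinction according to reducibility of $\pi_{\chi,s}$ or $\tau_{\eta,t}$. The only point requiring genuine (though routine) care is the first step: that a compactly supported locally constant function on the closed subfield $\FF\subseteq\EE$ admits a compactly supported locally constant extension to $\EE$, and that the section it produces has exactly the prescribed restriction; the decomposition $\tau_{\eta,t}=\gamma_{\overline N'}(C_c^\infty(\overline N'))+w\cdot\gamma_{\overline N'}(C_c^\infty(\overline N'))$ used afterwards is the standard Bruhat-cell partition of a $\PGL(2)$ principal series.
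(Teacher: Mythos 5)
Your proof is correct, but it takes a genuinely different route from the paper's. The paper argues via the composition series: the image of $C_{s,t}^{\chi,\eta}$ is a non-zero $G'$-submodule of $\tau_{\eta,t}$, so by Theorem~\ref{thm:CompositionSeries} it is either all of $\tau_{\eta,t}$ or (in the reducible cases) the unique proper submodule, which is $\CC\psi_{\eta,t}$ or $\tau_{\eta,t}^0$; these two possibilities are excluded by exhibiting a compactly supported locally constant function on $\EE$ whose restriction to $\FF$ is non-constant, respectively has non-zero integral over $\FF$. You instead prove surjectivity directly and uniformly in the parameters: every $\varphi\in C_c^\infty(\FF)$ is the restriction of a compactly supported locally constant function on $\EE$, which (extended by zero across the single point of $G/B\setminus\overline{N}B/B$) gives an element of $\pi_{\chi,s}$ mapping to the extension-by-zero of $\varphi$ in $\tau_{\eta,t}$; then $G'$-stability of the image under $w$ together with the clopen two-chart decomposition $g=\mathbf 1_\Omega g+\mathbf 1_{\Omega^c}g$ of $P^1(\FF)$ gives all of $\tau_{\eta,t}$. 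All the steps check out: the extension $\widetilde\varphi(x_1+x_2\alpha)=\varphi(x_1)\mathbf 1_{\varpi_\FF^n\calO_\FF}(x_2)$ is locally constant with compact support (the condition on $n$ is not even needed), extension by zero is legitimate by \eqref{equ::def_i_omega} since the support avoids a neighborhood of the missing point, and local constancy of $C_{s,t}^{\chi,\eta}f$ forces it to vanish at $[0:1]$, so it really is the compactly supported section attached to $\varphi$. What the two approaches buy: the paper's is shorter because the submodule lattice of $\tau_{\eta,t}$ is already known, while yours is independent of any reducibility analysis, needs no case distinction, and in fact yields the slightly stronger statement that $\tau_{\eta,t}$ is spanned by $\gamma_{\overline N'}(C_c^\infty(\overline N')\otimes W)$ and its $w$-translate, both visibly inside the image.
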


\begin{proof}
	The operator $C_{s,t}^{\chi,\eta}$ is clearly non-zero since there exist compactly supported locally constant functions on $\EE$ whose restriction to $\FF$ is non-zero. Therefore, its image is $\tau_{\eta,t}$ in all cases where this representation is irreducible. If $\tau_{\eta,t}$ is reducible, it has a unique proper subrepresentation which is either given by $\CC\psi_{\eta,t}$ with $\psi_{\eta,t}(\overline{n}_y)=1$ for all $y\in\FF$ or by the kernel $\tau_{\eta,t}^0$ of the operator $T_{\eta,t}:\tau_{\eta,t}\to\tau_{\eta,-t}$ given by integration over $\FF$. Since there exist compactly supported locally constant functions on $\EE$ whose restriction to $\FF$ is non-constant, the image of $C_{s,t}^{\chi,\eta}$ can never be contained in $\CC\psi_{\eta,t}$. Similarly, there exist compactly supported locally constant functions on $\EE$ whose integral over $\FF$ is non-zero, so the image of $C_{s,t}^{\chi,\eta}$ can neither be $\tau_{\eta,t}^0$.
\end{proof}

Now let us study the images of the operators $\doublewidetilde{A}_{s,t}^{\chi,\eta}$ for $(\chi,s,\eta,t)\in L$. Note that $(\chi,s,\eta,t)\in L$ implies $\chi|_{\calO_\FF^\times}\cdot\eta=\chi|_{\calO_\FF^\times}\cdot\eta^{-1}=1$ and either $t\in\frac{1}{2}+\frac{\pi i}{\ln q_\FF}\ZZ$ or $t\in-\frac{1}{2}+\frac{\pi i}{\ln q_\FF}\ZZ$.

\begin{proposition}
\label{prop::im_A}
	Let $(\chi,s,\eta,t)\in L$.
	\begin{enumerate}
		\item For $t\in\frac{1}{2}+\frac{\pi i}{\ln q_\FF}\ZZ$ we always have $\Im(\doublewidetilde{A}_{s,t}^{\chi,\eta})=\tau_{\eta,t}^0$.
		\item For $t\in-\frac{1}{2}+\frac{\pi i}{\ln q_\FF}\ZZ$ we always have $\Im(\doublewidetilde{A}_{s,t}^{\chi,\eta})=\CC\psi_{\eta,t}$.
	\end{enumerate}
\end{proposition}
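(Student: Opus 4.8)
The plan is to combine the non-vanishing of $\doublewidetilde{A}_{s,t}^{\chi,\eta}$ with the composition series of $\tau_{\eta,t}$. First, $\doublewidetilde{A}_{s,t}^{\chi,\eta}\neq0$: by Proposition~\ref{prop::kernel_tilde_tilde} its distribution kernel $\doublewidetilde{K}_{s,t}^{\chi,\eta}$ has support $\EE$, hence is non-zero, hence the operator is non-zero. Next, $(\chi,s,\eta,t)\in L$ forces $\eta^2=1$ and $t\in\pm\tfrac12+\tfrac{\pi i}{\ln q_\FF}\ZZ$, so by Theorem~\ref{thm:CompositionSeries} the representation $\tau_{\eta,t}$ has a unique proper non-zero subrepresentation: the Steinberg $\tau_{\eta,t}^0=\ker T_{\eta,t}$ if $t\in\tfrac12+\tfrac{\pi i}{\ln q_\FF}\ZZ$, and the line $\CC\psi_{\eta,t}$ of functions constant on $\overline{N}'$ if $t\in-\tfrac12+\tfrac{\pi i}{\ln q_\FF}\ZZ$. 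Since $\Im(\doublewidetilde{A}_{s,t}^{\chi,\eta})$ is a non-zero $G'$-subrepresentation, it suffices in case~(1) to prove $\Im(\doublewidetilde{A}_{s,t}^{\chi,\eta})\subseteq\tau_{\eta,t}^0$ and in case~(2) to prove $\Im(\doublewidetilde{A}_{s,t}^{\chi,\eta})\subseteq\CC\psi_{\eta,t}$; irreducibility of $\tau_{\eta,t}^0$ (resp.\ one-dimensionality of $\CC\psi_{\eta,t}$) then upgrades the inclusion to an equality.

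For case~(1), $\Im(\doublewidetilde{A}_{s,t}^{\chi,\eta})\subseteq\tau_{\eta,t}^0=\ker T_{\eta,t}$ is equivalent to $T_{\eta,t}\circ\doublewidetilde{A}_{s,t}^{\chi,\eta}=0$. Applying Lemma~\ref{lem:CompositionWithStdIntertwiner} with $s$ replaced by $s+z$ gives $T_{\eta,t}\circ\widetilde{A}_{s+z,t}^{\chi,\eta}=c\cdot\widetilde{A}_{s+z,-t}^{\chi,\eta}$ for all $z$, where the constant $c=c_\FF\Gamma(|\cdot|_\FF^{2t})\Gamma((\chi^2)_{-t+\frac12})$ does not depend on $z$ or $s$. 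The key observation is that $c=0$ at every point of $L$ with $t\in\tfrac12+\tfrac{\pi i}{\ln q_\FF}\ZZ$: inspecting \eqref{equ::L}, such a point has either $\chi^2\neq1$, or $\chi^2=1$ with $\EE/\FF$ ramified and $t=\tfrac12+\tfrac{\pi i}{\ln q_\FF}$, and in both situations Lemma~\ref{lem:CompositionWithStdIntertwiner} (equivalently the pole/zero pattern of the gamma factors in Lemma~\ref{lem:PropertiesGammaFactors}) yields $\Im(\widetilde{A}_{s',t}^{\chi,\eta})\subseteq\tau_{\eta,t}^0$ for all $s'$, i.e.\ $T_{\eta,t}\circ\widetilde{A}_{s',t}^{\chi,\eta}\equiv0$, so $c=0$. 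Hence $T_{\eta,t}\circ\widetilde{A}_{s+z,t}^{\chi,\eta}=0$ for all $z$, and therefore also $T_{\eta,t}\circ\big[(1-q_\FF^{-2z})^{-1}\widetilde{A}_{s+z,t}^{\chi,\eta}\big]=0$; letting $z\to0$ and using that $z\mapsto(1-q_\FF^{-2z})^{-1}\widetilde{A}_{s+z,t}^{\chi,\eta}$ is weakly holomorphic with value $\doublewidetilde{A}_{s,t}^{\chi,\eta}$ at $z=0$ (Proposition~\ref{prop::kernel_tilde_tilde}) gives $T_{\eta,t}\circ\doublewidetilde{A}_{s,t}^{\chi,\eta}=0$, whence $\Im(\doublewidetilde{A}_{s,t}^{\chi,\eta})=\tau_{\eta,t}^0$.

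For case~(2), the plan is to show that $\doublewidetilde{K}_{s,t}^{\chi,\eta}$ is invariant under translations of $\EE\simeq\overline{N}$ by $\FF$; since any $\varphi\in C_c^\infty(\EE)$ extends by zero to an element of $\pi_{\chi,s}$ whose restriction to $\overline{N}$ is $\varphi$, this is equivalent to $\doublewidetilde{A}_{s,t}^{\chi,\eta}f$ being constant on $\overline{N}'$ for every $f$, i.e.\ to $\Im(\doublewidetilde{A}_{s,t}^{\chi,\eta})\subseteq\CC\psi_{\eta,t}$. Inspecting \eqref{equ::L}, every point of $L$ with $t\in-\tfrac12+\tfrac{\pi i}{\ln q_\FF}\ZZ$ satisfies $\chi^2=1$ and $-t-\tfrac12\in\tfrac{2\pi i}{\ln q_\EE}\ZZ$ (for the unramified points one uses $\ln q_\EE=2\ln q_\FF$). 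Thus $\chi_0(x^2)|x|_\EE^{-t-\frac12}=(\chi^2)_{-t-\frac12}(x)\equiv1$ on $\EE^\times$, so on $\EE\setminus\FF$ the function $K_{s,t}^{\chi,\eta}$ reduces to $|x_2|_\FF^{2s+t-\frac12}$, the pullback along $x_1+x_2\alpha\mapsto x_2$ of a homogeneous function on $\FF$. Since at an $L$-point the family $s'\mapsto K_{s',t}^{\chi,\eta}$ is regular (its residue at $\SminusT$ vanishes there, cf.\ the proof of Theorem~\ref{thm:ZerosAndSupportOfKernel}), $\doublewidetilde{K}_{s,t}^{\chi,\eta}$ equals $(1-q_\FF^{-(2s+t+\frac12)})K_{s,t}^{\chi,\eta}$ with $1-q_\FF^{-(2s+t+\frac12)}\neq0$, and I would identify it, by analytic continuation from large real part and weak continuity of pullback, with the pullback along $x_1+x_2\alpha\mapsto x_2$ of the meromorphically continued homogeneous distribution $|\cdot|_\FF^{2s+t-\frac12}$ on $\FF$. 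Such a pullback is manifestly $\FF$-translation-invariant, so $\Im(\doublewidetilde{A}_{s,t}^{\chi,\eta})\subseteq\CC\psi_{\eta,t}$, and with $\doublewidetilde{A}_{s,t}^{\chi,\eta}\neq0$ this gives $\Im(\doublewidetilde{A}_{s,t}^{\chi,\eta})=\CC\psi_{\eta,t}$.

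The step I expect to be the main obstacle is the bookkeeping in case~(1): one must run through the four possibilities ($\chi^2=1$ vs.\ $\chi^2\neq1$, $\EE/\FF$ ramified vs.\ unramified) and confirm via \eqref{equ::L} and Lemma~\ref{lem:PropertiesGammaFactors} that the constant $c$ really vanishes at all relevant $L$-points, and to justify interchanging the fixed intertwiner $T_{\eta,t}$ with the weak limit defining $\doublewidetilde{A}_{s,t}^{\chi,\eta}$ (which is harmless precisely because $T_{\eta,t}\circ\widetilde{A}_{s+z,t}^{\chi,\eta}$ is identically zero). In case~(2) the only delicate point is that the renormalization producing $\doublewidetilde{K}_{s,t}^{\chi,\eta}$ commutes with pullback along $x_1+x_2\alpha\mapsto x_2$; this holds because the two families agree on the dense open set $\EE\setminus\FF$ and are weakly holomorphic in the regularization parameter.
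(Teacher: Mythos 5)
Your proof is correct and follows essentially the same route as the paper: part (1) via Lemma~\ref{lem:CompositionWithStdIntertwiner} applied at $s+z$ together with the vanishing of the constant $c_\FF\Gamma(|\cdot|_\FF^{2t})\Gamma((\chi^2)_{-t+\frac{1}{2}})$ at the relevant $L$-points, and part (2) via the observation that at those $L$-points the kernel family reduces to (the continuation of) $|x_2|_\FF^{2z-2}$, hence is independent of $x_1$. The only difference is that you spell out the upgrade from containment to equality (non-vanishing of $\doublewidetilde{A}_{s,t}^{\chi,\eta}$ from Proposition~\ref{prop::kernel_tilde_tilde} plus irreducibility, resp.\ one-dimensionality, of the target), which the paper leaves implicit.
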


\begin{proof}
	\begin{enumerate}
		\item From Lemma~\ref{lem:CompositionWithStdIntertwiner} it follows that
		\begin{align*}
			T_{\eta,t}\circ\doublewidetilde{A}_{s,t}^{\chi,\eta} &= \left.(1-q_\FF^{-2z})^{-1}T_{\eta,t}\circ\widetilde{A}_{s+z,t}^{\chi,\eta}\right|_{z=0}\\
			&= \left.(1-q_\FF^{-2z})^{-1}c_\FF\Gamma(|\cdot|_\FF^{2t})\Gamma((\chi^2)_{-t+\frac{1}{2}}) \cdot \widetilde{A}_{s+z,-t}^{\chi,\eta}\right|_{z=0}.
		\end{align*}
		It can be checked case-by-case using Lemma~\ref{lem:PropertiesGammaFactors} that $\Gamma(|\cdot|_\FF^{2t})\Gamma((\chi^2)_{-t+\frac{1}{2}})=0$ in all cases, so the final expression is equal to zero for all $z$, hence $T_{\eta,t}\circ\doublewidetilde{A}_{s,t}^{\chi,\eta}=0$.
		\item For $t\in-\frac{1}{2}+\frac{\pi i}{\ln q_\FF}\ZZ$ and $(s,t)\in L^{\chi,\eta}$ we always have $\chi^2=1$, $2s-t+\frac{1}{2}=0$ and $t\in-\frac{1}{2}+\frac{2\pi i}{\ln q_\EE}\ZZ$ by Theorem~\ref{thm:ZerosAndSupportOfKernel}. Hence, the kernel $K_{s,t}^{\chi,\eta}$ has the form $K_{s+z,t}^{\chi,\eta}(x)=|x_2|^{2z-2}$. This distribution is clearly independent of $x_1$ for all $z\in\CC$, so the same is true for its value at $z=0$. This shows the claim.\qedhere
	\end{enumerate}
\end{proof}

\section{Application: Discrete components in the restriction of unitary representations}
\label{sec::app}

We recall the unitary structure on the complementary series and the Steinberg representations of $G=\PGL(2,\EE)$. The construction makes use of the invariant non-degenerate bilinear form
$$ \Ind_B^G(\chi_s)\times\Ind_B^G(\overline{\chi}_{-s})\to\CC, \quad (f_1,f_2)\mapsto\int_{G/B}(f_1\cdot f_2)(x)\,d_{G/B}(xB), $$
where $d_{G/B}(xB)$ denotes the invariant integral on $C(G/B,\Omega_{G/B})$ (see Theorem~\ref{thm::invaraint integral}). Using \eqref{eq:IntegralG/PvsNbar}, this form can also be written as an integral over $\overline{N}$:
$$ \int_{G/B}(f_1\cdot f_2)(x)\,d_{G/B}(xB) = \int_{\overline{N}} f_1(\overline{n})f_2(\overline{n})\,d\overline{n}. $$
This motivates the realization of $\pi_{\chi,s}$ on a space of locally constant functions on $\overline{N}\simeq\EE$ by restriction from $G/B$ to the open dense subset $\overline{N}B/B\simeq\EE$: for $f\in\Ind_B^G(\chi_s)$ let $f_\EE(x)=f(\overline{n}_x)$ ($x\in\EE$) and put
$$ I(\chi,s)^\infty = \{f_\EE:f\in\Ind_B^G(\chi_s)\}. $$
Then the invariant bilinear form from above translates to
$$ I(\chi,s)^\infty\times I(\overline{\chi},-s)^\infty\to\CC, \quad (f_1,f_2)\mapsto\int_\EE f_1(x)f_2(x)\,dx. $$
Since this form is non-degenerate, we can identify $I(\chi,s)^\infty$ with a subspace of the dual space
$$ I(\chi,s)^{-\infty} := (I(\overline{\chi},-s)^\infty)^\vee. $$
Note that since $C_c^\infty(\EE)\subseteq I(\chi,s)^\infty$, we can view $I(\chi,s)^{-\infty}$ as a subspace of $\calD'(\EE)$.

Recall from Section~\ref{sec:PSforPGL2} the standard intertwining operators $T_{\chi,s}:\pi_{\chi,s}\to\pi_{\overline{\chi},-s}$. Using \eqref{eq:IntertwinerConvolutionFormula}, we obtain their counterparts on $I(\chi,s)^\infty$:
$$ T_{\chi,s}:I(\chi,s)^\infty\to I(\overline{\chi},-s)^\infty, \quad T_{\chi,s}f(x') = \int_\EE\chi_{s-\frac{1}{2}}(x-x')^2f(x)\,dx. $$
Combining the invariant bilinear form from above with the standard intertwining operator $\widetilde{T}_{\chi,s}$, we obtain an invariant sesquilinear form on $I(\chi,s)^\infty$ whenever $\chi^2=1$ and $s\in\RR$:
$$ \langle f_1,f_2\rangle_{\chi,s} = \int_\EE T_{\chi,s}f_1(x')\overline{f_2(x')}\,dx' = \int_\EE\int_\EE|x-x'|_\EE^{2s-1}f_1(x)\overline{f_2(x')}\,dx\,dx' \qquad (f_1,f_2\in I(\chi,s)^\infty). $$
To determine for which $s\in\RR$ this form is positive definite, we apply the Fourier transform on $\EE$ (see Section~\ref{app:Integral} for its definition and basic properties).

Since the intertwining operator $T_{\chi,s}$ is given by convolution with the character $|\cdot|_\EE^{2s-1}$, and since the Fourier transform of this character equals $\Gamma(|\cdot|_\EE^{2s})|\cdot|_\EE^{-2s}$, we can rewrite the invariant sesquilinear form using the Plancherel formula for the Fourier transform:
$$ \langle f_1,f_2\rangle_{\chi,s} = c_\EE\Gamma(|\cdot|_\EE^{2s})\int_\EE\widehat{f}_1(x)\overline{\widehat{f}_2(x)}|x|_\EE^{-2s}\,dx. $$
For $s\in(0,\frac{1}{2})$, the gamma function is regular (see Lemma~\ref{lem:PropertiesGammaFactors}) and the function $|x|_\EE^{-2s}$ is locally integrable and positive, so the form is positive definite. The completion of $I(\chi,s)^\infty$ with respect to this inner product is the Hilbert space
$$ I(\chi,s) = \{f\in\calD'(\EE):\widehat{f}\in L^2(\EE,|x|_\EE^{-2s}\,dx)\}. $$
The representations $\pi_{\chi,s}$ of $G=\PGL(2,\EE)$ on $I(\chi,s)$ are unitary and irreducible and they form the complementary series of $G$. Note that
$$ I(\chi,s)^\infty \subseteq I(\chi,s) \subseteq I(\chi,s)^{-\infty}. $$

For $s=\frac{1}{2}$, the kernel $I(\chi, {\frac{1}{2}})^\infty_0=\ker T_{\chi,{\frac{1}{2}}}$ of the intertwining operator $T_{\chi,{\frac{1}{2}}}$ is a proper subrepresentation of codimension one, called the Steinberg representation. Note that
$$ I(\chi,\tfrac{1}{2})_0^\infty = \left\{f\in I(\chi,\tfrac{1}{2})^\infty:\int_\EE f(x)\,dx=0\right\}. $$
The invariant form $\langle\cdot,\cdot\rangle_{\chi,\frac{1}{2}}$ vanishes on $I(\chi,\frac{1}{2})^\infty_0$, but we can renormalize it to obtain an inner product
$$ \langle f_1,f_2\rangle'_{\chi,\frac{1}{2}} := \left.\Gamma(|\cdot|_\EE^{2s})^{-1}\langle f_1,f_2\rangle_{\chi,s}\right|_{s=\frac{1}{2}} \qquad (f_1,f_2\in I(\chi,\tfrac{1}{2})^\infty_0). $$
Denote by $I(\chi,\frac{1}{2})_0$ the completion of $I(\chi,\frac{1}{2})_0^\infty$. By the previous computation we have
$$ \langle f_1,f_2\rangle'_{\chi,\frac{1}{2}} = c_\EE\int_\EE\widehat{f}_1(x)\overline{\widehat{f}_2(x)}|x|_\EE^{-1}\,dx. $$
Note that the integral exists since $f\in I(\chi,\frac{1}{2})_0^\infty$ implies $\widehat{f}(0)=0$, so $\widehat{f}$ vanishes in a neighborhood of $0$. It follows that
$$ I(\chi,\tfrac{1}{2})_0 = \{f\in\calD'(\EE):\widehat{f}\in L^2(\EE,|x|_\EE^{-1}\,dx)\}. $$

Denote by $J(\eta,t)^{\pm\infty}$ and $J(\eta,t)$ the corresponding representation spaces for $G'$. For $\eta=\chi|_{\calO_\FF^\times}$ and $t=2s+\frac{1}{2}$ the restriction operator
$$ C_{s,t}^{\chi,\eta}:I(\chi,s)^\infty\to J(\eta,t)^\infty, \quad C_{s,t}^{\chi,\eta}f=f|_\FF, $$ is $G'$-intertwining. We consider its transpose
$$ (C_{s,t}^{\chi,\eta})^\vee:J(\overline{\eta},-t)^{-\infty}\to I(\overline{\chi},-s)^{-\infty}, \quad (C_{s,t}^{\chi,\eta})^\vee f(x_1+x_2\alpha) = f(x_1)\delta(x_2). $$

\begin{theorem}\label{thm:DiscreteComponents}
	Assume that $\chi^2=1$ and put $\eta=\chi|_{\calO_\FF^\times}$.
	\begin{enumerate}
		\item\label{thm:DiscreteComponents1} For $s\in(\frac{1}{4},\frac{1}{2})$ and $t=2s-\frac{1}{2}$, the intertwining operator
		$$ (C_{-s,-t}^{\chi,\eta})^\vee:J(\eta,t)^{-\infty}\to I(\chi,s)^{-\infty} $$
		restricts to an isometric embedding (up to a scalar) $J(\eta,t)\hookrightarrow I(\chi,s)$. In particular, the restriction of the irreducible unitarizable representation $\pi_{\chi,s}$ of $G$ to $G'$ contains $\tau_{\eta,t}$ as a direct summand.
		\item\label{thm:DiscreteComponents2} For $s=t=\frac{1}{2}$, the intertwining operator
		$$ (C_{-s,-t}^{\chi,\eta})^\vee:J(\eta,t)^{-\infty}\to I(\chi,s)^{-\infty} $$
		restricts to an isometric embedding (up to a scalar) $J(\eta,t)_0\hookrightarrow I(\chi,s)_0$. In particular, the restriction of the irreducible unitarizable representation $\pi_{\chi,s}^0$ of $G$ to $G'$ contains $\tau_{\eta,t}^0$ as a direct summand.
	\end{enumerate}
\end{theorem}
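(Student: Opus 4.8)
The strategy is the one used by Speh--Venkataramana~\cite{SV11} in the archimedean case: exhibit the transpose of the restriction operator as an (essentially) isometric embedding between the unitary completions, and then invoke the fact that an isometric $G'$-intertwining embedding of a unitary $G'$-representation into another splits off its image as a Hilbert space direct summand via the orthogonal projection. Concretely, for (1) I would work entirely in the ``noncompact'' models $I(\chi,s)$ and $J(\eta,t)$ realized on distributions on $\EE$ and $\FF$, where the inner products are given by the explicit Fourier-side formulas
$$ \langle f_1,f_2\rangle_{\chi,s} = c_\EE\Gamma(|\cdot|_\EE^{2s})\int_\EE\widehat f_1(x)\overline{\widehat f_2(x)}|x|_\EE^{-2s}\,dx, \qquad \langle g_1,g_2\rangle_{\eta,t} = c_\FF\Gamma(|\cdot|_\FF^{2t})\int_\FF\widehat g_1(\xi)\overline{\widehat g_2(\xi)}|\xi|_\FF^{-2t}\,d\xi, $$
recalled in Section~\ref{sec::app}. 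The operator $(C_{-s,-t}^{\chi,\eta})^\vee$ sends $g\in J(\eta,t)^{-\infty}$, $g=g(x_1)$, to the distribution $(x_1+x_2\alpha)\mapsto g(x_1)\delta(x_2)$ on $\EE$. The heart of the matter is therefore a \emph{partial Fourier transform computation}: for $h\in C_c^\infty(\FF)$ one must identify the Euclidean Fourier transform on $\EE\simeq\FF\times\FF$ of the distribution $h(x_1)\delta(x_2)$ and see that, up to the explicit constant and a harmless unitary change of additive character, it is $(\xi_1,\xi_2)\mapsto\widehat h(\xi_1)$, a function constant in the second variable. Then
$$ \big\|(C_{-s,-t}^{\chi,\eta})^\vee g\big\|_{\chi,s}^2 = c_\EE\Gamma(|\cdot|_\EE^{2s})\int_{\FF\times\FF}|\widehat h(\xi_1)|^2\,|\xi_1^2-\xi_2^2\alpha^2|_\FF^{-2s}\,d\xi_1\,d\xi_2, $$
using $|\xi_1+\xi_2\alpha|_\EE=|\xi_1^2-\xi_2^2\alpha^2|_\FF$. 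The inner $\xi_2$-integral $\int_\FF|\xi_1^2-\xi_2^2\alpha^2|_\FF^{-2s}\,d\xi_2$ is, after scaling out $|\xi_1|_\FF$, a one-variable $p$-adic integral depending only on $|\xi_1|_\FF$; it converges precisely when $2\cdot 2s>1$, i.e. $s>\tfrac14$, and evaluates to $c(s)\,|\xi_1|_\FF^{1-4s}$ for an explicit nonzero constant $c(s)$ (here the constraint $t=2s-\tfrac12$ enters exactly so that $-2t=1-4s$, matching the measure $|\xi_1|_\FF^{-2t}\,d\xi_1$ on the $\FF$-side). Feeding this back in gives $\|(C_{-s,-t}^{\chi,\eta})^\vee g\|_{\chi,s}^2 = (\text{const})\cdot\|g\|_{\eta,t}^2$ with a positive finite constant, proving the isometry (up to scalar) on the dense subspace and hence on the completion.

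Once the isometric embedding $\iota:=(\text{scalar})^{-1}(C_{-s,-t}^{\chi,\eta})^\vee:J(\eta,t)\hookrightarrow I(\chi,s)$ is established, I would argue as follows: $\iota$ is a bounded $G'$-equivariant injection between Hilbert spaces with closed image (being isometric), so the orthogonal projection of $I(\chi,s)$ onto $\iota(J(\eta,t))$ is a bounded $G'$-equivariant projection, and $\iota(J(\eta,t))$ is a $G'$-invariant Hilbert space direct summand of $\pi_{\chi,s}|_{G'}$ isomorphic to $\tau_{\eta,t}$. That $\iota$ really does map into $I(\chi,s)$ and not merely $I(\chi,s)^{-\infty}$ is exactly the content of the finiteness of the norm integral above, so no separate argument is needed. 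For part (2), the same computation is run at $s=t=\tfrac12$ on the subspaces $I(\chi,\tfrac12)_0$, $J(\eta,\tfrac12)_0$ of distributions whose Fourier transform vanishes near $0$: there the renormalized inner products $\langle\cdot,\cdot\rangle'$ remove the $\Gamma$-factor, the integrands are supported away from $\xi=0$ so the $\xi_2$-integral converges without the $s>\tfrac14$ restriction, and one checks that $(C_{-s,-t}^{\chi,\eta})^\vee$ maps $J(\eta,\tfrac12)_0^\infty$ into $I(\chi,\tfrac12)_0^\infty$ because $\widehat h(0)=0$ forces the partial Fourier transform to vanish near the origin of $\EE$; the isometry constant is obtained by the same scaling computation with $s=\tfrac12$.

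The main obstacle I anticipate is the partial-Fourier-transform bookkeeping in the first step: one must pin down the precise additive characters on $\FF$ and $\EF=\EE$ and the relation between the self-dual Haar measures, so that the Fourier transform of $h(x_1)\delta(x_2)$ on $\FF\times\FF$ is genuinely $(\xi_1,\xi_2)\mapsto\widehat h(\psi_\FF$-transform$(\xi_1))$ (possibly composed with the linear change of variables relating the quadratic form $x_1^2-x_2^2\alpha^2$ to the standard additive pairing on $\EE$); getting the constants and the convergence threshold $s>\tfrac14$ exactly right, uniformly in the unramified and ramified cases, is where the real care is required. The subsidiary point that the image is closed and the splitting is by a bounded projection is standard Hilbert space theory once the isometry is in hand, and the $G'$-equivariance of $C_{-s,-t}^{\chi,\eta}$ and its transpose is already recorded in Section~\ref{sec::app}.
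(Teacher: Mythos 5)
Your proposal is correct and follows essentially the same route as the paper: the partial Fourier transform of $h(x_1)\delta(x_2)$ is $\widehat{h}(x_1)$ (constant in $x_2$), the inner $x_2$-integral scales to a constant times $|x_1|_\FF^{1-4s}$ (convergent exactly for $s>\tfrac14$, evaluated via Proposition~\ref{prop:GammaIntegral}), and $-2t=1-4s$ matches the $\FF$-side weight, yielding the isometry up to a scalar and hence the direct summand. The only (harmless) divergence is in part (2): the paper renormalizes in $s$ and uses the zero/pole cancellation of $\Gamma(|\cdot|_\FF^{4s-1})\Gamma(|\cdot|_\EE^{1-2s})$ at $s=\tfrac12$ (Lemma~\ref{lem:PropertiesGammaFactors}), whereas you evaluate directly at $s=\tfrac12$; note that convergence of the $x_2$-integral there is automatic (the $s>\tfrac14$ constraint concerns decay at infinity, not the origin), while the vanishing of $\widehat{h}$ near $0$ is what gives finiteness of the $x_1$-integral against $|x_1|_\FF^{-1}$ and places the image in $I(\chi,\tfrac12)_0$.
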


\begin{proof}
	We first note that
	$$ \widehat{(C_{-s,-t}^{\chi,\eta})^\vee f}(x_1+x_2\alpha) = \widehat{f}(x_1) \qquad \mbox{for all }f\in J(\eta,t)^{-\infty},x_1,x_2\in\FF. $$
	It follows that
	\begin{align*}
		\|(C_{-s,-t}^{\chi,\eta})^\vee f\|_{\chi,s}^2 &= c_\EE\Gamma(|\cdot|_\EE^{2s})\int_\EE|\widehat{f}(x_1)|^2|x|_\EE^{-2s}\,dx\\
		&= c_\EE\Gamma(|\cdot|_\EE^{2s})\int_\FF|\widehat{f}(x_1)|^2\left(\int_\FF|x_1+x_2\alpha|_\EE^{-2s}\,dx_2\right)\,dx_1.
	\end{align*}
	Substituting $x_2=x_1y$, the inner integral equals
	$$ \int_\FF|x_1+x_2\alpha|_\EE^{-2s}\,dx_2 = |x_1|^{1-4s}_\FF\int_\FF|1+y\alpha|_\EE^{-2s}\,dy = c_\FF\Gamma(|\cdot|_\FF^{4s-1})\Gamma(|\cdot|_\EE^{1-2s})\cdot|x_1|^{1-4s}_\FF $$
	by Proposition~\ref{prop:GammaIntegral}. Note that the integral converges if and only if $s>\frac{1}{4}$. Since $1-4s=-2t$, it follows that
	$$ \|(C_{-s,-t}^{\chi^\vee,\eta^\vee})^\vee f\|_{\chi,s}^2 = c_\EE c_\FF\Gamma(|\cdot|_\FF^{4s-1})\Gamma(|\cdot|_\EE^{1-2s})\Gamma(|\cdot|_\EE^{2s})\int_\FF|\widehat{f}(x_1)|^2|x_1|_\FF^{-2t}\,dx_1 $$
	which is a non-zero multiple of $\|f\|_{\eta,t}^2$ for $s\in(\frac{1}{4},\frac{1}{2})$. This shows \eqref{thm:DiscreteComponents1}. To prove \eqref{thm:DiscreteComponents2}, we renormalize by $\Gamma(|\cdot|_\EE^{2s})$ at $s=\frac{1}{2}$ and find
	$$ \langle(C_{-s,-t}^{\chi^\vee,\eta^\vee})^\vee f,(C_{-s,-t}^{\chi^\vee,\eta^\vee})^\vee f\rangle_{\chi,s}^0 = c_\EE\Gamma(|\cdot|_\FF^{4s-1})\Gamma(|\cdot|_\EE^{1-2s})\langle f,f\rangle_{\eta,t}^0. $$
	By Lemma~\ref{lem:PropertiesGammaFactors}, $\Gamma(|\cdot|_\FF^{4s-1})$ has a single zero at $s=\frac{1}{2}$ while $\Gamma(|\cdot|_\EE^{1-2s})$ has a single pole, so the product is regular and non-zero at $s=\frac{1}{2}$. This shows \eqref{thm:DiscreteComponents2}.
\end{proof}

\appendix

\section{Homogeneous distributions}\label{app:HomogeneousDistributions}

Let $\FF$ be a non-archimedean local field and $q_\FF$ the cardinality of its residue field. We denote by $C_c^\infty(\FF)$ the space of compactly supported locally constant functions on $\FF$ with values in $\CC$. A distribution on $\FF$ is by definition a functional $u: C_c^\infty(\FF) \to \CC$. The space of all distributions on $\FF$ is denoted by $\calD'(\FF)$. Every locally integrable function $u$ on $\FF$ can be identified with the distribution given by
$$ \langle u,f\rangle = \int_\FF u(x)f(x)\,dx \qquad (f\in C_c^\infty(\FF)). $$

For a multiplicative character $\xi$ of $\FF^{\times}$ we say that a distribution $u\in\calD'(\FF)$ is \emph{homogeneous of degree $\xi$} if for any test function $f \in C_c^\infty(\FF)$ and $a \in \FF^{\times}$ we have (see \cite[\S2.3]{GG63}):
$$\langle u,f(a^{-1}\cdot) \rangle= \xi(a)  |a|_\FF \langle u,f\rangle.$$
This means, $u$ satisfies the following equality in the sense of distributions:
$$ u(ax) = \xi(a)u(x) \qquad (a\in\FF^\times,x\in\FF). $$ 

Every multiplicative character $\xi$ of $ \FF^\times$ can uniquely be written as
$$ \xi(a) =: \chi_s(a) = |a|_\FF^s\chi(\varpi_\FF^{-\nu_\FF(a)}a) $$
for some $s\in\CC/\frac{2\pi i}{\ln q_\FF}\ZZ$ and $\chi\in\widehat{\calO_\FF^\times}$. One can prove that $\chi_s$ is locally integrable on $\FF$ if and only if $\Re(s)>-1$ and in this case it defines a homogeneous distribution of degree $\chi_s$ (see \cite[\S2.3]{GG63}). To extend this family of distributions analytically to all $s\in\CC$, we renormalize $\chi_s$ by
$$ \widetilde{\chi}_s(a): = L(1,\chi_s)^{-1}\chi_s(a), $$
where
$$ L(t,\chi_s) = \begin{cases}1&\mbox{for $\chi\neq1$,}\\(1-q_\FF^{-s-t})^{-1}&\mbox{for $\chi=1$.}\end{cases} $$
Then we have the following classification of homogeneous distributions in terms of $\widetilde{\chi}_s$ (see e.g. \cite[\S2.3]{GG63}):

\begin{theorem}\label{thm:HomogeneousDistributions}
	For every multiplicative character $\xi$ of $\FF^\times$ the space of homogeneous distribution of degree $\xi$ is one-dimensional. More precisely, for $\xi=\chi_{s-1}$ it is spanned by $\widetilde{\chi}_{s-1}$. For $s\not\in\frac{2\pi i}{\ln q_\FF}\ZZ$ we have $\supp\widetilde{\chi}_{s-1}=\FF$ and for $s\in\frac{2\pi i}{\ln q_\FF}\ZZ$ we have $\widetilde{\chi}_{s-1}=(1-q_\FF^{-1})\cdot\delta$, where $\delta$ denotes the Dirac distribution at $x=0$.
\end{theorem}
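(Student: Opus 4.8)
The plan is to classify the homogeneous distributions of degree $\xi$ on $\FF$ by reducing to the simply transitive action of $\FF^\times$ on itself and then analysing the behaviour at the single point $0$; write $\xi=\chi_{s-1}$ throughout, and note that $\xi=|\cdot|_\FF^{-1}$ precisely when $\chi=1$ and $s\in\frac{2\pi i}{\ln q_\FF}\ZZ$. First I would record that every $u\in\calD'(\FF)$ with $\supp u\subseteq\{0\}$ lies in $\CC\delta$: if $f\in C_c^\infty(\FF)$ is constant on $\varpi_\FF^N\calO_\FF$ for $N$ large, then $f-f(0)\1_{\varpi_\FF^N\calO_\FF}$ vanishes near $0$, and since $\langle u,\1_{\varpi_\FF^n\calO_\FF^\times}\rangle=0$ for every $n$ (these functions are supported away from $0$), the telescoping identity forces $\langle u,f\rangle=f(0)\langle u,\1_{\varpi_\FF^N\calO_\FF}\rangle$ with the right-hand side independent of $N\gg0$, i.e. $u\in\CC\delta$. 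As $\delta(ax)=|a|_\FF^{-1}\delta(x)$, the distribution $\delta$ is homogeneous of degree $\xi$ if and only if $\xi=|\cdot|_\FF^{-1}$; so the space of homogeneous distributions of degree $\xi$ supported at $0$ is $\CC\delta$ when $\xi=|\cdot|_\FF^{-1}$ and $0$ otherwise.

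Next I would restrict to the open subset $\FF^\times$, on which $\FF^\times$ acts simply transitively by multiplication. Dividing a homogeneous distribution of degree $\xi$ by the nowhere-vanishing locally constant function $\xi$ yields a translation-invariant distribution on $\FF^\times$, which a short averaging argument over compact open subgroups (equivalently, uniqueness of Haar measure) shows to be a scalar multiple of $dx/|x|_\FF$; hence the homogeneous distributions of degree $\xi$ on $\FF^\times$ form a one-dimensional space spanned by $\xi$ regarded as a density for $dx$. Feeding this and the previous paragraph into the left-exact restriction sequence $0\to\calD_{\{0\}}'(\FF)_\xi\to\calD'(\FF)_\xi\to\calD'(\FF^\times)_\xi$ gives $\dim\calD'(\FF)_\xi\le1$ for $\xi\ne|\cdot|_\FF^{-1}$ and $\dim\calD'(\FF)_\xi\le2$ for $\xi=|\cdot|_\FF^{-1}$. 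For the lower bound $\dim\ge1$: when $\Re(s)>0$ the function $\chi_{s-1}$ is locally integrable and visibly homogeneous of degree $\chi_{s-1}$; for all other $s$ I would invoke the classical meromorphic continuation, splitting $\langle\chi_{s-1},f\rangle=f(0)\langle\chi_{s-1},\1_{\varpi_\FF^N\calO_\FF}\rangle+(\text{entire in }s)$ for $N$ large, where $\langle\chi_{s-1},\1_{\varpi_\FF^N\calO_\FF}\rangle$ equals $q_\FF^{-Ns}\tfrac{1-q_\FF^{-1}}{1-q_\FF^{-s}}$ for $\chi=1$ and $0$ for $\chi\ne1$; multiplying by $L(1,\chi_{s-1})^{-1}$ cancels the resulting simple pole, so $\widetilde{\chi}_{s-1}$ is holomorphic on all of $\CC$, and the homogeneity relation — an equality of holomorphic functions of $s$ when tested against any fixed test function — persists by analytic continuation. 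Thus $\dim\calD'(\FF)_\xi=1$ with spanning vector $\widetilde{\chi}_{s-1}$ whenever $\xi\ne|\cdot|_\FF^{-1}$, and since $\widetilde{\chi}_{s-1}|_{\FF^\times}$ is a nonzero multiple of the nowhere-vanishing function $\xi$, we get $\supp\widetilde{\chi}_{s-1}=\overline{\FF^\times}=\FF$.

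It remains to treat $\xi=|\cdot|_\FF^{-1}$ (say $\chi=1$, $s=0$), where the bounds above only give $\dim\le2$; the key is to show the restriction map $\calD'(\FF)_\xi\to\calD'(\FF^\times)_\xi$ is zero. If $u\in\calD'(\FF)_\xi$, then homogeneity with $a=\varpi_\FF$ reads $\langle u,g(\varpi_\FF^{-1}\cdot)\rangle=\langle u,g\rangle$ (the factor $\xi(\varpi_\FF)|\varpi_\FF|_\FF$ being $1$); taking $g=\1_{\calO_\FF}$ gives $\langle u,\1_{\calO_\FF^\times}\rangle=\langle u,\1_{\calO_\FF}\rangle-\langle u,\1_{\varpi_\FF\calO_\FF}\rangle=0$. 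But if the restriction $u|_{\FF^\times}$ were the nonzero multiple $c\,|\cdot|_\FF^{-1}$, then $\langle u,\1_{\calO_\FF^\times}\rangle=c\operatorname{vol}(\calO_\FF^\times)=c(1-q_\FF^{-1})$, which is $\ne0$ unless $c=0$; hence $u|_{\FF^\times}=0$, $\calD'(\FF)_\xi=\calD_{\{0\}}'(\FF)_\xi=\CC\delta$, and $\supp=\{0\}$. Finally, comparing values on $\1_{\calO_\FF}$ — namely $\langle\widetilde{\chi}_{s-1},\1_{\calO_\FF}\rangle=(1-q_\FF^{-s})\cdot\tfrac{1-q_\FF^{-1}}{1-q_\FF^{-s}}=1-q_\FF^{-1}$ for $\chi=1$, identically in $s$, against $\langle\delta,\1_{\calO_\FF}\rangle=1$ — identifies $\widetilde{\chi}_{s-1}=(1-q_\FF^{-1})\delta$ for $s\in\frac{2\pi i}{\ln q_\FF}\ZZ$.

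The main obstacle is precisely this borderline case: for $\xi\ne|\cdot|_\FF^{-1}$ the statement is a combination of general facts about distributions on a totally disconnected field (distributions supported at a point are multiples of $\delta$; translation-invariant distributions on $\FF^\times$ are multiples of Haar measure) with the standard Tate-type meromorphic continuation, which could simply be cited to \cite[\S2.3]{GG63}; but for $\xi=|\cdot|_\FF^{-1}$ one genuinely has to rule out a ``principal-value'' extension of $|\cdot|_\FF^{-1}$ across $0$, and the rescaling test against $\1_{\calO_\FF}$ above is the one essential computation.
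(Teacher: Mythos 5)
Your proof is correct, and it is worth noting that the paper itself gives no argument for this statement: Theorem~\ref{thm:HomogeneousDistributions} is simply quoted from \cite[\S2.3]{GG63}. What you have written is essentially the standard Gel'fand--Graev/Tate argument, and it is also the exact one-variable model of the strategy the paper uses later for the two-variable kernels: the left-exact restriction sequence $0\to\calD'_{\{0\}}(\FF)_\xi\to\calD'(\FF)_\xi\to\calD'(\FF^\times)_\xi$ is the analogue of \eqref{eq:ExactSequence}, the cancellation of the simple pole of $\langle\chi_{s-1},\mathbf{1}_{\varpi_\FF^N\calO_\FF}\rangle=q_\FF^{-Ns}\tfrac{1-q_\FF^{-1}}{1-q_\FF^{-s}}$ by $L(1,\chi_{s-1})^{-1}$ mirrors the renormalization in Section~\ref{subsec::holom_renor}, and your scaling test $\langle u,\mathbf{1}_{\calO_\FF^\times}\rangle=\langle u,\mathbf{1}_{\calO_\FF}\rangle-\langle u,\mathbf{1}_{\varpi_\FF\calO_\FF}\rangle=0$, which kills the restriction to $\FF^\times$ in the borderline case $\xi=|\cdot|_\FF^{-1}$, plays the role of the scaling-operator argument in Lemma~\ref{lem::restriction_map_zero}. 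So your proposal buys a self-contained proof of a fact the paper only cites, at the cost of redoing a classical computation; all the individual steps (distributions supported at a point are multiples of $\delta$ on a totally disconnected field, holomorphy of $\widetilde{\chi}_{s-1}$, persistence of homogeneity under analytic continuation, the evaluation $\langle\widetilde{\chi}_{s-1},\mathbf{1}_{\calO_\FF}\rangle=1-q_\FF^{-1}$) check out.

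One small bookkeeping point: in the classification on $\FF^\times$ you say that dividing $u$ by the function $\xi$ gives a multiple of the multiplicative Haar measure $dx/|x|_\FF$, but then conclude that the span is $\xi$ as a density for $dx$. With the paper's convention ($u(ax)=\xi(a)u(x)$ as generalized functions against the additive measure $dx$), the quotient $\xi^{-1}u$ satisfies $v(ax)=v(x)$ and is therefore a multiple of the \emph{constant} function, i.e.\ of $dx|_{\FF^\times}$, not of $dx/|x|_\FF$; taking your two sentences literally would give the span $\CC\,\xi|\cdot|_\FF^{-1}\,dx$, which has the wrong degree. The conclusion you actually use is nevertheless correct — the space is one-dimensional and contains the visibly homogeneous function $\xi$, hence equals $\CC\xi$ — so this is a harmless imprecision in the choice of reference invariant distribution, not a gap, but it should be stated consistently.
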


\section{Evaluation of an integral}\label{app:Integral}

For a quadratic extension $\EE/\FF$ of non-archimedean local fields, $\EE=\FF(\alpha)$ with $\alpha^2\in \FF$, and a multiplicative character $\chi$ of $\EE^\times$ we evaluate the integrals
$$ \int_\FF\chi(1+y\alpha)\,dy \qquad  \mbox{and} \qquad \int_\FF\chi(y+\alpha)\,dy, $$
where $dy$ denotes a Haar measure on the additive group $\FF$.

The computation involves the Fourier transform on both $\FF$ and $\EE$. For this, we use the Haar measure $dx=dx_1\,dx_2$ ($x=x_1+x_2\alpha$, $x_1,x_2\in\FF$) on $\EE$ which is the product of two copies of the same fixed Haar measure $dy$ on $\FF$. Fix a non-trivial additive character $\psi$ of $\FF$ and extend it to $\EE$ by
$$ \psi(x_1+x_2\alpha) := \psi(x_1) \qquad (x_1,x_2\in\FF). $$
The character $\psi$ can be used to define the Fourier transform $\calF f=\widehat{f}$ of functions $f\in C_c^\infty(\EE)$ and $f\in C_c^\infty(\FF)$:
$$ \widehat{f}(x) := \int_\EE\psi(xy)f(y)\,dy \quad (f\in C_c^\infty(\EE)) \qquad  \qquad \widehat{f}(x_1): = \int_\FF\psi(x_1y_1)f(y_1)\,dy_1 \quad (f\in C_c^\infty(\FF)). $$
The Fourier transform $\calF$ gives bijections $C_c^\infty(\EE)\to C_c^\infty(\EE)$ and $C_c^\infty(\FF)\to C_c^\infty(\FF)$, respectively (see \cite[\S2.4]{GG63}). Its inverse is given by the inversion formula: there exist constants $c_\EE,c_\FF>0$ such that
\begin{align*}
	f(x) &= c_\EE\int_\EE\psi(-xy)\widehat{f}(y)\,dy \quad &&(f\in C_c^\infty(\EE)) \qquad \mbox{and}\\
	f(x_1) &= c_\FF\int_\FF\psi(-x_1y_1)\widehat{f}(y_1)\,dy_1 &&(f\in C_c^\infty(\FF)).
\end{align*}
With these definitions, the Fourier transform satisfies the convolution formula
$$ \widehat{f*g}=\widehat{f}\cdot\widehat{g}, \qquad \mbox{where }(f*g)(x)=\int_\EE f(y)g(x-y)\,dy $$
and the Plancherel formula
$$ \int_\EE f(x)\overline{g(x)}\,dx = c_\EE\int_\EE \widehat{f}(y)\overline{\widehat{g}(y)}\,dy, $$
and similar for $\FF$.

Using the symplectic form
$$\omega(x_1+x_2\alpha,y_1+y_2\alpha):=x_1y_2-x_2y_1 \qquad (x_1,x_2,y_1,y_2\in\FF) $$ 
on the two-dimensional vector space $\EE$ over $\FF$, we further define the \emph{symplectic Fourier transform} $\Fsymp f$ of $f\in C_c^\infty(\EE)$ by
$$ \Fsymp f(x): = \int_\EE\psi(\omega(x,y))f(y)\,dy. $$
The symplectic Fourier transform appears naturally when studying the convolution of functions on the Heisenberg group in terms of the Fourier transform in the central variable (see e.g. \cite[\S XII.3.3]{Stein}). Since $\Fsymp f(x_1+x_2\alpha)=\calF f(-x_2+x_1\alpha^{-1})$, the symplectic Fourier transform also maps $C_c^\infty(\EE)$ bijectively onto itself. Note that $\omega(ax,y)=\omega(x,\overline{a}y)$ for all $a,x,y\in \EE$, where $\overline{a_1+a_2\alpha}=a_1-a_2\alpha$.

All Fourier transforms can be extended uniquely to distributions by duality. More precisely, for $u,f\in C_c^\infty(\EE)$ we have
$$ \int_\EE \widehat{u}(x)f(x)\,dx = \int_\EE u(y)\widehat{f}(y)\,dy, $$
so we define for $u\in\calD'(\EE)$:
$$ \langle\widehat{u},f\rangle := \langle u,\widehat{f}\rangle \qquad (f\in C_c^\infty(\EE)). $$
Similar formulas extend the Fourier transform on $\FF$ and the symplectic Fourier transform on $\EE$ to distributions. Then for $x\in \FF$ we have
$$ \int_\FF\psi(xy)\,dy = c_\FF^{-1}\cdot\delta(x) $$
in the sense of distributions.

By \cite[\S2.5]{GG63} and Theorem \ref{thm:HomogeneousDistributions}, the Fourier transform of a multiplicative character $\chi(x)|x|_{\EE}^{-1}$ of $\EE^\times$, which defines a homogeneous distribution of degree $\chi(x)|x|_{\EE}^{-1}$ if $\chi\neq1$, is a homogeneous distribution of degree $\chi(x)^{-1}$ to within a factor, called the gamma factor $\Gamma(\chi)$. The gamma factor $\Gamma(\chi)$ takes the following integral form:
$$ \Gamma(\chi) = \lim_{n\to\infty}\int_{q_{\EE}^{-n}\leq|x|_{\EE}\leq q_{\EE}^n}\psi(x)\chi(x)|x|_{\EE}^{-1}\,dx, $$
where $q_\EE$ denotes the order of the residue field of $\calO_\EE$. Then
$$ \widehat{\chi}(x) = \Gamma(\chi|\cdot|_{\EE})\cdot|x|_{\EE}^{-1}\chi(x)^{-1} $$
as distributions. From \cite[\S2.5]{GG63} we further recall:

\begin{lemma}\label{lem:PropertiesGammaFactors}
	For a multiplicative character $\chi$ of $\calO_\EE^{\times}$ and $s\in\CC$ let $\chi_s$ be the multiplicative character of $\EE^\times$ given by $\chi_s(x):=\chi(\varpi_\EE^{-\nu_\EE(x)}x)|x|_\EE^s$ ($x\in\EE^\times$).
	\begin{enumerate}[(i)]
		\item If $\chi\neq1$, then $\Gamma(\chi_s)$ is holomorphic in $s\in\CC$ and nowhere vanishing.
		\item If $\chi=1$, then $\Gamma(\chi_s)$ is meromorphic in $s\in\CC$ with simple poles at $s \in \frac{2\pi i}{\ln q_\EE}\ZZ$ and simple zeros at $s \in 1+ \frac{2\pi i}{\ln q_\EE}\ZZ$ and no further singularities or zeros.
	\end{enumerate}
\end{lemma}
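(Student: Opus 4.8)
The plan is to return to the defining limit for $\Gamma(\chi_s)$ and reduce it to a sum of local Gauss sums over the units. Fix the level $d\in\ZZ$ of $\psi$, i.e. the integer with $\psi$ trivial on $\varpi_\EE^{-d}\calO_\EE$ but not on $\varpi_\EE^{-d-1}\calO_\EE$, and let $c=c(\chi)\geq0$ be the conductor exponent of $\chi$, so that $c=0$ precisely when $\chi=1$. Decomposing the domain $q_\EE^{-n}\leq|x|_\EE\leq q_\EE^n$ into the spheres $\varpi_\EE^{-k}\calO_\EE^\times$ ($-n\leq k\leq n$) and substituting $x=\varpi_\EE^{-k}u$ on each one turns the integral into
$$ \Gamma(\chi_s) = \lim_{n\to\infty}\sum_{k=-n}^n q_\EE^{ks}\,I_k(\chi), \qquad I_k(\chi) := \int_{\calO_\EE^\times}\psi(\varpi_\EE^{-k}u)\,\chi(u)\,du, $$
so the whole question reduces to evaluating the constants $I_k(\chi)$.

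I would first handle $\chi=1$, which is part (ii). Using only that $\int_{\calO_\EE}\psi(\varpi_\EE^m v)\,dv$ equals $1$ for $m\geq-d$ and $0$ otherwise, one finds $I_k(1)=1-q_\EE^{-1}$ for $k\leq d$, $I_{d+1}(1)=-q_\EE^{-1}$, and $I_k(1)=0$ for $k\geq d+2$. The resulting series converges for $\Re(s)>0$ and sums, after a short manipulation, to the closed form
$$ \Gamma(|\cdot|_\EE^s) = q_\EE^{ds}\,\frac{1-q_\EE^{s-1}}{1-q_\EE^{-s}}, $$
which is a rational function of $q_\EE^s$ and hence the desired meromorphic continuation to all of $\CC$. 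The prefactor $q_\EE^{ds}$ is entire and nowhere zero; the denominator vanishes simply exactly on $\frac{2\pi i}{\ln q_\EE}\ZZ$ and the numerator simply exactly on $1+\frac{2\pi i}{\ln q_\EE}\ZZ$; since these two sets are disjoint there is no cancellation, and (ii) follows.

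Next comes $\chi\neq1$, which is part (i). Here the Gauss sums $I_k(\chi)$ vanish for every $k$ except one. If $k\leq d$, then $\psi(\varpi_\EE^{-k}u)=1$ on $\calO_\EE^\times$, so $I_k(\chi)=\int_{\calO_\EE^\times}\chi(u)\,du=0$ because $\chi$ is a nontrivial character of the compact group $\calO_\EE^\times$. If $k>d$, set $m=k-d\geq1$, the level of the additive character $u\mapsto\psi(\varpi_\EE^{-k}u)$ on $\calO_\EE$; replacing $u$ by $u(1+w)$ with $w$ ranging over $\varpi_\EE^{m}\calO_\EE$ when $m<c$, and over $\varpi_\EE^{c}\calO_\EE$ when $m>c$, and averaging over the corresponding coset annihilates $I_k(\chi)$ (the nontrivial averaging factor being $w\mapsto\chi(1+w)$ in the first case and $w\mapsto\psi(\varpi_\EE^{-k}uw)$ in the second). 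Thus only $k=c+d$ contributes and $\Gamma(\chi_s)=I_{c+d}(\chi)\,q_\EE^{(c+d)s}$, which is entire in $s$. Its nonvanishing amounts to $I_{c+d}(\chi)\neq0$, the classical nonvanishing of the local Gauss sum; this can be quoted from \cite[\S2.5]{GG63}, or deduced from the functional equation obtained by applying the Fourier transform twice to the normalized family $\widetilde{\chi}_{s-1}$ and combining $\widehat{\chi_{s-1}}=\Gamma(\chi_s)\,|\cdot|_\EE^{-1}\chi_{s-1}^{-1}$ with $\widehat{\widehat{u}}(x)=c_\EE^{-1}u(-x)$, which forces $\Gamma(\chi_s)\,\Gamma((\chi^{-1})_{1-s})=c_\EE^{-1}\chi_{s-1}(-1)\neq0$. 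This proves (i).

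The only step with real content is the Gauss-sum dichotomy in case (i): vanishing away from the conductor and nonvanishing at it. Everything else is bookkeeping of a geometric series in $q_\EE^{-s}$ together with reading the poles and zeros off a rational function of $q_\EE^s$; the one point to watch is that for $\chi=1$ the defining limit converges only in the half-plane $\Re(s)>0$, so the assertion in (ii) is about the meromorphic extension furnished by the closed form rather than about the limit itself.
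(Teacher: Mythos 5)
Your proof is correct. There is nothing in the paper to compare it against line by line: the paper does not prove this lemma, it simply quotes it from \cite[\S2.5]{GG63}, so your contribution is a self-contained elementary argument where the paper offers only a citation. Your route — decomposing the principal-value integral over the shells $\varpi_\EE^{-k}\calO_\EE^\times$ to get $\Gamma(\chi_s)=\sum_k q_\EE^{ks}I_k(\chi)$, evaluating $I_k(1)$ to obtain the closed form $q_\EE^{ds}(1-q_\EE^{s-1})/(1-q_\EE^{-s})$ (whose poles and zeros are visibly simple, lie on $\frac{2\pi i}{\ln q_\EE}\ZZ$ and $1+\frac{2\pi i}{\ln q_\EE}\ZZ$ respectively, and cannot cancel since the two sets are disjoint), and then the conductor-shell dichotomy for $\chi\neq1$ showing $\Gamma(\chi_s)=q_\EE^{(c+d)s}I_{c+d}(\chi)$ is entire — is exactly the standard computation behind the Gelfand--Graev statement, and all steps check out: the vanishing of $I_k(\chi)$ for $k\leq d$ by orthogonality on $\calO_\EE^\times$, for $d<k<c+d$ by the substitution $u\mapsto u(1+w)$ (here a single $w$ with $\chi(1+w)\neq1$ already suffices, no averaging needed), and for $k>c+d$ by averaging the additive character over $w\in\varpi_\EE^c\calO_\EE$. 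For the nonvanishing of the remaining Gauss sum your functional-equation argument is legitimate within the paper's toolkit, since the identity $\widehat{\chi_{s-1}}=\Gamma(\chi_s)\,|\cdot|_\EE^{-1}\chi_{s-1}^{-1}$ and the inversion formula $\widehat{\widehat{u}}(x)=c_\EE^{-1}u(-x)$ are exactly what the paper imports, and they yield $\Gamma(\chi_s)\Gamma((\chi^{-1})_{1-s})=c_\EE^{-1}\chi_{s-1}(-1)\neq0$; moreover, since $\Gamma(\chi_s)$ is a constant times $q_\EE^{(c+d)s}$, verifying this at a single $s$ in the strip of convergence already gives $I_{c+d}(\chi)\neq0$ everywhere. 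You also correctly flag the one genuine subtlety, namely that for $\chi=1$ the defining limit converges only for $\Re(s)>0$, so part (ii) is a statement about the meromorphic continuation provided by the closed form.
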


The relevance of the symplectic Fourier transform is its relation to the integrals we would like to evaluate:

\begin{lemma}
\label{lem::Fsym_formula}
For $\varphi\in C_c^\infty(\EE)$, the following identity holds:
$$ \int_\FF\varphi(1+y\alpha)\,dy = c_\FF\int_\FF\psi(r)\Fsymp\varphi(r\alpha)\,dr. $$
\end{lemma}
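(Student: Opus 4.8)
The plan is to unfold the definition of $\Fsymp$, reduce the right-hand side to the ordinary one-dimensional Fourier transform on $\FF$, and then invoke Fourier inversion. First I would evaluate the symplectic form along the line $\FF\alpha$: since $\omega(x_1+x_2\alpha,y_1+y_2\alpha)=x_1y_2-x_2y_1$, we have $\omega(r\alpha,y_1+y_2\alpha)=-ry_1$ for $r\in\FF$. Hence, writing $y=y_1+y_2\alpha$ and $dy=dy_1\,dy_2$ and interchanging the order of integration (unproblematic here because $\varphi\in C_c^\infty(\EE)$ is compactly supported and locally constant, so the integrals are effectively finite sums),
$$ \Fsymp\varphi(r\alpha) = \int_\FF\int_\FF\psi(-ry_1)\varphi(y_1+y_2\alpha)\,dy_2\,dy_1 = \int_\FF\psi(-ry_1)g(y_1)\,dy_1, $$
where $g(y_1):=\int_\FF\varphi(y_1+y_2\alpha)\,dy_2$. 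A short verification gives $g\in C_c^\infty(\FF)$: its support is contained in the image of $\supp\varphi$ under the projection $x_1+x_2\alpha\mapsto x_1$ and is therefore compact, while local constancy of $g$ follows from that of $\varphi$ together with the fact that the $y_2$-integral runs over a fixed compact set. Comparing with the definition of the Fourier transform on $\FF$, this computation shows $\Fsymp\varphi(r\alpha)=\widehat{g}(-r)$.

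Next I would substitute $r\mapsto-r$ (the Haar measure on $\FF$ is invariant under negation) to obtain
$$ c_\FF\int_\FF\psi(r)\Fsymp\varphi(r\alpha)\,dr = c_\FF\int_\FF\psi(r)\widehat{g}(-r)\,dr = c_\FF\int_\FF\psi(-r)\widehat{g}(r)\,dr, $$
and then apply the Fourier inversion formula on $\FF$, namely $g(x_1)=c_\FF\int_\FF\psi(-x_1y_1)\widehat{g}(y_1)\,dy_1$, at the point $x_1=1$. This yields
$$ c_\FF\int_\FF\psi(-r)\widehat{g}(r)\,dr = g(1) = \int_\FF\varphi(1+y\alpha)\,dy, $$
which is exactly the asserted identity.

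There is no substantial obstacle in this argument; the only points that need care are the interchange of integrations (immediate in the non-archimedean setting) and the verification that $g\in C_c^\infty(\FF)$, which is required so that Fourier inversion applies on the nose. Alternatively, one can carry out the whole computation inside $\calD'(\FF)$, using the distributional identity $\int_\FF\psi(r(1-y_1))\,dr=c_\FF^{-1}\delta(1-y_1)$ recorded in Appendix~\ref{app:Integral} and pairing it against $g$; this is the same calculation phrased through the delta formula, but keeping $g$ as a genuine test function is cleaner and avoids any appeal to distributions.
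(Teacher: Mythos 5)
Your proof is correct and follows essentially the same computation as the paper: evaluate $\omega(r\alpha,y)=-ry_1$, interchange the integrals, and recognize the resulting $r$-integral as Fourier inversion on $\FF$. The only difference is cosmetic: the paper carries out the final step formally in the distribution sense via $\int_\FF\psi(r(1-y_1))\,dr=c_\FF^{-1}\delta(1-y_1)$, whereas you apply genuine Fourier inversion to the fiber integral $g\in C_c^\infty(\FF)$, which is the same calculation with a slightly more self-contained justification.
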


\begin{proof}
Indeed, we can compute as follows (in the distribution sense):
\begin{align*}
	\int_\FF\psi(r)\Fsymp\varphi(r\alpha)\,dr &= \int_\FF\int_\EE\psi(r)\psi(-ry_1)\varphi(y)\,dy\,dr = \int_\EE\varphi(y)\int_\FF\psi(r(1-y_1))\,dr\,dy\\
	&= c_\FF^{-1}\int_\EE\varphi(y)\delta(1-y_1)\,dy = c_\FF^{-1}\int_\FF\varphi(1+y_2\alpha)\,dy_2.\qedhere
\end{align*}
\end{proof}

To be able to use the formula from Lemma \ref{lem::Fsym_formula}, we need to compute $\Fsymp\chi$ for a multipicative character $\chi$ of $\EE^\times$.

\begin{lemma}\label{lem:Fsym_character}
The symplectic Fourier transform of a multiplicative character $\chi$ of $\EE^\times$ is given by
$$ \Fsymp\chi(x) = |\alpha|_\EE\chi(\alpha)\Gamma(\chi|\cdot|_\EE)\cdot|\overline{x}|_\EE^{-1}\chi(\overline{x})^{-1}. $$
\end{lemma}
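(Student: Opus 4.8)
The plan is to compute $\Fsymp\chi$ by relating it to the ordinary Fourier transform $\calF$ on $\EE$, whose action on multiplicative characters is already recorded in the excerpt via the gamma factor $\Gamma$. Recall from the discussion preceding the lemma that $\Fsymp\varphi(x_1+x_2\alpha) = \calF\varphi(-x_2+x_1\alpha^{-1})$ for $\varphi\in C_c^\infty(\EE)$; by duality this same relation extends to distributions. So the first step is to rewrite $\Fsymp\chi(x)$ as $\calF\chi$ evaluated at the point $-x_2+x_1\alpha^{-1}$, which I will identify more intrinsically. Writing $x=x_1+x_2\alpha$, one has $\overline{x}=x_1-x_2\alpha$, and the map $x\mapsto -x_2+x_1\alpha^{-1}$ is $\FF$-linear; the clean way to see it is to observe $-x_2+x_1\alpha^{-1} = \alpha^{-1}(x_1 - x_2\alpha) = \alpha^{-1}\overline{x}$. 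Thus $\Fsymp\chi(x) = \calF\chi(\alpha^{-1}\overline{x})$.

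The second step is to invoke the formula already stated in the excerpt, namely $\widehat{\chi}(x) = \Gamma(\chi|\cdot|_\EE)\cdot|x|_\EE^{-1}\chi(x)^{-1}$ as distributions, and substitute $x\mapsto\alpha^{-1}\overline{x}$. This gives
$$ \Fsymp\chi(x) = \Gamma(\chi|\cdot|_\EE)\cdot|\alpha^{-1}\overline{x}|_\EE^{-1}\chi(\alpha^{-1}\overline{x})^{-1}. $$
Using multiplicativity of $\chi$ and $|\cdot|_\EE$, $|\alpha^{-1}\overline{x}|_\EE^{-1} = |\alpha|_\EE|\overline{x}|_\EE^{-1}$ and $\chi(\alpha^{-1}\overline{x})^{-1} = \chi(\alpha)\chi(\overline{x})^{-1}$, which yields exactly
$$ \Fsymp\chi(x) = |\alpha|_\EE\chi(\alpha)\Gamma(\chi|\cdot|_\EE)\cdot|\overline{x}|_\EE^{-1}\chi(\overline{x})^{-1}, $$
the claimed identity. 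Throughout, all manipulations are understood in the sense of distributions, using that substitution by an invertible $\FF$-linear (here even $\EE$-linear up to conjugation) change of variables commutes with Fourier transforms in the expected way and that the measure factors are already absorbed into the definitions.

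The main technical point to be careful about is the precise change-of-variables constant relating $\Fsymp$ and $\calF$: one must check that the substitution $x=x_1+x_2\alpha\mapsto(-x_2,x_1)$ (equivalently multiplication by $\alpha^{-1}$ composed with conjugation) is measure-preserving for the chosen Haar measure $dx=dx_1\,dx_2$, so that no extra Jacobian factor $|\alpha|_\EE$ or $|\alpha|_\FF$ sneaks in; indeed $(x_1,x_2)\mapsto(-x_2,x_1)$ has determinant $1$ over $\FF$, so the identity $\Fsymp\varphi(x_1+x_2\alpha)=\calF\varphi(-x_2+x_1\alpha^{-1})$ holds on the nose with no constant, as already asserted in the text. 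The only subtlety is then bookkeeping: making sure that the $\chi$ in $\Gamma(\chi|\cdot|_\EE)$ is the same normalization as in the cited formula $\widehat\chi = \Gamma(\chi|\cdot|_\EE)|x|_\EE^{-1}\chi(x)^{-1}$, which it is. No genuine obstacle is expected; the lemma is essentially a coordinate translation of the known Fourier transform of a character.
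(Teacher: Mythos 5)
Your proof is correct, but it follows a genuinely different route from the paper. The paper first shows that $\Fsymp\chi$ is homogeneous of degree $|\overline{x}|_\EE^{-1}\chi(\overline{x})^{-1}$ (using $\omega(ax,y)=\omega(x,\overline{a}y)$), invokes the one-dimensionality of the space of homogeneous distributions of a given degree (Theorem~\ref{thm:HomogeneousDistributions}) to conclude $\Fsymp\chi(x)=c_\chi|\overline{x}|_\EE^{-1}\chi(\overline{x})^{-1}$, and then pins down $c_\chi$ by evaluating at $x=-\alpha$, where the integral collapses to $\int_\EE\psi(y)\chi(y)\,dy=\Gamma(\chi|\cdot|_\EE)$. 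You instead exploit the coordinate identity $\Fsymp f(x)=\calF f(-x_2+x_1\alpha^{-1})=\calF f(\alpha^{-1}\overline{x})$ (stated in the paper but not used in its proof) and simply substitute $\alpha^{-1}\overline{x}$ into the known formula $\widehat\chi(z)=\Gamma(\chi|\cdot|_\EE)|z|_\EE^{-1}\chi(z)^{-1}$; this is more direct and avoids the uniqueness theorem for homogeneous distributions, at the cost of having to justify pulling back a distributional identity under an invertible $\FF$-linear map (harmless here, since the Jacobian enters both sides of the pairing identically and, for $\chi$ in the range of local integrability, the computation is just a pointwise one, extended by meromorphic continuation as elsewhere in the paper). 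One small slip in your bookkeeping: the output-variable map in coordinates is $(x_1,x_2)\mapsto(-x_2,x_1/\alpha^2)$, whose determinant is $1/\alpha^2$, not $1$; but no Jacobian is relevant anyway, because the identity $\Fsymp f(x)=\calF f(\alpha^{-1}\overline{x})$ comes from rewriting $\psi(\omega(x,y))=\psi((\alpha^{-1}\overline{x})y)$ inside the same integral over $y$, with no change of integration variable, so your conclusion and constants are unaffected.
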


\begin{proof}
For $x\in \EE$ and $a\in \EE^\times$ we have
\begin{multline*}
	\Fsymp\chi(ax) = \int_\EE\psi(\omega(ax,y))\chi(y)\,dy = \int_\EE\psi(\omega(x,\overline{a}y))\chi(y)\,dy\\
	= |\overline{a}|_\EE^{-1}\int_\EE\psi(\omega(x,y))\chi(\overline{a}^{-1}y)\,dy = |\overline{a}|_\EE^{-1}\chi(\overline{a})^{-1}\Fsymp\chi(x).
\end{multline*}
Thus, $\Fsymp\chi$ is again a homogeneous distribution of degree $|\overline{x}|_\EE^{-1}\chi(\overline{x})^{-1}$.  It follows that
$$ \Fsymp\chi(x) = c_\chi|\overline{x}|_\EE^{-1}\chi(\overline{x})^{-1} $$
for some constant $c_\chi\in\CC$. To determine $c_\chi$, put $x=-\alpha$, then
\begin{equation*}
	c_\chi|\alpha|_\EE^{-1}\chi(\alpha)^{-1} = \Fsymp\chi(-\alpha) = \int_\EE\psi(y_1)\chi(y)\,dy = \int_\EE\psi(y)\chi(y)\,dy = \Gamma(\chi|\cdot|_\EE).\qedhere
\end{equation*}
\end{proof}

\begin{proposition}\label{prop:GammaIntegral}
For a multiplicative character $\chi$ of $\EE^\times$ we have
\begin{align*}
	\int_\FF\chi(1+y\alpha)\,dy &= c_\FF\chi(-1)\Gamma(\chi^{-1}|_{\FF^\times}|\cdot|_\FF^{-1})\Gamma(\chi|\cdot|_\EE),\\
	\int_\FF\chi(y+\alpha)\,dy &= c_\FF\chi(-\alpha)|\alpha^2|_\FF\Gamma(\chi^{-1}|_{\FF^\times}|\cdot|_\FF^{-1})\Gamma(\chi|\cdot|_\EE).
\end{align*}
\end{proposition}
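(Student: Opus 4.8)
The plan is to reduce both integrals to the symplectic Fourier transform of a multiplicative character of $\EE^\times$ and then invoke the explicit formula of Lemma~\ref{lem:Fsym_character}. For the first integral, I would apply Lemma~\ref{lem::Fsym_formula} with $\varphi=\chi$ (interpreted as a distribution, so the manipulations are in the distribution sense; the left-hand side converges as an actual integral when $\Re$ of the exponent of $\chi$ is suitable and both sides extend meromorphically, so it suffices to prove the identity formally). This gives
$$ \int_\FF\chi(1+y\alpha)\,dy = c_\FF\int_\FF\psi(r)\,\Fsymp\chi(r\alpha)\,dr. $$
Now substitute the formula from Lemma~\ref{lem:Fsym_character}: since $\overline{r\alpha}=-r\alpha$ for $r\in\FF$, we have $\Fsymp\chi(r\alpha)=|\alpha|_\EE\chi(\alpha)\Gamma(\chi|\cdot|_\EE)\cdot|{-r\alpha}|_\EE^{-1}\chi(-r\alpha)^{-1}$. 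Using $|r\alpha|_\EE=|r^2\alpha^2|_\FF=|r|_\FF^2|\alpha^2|_\FF$ and $|\alpha|_\EE=|\alpha^2|_\FF$, the powers of $|\alpha^2|_\FF$ cancel, and $\chi(-r\alpha)^{-1}=\chi(-1)^{-1}\chi(\alpha)^{-1}\chi(r)^{-1}$, so $\chi(\alpha)$ cancels and we are left (up to the factor $\chi(-1)^{-1}=\chi(-1)$ and $\Gamma(\chi|\cdot|_\EE)$) with
$$ c_\FF\,\chi(-1)\,\Gamma(\chi|\cdot|_\EE)\int_\FF\psi(r)\,|r|_\FF^{-2}\chi(r)^{-1}\,dr. $$
The remaining integral is exactly the integral representation of the gamma factor $\Gamma(\chi^{-1}|_{\FF^\times}|\cdot|_\FF^{-1})$ over $\FF$: indeed $\int_\FF\psi(r)\xi(r)|r|_\FF^{-1}\,dr=\Gamma(\xi|\cdot|_\FF)$ with $\xi=\chi^{-1}|_{\FF^\times}|\cdot|_\FF^{-1}$, whose $|\cdot|_\FF$-twist is $\chi^{-1}|_{\FF^\times}$, matching the integrand $|r|_\FF^{-2}\chi(r)^{-1}=|r|_\FF^{-1}\cdot(|r|_\FF^{-1}\chi(r)^{-1})$. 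This yields the first formula.

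For the second integral, I would reduce it to the first by the change of variables $y\mapsto y^{-1}$, which sends $\frac{dy}{|y|_\FF}$ to itself: writing $\chi(y+\alpha)=\chi(y)\chi(1+\alpha y^{-1})$ and substituting $y\mapsto y^{-1}$ gives
$$ \int_\FF\chi(y+\alpha)\,dy = \int_\FF\chi(y^{-1})\chi(1+\alpha y)\,|y|_\FF^{-2}\,dy, $$
which is the same shape as the first integral but with an extra twist $\chi(y)^{-1}|y|_\FF^{-2}$ inside; this amounts to replacing $\chi$ by $\chi$ composed with scaling, equivalently running the same Fourier computation with the character $\chi$ acting on the argument $r\alpha$ only through $\chi(r\alpha)$ and tracking the extra $|r|_\FF^{-2}$, which shifts the $\FF$-gamma factor's exponent. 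Alternatively — and this is cleaner — I would run the argument of the first part verbatim but with $x=-\alpha$ replaced by a scaling argument, or simply substitute $y=\alpha z$ directly in $\int_\FF\chi(y+\alpha)\,dy$ to get $|\alpha^2|_\FF^{1/2}\cdot$ something; the bookkeeping of absolute values is where I would be most careful. In fact the most transparent route is: substitute $y\to \alpha^2 y$ is not legal (that leaves $\FF$), so instead use $\int_\FF \chi(y+\alpha)\,dy$, apply Lemma~\ref{lem::Fsym_formula} in the form with $\int_\FF\varphi(\alpha + y\alpha)\,dy = |\alpha|_\FF^{-1}\int_\FF \varphi(y+\alpha)\,dy$ after rescaling, then proceed as before; the net effect is the extra factor $\chi(-\alpha)|\alpha^2|_\FF$ in place of $\chi(-1)$.

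The main obstacle I anticipate is not conceptual but the careful tracking of normalizing constants and of the $|\cdot|_\EE$ versus $|\cdot|_\FF$ absolute values through the symplectic Fourier transform, together with justifying the distributional manipulations: the integrals $\int_\FF\chi(1+y\alpha)\,dy$ and the gamma-factor integrals are only conditionally convergent (or defined by analytic continuation), so one must either work in a region of convergence and continue, or interpret every step in the sense of distributions as in Lemmas~\ref{lem::Fsym_formula} and \ref{lem:Fsym_character}. Once the identity $\Fsymp\chi(r\alpha)=\,|\alpha|_\EE\chi(\alpha)\Gamma(\chi|\cdot|_\EE)|r|_\FF^{-2}|\alpha^2|_\FF^{-1}\chi(-1)^{-1}\chi(\alpha)^{-1}\chi(r)^{-1}$ is in hand and the residual $\FF$-integral is recognized as $\Gamma(\chi^{-1}|_{\FF^\times}|\cdot|_\FF^{-1})$, both formulas drop out. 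I would also double-check the special cases $\chi|_{\FF^\times}=1$ where the $\FF$-gamma factor has poles/zeros, to confirm the identity of meromorphic functions holds on all of the relevant parameter space.
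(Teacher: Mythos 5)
Your treatment of the first integral is correct and is essentially the paper's own argument: apply Lemma~\ref{lem::Fsym_formula} with $\varphi=\chi$, insert the formula of Lemma~\ref{lem:Fsym_character} at $x=r\alpha$ (using $\overline{r\alpha}=-r\alpha$, $|r\alpha|_\EE=|r|_\FF^2|\alpha^2|_\FF$, $|\alpha|_\EE=|\alpha^2|_\FF$), and recognize $\int_\FF\psi(r)|r|_\FF^{-2}\chi(r)^{-1}\,dr$ as $\Gamma(\chi^{-1}|_{\FF^\times}|\cdot|_\FF^{-1})$.

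The gap is in your reduction of the second integral to the first. You explicitly reject the substitution $y\mapsto\alpha^2 y$ as ``not legal (that leaves $\FF$)'', but this is exactly backwards: by construction $\alpha^2\in\FF^\times$ (only $\alpha$ itself lies outside $\FF$), so $y=\alpha^2 z$ is a perfectly legal change of variables on $\FF$, with $dy=|\alpha^2|_\FF\,dz$ and $y+\alpha=\alpha(1+z\alpha)$, whence multiplicativity of $\chi$ gives
\begin{equation*}
\int_\FF\chi(y+\alpha)\,dy=\chi(\alpha)\,|\alpha^2|_\FF\int_\FF\chi(1+z\alpha)\,dz,
\end{equation*}
and the second formula follows at once from the first (this is precisely how the paper argues). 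The routes you offer instead do not work as stated: the substitution $y=\alpha z$ is the one that is genuinely illegal, the quantity $|\alpha|_\FF$ in your proposed identity $\int_\FF\varphi(\alpha+y\alpha)\,dy=|\alpha|_\FF^{-1}\int_\FF\varphi(y+\alpha)\,dy$ is undefined since $\alpha\notin\FF$ (and $\alpha+y\alpha=\alpha(1+y)$ does not relate to the target integral), and the $y\mapsto y^{-1}$ route produces the integrand $\chi(y)^{-1}|y|_\FF^{-2}\chi(1+y\alpha)$, which is not of the form $\varphi(1+y\alpha)$ for a single $\varphi$ on $\EE$, so Lemma~\ref{lem::Fsym_formula} no longer applies without a new computation. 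You do guess the correct net factor $\chi(-\alpha)|\alpha^2|_\FF$, but none of the derivations you sketch actually justifies it; the fix is simply to use $y=\alpha^2 z$.
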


\begin{proof}
First note that by the coordinate change $y=z\alpha^2$ we have
$$ \int_\FF\chi(y+\alpha)\,dy = \chi(\alpha)|\alpha^2|_\FF\int_\FF\chi(1+z\alpha)\,dz, $$
so it suffices to compute the first integral.
By the previous lemmata \ref{lem::Fsym_formula} and \ref{lem:Fsym_character}:
\begin{align*}
	\int_\FF\chi(1+y\alpha)\,dy &= c_\FF\int_\FF\psi(r)\Fsymp\chi(r\alpha)\,dr\\
	&= c_\FF|\alpha|_\EE\chi(\alpha)\Gamma(\chi|\cdot|_\EE)\int_\FF\psi(r)|-r\alpha|_\EE^{-1}\chi(-r\alpha)^{-1}\,dr\\
	&= c_\FF\chi(-1)\Gamma(\chi|\cdot|_\EE)\int_\FF\psi(r)|r|_\FF^{-2}\chi(r)^{-1}\,dr.
\end{align*}
The remaining integral equals $\Gamma(\chi^{-1}|_{\FF^\times}|\cdot|_\FF^{-1})$.
\end{proof}


\providecommand{\bysame}{\leavevmode\hbox to3em{\hrulefill}\thinspace}
\providecommand{\MR}{\relax\ifhmode\unskip\space\fi MR }
\providecommand{\MRhref}[2]{%
  \href{http://www.ams.org/mathscinet-getitem?mr=#1}{#2}
}
\providecommand{\href}[2]{#2}

\end{document}